\DeclareMathAlphabet{\mathbbm}{U}{bbm}{m}{n}
\begin{document}
\newtheoremstyle{all}{11pt}{11pt}{\slshape}{}{\bfseries}{}{.5em}{}

\theoremstyle{all}
\newtheorem{itheorem}{Theorem}
\newtheorem{theorem}{Theorem}[section]
\newtheorem{proposition}[theorem]{Proposition}
\newtheorem{corollary}[theorem]{Corollary}
\newtheorem{lemma}[theorem]{Lemma}
\newtheorem{assumption}[theorem]{Assumption}
\newtheorem{definition}[theorem]{Definition}
\newtheorem{ques}[theorem]{Question}
\newtheorem{conjecture}[theorem]{Conjecture}
\newtheorem{physics}[theorem]{Physics Motivation}

\theoremstyle{remark}
\newtheorem{remark}[theorem]{Remark}
\newtheorem{examplex}{Example}
\newenvironment{example}
  {\pushQED{\qed}\renewcommand{\qedsymbol}{$\clubsuit$}\examplex}
  {\popQED\endexamplex }
\renewcommand{\theexamplex}{{\arabic{section}.\roman{examplex}}}
\newcommand{\nc}{\newcommand}
\newcommand{\renc}{\renewcommand}
\numberwithin{equation}{section}
\renc{\theequation}{\arabic{section}.\arabic{equation}}

\newcounter{subeqn}
\renewcommand{\thesubeqn}{\theequation\alph{subeqn}}
\newcommand{\subeqn}{\refstepcounter{subeqn}\tag{\thesubeqn}}\makeatletter
\@addtoreset{subeqn}{equation}
\newcommand{\newseq}{\refstepcounter{equation}}
  \nc{\kac}{\kappa^C}
\nc{\alg}{T}
\nc{\salg}{W}
\nc{\zero}{o}
\nc{\weights}{\iota}
\nc{\psalg}{\mathscr{W}}

\nc{\Isalg}{\mathscr{H}}
\nc{\Lco}{L_{\la}}
\nc{\qD}{q^{\nicefrac 1D}}
\nc{\ocL}{M_{\la}}
\nc{\excise}[1]{}
\nc{\Dbe}{D^{\uparrow}}
\nc{\Dfg}{D^{\mathsf{fg}}}
\nc{\frob}{\mathsf{f}}

\nc{\op}{\operatorname{op}}
\nc{\Sym}{\operatorname{Sym}}
\nc{\Symt}{S}
\nc{\tr}{\operatorname{tr}}
\newcommand{\Mirkovic}{Mirkovi\'c\xspace}
\nc{\tla}{\mathsf{t}_\la}
\nc{\llrr}{\langle\la,\rho\rangle}
\nc{\lllr}{\langle\la,\la\rangle}
\nc{\K}{\mathbbm{k}}
\nc{\Stosic}{Sto{\v{s}}i{\'c}\xspace}
\nc{\cd}{\mathcal{D}}
\nc{\cT}{\mathcal{T}}
\nc{\vd}{\mathbb{D}}
\nc{\Fp}{{\mathbb{F}_p}}
\nc{\lift}{\gamma}
\nc{\cox}{h}
\nc{\Aut}{\operatorname{Aut}}
\nc{\R}{\mathbb{R}}
\renc{\wr}{\operatorname{wr}}
  \nc{\Lam}[3]{\La^{#1}_{#2,#3}}
  \nc{\Lab}[2]{\La^{#1}_{#2}}
  \nc{\Lamvwy}{\Lam\Bv\Bw\By}
  \nc{\Labwv}{\Lab\Bw\Bv}
  \nc{\nak}[3]{\mathcal{N}(#1,#2,#3)}
  \nc{\hw}{highest weight\xspace}
  \nc{\al}{\alpha}
  \nc{\gK}{K}
  \nc{\gk}{\mathfrak{k}}
  
\newcommand{\LLoc}{\mathbb{L}\!\operatorname{Loc}}
\newcommand{\Rsecs}{\mathbb{R}\Gamma_\bS}

\newlength{\dhatheight}
\newcommand{\doublehat}[1]{\settoheight{\dhatheight}{\ensuremath{\hat{#1}}}\addtolength{\dhatheight}{-0.35ex}\hat{\vphantom{\rule{1pt}{\dhatheight}}\smash{\hat{#1}}}}

\newcommand{\dgmod}{\operatorname{-dg-mod}}
  \nc{\be}{\beta}
  \nc{\bM}{\mathbf{m}}
  \nc{\Bu}{\mathbf{u}}

  \nc{\bkh}{\backslash}
  \nc{\Bi}{\mathbf{i}}
  \nc{\Bm}{\mathbf{m}}
  \nc{\Bj}{\mathbf{j}}
 \nc{\Bk}{\mathbf{k}}
  \nc{\Bs}{\mathbf{s}}
\newcommand{\bS}{\mathbb{S}}
\newcommand{\bT}{\mathbb{T}}
\newcommand{\bt}{\mathbbm{t}}

\nc{\hatD}{\widehat{\Delta}}
\nc{\bd}{\mathbf{d}}
\nc{\D}{\mathcal{D}}
\nc{\mmod}{\operatorname{-mod}}  
\nc{\AS}{\operatorname{AS}}
\newcommand{\red}{\mathfrak{r}}

\nc{\RAA}{R^\A_A}
  \nc{\Bv}{\mathbf{v}}
  \nc{\Bw}{\mathbf{w}}
\nc{\Id}{\operatorname{Id}}
\nc{\Cth}{S_h}
\nc{\Cft}{S_1}
\def\MHM{{\operatorname{MHM}}}

\newcommand{\cM}{\mathcal{M}}
\newcommand{\cD}{\mathcal{D}}
\newcommand{\LCP}{\operatorname{LCP}}
  \nc{\By}{\mathbf{y}}
\nc{\eE}{\EuScript{E}}
  \nc{\Bz}{\mathbf{z}}
  \nc{\coker}{\mathrm{coker}\,}
  \nc{\C}{\mathbb{C}}
\nc{\ab}{{\operatorname{ab}}}
\nc{\wall}{\mathbbm{w}}
  \nc{\ch}{\mathrm{ch}}
  \nc{\de}{\delta}
  \nc{\ep}{\epsilon}
  \nc{\Rep}[2]{\mathsf{Rep}_{#1}^{#2}}
  \nc{\Ev}[2]{E_{#1}^{#2}}
  \nc{\fr}[1]{\mathfrak{#1}}
  \nc{\fp}{\fr p}
  \nc{\fq}{\fr q}
  \nc{\fl}{\fr l}
  \nc{\fgl}{\fr{gl}}
  \nc{\Fr}{\operatorname{Fr}}
\nc{\rad}{\operatorname{rad}}
\nc{\ind}{\operatorname{ind}}
  \nc{\GL}{\mathrm{GL}}
\newcommand{\arxiv}[1]{\href{http://arxiv.org/abs/#1}{\tt arXiv:\nolinkurl{#1}}}
  \nc{\Hom}{\mathrm{Hom}}
  \nc{\im}{\mathrm{im}\,}
  \nc{\La}{\Lambda}
  \nc{\la}{\lambda}
  \nc{\mult}{b^{\mu}_{\la_0}\!}
  \nc{\mc}[1]{\mathcal{#1}}
  \nc{\om}{\omega}
\nc{\gl}{\mathfrak{gl}}
  \nc{\cF}{\mathcal{F}}
\nc{\cC}{\mathcal{C}}
  \nc{\Mor}{\mathsf{Mor}}
  \nc{\HOM}{\operatorname{HOM}}

  \nc{\sHom}{\mathscr{H}\text{\kern -3pt {\calligra\large om}}\,}
  \nc{\Ob}{\mathsf{Ob}}
  \nc{\Vect}{\operatorname{-Vect}}
\nc{\gVect}{\mathsf{gVect}}
  \nc{\modu}{\mathsf{-mod}}
\nc{\pmodu}{\mathsf{-pmod}}
  \nc{\qvw}[1]{\La(#1 \Bv,\Bw)}
  \nc{\van}[1]{\nu_{#1}}
  \nc{\Rperp}{R^\vee(X_0)^{\perp}}
  \nc{\si}{\sigma}
\nc{\sgns}{{\boldsymbol{\sigma}}}
  \nc{\croot}[1]{\al^\vee_{#1}}
\nc{\di}{\mathbf{d}}
  \nc{\SL}[1]{\mathrm{SL}_{#1}}
  
  \nc{\slhat}[1]{\mathfrak{\widehat{sl}}_{#1}}
  \nc{\sllhat}{\slhat{\ell}}
  \nc{\slnhat}{\slhat{n}}
    \nc{\slehat}{\slhat{e}}
   
 \nc{\sle}{\mathfrak{sl}_e}
    \nc{\Th}{\theta}
  \nc{\vp}{\varphi}
  \nc{\wt}{\mathrm{wt}}
\nc{\te}{\tilde{e}}
\nc{\tf}{\tilde{f}}
\nc{\hwo}{\mathbb{V}}
\nc{\soc}{\operatorname{soc}}
\nc{\cosoc}{\operatorname{cosoc}}
 \nc{\Q}{\mathbb{Q}}
\nc{\LPC}{\mathsf{LPC}}
  \nc{\Z}{\mathbb{Z}}
  \nc{\Znn}{\Z_{\geq 0}}
  \nc{\ver}{\EuScript{V}}
  \nc{\Res}[2]{\operatorname{Res}^{#1}_{#2}}
  \nc{\edge}{\EuScript{E}}
  \nc{\Spec}{\operatorname{Spec}}
  \nc{\tie}{\EuScript{T}}
  \nc{\ml}[1]{\mathbb{D}^{#1}}
  \nc{\fQ}{\mathfrak{Q}}
        \nc{\fg}{\mathfrak{g}}
        \nc{\ft}{\mathfrak{t}}
        \nc{\fm}{\mathfrak{m}}
  \nc{\Uq}{U_q(\fg)}
        \nc{\bom}{\boldsymbol{\omega}}
\nc{\bla}{{\underline{\boldsymbol{\la}}}}
\nc{\bmu}{{\underline{\boldsymbol{\mu}}}}
\nc{\bal}{{\boldsymbol{\al}}}
\nc{\bet}{{\boldsymbol{\eta}}}
\nc{\rola}{X}
\nc{\wela}{Y}
\nc{\fM}{\mathfrak{M}}
\nc{\tfM}{\mathfrak{\tilde M}}
\nc{\fX}{\mathfrak{X}}
\nc{\fH}{\mathfrak{H}}
\nc{\fE}{\mathfrak{E}}
\nc{\fF}{\mathfrak{F}}
\nc{\fI}{\mathfrak{I}}
\nc{\qui}[2]{\fM_{#1}^{#2}}
\nc{\cL}{\mathcal{L}}
\nc{\ca}[2]{\fQ_{#1}^{#2}}
\nc{\cat}{\mathcal{V}}
\nc{\cata}{\mathfrak{V}}
\nc{\catf}{\mathscr{V}}
\nc{\hl}{\mathcal{X}}
\nc{\hld}{\EuScript{X}}
\nc{\hldbK}{\EuScript{X}^{\bla}_{\bar{\mathbb{K}}}}
\nc{\Iwahori}{\mathrm{Iwa}}
\nc{\hE}{\mathfrak{E}^{(1)}}
\nc{\Eh}{\mathfrak{E}^{(2)}}
\nc{\hF}{\mathfrak{F}^{(1)}}
\nc{\Fh}{\mathfrak{F}^{(2)}}

\nc{\pil}{{\boldsymbol{\pi}}^L}
\nc{\pir}{{\boldsymbol{\pi}}^R}
\nc{\cO}{\mathcal{O}}
\nc{\Ko}{\text{\Denarius}}
\nc{\Ei}{\fE_i}
\nc{\Fi}{\fF_i}
\nc{\fil}{\mathcal{H}}
\nc{\brr}[2]{\beta^R_{#1,#2}}
\nc{\brl}[2]{\beta^L_{#1,#2}}
\nc{\so}[2]{\EuScript{Q}^{#1}_{#2}}
\nc{\EW}{\mathbf{W}}
\nc{\rma}[2]{\mathbf{R}_{#1,#2}}
\nc{\Dif}{\EuScript{D}}\nc{\MDif}{\EuScript{E}}
\renc{\mod}{\mathsf{mod}}
\nc{\modg}{\mathsf{mod}^g}
\nc{\fmod}{\mathsf{mod}^{fd}}
\nc{\id}{\operatorname{id}}
\nc{\compat}{\EuScript{K}}
\nc{\DR}{\mathbf{DR}}
\nc{\End}{\operatorname{End}}
\nc{\Fun}{\operatorname{Fun}}
\nc{\Ext}{\operatorname{Ext}}
\nc{\Coh}{\operatorname{Coh}}
\nc{\tw}{\tau}
\nc{\second}{\tau}
\nc{\A}{\EuScript{A}}
\nc{\Loc}{\mathsf{Loc}}
\nc{\eF}{\EuScript{F}}
\nc{\LAA}{\Loc^{\A}_{A}}
\nc{\perv}{\mathsf{Perv}}
\nc{\gfq}[2]{B_{#1}^{#2}}
\nc{\qgf}[1]{A_{#1}}
\nc{\qgr}{\qgf\rho}
\nc{\tqgf}{\tilde A}
\nc{\Tr}{\operatorname{Tr}}
\nc{\Tor}{\operatorname{Tor}}
\nc{\cQ}{\mathcal{Q}}
\nc{\st}[1]{\Delta(#1)}
\nc{\cst}[1]{\nabla(#1)}
\nc{\ei}{\mathbf{e}_i}
\nc{\Be}{\mathbf{e}}
\nc{\Hck}{\mathfrak{H}}
\renc{\P}{\mathbb{P}}
\nc{\bbB}{\mathbb{B}}
\nc{\ssy}{\mathsf{y}}
\nc{\cI}{\mathcal{I}}
\nc{\cG}{\mathcal{G}}
\nc{\cH}{\mathcal{H}}
\nc{\coe}{\mathfrak{K}}
\nc{\pr}{\operatorname{pr}}
\nc{\bra}{\mathfrak{B}}
\nc{\rcl}{\rho^\vee(\la)}
\nc{\tU}{\mathcal{U}}
\nc{\dU}{{\stackon[8pt]{\tU}{\cdot}}}
\nc{\dT}{{\stackon[8pt]{\cT}{\cdot}}}
\nc{\BFN}{\EuScript{R}}

\nc{\RHom}{\mathrm{RHom}}
\nc{\tcO}{\tilde{\cO}}
\nc{\Yon}{\mathscr{Y}}
\nc{\sI}{{\mathsf{I}}}
\nc{\sptc}{X_*(T)_1}
\nc{\spt}{\ft_1}
\nc{\Bpsi}{u}
\nc{\acham}{\eta}
\nc{\hyper}{\mathsf{H}}
\nc{\AF}{\EuScript{Fl}}
\nc{\VB}{\EuScript{X}}
\nc{\OHiggs}{\cO_{\operatorname{Higgs}}}
\nc{\OCoulomb}{\cO_{\operatorname{Coulomb}}}
\nc{\tOHiggs}{\tilde\cO_{\operatorname{Higgs}}}
\nc{\tOCoulomb}{\tilde\cO_{\operatorname{Coulomb}}}
\nc{\indx}{\mathcal{I}}
\nc{\redu}{K}
\nc{\Ba}{\mathbf{a}}
\nc{\Bb}{\mathbf{b}}
\nc{\Bc}{\mathbf{c}}
\nc{\Lotimes}{\overset{L}{\otimes}}
\nc{\AC}{C}
\nc{\rAC}{rC}\nc{\defr}{\operatorname{def}}

\nc{\rACp}{\mathsf{C}}
\nc{\ideal}{\mathscr{I}}
\nc{\ACs}{\mathscr{C}}
\nc{\Stein}{\mathscr{X}}
\nc{\pStein}{p\mathscr{X}}
\nc{\pSteinK}{\overline{\mathscr{X}}}
\nc{\No}{H}
\nc{\To}{Q}
\nc{\tNo}{\tilde{H}}
\nc{\tTo}{\tilde{Q}}

\nc{\gaugeG}{G}
\nc{\weylW}{W}
\nc{\matterV}{V}
\nc{\quiver}{\Gamma}
\nc{\Coulomb}{\fM}
\nc{\Rring}{\mathring{R}}
\nc{\hRring}{\mathring{R}^h}
\nc{\ringS}{S}
\nc{\efA}{\EuScript{A}}
\nc{\groupK}{K}
\newcommand{\MQ}{\fM_{\To}}
\nc{\yw}{y}
\nc{\Twist}{T}
\nc{\scrBhat}{\mathscr{\widehat{B}}}
\nc{\sfB}{\ensuremath{\mathsf{B}}}

\nc{\sfBhat}{\ensuremath{\widehat{\mathsf{B}}}}

\nc{\sfA}{\ensuremath{\mathsf{A}}}

\nc{\sfAhat}{\ensuremath{\widehat{\mathsf{A}}}}

\nc{\scrB}{\mathscr{B}}
\nc{\scrT}{\mathscr{T}}
\nc{\Asph}{\ensuremath{\EuScript{A}^{\operatorname{sph}}}}
\nc{\What}{\widehat{\weylW}}
\nc{\tM}{\tilde{\fM}}
\nc{\flav}{\phi}
\nc{\tF}{\tilde{F}}
\newcommand{\cOg}{\mathcal{O}_{\!\operatorname{g}}}
\newcommand{\tcOg}{\mathcal{\tilde O}_{\!\operatorname{g}}}
\newcommand{\dOg}{D_{\cOg}}
\newcommand{\preO}{p\cOg}
\newcommand{\dpreO}{D_{p\cOg}}
\nc{\vertex}{\EuScript{V}(\Gamma)}
\nc{\Wei}{\EuScript{W}}
\setcounter{tocdepth}{2}
\newcommand{\thetitle}{Coherent sheaves and quantum Coulomb branches I:\\tilting bundles from integrable systems}
\newcommand{\theshorttitle}{Coherent sheaves and quantum Coulomb branches I}
\renc{\theitheorem}{\Alph{itheorem}}

\excise{
\newenvironment{block}
\newenvironment{frame}
\newenvironment{tikzpicture}
\newenvironment{equation*}
}

\baselineskip=1.1\baselineskip

 \usetikzlibrary{decorations.pathreplacing,backgrounds,decorations.markings,shapes.geometric,decorations.pathmorphing}
\tikzset{wei/.style={draw=red,double=red!40!white,double distance=1.5pt,thin}}
\tikzset{awei/.style={draw=blue,double=blue!40!white,double distance=1.5pt,thin}}
\tikzset{bdot/.style={fill,circle,color=blue,inner sep=3pt,outer sep=0}}
    \tikzset{ weyl/.style={decorate, decoration={snake}, draw=black!50!green, very thick}}
\tikzset{fringe/.style={gray,postaction={decoration=border,decorate,draw,gray, segment length=4pt,thick}}}
\tikzset{old/.style={gray,thin}}
\tikzset{dir/.style={postaction={decorate,decoration={markings, mark=at position .8 with {\arrow[scale=1.3]{>}}}}}}
\tikzset{rdir/.style={postaction={decorate,decoration={markings, mark=at position .8 with {\arrow[scale=1.3]{<}}}}}}
\tikzset{edir/.style={postaction={decorate,decoration={markings, mark=at position .2 with {\arrow[scale=1.3]{<}}}}}}\begin{center}
\noindent {\large  \bf \thetitle}
\medskip

\noindent {\sc Ben Webster}\footnote{Supported by the NSF under Grant DMS-1151473 and the Alfred P. Sloan Foundation. This research was supported in part by Perimeter Institute for Theoretical Physics. Research at Perimeter Institute is supported in part by the Government of Canada through the Department of Innovation, Science and Economic Development Canada and by the Province of Ontario through the Ministry of Colleges and Universities.}\\  
Department of Pure Mathematics, University of Waterloo \& \\
 Perimeter Institute for Theoretical Physics\\
Waterloo, ON\\
Email: {\tt ben.webster@uwaterloo.ca}
\end{center}
\bigskip
{\small
\begin{quote}
\noindent {\em Abstract.}
In this paper, we consider how the approach of Bezrukavnikov and
Kaledin to understanding the categories of coherent sheaves on
symplectic resolutions can be applied to the Coulomb branches
introduced by Braverman, Finkelberg and Nakajima.  In particular, we
construct tilting generators on resolved Coulomb branches and give
explicit quiver presentations of categories of coherent sheaves on
these varieties, with the wall-crossing functors described by natural
bimodules.
\end{quote}
}

\section{Introduction}
\label{sec:introduction}

Let $V$ be a complex vector space, and let $G$ be a connected reductive
algebraic group with a fixed faithful linear action on $V$. 
Attached to these data, we have a symplectic variety $\Coulomb$ called the {\bf
  Coulomb branch}, defined by Braverman, Finkelberg and Nakajima \cite{BFN},
based on proposals in the physics literature. Many interesting
varieties appear in this way, including quiver varieties in finite and
affine type A, hypertoric varieties, and slices between Schubert cells in affine Grassmannians.  This construction generalizes to give a construction of a number of partial resolutions $\tM$ of $\fM$ \cite{BFNline}; we will
call one of these a {\bf BFN resolution} if it is a resolution of singularities.

Bezrukavnikov and Kaledin have developed a general theory of quantizations
of algebraic varieties in arbitrary characteristic
\cite{BK04a,BKpos} and Kaledin showed that this theory can
be applied to construct tilting generators on symplectic resolutions
of singularities \cite{KalDEQ}.  Kaledin's theory is very powerful
but not very concrete from the perspective of a representation theorist.
In particular, this work shows that the category of coherent sheaves on a conic
symplectic resolution is derived equivalent to the category of modules
over an algebra $A$ (actually to many different algebras, one for each choice of a quantization parameter), defined in \eqref{eq:A-def}. In any particular case, this algebra is quite
challenging to calculate.  Our goal in this paper is to develop
Kaledin's theory as explicitly as possible in the case of Coulomb
branches and, in particular, to describe this algebra $A$. 
 We will show:
\begin{itheorem}\label{th:main}
Any BFN Coulomb branch with a BFN resolution has an explicit
combinatorially presented noncommutative resolution of singularities
$A$.  The category $D^b(A\mmod)$ is equivalent to the derived category of coherent sheaves on any BFN resolution through an explicit tilting generator.
\end{itheorem}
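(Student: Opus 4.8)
The plan is to carry out the Bezrukavnikov--Kaledin quantization-in-positive-characteristic program, but to keep every step explicit by reading it off the extended BFN category. Fix a prime $p\gg 0$ and work over $\Fp$, lifting to $\C$ only at the end. BFN present $\fM$ as $\Spec$ of the spherical convolution algebra $\EuScript{A}^{\mathrm{sph}}_0=\Fp[\fM]$, and the BFN resolution $\tfM$ is obtained from the Coulomb branch $\fM_{\To}$ of the enlarged torus $\To$ by Hamiltonian reduction with a GIT stability. Its quantization is the quantum Coulomb branch at $h=1$: the $\Fp$-algebra $\EuScript{A}^{\mathrm{sph}}_1$ has $p$-center $\Fp[\fM]$, and $\tfM$ carries a sheaf of generically Azumaya algebras $\psalg_\phi$ (with Iwahori cousin $\Isalg_\phi$) satisfying $\Gamma(\tfM;\psalg_\phi)=\EuScript{A}^{\mathrm{sph}}_1$ and $\Gamma(\tfM;\Isalg_\phi)=\EuScript{A}_1$, the flavor cocharacter $\phi$ playing the role of the quantization parameter; over the Frobenius twist of $\tfM$ this sheaf is genuinely Azumaya.

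The heart of the matter is constructing a splitting bundle and recognizing its endomorphism algebra combinatorially. On the formal neighborhood $\hat{\fM}$ of the central fibre I would identify $\hat{\Isalg}_\phi$ with the completion $\widehat{\mathsf{B}}$ of the extended BFN category $\mathscr{B}^{\To}$, via the equivalence $\gamma_{\mathsf{B}}$: here the objects are lattice points labelled by the chambers $\rACp_{\Ba}$, with $\Hom$-spaces spanned by the homology classes $y_w$, $r(\eta,\eta')$ and $\Bpsi_\alpha$. The local pieces of the splitting bundle are the projective modules $\hat{\cQ}_\mu=e_\mu\hat{\Isalg}_\phi e_{0,\second}$; summing them over representatives of $\bar{\Lambda}^{\R}$ --- lattice points modulo $\widehat{W}$ meeting the relevant chambers --- produces a bundle splitting $\hat{\Isalg}_\phi$ as a matrix algebra, with endomorphism algebra the completion of $\mathsf{B}^{\bar{\Lambda}}$. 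This local bundle must then be promoted to a genuine vector bundle $\cQ^{\Q}_\phi$ on all of $\tfM$, which I would do by Hamiltonian reduction of suitable $\mathscr{B}^{\To}$-modules $\mathsf{R}_\mu$, again summing over $\bar{\Lambda}^{\R}$ to obtain a single generator for generic $\phi$.

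Next I would verify that $\cQ^{\Q}_\phi$ is tilting and descend to characteristic $0$. Since $\cQ^{\Q}_\phi$ splits $\Isalg_\phi$, one has $\Ext^{>0}_{\tfM}(\cQ^{\Q}_\phi,\cQ^{\Q}_\phi)=H^{>0}(\tfM;\Isalg_\phi)$, and a filtration argument reduces this to the vanishing of the higher cohomology of the structure sheaf of a flat deformation of $\tfM$, which holds because the Coulomb branch has symplectic --- hence rational --- singularities; this is where $p\gg 0$ and the BFN-resolution hypothesis (so that $\tfM$ is genuinely smooth symplectic) enter. Generation of $D^b\Coh(\tfM)$ by the splitting bundle of an Azumaya algebra on a symplectic resolution is Kaledin's theorem. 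Then $A:=\End_{\tfM}(\cQ^{\Q}_\phi)$ is, by the local computation above, the sum of the morphism spaces in $\mathsf{B}^{\bar{\Lambda}}(\Q)$ --- finite and combinatorially presented --- and the derived equivalence $D^b(A\mmod)\simeq D^b\Coh(\tfM)$ follows. That $A$ is a noncommutative resolution (module-finite and maximal Cohen--Macaulay over $\C[\fM]$, of finite global dimension) is then automatic, since it is derived-equivalent to the smooth crepant resolution $\tfM$. Flatness of the morphism spaces over the base together with finiteness of $A$ supply enough semicontinuity to transport the equivalence from $\Fp$ to $\C$; and independence of the chosen BFN resolution follows because any two are related by variation of GIT, under which the tilting bundles are intertwined by the wall-crossing bimodules $\Phi_w$ (indeed $A$ itself is unchanged for generic $\phi$).

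The main obstacle is everything in the second paragraph: proving that $\bigoplus_\mu\hat{\cQ}_\mu$ splits the Azumaya algebra over the \emph{whole} formal neighborhood and not merely generically, that it extends to an honest vector bundle on $\tfM$, and --- most delicately --- that its endomorphism algebra is \emph{exactly} the combinatorial algebra read off from $\mathsf{B}$ rather than some Morita-equivalent cousin. This demands a tight dictionary between the affine-Grassmannian fibre products defining the BFN category and the one-parameter-subgroup and fixed-point combinatorics of $\tfM$, and it is entangled with the characteristic-$p$ cohomology vanishing. By comparison, the initial reduction to characteristic $p$ and the final lift to characteristic $0$ are comparatively routine once flatness and finiteness are in hand.
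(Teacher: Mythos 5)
Your outline reproduces the paper's strategy: reduction to characteristic $p\gg 0$, Lonergan's Frobenius-constant quantization producing the Azumaya algebras $\psalg_\phi$ and $\Isalg_\phi$ on $\tfM$, splitting on the formal neighborhood $\hat{\fM}$ by the weight idempotents $e_\mu$ with splitting pieces $\hat{\cQ}_\mu=e_\mu\hat{\Isalg}_\phi e_{0,\second}$, identification of the endomorphism algebra with the combinatorial category $\mathsf{B}^{\bar\Lambda}$ via the $p$th-root equivalence, lifting of the bundle over $\Z$ by Hamiltonian reduction of the modules $\mathsf{R}_\mu$, and descent to characteristic $0$ by openness. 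However, two steps you treat as citable facts are exactly where the paper has to work, and your stated justifications would not go through as written.

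First, the vanishing $H^{>0}(\tfM;\cO)=0$ in characteristic $p$ does not follow from ``symplectic, hence rational, singularities'': that implication is a characteristic-zero statement. The paper instead constructs an explicit Frobenius splitting of $\K[\fM]$ (first for abelian $G$, then for general $G$ by restriction from the abelianized Coulomb branch using the dressed monopole basis), checks it descends to the GIT quotient, and only then invokes Grauert--Riemenschneider for Frobenius split varieties (Corollary \ref{cor:cohomology-vanishing}). Second, generation of $D^b(\Coh(\tfM))$ by the splitting bundle is not simply ``Kaledin's theorem'': it is \emph{equivalent} to derived localization holding at the chosen parameter $\phi$ (Lemma \ref{lem:tiling-localization}), and derived localization genuinely fails at some $\phi$. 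Kaledin's result only bounds, uniformly in $p$, the number of parameters on a line $\phi+k\chi$ where it fails (Lemma \ref{lem:upper-bound}). The paper must combine this bound with the combinatorial genericity of $\phi$ --- so that for $p$ large the whole segment $\phi,\phi+\chi,\dots,\phi+N\chi$ stays inside one chamber $R_{\bar\Lambda}$ (Lemma \ref{lem:segment}) --- and with the Morita equivalences given by twisting bimodules within that chamber, to conclude that localization holds at some, hence every, generic $\phi$ with the prescribed $\bar\Lambda$ (Theorem \ref{thm:asymptotic-derived}). Without this step you only know that \emph{some} tilting generator exists for \emph{some} parameter, not that the explicit bundle $\cQ^{\Q}_\phi$ whose endomorphisms are the combinatorial algebra $A$ is one. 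A smaller omission: the NCCR conclusion via Van den Bergh's criterion requires $\cO_{\tfM}$ to be a summand of the tilting bundle, which the paper arranges by choosing $\phi$ with $\mathbf{0}\in\Lambda^{\R}$.
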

For readers who prefer to live in characteristic 0 as opposed to characteristic
$p$, we should emphasize that the construction of this noncommutative
resolution $A$ and its tilting generator have a construction which is
characteristic free (that is, over $\Z$);  however, we use reduction
to characteristic $p$ and comparison to the Bezrukavnikov-Kaledin
method to confirm Theorem \ref{th:main}.  In fact, our resolution is obtained as the Borel-Moore homology of a variation on the BFN space considered in \cite{BFN}.

Of course, this theorem is only of interest to a reader who knows some
examples of Coulomb branches with BFN resolutions.  The most interesting
come from a quiver gauge theory, that is,
the case which leads to Nakajima quiver varieties as Higgs branches.
Following Nakajima's notation, for a quiver $\Gamma$ with vertex set $\vertex$,
consider dimension vectors $\Bv,\Bw\colon \vertex\to \Z_{\geq 0}$, and
the group and representation
\begin{equation}
G=\prod GL(\C^{v_i})\qquad V=\Big(\bigoplus_{i\to j}
\Hom(\C^{v_i},\C^{v_j})\Big)\bigoplus \Big(\bigoplus_{i\in {\vertex}}
  \Hom(\C^{w_i},\C^{v_i}) \Big),\label{eq:quiver-gauge}
\end{equation}
  with the obvious induced action.  In this
case, the algebra $A$ is a version of a KLR algebra drawn on a
cylinder, as we will show in the second part of this paper \cite{WebcohII}.  Examples include:
\begin{enumerate}
\item When the underlying quiver is of type A, then the resulting
  Coulomb branch is the Slodowy slice to one nilpotent orbit inside
  another in a type A nilcone.  The BFN resolutions in this case are
exactly those that arise from taking the preimage under a resolution of the larger orbit closure by $T^*(SL_n/P)$ for a parabolic $P$. 
\item When the underlying quiver is of type D or E, the Coulomb branch is isomorphic to an affine Grassmannian slice, as shown in \cite[App. B]{BFNplus}.
\item When the underlying quiver is a loop, the resulting Coulomb branch is the $v$-fold symmetric product of the singular surface $S=\C^2/(\Z/w\Z)$.  In particular, one of the BFN resolutions we obtain is the Hilbert scheme of $v$ points on the crepant resolution $\tilde S$.  
\item When the underlying graph is an $n$-cycle, we obtain a Nakajima quiver variety (or more generally a bow variety) for a cycle of size $w=w_i$ whose dimension vectors are related to $\Bv,\Bw$ by a version of rank-level duality \cite{nakajimaCherkisBow2017}.  This includes the results (1) and (3) above as special cases.
\end{enumerate}

The algebra $A$,
which appears as endomorphisms of this tilting generator, can be
interpreted in three very interesting ways:
\begin{enumerate}
\item It can be described algebraically  as a finitely generated algebra constructed directly from the combinatorics of the group $G$ and representation $V$.
\item It can also be described as a convolution algebra in the
 extended BFN category of \cite{websterKoszulDuality2019}, with adjusted flavor and $h=0$ (we
  call these ``$p$-th root conventions.'').
\item It is the endomorphism algebra of a finite sum of line
 defects in the corresponding $\mathcal{N}=4$ supersymmetric $3d$
  gauge theory.
\end{enumerate}
The equivalence of these descriptions is discussed in \cite{websterKoszulDuality2019}:
the equivalence of (1) and (2) is \cite[Th. \ref{SD-thm:BFN-pres}]{websterKoszulDuality2019} and of (2)
and (3) is \cite[Rem. \ref{SD-rmk:QFT}]{websterKoszulDuality2019};  the latter is a motivational
statement rather than a theorem since we are not working with a
precise definition of the category of line defects.

\subsection{Motivation from line defects}
\label{sec:motivation-from-line}

Before getting bogged down in details, let us try to give a general
sketch of our approach.  In this section, we play a little fast and
loose with the existence of certain geometric categories (not to
mention aspects of quantum field theory); we promise to the reader
that no such chicanery will appear in the rest of the paper.

Having fixed the vector space $V$ and gauge
group $G$, we consider the quotient space of $\C((t))$-points
$\mathcal{L}=V((t))/G((t))$.  In the world of derived algebraic geometry, we think
of this as the loop space of the stack quotient $V/G$.  We will
consider the category of $D$-modules on the loop space $\mathcal{L}$.

For the utility of both readers who  wish to read not-entirely-rigorous physics motivation and for those who wish to avoid it, the author will put such motivating paragraphs in ``Physics Motivation'' environments going forward.
\begin{physics}  
As discussed in \cite[\S 1.1]{DGGH}, this category is natural to consider in this context because it should give the category of line defects in the $A$-twist of a 3-dimension $\mathcal{N}=4$ supersymmetric field theory defined by the $\sigma$-model into the $2$-shifted stacky cotangent bundle $T^*[2](V/G)$, turned into a 3-dimensional field theory using the AKSZ formalism.  In particular, the local operators on a point should appear in this category as operators from the trivial line defect to itself.  
\end{physics}

Obviously, there
are many technical issues involved in doing this, and, with apologies
to the reader, we will make no attempt to resolve them.
We simply
ask the reader to accept the existence of this category as a black box
with one basic property:
\begin{enumerate}
\item Given a reasonable map $p\colon \mathcal{M}\to \mathcal{L}$, we
  have the pushforward of the function D-module
  $\mathscr{O}_p=p_*\mathcal{O}_{\mathcal{M}}$.  If we are given a second
  such map $p'\colon \mathcal{M}'\to \mathcal{L}$ then 
  \[\Ext^\bullet(\mathscr{O}_p,
   \mathscr{O}_{p'})=H^{BM}_*(\mathcal{M}\times_{\mathcal{L}}\mathcal{M'})\]
 with composition induced by convolution.  
\end{enumerate}

The definition of Braverman-Finkelberg-Nakajima is that the functions
on the Coulomb branch  of the gauge theory of $(G,V)$ arise when we
take $\mathcal{M}=\mathcal{M}'=V[[t]]/G[[t]]$, the arc space of the
quotient $V/G$; the natural quantization of this ring
appears when we consider this same construction $\C^*$-equivariantly for the
loop action (where $t$ has weight 1).
\begin{physics}
  As mentioned above, the arc space should give the trivial line, so
  the BFN construction gives the local operators in the $A$-twist of
  the $\sigma$-model discussed, with $\C^*$-equivariance giving the
  $\Omega$-deformed version of these operators (see \cite{NaCoulomb}
  and \cite[\S
  1.3]{BBBDN} for more details).
\end{physics}

However, the utility of this perspective does not stop when we have
constructed the Coulomb branch; given any $\C^*$-action $\nu\colon \C^*
\to \Aut_G(V)$ on $V$
commuting with $G$, we can consider the image $\nu(t^k)\cdot
V[[t]]/G[[t]]$ and the map \[p^{(k)}\colon \nu(t^k)\cdot
V[[t]]/G[[t]]\to V((t))/G((t)).\]  The
BFN resolution $\tM$ constructed by
Braverman-Finkelberg-Nakajima in \cite{BFNline} can be defined  the
property that \[\Gamma(\tilde{\fM}; \mathcal{O}(k))\cong
  \Ext^\bullet(\mathscr{O}_{p^{(m)}}, \mathscr{O}_{p^{(m+k)}})\] with
multiplication in the projective coordinate ring given by Yoneda
product.  
 Loop equivariantly, these spaces are not isomorphic, but instead give a $\Z$-algebra
in the sense of Gordon and Stafford \cite{GS}; as discussed in
\cite[\S 5.2]{BLPWquant}, this is the quantum homogeneous coordinate
ring of a quantization of $\tilde{M}$.

Thus, given a map $q\colon \mathcal{M}\to \mathcal{L}$, we can define a coherent
sheaf $\mathcal{Q}_q$ on the BFN resolution defined by the property $\Gamma(\tilde{\fM};\mathcal{Q}_q\otimes \mathcal{O}(k))\cong \Ext^{\bullet}(\mathscr{O}_{p^{(-k)}},\mathscr{O}_q)$.  This is the construction that we require for our tilting generators and noncommutative resolution.
\begin{itheorem}\label{thm:B}
  There is a space  $\mathcal{M}$ and map $q\colon \mathcal{M}\to \mathcal{L}$ such
  that $ \mathcal{Q}_q$ is the tilting generator of Theorem
  \ref{th:main} and  \[A=\Ext^\bullet(\mathscr{O}_{q},
    \mathscr{O}_{q}).\]
\end{itheorem}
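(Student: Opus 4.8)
The plan is to construct $\mathcal{M}$ as an explicit finite union of associated bundles over the affine Grassmannian, mirroring the ``space'' that the algebra $\gls{A}$ is secretly built from in descriptions (1)--(2) of the introduction. Concretely, recall that the various BFN resolution line bundles $\mathcal{O}(k)$ come from the maps $p^{(k)}\colon \nu(t^k)\cdot V[[t]]/G[[t]]\to \mathcal{L}$, and that the noncommutative resolution $\gls{A}$ is the sum of morphism spaces in the combinatorial category $\mathsf{B}^{\bar\Lambda}$ indexed by the finite set $\bar\Lambda^{\R}$ of chambers $\rACp_{\Ba}$. For each representative $\Ba$ (equivalently each relevant cocharacter $\tilde\mu_{1/p}$), one has a space of the shape $\nu(t^{?})\cdot$(something)$/G[[t]]$ built from the subspaces $U_\acham$ of $V((t))$ of weight $\geq -1/2$; I would take $\mathcal{M}=\bigsqcup_{\Ba\in\bar\Lambda^{\R}}\mathcal{M}_\Ba$ to be the disjoint union of these, with $q\colon\mathcal{M}\to\mathcal{L}$ the evident map on each component, so that by the black-box property (1),
\[
\Ext^\bullet(\mathscr{O}_q,\mathscr{O}_q)=\bigoplus_{\Ba,\Bb}H^{BM}_*\!\left(\mathcal{M}_\Ba\times_{\mathcal{L}}\mathcal{M}_\Bb\right),
\]
and the right-hand fiber products are exactly the convolution spaces computing $\Hom_{\mathsf{B}}(\eta_\Ba,\eta_\Bb)$ in the extended BFN category with $p$th-root conventions (one must track that the $\C^*$-loop weight and the flavor shift match the $h=0$, adjusted-flavor normalization). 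This gives the identification $\gls{A}=\Ext^\bullet(\mathscr{O}_q,\mathscr{O}_q)$ essentially by unwinding definitions.

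Next I would identify the coherent sheaf $\mathcal{Q}_q$ on the BFN resolution $\tilde{\fM}$. By the construction recalled just before this theorem, each map $q_\Ba\colon\mathcal{M}_\Ba\to\mathcal{L}$ produces a coherent sheaf $\mathcal{Q}_{q_\Ba}$ via $\Gamma(\tilde{\fM};\mathcal{Q}_{q_\Ba}(k))\cong\Ext^\bullet(\mathscr{O}_{p^{(m)}},\mathscr{O}_{q_\Ba}$ twisted by $p^{(m+k)})$, i.e.\ by reading off the graded module over the projective coordinate ring of $\tilde{\fM}$. Comparing this with the definition of $\mathcal{Q}_\mu^{\K}$ as the Hamiltonian reduction of $\mathsf{R}_\mu(\K)=\Hom_{\mathscr{B}^{\To}(\K)}(\tau,-\tilde\mu_{1/p}+\zero)$, one sees these are the same sheaf: both are obtained from the weight-$k$ morphism spaces in the extended BFN category, reduced along $\gk$. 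Summing over $\Ba\in\bar\Lambda^{\R}$ then gives $\mathcal{Q}_q=\mathcal{Q}^{\Q}_\phi$, which is precisely the tilting generator of Theorem \ref{th:main}; tilting-ness and generation are inherited from that theorem, not reproved here.

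The main obstacle is the second step: making rigorous the passage between the loop-equivariant $\Z$-algebra of graded pieces and the actual coherent sheaf on $\tilde{\fM}$. Loop-equivariantly the spaces $\Gamma(\tilde{\fM};\mathcal{Q}_{q}(k))$ for varying $k$ do not assemble into a module over a single graded ring but only into a $\Z$-algebra in the sense of Gordon--Stafford, and one must invoke the dictionary of \cite{GS} and \cite[\S 5.2]{BLPWquant} to produce an honest sheaf, then check independence of the auxiliary shift $m$ and compatibility with the $\C^*$-weights so that the resulting object is $\mathcal{Q}^{\Q}_\phi$ on the nose rather than up to an ambiguous twist. A secondary technical point is verifying that the finite index set $\bar\Lambda^{\R}$ is the correct one --- that the chambers $\rACp_{\Ba}$ which are merely non-empty (as opposed to the smaller set indexing $\gls{A}_Q$) give exactly the summands of the tilting generator --- but this is bookkeeping once the chamber combinatorics of $\ft_{1,\R}$ established earlier is in hand. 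Everything else is a matter of matching normalizations between the geometric side (fiber products over $\mathcal{L}$) and the algebraic/combinatorial side (morphisms in $\mathsf{B}$ with $p$th-root conventions), which the black-box property reduces to a routine, if delicate, comparison of weights.
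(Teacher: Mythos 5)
Your proposal matches the paper's (largely implicit) justification of this introductory theorem: take $\mathcal{M}$ to be the disjoint union over representatives of $\bar\Lambda^{\R}$ of the associated spaces built from the subspaces $U_\acham\subset V((t))$, so that the black-box property identifies $\Ext^\bullet(\mathscr{O}_q,\mathscr{O}_q)$ with the convolution algebra of morphisms in $\mathsf{B}^{\bar\Lambda^{\R}}(\Q)$, which is $\gls{A}$ by definition \eqref{eq:A-def}, and identify $\mathcal{Q}_q$ with $\mathcal{Q}^{\Q}_\phi$ via the Hamiltonian reduction of $\mathsf{R}_\mu$, whose tilting property is exactly Theorem \ref{th:Q-equiv}. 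The technical caveats you flag (the $\Z$-algebra dictionary, the $p$th-root normalizations, and the choice of $\bar\Lambda^{\R}$ as index set) are precisely the points the body of the paper handles, so the approach is essentially the same.
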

We should note that there is not just one such space, but in fact
there are several of them, which give rise to different noncommutative
resolutions and different tilting generators.  These different choices
are related by wall-crossing
functors (as defined, for example, in \cite[\S
2.5.1]{losevModularCategories2021}).  These functors also have a geometric
realization:
\begin{itheorem}\label{ithm:Schobers}
The derived equivalence of coherent sheaves to $A$-modules
intertwines wall-crossing functors with tensor product with the
bimodules $\Ext^\bullet(\mathscr{O}_{q},
    \mathscr{O}_{q'})$ for different spaces $q,q'$ both giving
    resolutions.  These
actions define a Schober in the sense of \cite{kapranovPerverseSchobers2015}, that is,
a perverse sheaf of categories, on a particular subtorus arrangement
in a complex torus.  
\end{itheorem}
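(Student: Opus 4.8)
The plan is to push everything down to the level of convolution bimodules in the extended BFN category of \cite{WebSD}, where wall-crossing becomes literally tensor product and the schober axioms become finite homological computations on fiber products.

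For the first assertion, recall from the proof of Theorem~\ref{th:main} that for a choice of quantization parameter $\phi$ (equivalently, of the space $q_\phi$), the equivalence $D^b(\Coh\tilde{\fM})\simeq D^b(A_\phi\mmod)$ is the tilting equivalence $\RHom(\mathcal{Q}_{q_\phi},-)$, with $A_\phi=\Ext^\bullet(\mathscr{O}_{q_\phi},\mathscr{O}_{q_\phi})$ identified with the convolution algebra in the extended BFN category under $p$th-root conventions with $h=0$. Given two parameters $\phi,\phi'$ separated by an element $w$ of the affine Weyl group $\widehat{W}$ (which indexes the chambers of our arrangement), the $\Ext^\bullet=H^{BM}_*$ property of the geometric category computes $\Ext^\bullet(\mathscr{O}_{q_\phi},\mathscr{O}_{q_{\phi'}})$ as $H^{BM}_*(\mathcal{M}_\phi\times_{\mathcal{L}}\mathcal{M}_{\phi'})$, an $A_\phi$-$A_{\phi'}$-bimodule by convolution. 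First I would identify this bimodule with the twisting bimodule ${}_{w\phi'}T_\phi$: both are carved out as the weight-$\nu$ part of morphisms in a single ambient category $\mathscr{B}^{\To}$ for the larger torus $\To$, so the identification is exactly the fiber-product bookkeeping already used to match $A$ with the convolution algebra in Theorem~\ref{th:main}. Since the coherent-side wall-crossing functor is by definition derived tensor product with ${}_{w\phi'}T_\phi$, this yields the intertwining; the one point needing care is that this bimodule coincides with the wall-crossing bimodule of the general Bezrukavnikov--Kaledin theory \cite{BK04a,BKpos,KalDEQ}, which I would pin down by the standard characterization (flatness in the deformation parameter, the expected support, and agreement after restriction to the smooth symplectic locus with twisting by the relevant Picard element), invoking the characteristic-$p$ comparison already set up for Theorem~\ref{th:main}.

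For the schober, the complex torus is $K=T_F^\vee=\Hom(\ft^*_\Z,\C^*)$, and the arrangement of subtori is the image in $K$ of the walls across which the chamber combinatorics (the sign vectors $\sgns$ together with the Weyl chambers) jump, i.e.\ the matter hyperplanes and Coxeter hyperplanes together. I would build the perverse schober through the combinatorial model of \cite{KSschobers}: assign to the open chamber indexed by $\phi$ the category $D^b(A_\phi\mmod)$; to a codimension-one face the analogous category for the ``Levi'' Coulomb branch obtained by discarding the matter summand that becomes trivial there, which carries the residual wall-crossing data; and supply for each incidence the adjoint pair given by the wall-crossing functor $\Phi_w$ and its adjoint. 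The local axiom at a single wall reduces to the statement that $\Phi_w$ is an equivalence inverse to $\Phi_{w^{-1}}$ (true since the two BFN resolutions are derived-equivalent to a common $A$-category), fitting into a triangle with the wall category whose third term is governed by the ideal $\ideal_\xi$ generated by the idempotents of unsteady sign vectors, which plays the role of the ``vanishing cycles'' piece. This data is visibly local on $K$ and descends to the stated arrangement.

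The \textbf{main obstacle} is the codimension-two (and higher) coherence: one must show that the bimodules ${}_{w\phi'}T_\phi$ satisfy, \emph{canonically}, the relations of the Deligne groupoid of the arrangement --- orthogonal walls commute and each rank-two flat imposes the appropriate braid-type relation --- and that these identifications are compatible as the flat varies, so the data glues to an honest perverse sheaf of categories rather than a mere collection of functors with abstract isomorphisms among their composites. I would attack this inside the extended BFN category: around a codimension-two stratum the relevant composite of $T$-bimodules is computed by the $\Ext^\bullet=H^{BM}_*$ property on a single iterated fiber product, and the local geometry there is controlled by a rank-two sub-theory whose wall-crossing is already understood, so the composite collapses to the identity by a direct homological calculation; checking that these collapses are natural in the stratum and assemble globally is where I expect the bulk of the work to lie.
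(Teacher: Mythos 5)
Your first paragraph is essentially the paper's route to the intertwining statement: the bimodule $\Ext^\bullet(\mathscr{O}_q,\mathscr{O}_{q'})$ is by construction the twisting bimodule ${}_{w\phi'}T_\phi$ of \eqref{eq:aXnua}, the wall-crossing functor is \emph{defined} as derived tensor with it, and the comparison across the tilting equivalence goes through the $p$th-root equivalence of Theorem \ref{thm:pStein-equiv}. (The detour through a ``standard characterization'' of the Bezrukavnikov--Kaledin wall-crossing bimodule is unnecessary here, since the paper takes tensoring with ${}_{w\phi'}T_\phi$ as the definition of wall-crossing.) The Schober half of your proposal, however, has genuine gaps. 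First, the arrangement is misidentified: the walls are \emph{not} the matter and Coxeter hyperplanes (those live in $\ft_{1,\To,\R}$); they are the hyperplanes in the flavor space $\ft_{1,F,\R}$ cut out by \emph{circuits} of unrolled matter hyperplanes, i.e.\ the locus where the chamber set $\Lambda^{\R}$ jumps, made $\widehat W_F$-equivariant and then descended to the torus $\mathring{T}_{1,F}/W_F$. Second, and more seriously, your assignment of a ``Levi Coulomb branch'' category to a codimension-one face is not what is needed and points in the wrong direction: in the Kapranov--Schechtman model the category on a smaller face must \emph{contain} fully faithful copies of the categories on the adjacent larger faces (this is the Morita-context/spherical-pair structure of Lemma \ref{lem:equiv-sphere}). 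The paper instead takes $\EuScript{A}_C$ to be the matrix algebra assembled from \emph{all} quantization parameters $\phi$ with $\phi_{1/p}$ near $C$ together with the twisting bimodules between them --- equivalently, over $\Q$, the category $\mathsf{B}^{\bar\Lambda_C}$ on the \emph{union} $\bar\Lambda_C=\bigcup_{\phi}\bar\Lambda^{\R}_\phi$ of chamber sets. A degenerate gauge theory with matter discarded would give a smaller category and there is no argument that it supplies the required adjoints.

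Third, the coherence you correctly flag as the ``main obstacle'' is exactly the content of the paper's argument, and your sketch does not supply it. The paper's engine is Lemma \ref{lem:just-hyperplanes}: if no wall separates both $\phi$ and $\phi''$ from $\phi'$, then ${}_{\phi''}T_{\phi'}\Lotimes_{\EuScript{A}_{\phi'}}{}_{\phi'}T_{\phi}\cong{}_{\phi''}T_{\phi}$. This is proved by induction on separating hyperplanes, using Lemma \ref{lem:Lam-same} (a Morita-context/trace-ideal argument showing $T$ is a Morita equivalence when $\Lambda=\Lambda'$) and Lemma \ref{lem:localize-twist}, which computes compositions geometrically via derived localization and the cohomology vanishing of semi-ample quantized line bundles --- the latter resting on the Frobenius splitting of Proposition \ref{prop:nonabelian-splitting}. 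All four Schober axioms in Theorem \ref{thm:p-Schober} reduce to repeated applications of this lemma together with elementary idempotent manipulations in the matrix algebras $\EuScript{A}_C$; there is no separate Deligne-groupoid or rank-two braid computation, because colinearity plus Lemma \ref{lem:just-hyperplanes} already forces the composites to agree canonically (the isomorphisms are induced by composition, hence associative). Finally, the passage to the coherent Schober over $\Q$ is not automatic: one must know the bimodules are defined over $\Z$ (via the quotient category $\bar{\mathsf B}$ with $\ft_F$ set to zero) and then verify the relations by reduction modulo infinitely many primes, enlarging $p$ finitely often using $\widehat W_F$-equivariance. Without these ingredients your construction remains a collection of functors with unverified compatibilities rather than a perverse Schober.
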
  
\begin{physics}
  All of these other D-modules can be interpreted naturally in physics in terms of natural modifications of the fields of the theory near the line defect.  We leave a more detailed discussion of this point to future work and the interested reader.

  Unfortunately, the author knows no good physics explanation of which combinations of line operators give noncommutative resolutions, and which do not.  Obviously, this would be an interesting question from a quantum field theory perspective.   
\end{physics}

\subsection{Summary of approach}
\label{sec:summary}

Proving these results depends on comparison with the characteristic
$p$ approach of Bezrukavnikov and Kaledin \cite{BKpos,KalDEQ}.  That
is, we consider quantizations in
characteristic $p$ and apply the approach of \cite{websterKoszulDuality2019} (based
in turn on \cite{FOD,MVdB}) in
positive characteristic.  The focus is on the
action of a large polynomial subalgebra of the quantum Coulomb branch,
and analyzing the representations of this algebra in terms of their
weights for this subalgebra.  In the perspective of Stadnik \cite{Stadnik} to resolving this problem
for hypertoric varieties,  this polynomial subalgebra played a key
role in constructing the requisite \'etale cover where the Azumaya
algebra constructed from a quantization splits.  

This approach extends to the case of a general BFN Coulomb branch.
Whenever we have a BFN resolution $\tM$, we obtain an explicit
tilting generator for $\K$ either of large positive characteristic or
characteristic 0, as described in Theorem  \ref{thm:B}.
This is an extension of work of Gammage, McBreen and the author \cite{mcbreenHomologicalMirror2024,gammageHomologicalMirror2023}, which
shows the same result in the abelian case.

As mentioned above, we'll cover the case of quiver gauge theories in
considerably greater detail in a companion paper \cite{WebcohII}.
Beyond this, there are several interesting possibilities for extension of this
work.  The work of McBreen and author in the abelian case \cite{mcbreenHomologicalMirror2024} can
be used to show one version of homological mirror symmetry for
multiplicative hypertoric varieties, and it would be very interesting
to relate the presentations of $A$ appearing here with the Fukaya
category of multiplicative Coulomb branches (i.e. the algebraic
varieties obtained by the K-theoretic BFN construction).  Interesting progress in this direction has recently been obtained by Aganagi\'c, Danilenko, Li, Shende and Zhou \cite{aganagicQuiverHecke2024}.  

The tilting bundles that appear also have a natural interpretation in
terms of line operators in the corresponding 3-dimensional gauge
theory, and one could hope that other perspectives on these line
operators, such as the vertex algebra perspective suggested in
Costello, Creuzig and Gaiotto \cite{CCG}, will also see the same combinatorial
constructions appear, hopefully eventually leading to a theory of
S-duality where coherent sheaves on Coulomb branches can be described
as a natural object on the Higgs side as well.

\subsection*{Acknowledgements}
\label{sec:acknowledgements}

Many thanks to Roman Bezrukavnikov, Alexander Braverman, Kevin Costello, Tudor
Dimofte, Michael Finkelberg, Justin Hilburn, Gus Lonergan, Ivan Losev,
Ted Stadnik, Alex Weekes and Philsang Yoo for useful discussions on these topics.

\section{Quantum Coulomb branches}

\subsection{Background}
\label{sec:background}

   \notation{${G}$}
   {The gauge group.}
Let us recall the construction of quantum Coulomb branches from
\cite{websterKoszulDuality2019}.  As before, let $G$ be a connected reductive algebraic
group over $\C$, with $G((t)), G[[t]]$ its points over
$\C((t)), \C[[t]]$. For a fixed Borel $B\subset G$, we let $\Iwahori$
be the associated Iwahori subgroup
\[\Iwahori=\{g(t)\in G[[t]]\mid g(0)\in B\}\subset G[[t]].\]  The {\bf affine flag variety} $\AF=G((t))/\Iwahori$ is
just the quotient by this Iwahori.  
\notation{$\Iwahori$}{The Iwahori $\Iwahori=\{g(t)\in G[[t]]\mid g(0)\in B\}\subset G[[t]]$ for a fixed Borel $B$.}

\notation{$V$}{The matter representation.}

\nc{\wtG}{\widetilde{G((t))}}
\notation{$\No$}{$\No=N_{GL(V)}(G)$}
 
   \notation{${F}$}
    {The
flavor group $\No/G$.}
   
     \notation{${T_*}$}
         {A maximal torus of the group $*$.}
Let $V$ be a fixed faithful $G$-representation and let $\No=N_{GL(V)}(G)$ be the normalizer of $G$ in $GL(V)$, and let
$F=\No/G$ be the flavor quotient and $T_\No,T_{F}$ be compatible maximal tori of
these groups.  
   
     \notation{${\ft_{*,\dagger}}$} 
         {The Lie algebra of the torus $T_{*}$ over $\dagger=\Z,\Q,\R,\C$.}
We use $\ft_{\No}$, etc. for the Lie algebra of this
torus, $\ft_{\No,S}$ for the subset where integral weights have
values in a subring $S\subset \C$, with the most important cases being
$S=\R$ and $S=\Z$.    

\notation{$\To$}{The subgroup of $\No$ generated by $\gaugeG$ and a maximal torus of $\No$.}
It's also useful to consider $\To$, the preimage of
$T_F$ in $\No$ and $\tilde{\To}=Q\times \C^*$.  This latter group acts on $V((t))$
such that $vt^a$ has weight $a$ under the second factor and the
obvious action of $\To$.

     \notation{${\flav}$}
   {The flavor: a fixed cocharacter $\flav\colon \C^*\to T_{F}$.}
Fix a flavor $\flav\colon \C^*\to T_F$, and let 
\[\tilde{G} =\{(g,s)\in \To\times \C^* \mid \flav(s)=g\pmod G\}\qquad
  \nu(g,s)=s,\]
with its induced action on $V((t))$.  
That is, $\tilde{G}$ is the pullback of the diagram $\C^*\to T_F \leftarrow
\To$. Let $\tilde{T}$ be the induced torus of this group and
$\tilde{\mathfrak{t}}$ its Lie algebra. 

Fix a subspace $U\subset V((t))$ invariant under $\Iwahori$.  
Let $\VB_U:=(G((t)) \times U)/\Iwahori$.  Note that we have a
natural $G((t))$-equivariant projection map $\VB_U\to V((t))$. 
Let
$\wtG$ be the subgroup of $\No((t))\times
\mathbb{C}^*$ generated by $G((t))$ and the image of
$\tilde{G}\hookrightarrow \tilde{G}\rtimes \C^*$ included via the
identity times $\nu$.

       \notation{${\EuScript{A}}$}
       {The Iwahori Coulomb branch algebra $\EuScript{A}=H_*^{BM, \wtG}(\VB_{V[[t]]}\times_{V((t))}\VB_{V[[t]]})$.}
\begin{definition} The BFN Steinberg algebra $\efA$ is the equivariant Borel-Moore homology group
   \[\efA=H_*^{BM, \wtG}(\VB_{V[[t]]}\times_{V((t))}\VB_{V[[t]]};\K)\] endowed with the convolution multiplication.  
 \end{definition}
 As discussed in Section \ref{sec:motivation-from-line}, this algebra
is intended to match an Ext algebra in the category of $D$-modules on
$\mathcal{L}$.  We can avoid any technicalities about the nature of
this category by considering this
homology space instead, and interpreting the equivariant homology $H_*^{BM,
  \wtG}(\VB_{V[[t]]}\times_{V((t))}\VB_{V[[t]]};\K)$ using the techniques in \cite[\S 2(ii)]{BFN}. 
  As usual, we let $h$ be
the equivariant parameter corresponding to the character $\nu$, and $\ringS_h=H^*(B\tilde{T};\K)=\K[\tilde{\ft}]$, which is naturally a subalgebra of $\EuScript{A}$ under the identification $H^{BM,\wtG}_*(\VB_{V[[t]]})\cong S_h$.  When we specialize $h=0,1$, we will write $S_0,S_1$, etc.
  \notation{$S_*$}
            {The symmetric algebra on $\tilde{\ft}^*$, that is, the ring of functions on the affine variety $\tilde{\ft}$, with the parameter $h$ specialized at $h=*$.}

       \notation{${\EuScript{A}^{\operatorname{sph}}}$}
       {The quantum Coulomb branch algebra  $\EuScript{A}^{\operatorname{sph}}=H_*^{BM, \wtG}(\EuScript{Y}_{V[[t]]}\times_{V((t))}\EuScript{Y}_{V[[t]]})$.}
 The original BFN algebra $\Asph$ is
defined in essentially the same way, using
$\EuScript{Y}_{V[[t]]}:=(G((t)) \times V[[t]])/G[[t]]$.  The algebras $\EuScript{A}^{\operatorname{sph}}$ and $\EuScript{A}$
  are Morita equivalent by \cite[Th. \ref{SD-th:Morita}]{websterKoszulDuality2019}, with
  $e_{\operatorname{sph}}
  \EuScript{A}e_{\operatorname{sph}}=\EuScript{A}^{\operatorname{sph}}$
  for an idempotent $e_{\operatorname{sph}}\in \EuScript{A}$.
  \begin{physics}
    In the parlance of quantum field theory, $\Asph$ is the algebra of local operators on a trivial line defect, and $\efA$ the
    algebra of local operators on the defect that comes from coupling
    to super quantum mechanics on the flag variety $G/B$; this
    precisely the ``abelianizing'' line denoted $\mathbb{V}_{\mathcal{I}}$ in \cite[\S 7]{DGGH}.  The
    equivariant parameter $h$ corresponds to the $\Omega$-background for the circle rotating
    around this line in $\R^3$, as discussed in \cite[\S 6]{BBBDN}.
  \end{physics}

\begin{definition}\label{def:Coulomb-branch}
The {\bf Coulomb branch} $\Coulomb$ for $(V,G)$ is the spectrum of the algebra \Asph after specialization at $h=0$ (at which point it becomes commutative).  The {\bf quantum Coulomb branch} is the specialization of this algebra at $h=1$.
\end{definition}
\notation{${\Coulomb}$}{The Coulomb branch of the gauge theory with gauge group $\gaugeG$ and matter representation $\matterV$ (Definition \ref{def:Coulomb-branch}).}

Of course, $\To$ still acts on $V$, and thus has an associated Coulomb
branch $\MQ$. 
     \notation{$\MQ$}
    {The Coulomb branch $\Coulomb$ attached to the group
  $\To$ acting on $V$ with its usual action.}

  \notation{${K}$}
    {The Langlands dual $T_F^\vee$, or equivalently, the
     Pontryagin dual $\Hom(\ft^*_{\Z}, \C^*)$.}  
 As discussed in \cite[\S 3]{BFN} and \cite[\S
\ref{SD-sec:pres-extend-categ}]{websterKoszulDuality2019}, this Coulomb branch has a Hamiltonian action of
$\groupK=T_F^\vee$, the Langlands dual of the dual of the torus of the flavor
group $F$ with moment map given by
$\mathfrak{t}_F^*\to H^*_{\To}(pt)$, and $\fM$ is the categorical
quotient of the zero-level of the moment map on $\fM_{\To}$, the
Coulomb branch for $\To$.  For a given cocharacter of $T_F$
(considered as a character of $K$), we can instead take the
associated GIT quotient of $\fM_{\To}$, which gives a variety
$\tM$ which maps projectively to $\fM$.  As mentioned in the
introduction, if $\tilde{\fM}\to \fM$ is a resolution of singularities
(or equivalently, if $\tilde{\fM}$ is smooth) then we call it a {\bf
  BFN resolution}.
 \notation{${\tM}$}{A BFN resolution of the Coulomb branch $\Coulomb$.}

\subsection{The extended category}
\label{sec:extended}
The quantization of the Coulomb branch attached to $(G,V)$ appears as an endomorphism algebra in a larger category, building on the geometric definition of this algebra by Braverman, Finkelberg and Nakajima
\cite{NaCoulomb,BFN}. This category is not unique; there are actually
many variations on it one could choose, and it will be convenient for
us to incorporate a parameter $\delta\in (0,1)\subset \R$ into its
definition; in \cite{websterKoszulDuality2019}, we assumed that $\delta=1/2$, but this
played no important role in the results of that paper (in fact, some
results become simpler if we choose $\delta$ generic instead).
   
   \notation{${\delta}$}
   {A parameter between the open interval $(0,1)\subset \R$ used in the definition of $\scrB$.}

\notation{$\ft_{1,*,\R}$}{The preimage
of 1 under the projection  $\tilde{\ft}_{*,\R}\to \C=\operatorname{Lie}(\C^*)$}
Let $\ft_{1,\To,\R}\subset \tilde{\ft}_{\To,\R}$ be the preimage
of 1 under projection to $\C=\operatorname{Lie}(\C^*)$ and let $\ft_{1,\R}=\ft_{1,
  \To,\R}\cap\tilde{\ft}$, be the space of real
lifts of the cocharacter $\flav$.
\newcommand{\varphimid}{\varphi^{\operatorname{mid}}}
As in \cite{websterKoszulDuality2019}, we let
$\{\varphi_i\}$ be the multiset of weights of $V$ (considered as
functions on $\tilde{\ft}_{\To}$)  and we let \begin{equation}\label{eq:varphi}
	\varphi_i^+=\varphi_i\qquad
\varphimid_i= (1-\delta) \varphi_i^+-\delta\varphi_i^-=\varphi_i+\delta\nu\qquad \varphi_i^-=-\varphi_i-\nu.  
\end{equation}
     
       \notation{${\varphi_i}$}
         {The weights of  $ V$ over $G$ or $\No$.}

       \notation{${\varphi_i^{\operatorname{mid}}}$}
         {The average of $\varphi_i^+$ and $-\varphi_i^-$.}

Given any $\acham\in \ft_{1,
  \To,\R}$, we can consider the induced action on the
vector space $V((t))$.  
\begin{itemize}
\item Let $\Iwahori_\acham$ be the subgroup whose Lie algebra is the sum of positive weight
spaces for the adjoint action of $\acham$. This only depends on the
alcove in which $\acham$ lies, i.e. which chamber of the arrangment
\[\{\alpha(\acham)=n\mid \al\in \Delta, n\in \Z\}\] contains
$\acham$; the subgroup $\Iwahori_\acham$ is an Iwahori if $\acham$ does not
lie on any of these hyperplanes. 
\item Let  $U_\acham\subset V((t))$ be the subspace of elements of 
weight $\geq -\delta$ under $\acham$.  This subspace is closed under the action of
$\Iwahori_\acham$.  This only depends on the vector $\Ba$ such that
\begin{equation}
\acham\in \AC_{\Ba}=\{\xi \in \ft_{1,
  \To,\R}\mid a_i<\varphimid_i(\xi)<a_i+1\text{
  for all $i$}\}.\label{eq:aff-cham}
\end{equation}
\end{itemize}

We call $\acham$ {\bf unexceptional} if does not lie on the unrolled matter hyperplanes
\[varphi_i^{\operatorname{mid}}(\acham)=n\mid n\in
\Z\}\] and {\bf generic} if it is unexceptional and does not lie on any
of the unrolled root hyperplanes $\{\alpha(\acham)=n\mid
n\in\Z\}$. We'll call the hyperplanes generic points avoid the {\bf
  unrolled hyperplane arrangment}.  

For any $\acham\in  \ft_{1,
  \To,\R}$, we can
consider
$\VB_{\acham}:=\VB_{U_\acham}:=G((t))\times_{\Iwahori_{\acham}}U_{\acham}$,
the associated vector bundle.  
The space $ \ft_{1,
  \To,\R}$ has a natural adjoint action of
$\What=N_{\wtG}(T)/T$, and of course,
$U_{w\cdot \acham}=w\cdot U_{\acham}$.  
      
     \notation{${\widehat{W}}$}
    {The affine Weyl group of $G$.  The semi-direct
     product of $ \weylW$ and the coweight lattice of $T$.}

   \notation{${{}_{\acham}\VB_{\acham'}}$}
             {The subspace $\left\{(g,v(t))\in G((t))\times
        U_{\acham}\mid g\cdot v(t)\in
        U_{\acham'}\right\}/\Iwahori_{\acham}\subset \VB_{\acham}$.}
\newcommand{\dVB}{\VB}
We let
\begin{equation}
{}_{\acham}\dVB_{\acham'}=\left\{(g,v(t))\in G((t))\times
        U_{\acham}\mid g\cdot v(t)\in
        U_{\acham'}\right\}/\Iwahori_{\acham}.\label{eq:aXa}
    \end{equation}
        
 \notation{${\mathscr{B}}$}
 {The extended BFN category, defined in Definition \ref{def:extended-BFN}.}
\begin{definition}\label{def:extended-BFN}
  Let the {\bf extended BFN category} $\scrB^+$ be the category whose
  objects are unexceptional cocharacters $\acham\in  \ft_{1,
  \No,\R}$, with morphisms given by:
  \begin{equation*}
    \Hom(\acham,\acham')=H_*^{BM, \wtG}(\VB_{\acham}\times_{V((t))}\VB_{\acham'};\K)\\
    \cong H_*^{BM, \tilde T}\left({}_{\acham}\VB_{\acham'};
    \K\right).
\end{equation*}
Let $\scrB$ be the subcategory whose objects are given by $\ft_{1,\R}$.
\end{definition}
As before, this homology is defined using the techniques in \cite[\S
2(ii)]{BFN}.
\begin{physics}\label{physics:line}
  As discussed in Section \ref{sec:motivation-from-line}, the objects
  in this category can be interpreted as D-modules on the loop space
  of $V/G$ (for those inclined toward stacks) or as line defects in a
  $\sigma$-model (for those inclined toward quantum field theory).
  In the notation of  \cite[\S 4.3]{DGGH}, these correspond to the
  Lagrangian $\mathcal{L}_0$ given by the conormal to $U_{\acham}$ and
  the subgroup $\mathcal{G}_0=\Iwahori_{\acham}$.   
  The author has no especially good explanation from either of these
  perspectives why this is the ``right'' subcategory of line operators
  to consider when there are many others available, but it does get the job done.
\end{physics}

   \notation{${\second}$}
   {The cocharacter  that acts on $V((t))$ with weight $a$ on $vt^a$.}
Note that by assumption, the
cocharacter $\second$ defined by acting on $vt^a$ by weight $a$ is unexceptional, but not generic and  $U_{\second}=V[[t]]$.
Given any unexceptional point $\acham$, it has a neighborhood in the
classical topology, which necessarily contains a generic point, on
which $U_{\acham'}=U_{\acham}$. Thus we can find a generic element $\zero$ of the fundamental alcove such that $U_\zero=V[[t]]$. In this case, we
have that $\Iwahori_\second=G[[t]]$ and $\Iwahori_\zero$ is the standard Iwahori so \begin{equation}\label{eq:A-B}
    \Asph=\Hom_{\mathscr{B}}(\second,\second)\qquad \EuScript{A}=\Hom_{{\mathscr{B}}}(\zero,\zero)
\end{equation}  Thus, this extended category encodes the structure of $\EuScript{A}$.
\begin{definition}
  Let $\Phi(\acham,\acham')$ be the product of the terms
  $\varphi^+_i-nh$ over pairs $(i,n) \in [1,d]\times \Z$ such that the inequalities below both hold:
  \[\varphi_i^{\operatorname{mid}}(\acham)>n \qquad  \varphi_i^{\operatorname{mid}}(\acham')<n \] hold.   
  $\Phi(\acham,\acham',\acham'')$ be the product of the terms
  $\varphi^+_i-nh$ over pairs $(i,n)\in [1,d]\times \Z$ such that we
   the inequalities of (\ref{pmp}) or the inequalities of (\ref{mpm}) below hold:\newseq
  \[\subeqn\label{pmp}\varphi_i^{\operatorname{mid}}(\acham'')>n \qquad
    \varphi_i^{\operatorname{mid}}(\acham')<n \qquad \varphi_i^{\operatorname{mid}}(\acham)>n\] 
  \[\subeqn\label{mpm}\varphi_i^{\operatorname{mid}}(\acham'')<n \qquad
    \varphi_i^{\operatorname{mid}}(\acham')>n\qquad \varphi_i^{\operatorname{mid}}(\acham)<n. \] These terms correspond to the hyperplanes that a path
  $\acham\to\acham'\to \acham''$ must cross twice. 
\end{definition}

\notation{${y_w}$}
             {The homology classes that give the action of
   $\widehat{W}$ in $\mathscr{B}$.}
 
Recall from \cite[Thm. \ref{SD-thm:BFN-pres}]{websterKoszulDuality2019} that we have:
\begin{theorem}\label{thm:BFN-pres}
  The morphisms in the extended BFN category are generated by
  \begin{enumerate}
  \item $\yw_w$ for $w\in \What$, the graph of a lift of $w$:
\begin{equation*}
\yw_w=[\big\{(w,v(t))\mid v(t)\in
U_\acham\big\}]/\Iwahori_\acham;\label{eq:y-def}
\end{equation*}
\notation{${r(\eta,\eta')}$}
             {The homology classes corresponding to fibers over
   torus fixed points.}
  \item $ {r(\acham,\acham')}$ for $\acham,\acham'\in \ft_{1,\To,\R}$
    generic
\begin{equation*}
r(\acham,\acham')=\left[\{(e,v(t)) \in T((t))\times U_{\acham'}\mid v(t)\in
U_{\acham}\}/T[[t]]\right]\in \Hom_{\ab}(\acham',\acham) ;\label{eq:r-def}
\end{equation*}

\notation{${\Bpsi}_{\al}$}
            {The homology classes corresponding crossing a root hyperplane.}
  \item $ {u_{\al'}}(\acham)= {u_{\al-n\delta}}(\acham)$ for
    $\acham_\pm$ affine chambers adjacent across $\al'(\acham)=0$ for
    $\al'\in \hatD$ an affine root (i.e. $\al'=\al-n\delta$ for some
    finite root $\al'$)
\begin{equation*}
 {u_{\al'}}(\acham)=[{\left\{(gv(t),g \cdot\Iwahori_{\pm} ,g\cdot \Iwahori_{\mp})\in \VB_{\acham^\pm}\times_{V((t))}\VB_{\acham^{\mp}}
  \mid g\in
  G((t)), v(t)\in U_\acham\right\}}];\label{eq:psi-def}
\end{equation*}
  \item the polynomials in $\ringS_h$.
  \end{enumerate}
  This category has a polynomial representation where each object $\acham$ is assigned to $H_*^{BM,\wtG}(\VB_{\acham})\cong \Cth\cdot [\VB_{\acham}]$, and the generators above act by:
  \newseq \begin{align*}
\subeqn\label{eq:ract} 
 {r(\acham,\acham')} \cdot f [\VB_{\acham'}]&=\Phi(\acham,\acham') f\cdot [\VB_{\acham}]\\
  \subeqn\label{eq:psiact}
\Bpsi_{\al}\cdot f[\VB_{\acham_{\pm}}]&=\partial_{\al}(f)\cdot [\VB_{\acham_{\mp}}]\\
   \subeqn\label{eq:wact} \yw_w\cdot f[\VB_{\acham}]&=(w\cdot f)[\VB_{w\cdot \acham}]\\
     \subeqn\label{eq:muact}
\mu \cdot f [\VB_{\acham}]&=\mu f\cdot [\VB_{\acham}]
  \end{align*} \newseq 
The relations between these operators are given by:
 \begin{align*}
\subeqn\label{eq:dot-commute}
\mu \cdot  r(\acham,\acham') &= r(\acham,\acham')\cdot \mu   \\
\subeqn\label{eq:weyl1}
  y_{\zeta}\cdot\mu\cdot y_{-\zeta}&=\mu+h \langle \zeta,\mu\rangle \\
\subeqn\label{eq:wall-cross1}
r(\acham,\acham') r(\acham'',\acham''')&=
\delta_{\acham',\acham''}\Phi(\acham,\acham',\acham''')
                                         r(\acham,\acham''')\\
\subeqn\label{eq:coweight2}
y_w\cdot y_{w'}&=y_{ww'}\\
\subeqn\label{eq:conjugate2}
  y_w r(\acham',\acham) y_w^{-1}&=r(w\cdot \acham',w\cdot\acham) \\
\subeqn\label{eq:weyl2}
  y_w \mu y_w^{-1}&=w\cdot \mu\\
   \subeqn\label{eq:psi2}
\Bpsi_{\al}^2&=0\\
   \subeqn\label{eq:psi}
\underbrace{\Bpsi_{\al}\Bpsi_{s_\al \beta}\Bpsi_{s_\al s_{\beta}\al}\cdots}_{m_{\al\be}}
             &=\underbrace{\Bpsi_{\beta}\Bpsi_{s_{\beta}\al}\Bpsi_{s_{\beta}s_\al\beta}\cdots}_{m_{\al\be}}\\
   \subeqn\label{eq:psiconjugate}
y_w\Bpsi_{\al}y_{w^{-1}}&=\Bpsi_{w\cdot \al}\\
  \subeqn\label{eq:psipoly}
\Bpsi_{\al} \mu-(s_{\al}\cdot\mu)\Bpsi_{\al}
             &=r(\eta_{\mp},\eta_{\pm})\partial_{\al}(\mu) \end{align*} whenever these
           morphisms are well-defined
and finally, if $\acham'_\pm$ and $\acham''_\pm$ are two pairs of chambers
opposite across $\al(\acham)=0$ on opposite sides of an intersection
of affine root and flavor hyperplanes as shown in \Cref{fig:weight-root} and
$\acham,\acham'''$ differ by a $180^\circ$ rotation around the
corresponding codimension 2 subspace:
 \addtocounter{subeqn}{1}
\begin{multline*}
\subeqn\label{eq:triple} 
r(\acham''',\acham'_-)\Bpsi_{\al} r(\acham'_+,\acham)
-r(\acham''',\acham''_-)\Bpsi_{\al}
r(\acham''_+,\acham)\\=\partial_\al\left(\Phi(\acham'_+,\acham) \cdot s_{\al}\Phi(\acham,\acham'_-)\right)
r(\acham''',s_\al\acham) s_\al.
\end{multline*}
\end{theorem}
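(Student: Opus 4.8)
Since this statement is quoted verbatim from \cite[Thm.~3.11]{WebSD}, strictly speaking nothing new needs to be proved; but let me indicate the shape of the argument, which is the one natural to a convolution algebra of this kind. The plan is to first reduce all the Hom-spaces to something computable, then build a faithful ``polynomial representation,'' and finally match the geometric generators and the listed relations against that representation.

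\emph{Reduction to a torus, and a Schubert basis.} Using the $G((t))$-equivariance exactly as in \cite[\S 2(ii)]{BFN}, I would rewrite $H_*^{BM,\wtG}(\VB_{\acham}\times_{V((t))}\VB_{\acham'})$ as $H_*^{BM,\tilde T}({}_{\acham}\VB_{\acham'})$, the isomorphism already recorded in \eqref{eq:aXa}. The space ${}_{\acham}\VB_{\acham'}$ has finitely many relevant orbits, indexed by $\Iwahori_{\acham}$-double cosets (equivalently by the elements of $\widehat{W}$ compatible with the constraint imposed by $U_{\acham},U_{\acham'}$), so each Hom-space becomes a free $\Cth$-module with an explicit ``Schubert-type'' basis given by fundamental classes of orbit closures. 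This both makes sense of the homology and pins down the size of everything in sight.

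\emph{The polynomial representation and the action of the generators.} Assign to each object $\acham$ the free rank-one module $\Cth\cdot[\VB_{\acham}]=H_*^{BM,\wtG}(\VB_{\acham})$, and compute the convolution action of each proposed generator by the usual localization and Euler-class bookkeeping: $y_w$ pulls back along the graph of a lift of $w$, hence acts by $w$ together with the change of object $\acham\mapsto w\cdot\acham$; $r(\acham,\acham')$ acts by multiplication by the Euler class $\Phi(\acham,\acham')$ of the directions lying in $U_{\acham}$ but not $U_{\acham'}$; and the wall-crossing class $\Bpsi_{\al}$ acts by the Demazure/BGG operator $\partial_{\al}$. This yields precisely \eqref{eq:ract}--\eqref{eq:muact}. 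I would then verify each relation \eqref{eq:dot-commute}--\eqref{eq:triple} directly in this representation: the nilHecke/KLR-type identities \eqref{eq:psi2}--\eqref{eq:psipoly} are classical computations with $\partial_{\al}$ and multiplication operators, the affine-Weyl braid relations follow from those of $y_w$ and $\partial_{\al}$, and the codimension-two ``triple point'' identity \eqref{eq:triple} is a finite but delicate local computation comparing the two ways of crossing a root hyperplane near its intersection with a matter hyperplane.

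\emph{Generation and completeness of the relations — the crux.} For generation I would argue, just as one builds Schubert classes from Demazure operators, that every basis class of the first step is obtained from the unit $[\VB_{\acham}]$ by a sequence of wall-crossings $\Bpsi_{\al}$ and affine-Weyl translations $y_w$, rescaled by factors $r(-,-)$; this shows the geometric generators generate all morphisms. For completeness I would show that the \emph{abstractly} presented category also acts faithfully on $\bigoplus_{\acham}\Cth\cdot[\VB_{\acham}]$: using the relations one puts an arbitrary word into a normal form $r\cdot y_w\cdot(\text{reduced }\Bpsi\text{-word})\cdot r$, so the abstract Hom-space has at most the expected $\Cth$-rank, while it has at least that rank since it surjects onto the geometric one. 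The main obstacle is exactly this last point: one must be sure that no further relations are forced at codimension two beyond \eqref{eq:psi} and \eqref{eq:triple}. Handling that requires a genericity argument in the parameter $\de$ (so that matter and root hyperplanes intersect as transversally as possible) together with a case analysis of the possible local configurations of those two families of hyperplanes, showing that every syzygy among the geometric generators is a consequence of the listed relations. Everything else is bookkeeping with Euler classes.
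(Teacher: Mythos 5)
The paper gives no proof of this statement at all: it is recalled verbatim from \cite[Thm.~3.11]{WebSD}, and your sketch is essentially the argument used there — reduction to $\tilde T$-equivariant homology of ${}_{\acham}\VB_{\acham'}$ with a Schubert-type basis over $S_h$, verification that the geometric generators act on the polynomial representation by \eqref{eq:ract}--\eqref{eq:muact}, and a normal-form plus rank-count argument for generation and completeness of the relations. One caveat worth flagging: your completeness step leans on faithfulness of the polynomial representation, which holds in characteristic $0$ but fails in characteristic $p$ (as the paper notes immediately after the theorem, translations by $p$-divisible cocharacters act trivially there); over such a field the faithfulness input must be replaced by the localized representation of Lemma \ref{lem:frac-rep} on $\bigoplus_{\la}K\cdot[t^\la]$.
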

\begin{figure}
    \centering
   \begin{equation*}
      \begin{tikzpicture}[very thick]
        \draw[dotted] (-2,0) -- node [at
        start,left]{$\al$}(2,0); \draw (2,-1)--  (-2,1); \draw (-2,-1)-- (2,1); 
\draw (1,-2)-- (-1,2); \draw (-1,-2)--  (1,2); 
\node at (1.7,-.5)
        {${\acham'_+}$}; 
\node at (1.7,.5)
        {${\acham'_-}$}; 
        \node at (-1.7,-.5)
        {${\acham''_+}$};
        \node at (-1.7,.5)
        {${\acham''_+}$};
\node at (1.3,-1.3)
        {${\acham}$}; \node at (-1.3,1.3)
        {${\acham'''}$};
\node at (0,-1.3){$\cdots$}; 
\node at (0,1.3){$\cdots$};
      \end{tikzpicture}
    \end{equation*}
    
    \caption{A point of intersection for weight and root hyperplanes}
    \label{fig:weight-root}
\end{figure}

One important change in the characteristic $p$ case is that the
representation defined by (\ref{eq:ract}--\ref{eq:muact}) is no longer
faithful, since the same is true of the corresponding representation
of $\widehat{W}$: translations by cocharacters divisible by $p$ act
trivially.

It is possible to fix this, though it is somewhat less pleasant to
think about. Fix $h=g\in \C$ (we will of course be primarily interested in the cases $g=0,1$).  Let $K$ be the fraction field of $\ringS_g$, and consider
the induced action by convolution on $K\otimes_{S_g}
H_*^{BM,T}(\VB_{\acham}^T)\cong \oplus_{\la\in X_*(T)}K\cdot
[t^\la]$.  We will not explicitly check that the action we define
below arises from convolution due to the complications of
localization in equivariant cohomology for loop groups, but it is
worth pointing to as our source of inspiration.
  \begin{lemma}\label{lem:frac-rep}
There is a faithful action of $\scrB^+$  that sends every object
to  $K_X:=  \oplus_{\la\in X_*(T)}K\cdot [t^\la]$
given by  the formulas \newseq\begin{align*}
\subeqn\label{eq:ract-frac}
r  (\acham,\acham') \cdot f [t^\la]&=\Phi(\acham,\acham') f\cdot
                                           [t^\la]\\
\subeqn\label{eq:psiact-frac}
\Bpsi_{\al}\cdot f [t^\la]&=\frac{s_{\al}f}{\al}[t^{s_{\al}\la}]-\frac{f}{\al}[t^{\la}]\\
   \subeqn\label{eq:wact-frac}
\yw_w\cdot f[t^\la]&=(w\cdot f)[t^{w\la}]\\
     \subeqn\label{eq:muact-frac}
\mu \cdot f [t^\la]&=\mu f\cdot [t^\la].
  \end{align*} 
  \end{lemma}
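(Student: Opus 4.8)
The plan is to verify directly that the four formulas \eqref{eq:ract-frac}--\eqref{eq:muact-frac} define an action of $\gls{scrB}$, and then separately to check faithfulness. For the first part, since Theorem \ref{thm:BFN-pres} gives a presentation of $\gls{scrB}$ by the generators $y_w, r(\acham,\acham'), \Bpsi_\al, \mu$ together with the relations \eqref{eq:dot-commute}--\eqref{eq:triple}, it suffices to (i) check that each operator defined above is well-defined on $K_X$ (in particular that $\Bpsi_\al$ lands back in $K_X$, i.e.\ that $\frac{s_\al f}{\al}[t^{s_\al\la}]-\frac{f}{\al}[t^\la]$ is genuinely a $K$-linear combination of the $[t^\mu]$ — the apparent pole along $\al=0$ cancels because $s_\al f - f$ is divisible by $\al$ when $\la$ is $s_\al$-fixed, and otherwise the two terms sit in different graded pieces), and (ii) verify that the listed relations hold for these operators. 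Step (ii) is a finite list of essentially mechanical computations in the twisted group algebra of $\widehat W$ over $K$: the Weyl/coweight relations \eqref{eq:weyl1}, \eqref{eq:coweight2}, \eqref{eq:conjugate2}, \eqref{eq:weyl2}, \eqref{eq:psiconjugate} are immediate from the definitions; \eqref{eq:psi2} is the standard computation that a Demazure-type operator squares to zero; \eqref{eq:psi} is the braid relation for Demazure operators (classical, e.g.\ it follows from the corresponding identity in the nil-Hecke algebra); \eqref{eq:psipoly} is the Leibniz-type rule $\partial_\al(\mu f)=\partial_\al(\mu) f + (s_\al\mu)\partial_\al(f)$ combined with the definition of $r(\eta_\mp,\eta_\pm)$ as multiplication by $\Phi(\eta_\mp,\eta_\pm)$; and the wall-crossing relations \eqref{eq:wall-cross1} and \eqref{eq:triple} reduce to the multiplicativity identity $\Phi(\acham,\acham')\Phi(\acham',\acham''')=\Phi(\acham,\acham',\acham''')\Phi(\acham,\acham''')$ for the $\Phi$-products together with a single $\partial_\al$-Leibniz computation.

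For faithfulness, I would argue as follows. First reduce to a single object: it is enough to show that for every unexceptional $\acham$ the map $\End_{\gls{scrB}}(\acham)\to \End_K(K_X)$ is injective, since every hom-space $\Hom(\acham,\acham')$ embeds into an endomorphism algebra after composing with an invertible $y_w$ and an $r(-,-)$ (which acts injectively, being multiplication by a nonzero element of $K$). Next, filter $\End_{\gls{scrB}}(\acham)$ by the ``length'' of the $\widehat W$-component: using the relations one sees every element is a $K$-linear combination (on the left, say) of the operators $y_w$ and of products $y_w \Bpsi_{\al_1}\cdots\Bpsi_{\al_k}$, and modulo lower-length terms each $\Bpsi$ contributes its leading ``divided-difference'' part. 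On $K_X=\bigoplus_{\la} K[t^\la]$ the operators $\{y_w : w\in\widehat W\}$ are visibly $K$-linearly independent (they permute the lines $K[t^\la]$ in distinct ways and act by distinct field automorphisms), and the leading terms of the $\Bpsi$-strings are likewise independent — this is exactly the statement that the nil-Hecke algebra acts faithfully on the fraction field of the (co)weight-lattice symmetric algebra, applied here to $\widehat W$. A standard leading-term/degeneration argument then upgrades independence of leading terms to injectivity of the whole map.

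The main obstacle I expect is \emph{not} the relation-checking — that is routine — but the faithfulness argument, specifically making precise the ``leading term'' filtration on $\End_{\gls{scrB}}(\acham)$ and proving that distinct reduced words produce $K$-linearly independent operators on $K_X$. The subtlety is that $\widehat W$ is infinite and the chamber structure (which $\Bpsi_\al$ are even defined at a given $\acham$) varies, so one must be careful that the spanning set one writes down is adapted to the fixed object $\acham$; the cleanest route is probably to compare with the faithful polynomial representation of the affine nil-Hecke algebra over $K$ (where this independence is classical) and to check that the extra generators $r(-,-)$ only rescale by units of $K$, hence do not affect injectivity. I would also flag that one should double-check the pole-cancellation in \eqref{eq:psiact-frac} carefully in characteristic $p$, since that is precisely the place where the integral representation failed to be faithful and the passage to $K$ is doing real work.
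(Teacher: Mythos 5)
The paper gives no proof of this lemma at all: the statement is preceded only by the remark that the action is inspired by localization to $T$-fixed points in equivariant Borel--Moore homology, and the author explicitly declines to verify that it arises from convolution. So there is nothing in the paper to compare your argument against line by line; what you have written is, in effect, the missing proof, and your overall strategy is sound. Checking the relations of Theorem \ref{thm:BFN-pres} is the right way to get well-definedness, and your faithfulness argument is the correct one in substance: every morphism space embeds (by the dressed-monopole/path basis of \cite{WebSD}, or equivalently the Galois-order structure) into the twisted group algebra $K\#\widehat{W}$ localized at the roots, via $u_{\al}=\frac{1}{\al}\bigl(s_{\al}-r(s_\al\acham,\acham)\bigr)$, and that localized twisted group algebra acts faithfully on $K_X$ because distinct elements of $\widehat{W}$ permute the lines $K\cdot[t^\la]$ and act on $K$ by automorphisms, so Dedekind--Artin independence of automorphisms applies on each stabilizer. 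Phrasing this as an embedding into $K\#\widehat{W}$ is cleaner than your leading-term filtration and sidesteps the bookkeeping about which $\Bpsi_{\al}$ are defined at a given object. Two small corrections: the ``pole cancellation'' you worry about in \eqref{eq:psiact-frac} is a non-issue, since $K$ is by definition the fraction field of $S_g$ and the coefficients $\tfrac{s_\al f}{\al},\tfrac{f}{\al}$ already lie in $K$ with no divisibility required (this is exactly what passing to $K$ buys); and the real content of faithfulness at $h=0$ or in characteristic $p$ is not the $\Bpsi$'s but the translations, which act trivially on $K$ yet still shift the labels $[t^\la]\mapsto[t^{w\la}]$ --- that shift of labels is the whole reason $K_X$ is faithful where the polynomial representation is not, and your argument does capture it.
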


Now, consider the case where $\hbar=0$.  In this case,
$\End_{\mathscr{B}}(\second,\second)\cong \K[\fM]$ is the space of functions on the
  Coulomb branch.  The different non-isomorphic objects of
  $\scrB_{\al}$ define interesting modules over $\K[\fM]$, considering
  $Z_{\eta}=\Hom_{\mathscr{B}}(\zero,\eta)$ as a right module under composition.   
\begin{lemma}\label{lem:Q-rank}
  The module $Z_{\eta}$, considered as a coherent sheaf on $\K[\fM]$, is generically free of rank $\# \weylW$.
\end{lemma}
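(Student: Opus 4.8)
The plan is to reduce the statement to the single case $\eta=\zero$ using the wall-crossing classes $r(\cdot,\cdot)$ and the relation \eqref{eq:wall-cross1}, and then to compute the rank of $\EuScript{A}$ itself on $\fM$ via the flag fibration $\VB_{V[[t]]}\to\EuScript{Y}_{V[[t]]}$. First, the bookkeeping: at $h=0$ the subalgebra $\EuScript{A}^{\operatorname{sph}}=e_{\operatorname{sph}}\EuScript{A}e_{\operatorname{sph}}\cong\K[\fM]$ makes $\EuScript{A}$ Morita equivalent to $\K[\fM]$ \cite{WebSD}, so a right $\EuScript{A}$-module $M$ corresponds to the coherent sheaf $Me_{\operatorname{sph}}$ on $\fM=\Spec\K[\fM]$, and ``generically free of rank $\#W$'' is the assertion that $Q_\eta e_{\operatorname{sph}}\otimes_{\K[\fM]}F$ has dimension $\#W$, where $F:=\operatorname{Frac}\K[\fM]$. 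Throughout I will use, from \cite{WebSD}, that at $h=0$ the algebra $\EuScript{A}$ and all morphism spaces $\Hom_{\mathscr{B}}(\eta,\eta')$ are module-finite and torsion-free over the Noetherian domain $\K[\fM]$, so that $(-)\otimes_{\K[\fM]}F$ is exact and turns each of them into a finite-dimensional $F$-algebra or module. It is enough to treat $\eta$ generic: that is the case used in Theorem \ref{th:main}, and an unexceptional $\eta$ lying on a root hyperplane is covered by the same argument applied to a generic point of its matter-chamber adjacent to $\eta$.

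For the reduction, fix a generic $\eta$ and consider $r(\eta,\zero)\in\Hom_{\mathscr{B}}(\zero,\eta)=Q_\eta$ and $r(\zero,\eta)\in\Hom_{\mathscr{B}}(\eta,\zero)$. Specializing \eqref{eq:wall-cross1} to $h=0$ gives
\[
r(\zero,\eta)\circ r(\eta,\zero)=\Phi(\zero,\eta,\zero)\cdot\id_{\zero},\qquad
r(\eta,\zero)\circ r(\zero,\eta)=\Phi(\eta,\zero,\eta)\cdot\id_{\eta},
\]
where the two factors in fact coincide: each equals the product $N$ of the weights $\varphi_i$ over the pairs $(i,n)$ for which $n$ lies strictly between $\varphi_i^{\operatorname{mid}}(\zero)$ and $\varphi_i^{\operatorname{mid}}(\eta)$. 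This $N\in S_0$ is a nonzero product of linear forms; it therefore acts as multiplication by a nonzero polynomial in the faithful representation of Lemma \ref{lem:frac-rep}, so it is a nonzerodivisor in $\EuScript{A}$ and in $\End_{\mathscr{B}}(\eta)$, and hence a unit in the finite-dimensional $F$-algebras $\EuScript{A}\otimes_{\K[\fM]}F$ and $\End_{\mathscr{B}}(\eta)\otimes_{\K[\fM]}F$. Thus $r(\eta,\zero)$ becomes an isomorphism $\zero\xrightarrow{\sim}\eta$ in the localized category, with inverse $N^{-1}r(\zero,\eta)$, so $Q_\eta\otimes_{\K[\fM]}F\cong\EuScript{A}\otimes_{\K[\fM]}F$ as right $\EuScript{A}$-modules and therefore $Q_\eta e_{\operatorname{sph}}\otimes F\cong\EuScript{A}e_{\operatorname{sph}}\otimes F$. (When $N=1$, i.e. when $\zero$ and $\eta$ lie in the same matter-chamber, this isomorphism already holds over $\K[\fM]$.)

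It remains to prove the case $\eta=\zero$: that $\EuScript{A}e_{\operatorname{sph}}=H_*^{BM,\wtG}(\VB_{V[[t]]}\times_{V((t))}\EuScript{Y}_{V[[t]]})$ is free of rank $\#W$ over $\EuScript{A}^{\operatorname{sph}}=\K[\fM]$. The projection $\VB_{V[[t]]}\to\EuScript{Y}_{V[[t]]}$ has fibre $G[[t]]/\Iwahori\cong G/B$, so the first-projection map $\VB_{V[[t]]}\times_{V((t))}\EuScript{Y}_{V[[t]]}\to\EuScript{Y}_{V[[t]]}\times_{V((t))}\EuScript{Y}_{V[[t]]}$ is a flag-variety fibration, and a Leray--Hirsch argument (using the tautological line-bundle classes, which restrict to a homology basis on each $G/B$-fibre) identifies $\EuScript{A}e_{\operatorname{sph}}$ with a free module of rank $\dim_\K H^*(G/B;\K)=\#W$ over $H_*^{BM,\wtG}(\EuScript{Y}_{V[[t]]}\times_{V((t))}\EuScript{Y}_{V[[t]]})$, compatibly with specializing $h=0$. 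Alternatively one can use the BFN abelianization: inverting the roots $\alpha\in\Delta$ identifies $\EuScript{A}$ with $\K[\fM_T]_{\mathrm{loc}}\rtimes W$ ($\fM_T$ the Coulomb branch of the maximal torus of $G$), under which $e_{\operatorname{sph}}$ is the averaging idempotent and $\EuScript{A}e_{\operatorname{sph}}\cong\K[\fM_T]_{\mathrm{loc}}$ as a $\K[\fM]=\K[\fM_T]^W$-module, generically free of rank $[\operatorname{Frac}\K[\fM_T]:\operatorname{Frac}\K[\fM]]=\#W$ because the extension is $W$-Galois --- this is the route of McBreen and the author \cite{McBW} in the abelian case. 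Combined with the previous paragraph, this proves the lemma.

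The main obstacle I anticipate is the last step: making the passage from the equivariant Leray--Hirsch statement (or the abelianization isomorphism) down to the specialization at $h=0$ fully rigorous for these ind-schemes, and in particular checking that the relevant freeness is not lost at $h=0$; this is essentially the homological formalism of \cite{BFN} together with \cite{WebSD} and should be cited rather than reproved. The remaining points are routine: that $\EuScript{A}$ and the morphism spaces of $\mathscr{B}$ really are module-finite and torsion-free over $\K[\fM]$ at $h=0$, which is what legitimizes the localizations used above.
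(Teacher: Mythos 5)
Your argument is essentially the paper's, with the details filled in: the paper's two-line proof consists of exactly your two steps, namely (i) $\zero$ and $\eta$ become isomorphic after localization --- which you make explicit via $r(\eta,\zero)$, $r(\zero,\eta)$ and \eqref{eq:wall-cross1}, and which is fine since the relevant $\Phi$-factors are nonzerodivisors --- and (ii) knowledge of the generic rank in the base case $\eta=\zero$, which the paper packages as ``$\End_{\mathscr{B}}(\zero,\zero)$ is Azumaya of degree $\#W$ over $\K[\fM]$ at $h=0$'' rather than via your Leray--Hirsch/abelianization computation of $\EuScript{A}e_{\operatorname{sph}}$; these amount to the same fact, and your version is the more self-contained of the two. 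One correction: your parenthetical disposal of the non-generic case is false. If the unexceptional $\eta$ lies on a root hyperplane, then $\Iwahori_\eta$ is a strictly larger parahoric, $Q_\eta$ is a proper direct summand of $Q_{\eta'}$ for a nearby generic $\eta'$ rather than isomorphic to it, and the generic rank of $Q_\eta e_{\operatorname{sph}}$ drops to $\#W/\#W_\eta$; the extreme case is $\eta=\second$, where $Q_{\second}e_{\operatorname{sph}}=\K[\fM]$ has rank $1$. Since the lemma is only ever applied to generic $\eta$ (translates of $\zero$), nothing downstream is affected --- and the paper's own proof makes the same implicit genericity assumption --- but you should simply impose the hypothesis rather than claim the remaining cases follow by the same argument.
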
 
\begin{proof}
  As a module over $\End_{\mathscr{B}}(\zero,\zero)$, the module $\End_{\mathscr{B}}(\zero,\eta)$ is generically free of rank 1, since $\zero$ and $\eta$ become isomorphic after inverting all weights and roots.  Since at $h=0$, the algebra $\End_{\mathscr{B}}(\zero,\zero)$ is Azumaya over $\K[\fM]$ with degree $\#W$, this implies that $Z_{\eta}$ generically has the correct rank.  
\end{proof}

     \notation{${ {}_{\phi+\nu}\mathscr{T}{}_{\phi}}$}
          {The $\mathscr{B}_{\phi+\nu}\operatorname{-}\mathscr{B}_{\phi}$
  bimodule formed by the appropriate quotient of
  $\mathscr{T}(\nu)$, the morphisms of weight $\nu$ in $\scrB^{\To}$.}
    
     \notation{${ {}_{\phi+\nu}{T}{}_{\phi}}$}
          {The twisting bimodule $ {}_{\phi+\nu}\scrT{}_{\phi}(\zero,\zero)$.}
One other construction we'll need to connect to wall-crossing functors
is the twisting bimodules 
  ${}_{\phi+\nu}\scrT{}_{\phi}$ 
and
${}_{\phi+\nu}\Twist_{\phi}={}_{\phi+\nu}\mathscr{T}{}_{\phi}(\zero,\zero)$ relating two flavors $\flav$ and
$\flav+\nu$ that differ by $\nu\in \ft_{\Z,F}$
defined in \cite[Def. \ref{SD-def:Bdefr}]{websterKoszulDuality2019}.
These are constructed much as the Hom spaces in Definition
\ref{def:extended-BFN}: let ${}_{\acham}\VB^{(\nu)}_{\acham'}$ be the
component of the space ${}_{\acham}\dVB^{\To}_{\acham'}$  as defined in
\eqref{eq:aXa} lying above $t^\nu$ in the affine Grassmannian of
$F$, and we have:
\begin{equation}
  \label{eq:aXnua}
  { {}_{\phi+\nu}\mathscr{T}{}_{\phi}}(\acham,\acham')=  H_*^{BM, \tilde T}\left({}_{\acham}\VB^{(\nu)}_{\acham'}; 
    \K\right).
\end{equation}
This is a subspace of $H_*^{BM, \tilde T}\left({}_{\acham}\dVB^{\To}_{\acham'}\right)$. 
We can identify this space with the corresponding morphism space $Hom_{\scrB^{\To}}(\acham,\acham')$ in the extended BFN category $\scrB^{\To}$ for the group $\To$, modulo the maximal ideal $\mathsf{m}_{\phi+\nu}\subset \Sym(\mathfrak{t}_{F}^*)$ corresponding to $h(\phi+\nu)$.  The homology classes lying over $t^{\nu}$ are exactly those whose weight for the action of $T_{F}^{\vee}$ is $\nu$ (by the definition of this action); this ensures that the right action of $\Sym(\mathfrak{t}_{F}^*)$ kills the maximal ideal $\mathsf{m}_{\phi}$.

\subsection{Representations}
\label{sec:reps}
Using this presentation, we can analyze the structure of this category of representations in
characteristic $p$, just as we did in characteristic 0 in \cite[\S
\ref{SD-sec:pres-extend-categ}]{websterKoszulDuality2019}.   It is worth noting that the group $G$, representation
$V$ and its associated objects are unchanged; we simply consider their
homology over $\K$, a field of characteristic $p$.  To save ourselves
heartburn, we assume that $p$ is not a torsion prime for the group
$G$.  This is not a problematic restriction, since we will typically
assume that $p\gg 0$.    Throughout this subsection, we specialize $h=1$.  We'll let $\ringS_1$ denote the specialization of $\ringS_h$ at $h=1$.\notation{$\ringS_1$}{The specialization of $\ringS_h$ at $h=1$.}

Let $M$ be a finite-dimensional representation
of the category $\scrB$ (which we will also call $\mathscr{B}$-modules), that is, a functor from $\mathscr{B}$ to the
category $\K\Vect$ of finite dimensional $\K$-vector spaces.  

These are closely related to the theory of $\EuScript{A}$-modules since the finite-dimensional vector space $N:=M(\zero)$ has an
induced $\EuScript{A}$-module structure.  Furthermore, since $\Hom(\acham,\zero)$
and $\Hom(\zero,\acham)$ are finitely generated as
$\EuScript{A}$-modules, this is a quotient functor, with left adjoint given by
\[N\mapsto
\mathscr{B}\otimes_{\EuScript{A}}N(\acham):=\Hom(\acham,\zero)\otimes_{\EuScript{A}}N.\]

Now, let us return to the theory of $\mathscr{B}$-modules. Of course, if we restrict the action on $M(\eta)$ to the subalgebra
$\ringS_1$, then this vector space breaks up as a sum of {\bf weight spaces}:
\begin{equation}
  \label{eq:W-def}
  \Wei_{\upsilon,\acham}(M)=\{m\in M(\acham)\mid \mathfrak{m}_{\upsilon}^Nm=0 \text{ for } N\gg 0\},
\end{equation} 
where $\mathfrak{m}_{\upsilon}\subset S_1$ is the maximal ideal defined by polynomials vanishing at $\upsilon$.
\notation{$\mathfrak{m}_{\upsilon}$}{The maximal ideal defined by polynomials vanishing at $\upsilon\in \ft_{1}$.}
We can think of this as an exact functor
$\Wei_{\upsilon,\acham}\colon \mathscr{B}\operatorname{-fdmod}\to \K\Vect$.
In \cite{websterKoszulDuality2019}, we employed these weight functors to probe the
category of representations of  $\mathscr{B}$.   Versions of this
construction have appeared a number of places in the literature,
including work of Musson and van der Bergh \cite{MVdB} and Drozd,
Futorny and Ovsienko \cite{FOD}.
\begin{definition}
  Let $\scrBhat$ be the category whose objects are the
  set $\EuScript{J}$ of pairs of generic $\acham\in \ft_\R+\tau$ and any
  $\upsilon\in \ft_{1,\K}$, such that
  \[\Hom_{\widehat{\mathscr{B}}}((\acham',\upsilon'),(\acham,\upsilon))=\varprojlim
  \Hom_{{\mathscr{B}}}(\acham',\acham)/(\mathfrak{m}_{\upsilon}^N
  \Hom_{{\mathscr{B}}}(\acham',\acham)+\Hom_{{\mathscr{B}}}(\acham',\acham)\mathfrak{m}_{\upsilon'}^N).\] 
\end{definition}
\notation{$\scrBhat$}{The category whose objects are pairs of objects in $\scrB$ and maximal ideals in  $\ringS_{1}$, with morphism spaces given by appropriate completion.  }  
We can apply \cite[Theorem B]{WebGT} here to get a sense of the size
of this algebra---the endomorphism algebra
of any object in this category is again a Galois order in a skew group
algebra, but the group is now the stabilizer of $\acham'$ in the
affine Weyl group $\What$. Since we are now in characteristic $p$, this
contains all translations that are $p$-divisible, and so this
stabilizer is the semi-direct product of a parabolic subgroup in the
finite Weyl group with this $p$-scaled group of translations.

 \notation{${\mathscr{\widehat{B}}_{\upsilon}}$}
    {The subcategory of $\scrBhat$ where we only allow objects  $(\eta,\upsilon)$ with $\eta\in \ft_{\second}, \upsilon\in \upsilon'+\ft_\Z$. }
 
 \notation{${\mathscr{\widehat{A}}_{\upsilon}}$}
     {The subcategory of $\scrBhat$ where we only allow objects of the form
$(\zero,\upsilon)$ for $\upsilon \in \upsilon'+\ft_\Z$.  }
Note that since $\K$ is of characteristic $p$, the set $\ft_{1,\K}$
has an action of $\ft_{\Z/p\Z}$ by addition.
We let ${\mathscr{\widehat{B}}_{\upsilon'}}$ be the subcategory where we only
allow objects  with $\upsilon\in \upsilon'+\ft_{\Z/p\Z}$ and let
$\mathscr{\widehat{A}}_{\upsilon'}$ be the subcategory with the objects of the form
$(\zero,\upsilon)$ for $\upsilon \in \upsilon'+\ft_{\Z/p\Z}$.  

This category is useful in that it lets us organize how the weight spaces
of different values of $\acham$ relate.  For any $\mathscr{B}$-module
$M$, the functor $(\upsilon, \acham)\mapsto W_{\upsilon,\acham}(M)$
defines a representation of $\widehat{\mathscr{B}}$.  We have an
analogue in this situation of \cite[Lem. \ref{SD-lem:weight-complete}]{websterKoszulDuality2019}, which is a
special case of a more general result of Drozd-Futorny-Ovsienko
\cite[Th. 17]{FOD}:
\begin{lemma}\label{lem:FOD}
  The functor above defines an equivalence of the category $\scrB\mmod_{\upsilon'}$ of finite dimensional $\scrB$-modules with weights in $\What\cdot \upsilon'$ to
  the category of representations of
  $\widehat{\mathscr{B}}_{\upsilon'}$ in $\K\Vect$.

The analogous functor defines an equivalence of the category $\EuScript{A}\mmod_{\upsilon'}$ of finite dimensional
$\EuScript{A}$-modules  with weights in
$\widehat{W}\cdot \upsilon'$ to the category of
representations of $\mathscr{\widehat{A}}_{\upsilon'}$ in $\K\Vect$.
\end{lemma}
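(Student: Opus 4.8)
The plan is to deduce both statements from the theory of Galois orders and their generalized weight modules of Drozd--Futorny--Ovsienko \cite[Th. 17]{FOD}, reproducing in characteristic $p$ the argument used for \cite[Lemma 3.22]{WebSD}. The structural input is exactly the one invoked just above the lemma: the presentation of Theorem~\ref{thm:BFN-pres} together with the faithful fraction-field action of Lemma~\ref{lem:frac-rep} exhibits $\End_{\mathscr{B}}(\acham,\acham)$, and more generally every $\Hom$-bimodule of $\mathscr{B}$, as a Galois order over the polynomial subalgebra $S_h$ inside the skew group ring $K\rtimes\widehat W$, which is \cite[Theorem B]{WebGT}. Granting this, \cite[Th. 17]{FOD} applies directly: the category of finite-dimensional modules whose generalized $S_h$-weights lie in a prescribed $\widehat W$-orbit is equivalent to the category of finite-dimensional representations of the category obtained by completing all $\Hom$-spaces at the maximal ideals indexed by that orbit, and this is precisely the definition of $\widehat{\mathscr{B}}_{\upsilon'}$ (respectively of $\widehat{\mathscr{A}}_{\upsilon'}$, working with the single object $\zero$).

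Concretely, I would set up the two functors explicitly. To a $\mathscr{B}$-module $M$ with weights in $\widehat W\cdot\upsilon'$ I would assign the $\widehat{\mathscr{B}}_{\upsilon'}$-representation $(\acham,\upsilon)\mapsto \Wei_{\upsilon,\acham}(M)$: since $M$ is finite-dimensional, $M(\acham)=\bigoplus_\upsilon \Wei_{\upsilon,\acham}(M)$ with each summand finite-dimensional and $\mathfrak{m}_\upsilon$-nilpotent, so the completed morphism spaces $\Hom_{\widehat{\mathscr{B}}}((\acham',\upsilon'),(\acham,\upsilon))$ act on them, and the assignment is functorial and exact because the generalized weight decomposition is exact. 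For the quasi-inverse, given a $\widehat{\mathscr{B}}_{\upsilon'}$-representation $N$ I would set $M(\acham)=\bigoplus_\upsilon N_{(\acham,\upsilon)}$ --- over the coset of weights carried by $\widehat{\mathscr{B}}_{\upsilon'}$, and then over the rest of $\widehat W\cdot\upsilon'$ by transport along the $y_w$ --- and let the generators $r(\acham,\acham')$, $u_{\al'}$, $y_w$ and the polynomials of $S_h$ act by the formulas of Lemma~\ref{lem:frac-rep}. These make sense termwise since only finitely many weights are in play and, because $\upsilon'$ is generic (off the unrolled matter and root hyperplanes), every denominator $\al$ and every weight $\varphi_i$ inverts where required; verifying the relations of Theorem~\ref{thm:BFN-pres} is the same bookkeeping as in \cite[Lemma 3.22]{WebSD}. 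That these functors are mutually quasi-inverse is immediate, completing $M(\acham)$ at $\mathfrak{m}_\upsilon$ returning $\Wei_{\upsilon,\acham}(M)$. The $\EuScript{A}$-module statement is the same argument applied to the single object $\zero$ via $\EuScript{A}=\Hom_{\mathscr{B}}(\zero,\zero)$ of \eqref{eq:A-B}, and its compatibility with the $\mathscr{B}$-module equivalence is the quotient-functor picture recalled just before the lemma (that $M\mapsto M(\zero)$ is a quotient functor with adjoint $N\mapsto \mathscr{B}\otimes_{\EuScript{A}}N$).

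The one point that genuinely requires argument rather than citation --- and where I expect the bulk of the effort to go --- is confirming that the Galois-order input survives reduction modulo $p$. Because the integral polynomial representation $(\ref{eq:ract}$--$\ref{eq:muact})$ is no longer faithful ($p$-divisible translations act by zero), the Galois group attached to an object is not its full $\widehat W$-stabilizer but, as noted after Theorem~\ref{thm:BFN-pres} and in the remark on $\widehat{\mathscr{B}}$, the semidirect product of a parabolic subgroup of $W$ with the $p$-scaled translation lattice; one must check this is genuinely the Galois group for the faithful representation $K_X$ of Lemma~\ref{lem:frac-rep}, and that the residue-field extensions and flatness over $S_h$ entering \cite{FOD} are unaffected --- which holds since $p$ is non-torsion for $G$ and $p\gg 0$. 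Once this modular bookkeeping is in place, the equivalence is formal.
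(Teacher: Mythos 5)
Your proposal is correct and follows exactly the route the paper itself takes: the paper gives no proof, simply observing that the lemma is the characteristic-$p$ analogue of \cite[Lemma 3.22]{WebSD} and a special case of \cite[Th. 17]{FOD}, which are precisely the inputs you invoke (together with the Galois-order observation from \cite[Theorem B]{WebGT} already recorded after the definition of $\widehat{\mathscr{B}}$). One small wording correction: the quasi-inverse should act not ``by the formulas of Lemma~\ref{lem:frac-rep}'' (those define the specific fraction-field representation) but via the canonical maps from $\Hom_{\mathscr{B}}(\acham',\acham)$ into its completions, composed with the given $\widehat{\mathscr{B}}_{\upsilon'}$-action — which is clearly what you intend.
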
 
There is an important difference
between the characteristic $p$ and characteristic $0$ cases:
A module in $\EuScript{A}\mmod_{\upsilon'}$ with $\K$ of characteristic $p$ with finite dimensional
weight spaces is necessarily finite dimensional (since the affine Weyl
group orbit of any weight is finite) while it is typically not finite-dimensional if $\K$
has 
characteristic $0$. This is why here we only study finite dimensional
modules, while in \cite{websterKoszulDuality2019}, we study the category of all weight modules.

\subsection{The homogeneous presentation}
\label{sec:homo}
We wish to give a homogeneous presentation of the categories
$\mathscr{\widehat{A}}_{\upsilon'}$ and
$ {\mathscr{\widehat{B}}_{\upsilon'}}$, as in
\cite[\S \ref{SD-sec:higgs-coulomb}]{websterKoszulDuality2019}.  For simplicity, we assume that $\K=\Fp$ for
$p$ not dividing $\#  \weylW$ and we are still specializing $h=1$.

Recall that we have fixed $\flav\in \ft_{F,\Z}$; we can without
loss of generality choose a lift $\tilde{\phi}$ which is fixed by the action of $W$  to $\ft_{1,\To,
  \Z[\frac{1}{\# W}]}$ over the ring $\Z[\frac{1}{\# W}]$ of integers with the order of the
group $W$ inverted 
(note that this might not be
possible over $\Z$).   This has a unique reduction to
$\ft_{1,\Fp}$, which is again $W$-invariant.  Since
$\ft_{1,\Fp}$ is a torsor for $\ft_{\Fp}$, we can assume
$\upsilon'=\tilde{\phi}\pmod {p}$.

The
stabilizer $\widehat{W}_{\upsilon'}$ of $\upsilon'$ in the affine Weyl
group is generated by $s_{\al_i}$ for all $i$, and 
translations by $p \ft_{\Z}$.  Note that the map \[{(\cdot )}_p\colon \widehat{W}\to \widehat{W}_{\upsilon'} \qquad w_p(x)= p\cdot w\Big(\frac{1}{p}x\Big)\]
is an isomorphism between these groups.  Furthermore, for reflection
in an affine root $\al$, we have that $(s_\al)_p=s_{\al^{(p)}}$ for
some possibly different root $\al^{(p)}$.  

We can also understand this homomorphism in terms of the Frobenius map $\frob_V\colon V((t))\to V((t))$ given by $\frob_V(v(t))=v(t^p)$: the element $w_p$ is the unique one satisfying $w_p\circ \frob_V=\frob_V \circ w$.  Similarly, we will want to understand the interaction between $U_{\eta}$ and this map.  Consider the $\tilde{G}\times \C^*$ action on $V((t))$ as usual (that is, with $\C^*$ acting by loop rotation).  The map $\frob_V$ intertwines the action of 
$G((t))$ with the action twisted by the endomorphism $\frob_G(g(t))=g(t^p)$.

Note, we cannot extend this automorphism to the semi-direct product incorporating the loop scaling, since we would need to act on the loop $\C^*$ by $s\mapsto s^{1/p}$.  The corresponding automorphism on the level of Lie algebras is well-defined however, and we denote it by $\frob_{\tilde{\fg}}$.   Note that this does not preserve the subalgebra $\{(X,d\nu(X))\mid X\in \tilde{\fg}\}$, and thus does not preserve $\widetilde{\fg((t))}$. 

This shows that we need to have a different flavor in order to write $\frob_V^{-1}(U_\eta)$ as the same sort of subspace.  
\begin{definition}\label{def:pth-root}
  The {\bf ``$p$th root''} conventions for the extended category are taking:
  \begin{itemize}
    \item the gauge group $G$ and representation $V$;
      \item the (rational) flavor $\phi_{1/p}(s)=(\phi_0(s^{1/p}),s)$ in place of $\flav$;
      \item the constant $\delta/p$ in place of $\delta$;
      \item specialize the equivariant parameter $h=0$.
  \end{itemize}
  Throughout, we will use sans-serif letters to denote objects defined
  in the $p$th root conventions. In particular, we write
  $\mathsf{t}_{1,\R}$ for  $\ft_{1,\R}$,  $\mathsf{U}_\eta$ for
  $U_{\eta}$.  We let $\sfB$ denote the category $\scrB$ when we use this
  modified flavor and constant. 
\end{definition}

\begin{definition}
  Given $\eta\in \mathsf{t}_{1,\To,\R} $, let $\eta_p\in\ft_{1,\To,\R}$ be the
  element  $\eta_p=p\cdot \frob_{\mathfrak{\tilde{g}}}(\eta)$.  Given
  $\xi\in \ft_{1,\To,\R}$, let  $\xi_{1/p}\in \mathsf{t}_{1,\To,\R}$ be the unique solution to $\xi=(\xi_{1/p})_p$, that is, the inverse map.
\end{definition}
Like the description for $\phi$ above, if we write $\eta(t)=(\eta_0(s),s)$, then $\eta_{1/p}(s)=(\eta_0(t^{1/p}),s)$.  \notation{$\eta_{1/p}$}{The rescaled $\R$-cocharacter defined by $\eta_{1/p}(s)=(\eta_0(t^{1/p}),s)$.}

Note that for $w\in \widehat{W}$, we have $(w\eta)_p=w_p\eta_p$, and that
\begin{equation}
\varphimid_i(\eta_p)=p\mathsf{\varphi}^{\operatorname{mid}}_i(\eta)\label{eq:p-mid}
\end{equation}
(where the second is calculated using $p$th root conventions).
\begin{lemma}
  $\frob_V^{-1}(U_{\eta_p})=\mathsf{U}_{\eta}$.
\end{lemma}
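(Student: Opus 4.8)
The plan is to verify the identity monomial by monomial, using that $\frob_V$ acts on $V((t))$ by $v(t)\mapsto v(t^{p})$ and hence rescales the exponent of $t$ by $p$. Recall that $V((t))$ is spanned by the monomials $v_i t^{c}$ with $v_i$ a weight vector of $V$ of weight $\varphi_i$ and $c\in\Z$, that $U_{\eta_p}$ is by definition the span of those $v_i t^{c}$ on which $\eta_p$ acts with weight $\ge -\delta$, and that $\mathsf U_\eta$ is the span of those $v_i t^{c}$ on which $\eta$ acts with weight $\ge -\delta/p$ (the ``$p$th root'' conventions replace $\delta$ by $\delta/p$). The weight of $v_i t^{c}$ under $\eta$ is $\varphi_i^{+}(\eta)+c\,\nu(\eta)=\varphi_i^{+}(\eta)+c$, since $\eta\in\ft_{1,\To,\R}$ forces $\nu(\eta)=1$; adding $\delta/p=(\delta/p)\nu(\eta)$ and using $\mathsf{\varphi}_i^{\operatorname{mid}}=\varphi_i^{+}+(\delta/p)\nu$ (the $p$th root analogue of $\varphi_i^{\operatorname{mid}}=\varphi_i^{+}+\delta\nu$) rewrites the two membership conditions as $v_i t^{c}\in\mathsf U_\eta\iff\mathsf{\varphi}_i^{\operatorname{mid}}(\eta)+c\ge 0$ and $v_i t^{c}\in U_{\eta_p}\iff\varphi_i^{\operatorname{mid}}(\eta_p)+c\ge 0$.

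The heart of the matter is then a single computation: $\frob_V(v_i t^{c})=v_i t^{pc}$ lies in $U_{\eta_p}$ precisely when $\varphi_i^{\operatorname{mid}}(\eta_p)+pc\ge 0$, and by \eqref{eq:p-mid} the left-hand side equals $p\bigl(\mathsf{\varphi}_i^{\operatorname{mid}}(\eta)+c\bigr)$; since $p>0$, this holds if and only if $\mathsf{\varphi}_i^{\operatorname{mid}}(\eta)+c\ge 0$, i.e.\ if and only if $v_i t^{c}\in\mathsf U_\eta$. So $\frob_V$ carries every spanning monomial of $\mathsf U_\eta$ into $U_{\eta_p}$, and every spanning monomial of $V((t))$ not lying in $\mathsf U_\eta$ to one not lying in $U_{\eta_p}$.

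To finish, I would invoke that $\frob_V$ is linear and injective, that the monomials $v_i t^{pc}$ for distinct pairs $(i,c)$ remain linearly independent, and that $U_{\eta_p}$ is a sum of $\eta_p$-weight subspaces: for $x=\sum_{i,c}a_{i,c}v_i t^{c}$ one then has $\frob_V(x)=\sum_{i,c}a_{i,c}v_i t^{pc}\in U_{\eta_p}$ if and only if $a_{i,c}=0$ for every $(i,c)$ with $v_i t^{pc}\notin U_{\eta_p}$, and by the previous paragraph this is exactly the condition $x\in\mathsf U_\eta$; hence $\frob_V^{-1}(U_{\eta_p})=\mathsf U_\eta$. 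I expect no substantive obstacle: the only thing requiring attention is the bookkeeping between the original and ``$p$th root'' conventions (the change of flavor from $\phi$ to $\phi_{1/p}$, the replacement of $\delta$ by $\delta/p$, and the specialization $h=0$) needed to apply \eqref{eq:p-mid} verbatim. In particular no genericity assumption on $\eta$ is needed, because multiplication by $p$ is an order isomorphism of $\R$ sending the threshold $-\delta/p$ exactly to $-\delta$, so the (non-strict) inequalities match up with no boundary subtleties.
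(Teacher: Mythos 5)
Your argument is correct, and it is exactly the computation the paper leaves implicit: the lemma is stated without proof, with \eqref{eq:p-mid} and the remark immediately following it serving as the intended justification. Your monomial-by-monomial check (weight of $v_it^c$ under $\eta$ equals $\varphi_i^+(\eta)+c$, rescaling by $p$ via \eqref{eq:p-mid}, and the observation that $U_{\eta_p}$ is a sum of weight lines so membership can be tested on monomials) supplies precisely the missing details, including the correct handling of the threshold $-\delta$ versus $-\delta/p$.
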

\begin{remark}
  The map $\xi\mapsto \xi_{1/p}$ has the effect of shrinking the space
  $\ft_{1,\To,\R}$ by a factor of $\frac{1}{p}$, and the unrolled matter
  hyperplanes, which are defined by  $\varphimid_i(\xi)$ taking an
  integral value, become the hyperplanes where this same function
  (with the $p$th root conventions)  takes on a value in
  $\frac{1}{p}\Z$.  Thus we only keep every $p$th one of these
  hyperplanes as a matter hyperplane.

  Each unrolled matter hyperplane separates the locus of $\eta$ such
  that a given weight vector in $V((t))$ lies in $U_\eta$ from the
  locus where is does not; the hyperplanes we keep in the $p$th root
  conventions are those that correspond to vectors in $V((t^p))$.
\end{remark}

\begin{example}
  Let us consider the running example from \cite{websterKoszulDuality2019}: the gauge
  group $G=GL(2)$ acting on $V=\C^2\oplus \C^2$.  The flavor group $F$
  is isomorphic to $PGL(2)$, so choosing a flavor is choosing a
  cocharacter into this group, fixing the difference between the
  weights of this cocharacter on the two copies of $\C^2$.  Let's
  consider $p=5$, and choose the flavor so that the difference is
  $\varphi_1^{\operatorname{mid}}-\varphi_3^{\operatorname{mid}}=3$.
  In Figure \ref{fig:pthroot}, we draw the images under $\acham\mapsto
  \acham_{1/p}$ of all unrolled matter hyperplanes, but draw those
  which do not remain as matter hyperplanes for the $p$th root data in
  gray and with thinner weight.
  \begin{figure}\label{fig:pthroot}
    \centering
    \[\tikz[very thick,scale=1.4]{ \draw[old]
    (1.16,2.5)-- (1.16,-3.5) node[scale=.5, at
    end,below]{$\varphi_3^{\operatorname{mid}}=-1/5$}  node[scale=.5, at
    start,above]{$\varphi_1^{\operatorname{mid}}=2/5$}; \draw[old] (2.5,1.16)-- (-3.5,1.16)
    node[scale=.5, at
    end,left]{$\varphi_4^{\operatorname{mid}}=-1/5$}
    node[scale=.5, at
    start,right]{$\varphi_2^{\operatorname{mid}}=2/5$};
\draw (2.16,2.5)-- (2.16,-3.5)
    node[scale=.5, at
    end,below]{$\varphi_3^{\operatorname{mid}}=0$}
    node[scale=.5, at
    start,above]{$\varphi_1^{\operatorname{mid}}=3/5$}; \draw (2.5,2.16)-- (-3.5,2.16)
    node[scale=.5, at
    end,left]{$\varphi_4^{\operatorname{mid}}=0$}
    node[scale=.5, at
    start,right]{$\varphi_2^{\operatorname{mid}}=3/5$};
\draw[old] (0.16,2.5)-- (0.16,-3.5)
    node[scale=.5, at
    end,below]{$\varphi_3^{\operatorname{mid}}=-2/5$} 
    node[scale=.5, at
    start,above]{$\varphi_1^{\operatorname{mid}}=1/5$}; \draw[old] (2.5,.16)-- (-3.5,.16) 
    node[scale=.5, at
    end,left]{$\varphi_4^{\operatorname{mid}}=-2/5$}
    node[scale=.5, at
    start,right]{$\varphi_2^{\operatorname{mid}}=1/5$}; \draw (-.84,2.5)-- (-.84,-3.5)
    node[gray,scale=.5, at
    end,below]{$\varphi_3^{\operatorname{mid}}=-3/5$}
    node[scale=.5, at
    start,above]{$\varphi_1^{\operatorname{mid}}=0$}; \draw (2.5,-.84)-- (-3.5,-.84) 
    node[gray,scale=.5, at
    end,left]{$\varphi_4^{\operatorname{mid}}=-3/5$}
    node[scale=.5, at
    start,right]{$\varphi_2^{\operatorname{mid}}=0$}; \draw[old] (-1.84,2.5)-- (-1.84,-3.5) node[scale=.5, at
    end,below]{$\varphi_3^{\operatorname{mid}}=-4/5$} 
    node[scale=.5, at
    start,above]{$\varphi_1^{\operatorname{mid}}=-1/5$}; \draw[old] (2.5,-1.84)-- (-3.5,-1.84) 
    node[scale=.5, at
    end,left]{$\varphi_4^{\operatorname{mid}}=-4/5$}
    node[scale=.5, at
    start,right]{$\varphi_2^{\operatorname{mid}}=-1/5$}; 

    \draw (-2.84,2.5)-- (-2.84,-3.5) node[scale=.5, at
    end,below]{$\varphi_3^{\operatorname{mid}}=-1$} 
    node[gray, scale=.5, at
    start,above]{$\varphi_1^{\operatorname{mid}}=-2/5$}; \draw (2.5,-2.84)-- (-3.5,-2.84) 
    node[scale=.5, at
    end,left]{$\varphi_4^{\operatorname{mid}}=-1$}
    node[gray, scale=.5, at
    start,right]{$\varphi_2^{\operatorname{mid}}=-2/5$}; \draw[dotted] (-3.5,-3.5) -- node[scale=.5, right,at
    end]{$\alpha=0$}(2.5,2.5); \draw[dotted,gray, thin] (-3.5,-2.5) --
    node[scale=.5, below left,at
    start]{$\alpha=1/5$}(1.5,2.5); \draw[dotted,gray, thin]
    (-2.5,-3.5) -- node[scale=.5, above right,at
    end]{$\alpha=-1/5$}(2.5,1.5); \draw[dotted,gray, thin] (-3.5,-1.5)
    -- node[scale=.5, below left,at
    start]{$\alpha=2/5$}(0.5,2.5); \draw[dotted,gray, thin]
    (-1.5,-3.5) -- node[scale=.5, above right,at
    end]{$\alpha=-2/5$}(2.5,0.5); \draw[dotted,gray, thin] (-3.5,-.5)
    -- node[scale=.5, below left,at
    start]{$\alpha=3/5$}(-.5,2.5); \draw[dotted,gray, thin] (-.5,-3.5)
    -- node[scale=.5, above right,at
    end]{$\alpha=-3/5$}(2.5,-.5); \draw[dotted,gray, thin] (-3.5,.5)
    -- node[scale=.5, below left,at
    start]{$\alpha=4/5$}(-1.5,2.5); \draw[dotted,gray, thin] (.5,-3.5)
    -- node[scale=.5, above right,at
    end]{$\alpha=-5$}(2.5,-1.5); \draw[dotted] (-3.5,1.5) --
    node[scale=.5, below left,at
    start]{$\alpha=1$}(-2.5,2.5); \draw[dotted] (1.5,-3.5) --
    node[scale=.5, above right,at end]{$\alpha=-1$}(2.5,-2.5);}\]
\caption{The effect of $p$th root conventions on matter hyperplanes}
\end{figure}
\end{example}

   \notation{${\rACp_{\Ba}}$}
        {The chamber $\rACp_{\Ba}=\{\xi \in \mathsf{t}_{1,\R}\mid a_i<\varphi_i^{\operatorname{mid}}(\xi)<a_i+1\text{
  for  $i=1,\dots, d$}\}$.}
We'll want to consider the affine chambers for $\Ba\in \Z^{d}$ as in Section \ref{eq:aff-cham}: 
\[\rACp_{\Ba}=\{\xi \in \mathsf{t}_{1,\R}\mid a_i<\varphimid_i(\xi)<a_i+1\}\label{eq:r-aff-cham}\]  
\[\ACs=\{\Ba\in \Z^{d}\mid \rACp_{\Ba}\neq 0\}.\] 
  The reader should note that we use the $p$th root conventions here.
Consider $\widehat{W}$, the extended affine Weyl group, acting on
$\mathsf{t}_{1,\To,\R}$ via the usual level 1 action. Note that if $w\in
\widehat{W}$ and $\rACp_{\Ba}\neq 0$ then $w\cdot
\rACp_{\Ba}=\rACp_{w\cdot \Ba}$ for a unique $w\cdot \Ba$, so this
defines a $\widehat{W}$ action on $\ACs$.

First, we note how the polynomial representation changes when we complete it to match
$ {\widehat{\mathscr{B}}_{\upsilon'}}$.  
Given $\upsilon$, we
let $\widehat{S}^{(\upsilon)}_1=\varprojlim
 {S_1}/\mathfrak{m}_\upsilon^N$.

\begin{proposition}\label{prop:hat-rep}
  The category $ {\mathscr{\widehat{B}}_{\upsilon'}}$ has a 
  representation $\mathscr{P}$ sending $(\acham, \upsilon) \mapsto
  \widehat{S}^{(\upsilon)}_1$ defined by the formulas (\ref{eq:ract}--\ref{eq:muact}), and one $\mathscr{F}$ sending 
  \[(\acham, \upsilon)\mapsto \bigoplus \operatorname{Frac}( \widehat{S}^{(\upsilon)}_1)\cdot [t^\la] \]
  where $\operatorname{Frac}(-)$ denotes the fraction field of a commutative ring, with morphisms acting as in (\ref{eq:ract-frac}--\ref{eq:muact-frac}).
\end{proposition}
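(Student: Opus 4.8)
The plan is to obtain $\mathscr{P}$ and $\mathscr{F}$ by $\mathfrak{m}_\upsilon$-adically completing the two representations of $\mathscr{B}$ already in hand: the polynomial representation $P$ of Theorem~\ref{thm:BFN-pres} with $P(\acham)=S_1\cdot[\VB_\acham]$, and the faithful representation $F$ of Lemma~\ref{lem:frac-rep} with $F(\acham)=\bigoplus_{\la\in X_*(T)}\operatorname{Frac}(S_1)\cdot[t^\la]$. Since the objects of $\widehat{\mathscr{B}}_{\upsilon'}$ are obtained from those of $\mathscr{B}$ by completing the subalgebra $S_1\subset\End_{\mathscr{B}}(\acham)$ at the maximal ideals $\mathfrak{m}_\upsilon$, and the morphism spaces by the corresponding two-sided completion, it suffices to check that $P$ and $F$ are continuous for these topologies, so that the action descends to the inverse limits defining the morphism spaces of $\widehat{\mathscr{B}}_{\upsilon'}$.

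First I would record the structural input: in both $P$ and $F$ the subalgebra $S_1\subset\End_{\mathscr{B}}(\acham)$ acts purely by multiplication (formulas (\ref{eq:muact}) and (\ref{eq:muact-frac})), so $\mathfrak{m}_{\upsilon}^N\cdot\Hom_{\mathscr{B}}(\acham_1,\acham_2)$ always acts into $\mathfrak{m}_\upsilon^N$ times the target; this disposes of the ``left'' factor in the completed Hom-space automatically. For the ``right'' factor I would run through the generators of Theorem~\ref{thm:BFN-pres}. The polynomials $\mu$ and the classes $r(\acham,\acham')$ act $S_1$-linearly and hence extend to $\widehat{S}^{(\upsilon)}_1$; the graphs $y_w$ satisfy $w\cdot\mathfrak{m}_\upsilon=\mathfrak{m}_{w\upsilon}$ and carry $\widehat{S}^{(\upsilon)}_1$ isomorphically onto $\widehat{S}^{(w\upsilon)}_1$, using that inside $\widehat{\mathscr{B}}_{\upsilon'}$ all weights lie in the single orbit $\widehat{W}\cdot\upsilon'$ and that the group genuinely acting is the $p$-scaled stabilizer $\widehat{W}_{\upsilon'}$ (cf.\ the discussion preceding Definition~\ref{def:pth-root}); and the classes $\Bpsi_\alpha$ act by the divided difference $\partial_\alpha=\alpha^{-1}(1-s_\alpha)$ of (\ref{eq:psiact}), resp.\ by (\ref{eq:psiact-frac}). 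This last case is the substantive one: when $\alpha$ is a relevant root for $\upsilon$ — so $s_\alpha\in\widehat{W}_{\upsilon'}$ and $\alpha\in\mathfrak{m}_\upsilon$ — the operator $\partial_\alpha$ preserves $\widehat{S}^{(\upsilon)}_1$ and shifts the $\mathfrak{m}_\upsilon$-filtration by exactly one, $\partial_\alpha(\mathfrak{m}_\upsilon^N)\subseteq\mathfrak{m}_\upsilon^{N-1}$; when $\alpha$ is not relevant, $\alpha$ becomes invertible in $\widehat{S}^{(\upsilon)}_1$ and $\Bpsi_\alpha$ acts invertibly, consistently with $\acham^+$ and $\acham^-$ becoming isomorphic objects of $\widehat{\mathscr{B}}_{\upsilon'}$.

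With these estimates the descent proceeds by a standard completion argument, as in \cite[\S 4]{WebSD}: because $S_1$ is Noetherian and the Hom-spaces are finitely generated over the relevant endomorphism rings (as in Section~\ref{sec:reps}), the ideal $\mathfrak{m}_{\upsilon_2}^N\Hom_{\mathscr{B}}(\acham_1,\acham_2)+\Hom_{\mathscr{B}}(\acham_1,\acham_2)\mathfrak{m}_{\upsilon_1}^N$ carries $\widehat{S}^{(\upsilon_1)}_1$ into $\mathfrak{m}_{\upsilon_2}^{c(N)}\widehat{S}^{(\upsilon_2)}_1$ with $c(N)\to\infty$, so the $\mathscr{B}$-action factors through the completed morphism spaces and assembles into a functor $\mathscr{P}$ with $\mathscr{P}(\acham,\upsilon)=\widehat{S}^{(\upsilon)}_1$; the relations (\ref{eq:dot-commute})--(\ref{eq:triple}) survive because they hold in $\mathscr{B}$ and all operators involved are continuous. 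The construction of $\mathscr{F}$ is word for word the same, applying $\operatorname{Frac}(-)$ throughout, handling the direct sum over $\la\in X_*(T)$ termwise, and using the faithfulness of Lemma~\ref{lem:frac-rep} to match formulas.

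The step I expect to be the main obstacle is exactly the bookkeeping for $\Bpsi_\alpha$: one must pin down which affine roots are relevant for a given $\upsilon$ — those $\alpha$ with $\alpha(\upsilon')$ in the appropriate cyclic subgroup, as recorded by $\Delta_\upsilon$ — and verify both that the non-relevant ones genuinely become invertible after completion and that the relevant ones keep their divided-difference action, all compatibly with the rescaling $\xi\mapsto\xi_{1/p}$ and with the identification $\upsilon'=\tilde{\phi}\bmod p$ of the reduced $W$-invariant flavor. This is the point where the characteristic-$p$ version of \cite[\S 4]{WebSD} genuinely has to be set up; the remainder is a transcription of the characteristic-$0$ argument there.
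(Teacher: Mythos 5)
The paper states Proposition \ref{prop:hat-rep} without proof, treating it as a routine consequence of completing the two representations already constructed (the polynomial representation of Theorem \ref{thm:BFN-pres} and the fraction-field representation of Lemma \ref{lem:frac-rep}); your proposal supplies exactly the argument that is being left implicit, and its overall structure --- continuity of the action for the $\mathfrak{m}_\upsilon$-adic topologies, checked generator by generator, with the relations persisting automatically because $\widehat{\mathscr{B}}_{\upsilon'}$ is \emph{defined} as a completion of $\mathscr{B}$ --- is the right one. Two points deserve correction or more care. First, the aside that ``$\Bpsi_\alpha$ acts invertibly'' when $\alpha$ is not relevant cannot be literally true, since $\Bpsi_\alpha^2=0$ by \eqref{eq:psi2}; what actually happens is that after completion the single morphism $\Bpsi_\alpha$ splits into a diagonal component $f\mapsto -f/\alpha$ (well defined because $\alpha\notin\mathfrak{m}_\upsilon$ is invertible in $\widehat{S}^{(\upsilon)}_1$) and an $s_\alpha$-twisted component landing in the summand for $s_\alpha\upsilon\neq\upsilon$, exactly as \eqref{eq:psiact-frac} records; it is the wall-crossing element $r$, not $\Bpsi_\alpha$, that becomes invertible and identifies $\acham^+$ with $\acham^-$. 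Second, your uniform estimate $c(N)\to\infty$ is verified only on generators, and since arbitrary morphisms are unbounded products of generators each of which may lower the filtration by one, generator-by-generator continuity does not by itself give uniformity over all of $\Hom_{\mathscr{B}}(\acham_1,\acham_2)$. The missing input is that this Hom space is free as a left $S_1$-module on a basis indexed by $\widehat{W}$ (the Galois-order structure the paper invokes via \cite{WebGT} right after defining $\widehat{\mathscr{B}}$), with each basis element shifting the filtration by a bounded amount; with that in hand the estimate, and hence the descent to the completed category, goes through for both $\mathscr{P}$ and $\mathscr{F}$.
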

Since $\widehat{S}^{(\upsilon)}_1$ is a profinite dimensional algebra,
we can decompose any element of it into its semi-simple and nilpotent
parts, by doing so in each quotient $S_1/\mathfrak{m}_\upsilon^N$.
The grading we seek on a dense subcategory of  $\widehat{\mathscr{B}}_{\upsilon'}$ is uniquely
fixed by a small list of  requirements:
\begin{enumerate}[label=(\roman*)]
\item For $\mu \in \ft^*$, the nilpotent part $\mu-\langle\mu,\upsilon\rangle$ of $\mu$
  acting on $\widehat{S}^{(\upsilon)}_1$ is homogeneous of degree 2.
\item The action of $\widehat{W}$ is homogeneous of degree 0.
\item If $\acham\in \rACp_{\Ba}$ and $\acham'\in \rACp_{\Bb}$ then the
  obvious isomorphism   $\mathscr{P}(\acham_p,\upsilon)\cong \widehat{S}^{(\upsilon)}_1\cong \mathscr{P}(\acham'_p,\upsilon)$
is homogeneous of degree $\sum_{i=1}^d a_i-b_i$.
\end{enumerate}
The principles (i-iii) fix a grading on $\mathscr{P}(\acham,\upsilon)$
for all $\acham\in \ft_{1,\To,\R}$ and $\upsilon\in \hat{W}\cdot
\upsilon'$ up to a global shift. 

We would like to use this to fix a notion of what it means
for a morphism in $\widehat{\mathscr{B}}_{\upsilon'}$ to be
homogeneous, however this is slightly complicated by the fact that
$\mathscr{P}$ is not faithful.  However, it can still be a useful guide
to the choice of an appropriate grading.  

Let $\hat \Phi_0(\acham,\acham',\upsilon')$ be the product of the terms
$\varphi^+_i-n$  over pairs $(i,n) \in [1,d]\times\Z$ such that we have the inequalities
\[ {\varphi_i^{\operatorname{mid}}}(\acham)>n\qquad \varphi_i^{\operatorname{mid}}(\acham')<n \qquad
\langle\varphi^+_i,\upsilon'\rangle\not\equiv n\pmod p.\]  Note that these are precisely the factors in the product $\Phi(\acham,\acham')$ which remain invertible after reduction modulo $p$.  
Consider the morphisms:
  \begin{equation*}
  \wall(\acham,\acham')=\frac{1}{\hat{\Phi}_0(\acham,\acham',\upsilon')}r(\acham,\acham)
 \end{equation*}
We'll check below that these morphisms satisfy the requirements of  homogeneous morphisms from (i-iii) above, and together
with a few other obvious homogeneous morphisms, they generate a dense
subspace inside morphisms.

\notation{${\sfB}$}{The extended BFN category with $p$th root conventions (Definition \ref{def:pth-root}).}

Consider the extended BFN category $\sfB$ with $p$th root conventions.  Since $h=0$, this category is graded with   
\begin{samepage}
    \newseq\begin{equation*}\subeqn
    \label{eq:Stein-grading1}
\deg  {r(\acham,\acham')}=\deg \Phi(\eta,\eta')+\deg \Phi(\eta',\eta)
\end{equation*}\begin{equation*}\subeqn
\deg w=0\qquad \deg u_\al(\Ba)=-2 \qquad  \deg
  \mu=2\label{eq:Stein-grading2}.
\end{equation*}
\end{samepage}
Note that here $\deg \Phi(\eta,\eta')$ should be interpreted in the grading on $ {S_0}$ where $\ft^*$ is concentrated in degree 1.  
We define a functor $\gamma_{\mathsf{B}}\colon \sfB\to
 {\mathscr{\widehat{B}}_{\upsilon'}}$ by sending
$\eta\mapsto (\eta_p+\upsilon',
\upsilon')$, and acting on morphisms by:
\notation{$\gamma_{\sfB}$}{The equivalence $\sfB\cong   {\mathscr{\widehat{B}}_{\upsilon'}}$. }
\newseq 
  \begin{align*}
    \gamma_{\mathsf{B}}( {r(\acham,\acham')})&=\wall(\eta_p+\upsilon',\eta_p'+\upsilon')= \frac{1}{\hat{\Phi}_0(\eta_p+\upsilon',\eta_p'+\upsilon',\upsilon')}r(\eta_p+\upsilon',\eta_p'+\upsilon')\subeqn \label{gamma1}\\
    \gamma_{\mathsf{B}} (w)&= w_p \subeqn \label{gamma2}\\
    \gamma_{\mathsf{B}}(u_\al) &=u_{\al^{(p)}}\subeqn \label{gamma3}\\
    \gamma_{\mathsf{B}}(\mu)&= \mu-\langle\mu,\upsilon'\rangle \subeqn \label{gamma4}
  \end{align*}
Note that since $w_p\acham_p=(w\acham)_p$ and $w_p\cdot \upsilon'=\upsilon'$, these morphisms go
between the correct objects.

\begin{remark}
  Just as discussed in \cite{websterKoszulDuality2019} below the proof of Lemma \ref{SD-lem:HC-functor}, this isomorphism has a natural
  geometric interpretation as localization to the fixed points of a
  group action.  In the characteristic zero case, we analyze the space
  corresponding to a weight in terms of the fixed points of the
  corresponding character; a version of this is explained in
  \cite[Th. 4.4]{WebGT}, generalizing work of Varagnolo-Vasserot \cite[\S
  2]{varagnoloDoubleAffine2010}. In characteristic $p$, we only obtain an isomorphism
  after completion with the fixed points of the $p$-torsion subgroup
  of this cocharacter.  Of course, these fixed points again give
  versions of the spaces appearing in the extended BFN category.
  While this is a beautiful perspective, we think it will be clearer,
  especially for the reader unused to the geometry of the affine
  Grassmannian, to give an algebraic proof.
\end{remark}

  \begin{proposition}\label{prop:B-equiv}
    The functor $\gamma_{\mathsf{B}}\colon \sfB\to  {\mathscr{\widehat{B}}_{\upsilon'}}$ is faithful, topologically full and essentially surjective; that is, it induces an equivalence $\sfBhat\cong  {\widehat{\mathscr{B}}_{\upsilon'}}$
  \end{proposition}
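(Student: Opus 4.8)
The plan is to follow the pattern of \cite[\S 4]{WebSD}, adapted to characteristic $p$: first check that $\gamma_{\mathsf{B}}$ is a well-defined functor, then prove it faithful, topologically full, and essentially surjective, so that the induced functor on completions is an equivalence. The first step is the bulk of the work. By Theorem \ref{thm:BFN-pres} it suffices to verify that the assignments \eqref{gamma1}--\eqref{gamma4} are compatible with the presentation's relations, read with $h=0$ on the source and $h=1$ on the target. The relations involving only the $w$'s, the $u_\al$'s and $S_h$ --- such as \eqref{eq:coweight2}, \eqref{eq:weyl2}, \eqref{eq:psi}, \eqref{eq:psiconjugate} --- follow formally from the facts that $w\mapsto w_p$ is a group isomorphism $\widehat{W}\xrightarrow{\sim}\widehat{W}_{\upsilon'}$ intertwining the actions on $\ft^*$ and sending $s_\al$ to $s_{\al^{(p)}}$, and that $\mu\mapsto\mu-\langle\mu,\upsilon'\rangle$ is affine-linear and equivariant for it. The relation \eqref{eq:weyl1} is the single place where the switch $h=0\rightsquigarrow h=1$ is felt: a translation $y_\zeta$ maps to $y_{p\zeta}$, so the target relation reads $y_{p\zeta}\bigl(\mu-\langle\mu,\upsilon'\rangle\bigr)y_{-p\zeta}=\mu-\langle\mu,\upsilon'\rangle+p\langle\zeta,\mu\rangle$, and $p\langle\zeta,\mu\rangle$ vanishes in $\Fp$, giving back $y_\zeta\mu y_{-\zeta}=\mu$.

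The delicate relations are \eqref{eq:wall-cross1}, \eqref{eq:psipoly} and \eqref{eq:triple}, all of which involve the classes $r$, and the normalization by $\hat\Phi_0$ in the definition of $\wall$ is precisely engineered to make them work. In each case, once the relevant $\hat\Phi_0$ factors are cancelled, the identity reduces to a single combinatorial statement: the factors $\varphi_i^+-n$ of $\Phi(\eta,\eta')$ (resp.\ of $\Phi(\eta,\eta',\eta'')$) that remain after passing to $\hat\Phi_0$-normalized classes, namely those with $\langle\varphi_i^+,\upsilon'\rangle\equiv n\pmod p$, are exactly the images under $\mu\mapsto\mu-\langle\mu,\upsilon'\rangle$ of the factors in the corresponding product computed with the $p$th-root conventions. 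This is where \eqref{eq:p-mid} is used: scaling $\varphi_i^{\operatorname{mid}}$ by $p$ carries the integer points of the $p$th-root matter arrangement bijectively onto the $\upsilon'$-relevant integer points of the original one, matching the multiplicity of each $\varphi_i^+$ term on the two sides. I expect this bookkeeping to be the main obstacle; the geometric interpretation of $\gamma_{\mathsf{B}}$ as localization to the $p$-torsion fixed points, mentioned in the Remark above, is a good consistency check but not something I would make rigorous.

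Faithfulness I would deduce from the faithful representations of Lemma \ref{lem:frac-rep} and Proposition \ref{prop:hat-rep}. The source $\mathsf{B}$ acts faithfully on $\bigoplus_\la\operatorname{Frac}(S_0)\cdot[t^\la]$ and $\widehat{\mathscr{B}}_{\upsilon'}$ on $\bigoplus_\la\operatorname{Frac}(\widehat{S}^{(\upsilon')}_1)\cdot[t^\la]$; the ring map $S_0\to\widehat{S}^{(\upsilon')}_1$, $\mu\mapsto\mu-\langle\mu,\upsilon'\rangle$, is the inclusion of a polynomial ring into its completion at $\upsilon'$, hence injective, and embeds the first representation space into the second. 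Comparing \eqref{eq:ract-frac}--\eqref{eq:muact-frac} on both sides --- using once more the ratio $\Phi/\hat\Phi_0$ together with $w_p\eta_p=(w\eta)_p$ and $(s_\al)_p=s_{\al^{(p)}}$ --- shows $\gamma_{\mathsf{B}}$ intertwines the $\mathsf{B}$-action with the pullback of the $\widehat{\mathscr{B}}_{\upsilon'}$-action. Since the $\mathsf{B}$-action is faithful and the ring map is injective, anything killed by $\gamma_{\mathsf{B}}$ acts by $0$ on $\bigoplus_\la\operatorname{Frac}(S_0)\cdot[t^\la]$ and is therefore zero.

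For topological fullness I would check that each topological generator of $\widehat{\mathscr{B}}_{\upsilon'}$ given by Theorem \ref{thm:BFN-pres} lies in the closed image of $\gamma_{\mathsf{B}}$: the class $y_w$ equals $\gamma_{\mathsf{B}}(y_{w'})$ for the unique $w'$ with $w'_p=w$; the affine roots crossing the hyperplanes relevant to $\widehat{W}_{\upsilon'}$ are exactly the $\al^{(p)}$, so $u_{\al'}=\gamma_{\mathsf{B}}(u_\al)$; any element of $\widehat{S}^{(\upsilon')}_1$ is a limit of polynomials from $S_1$, each of which is $\gamma_{\mathsf{B}}$ of an element of $S_0$ up to an additive constant, hence lies in the closure of the image; and $r(\eta_p+\upsilon',\eta'_p+\upsilon')=\hat\Phi_0\cdot\wall(\eta_p+\upsilon',\eta'_p+\upsilon')$ is a polynomial multiple of $\gamma_{\mathsf{B}}(r(\eta,\eta'))$, hence also in the closed image; compositions passing through objects with $\upsilon\neq\upsilon'$ are absorbed using the isomorphisms produced by essential surjectivity. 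For the latter, given an object $(\eta,\upsilon)$ with $\upsilon\in\upsilon'+\ft_{\Z/p\Z}$, pick $\xi\in\ft_\Z$ reducing to $\upsilon-\upsilon'$; then $y$ of the translation by $-\xi$ is an isomorphism $(\eta,\upsilon)\cong(\eta-\xi,\upsilon')$, and since $\eta\mapsto\eta_p+\upsilon'$ is an affine isomorphism taking the $p$th-root matter arrangement onto the $\upsilon'$-relevant one, $\eta-\xi$ lies in the same $\upsilon'$-relevant chamber as some $\eta''_p+\upsilon'$, with $r(\eta-\xi,\eta''_p+\upsilon')$ invertible after completion (no $\upsilon'$-relevant hyperplane being crossed), so $(\eta,\upsilon)\cong\gamma_{\mathsf{B}}(\eta'')$. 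Combining these three properties gives the equivalence $\widehat{\mathsf{B}}\cong\widehat{\mathscr{B}}_{\upsilon'}$.
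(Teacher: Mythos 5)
Your proposal follows the same essential path as the paper, but with a different division of labor. The paper does not verify the relations of Theorem \ref{thm:BFN-pres} one at a time; instead it makes a single computation showing that the proposed images of the generators, acting on the faithful fraction-field representation $\mathscr{F}$ of $\widehat{\mathscr{B}}_{\upsilon'}$, reproduce exactly the action of the $\mathsf{B}$-generators in the faithful representation of Lemma \ref{lem:frac-rep}, after identifying $\operatorname{Frac}(\widehat{S}^{(\upsilon')}_1)$ with $\operatorname{Frac}(\widehat{S}^{(0)}_0)$ via $\mu\mapsto\mu-\langle\mu,\upsilon'\rangle$. Well-definedness and faithfulness then drop out simultaneously, with the only real content being the matching of $\mathsf{\Phi}(\acham,\acham')$ with ${\Phi}/\hat{\Phi}_0$ via \eqref{eq:p-mid} --- the same combinatorial core you identify. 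Your front-loaded relation-by-relation check is legitimate but largely redundant with your faithfulness paragraph; note that it is only the latter that makes it rigorous, since the polynomial representation $\mathscr{P}$ is not faithful and a check there alone would not establish the delicate relations. Your observation about \eqref{eq:weyl1} and the vanishing of $p\langle\zeta,\mu\rangle$ in characteristic $p$ is a nice explicit point the paper leaves implicit.

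There is one genuine omission, in the fullness step. The category $\widehat{\mathscr{B}}_{\upsilon'}$ is topologically generated by $u_{\al'}$ for \emph{all} affine roots $\al'$, but only those of the form $\al^{(p)}$ (equivalently, those with $s_{\al'}\in\widehat{W}_{\upsilon'}$) are literally of the form $\gamma_{\mathsf{B}}(u_\al)$; your sentence only accounts for these. For $s_{\al'}\notin\widehat{W}_{\upsilon'}$ one has $\al'(\upsilon')\not\equiv 0\pmod p$, so $\al'$ acts invertibly after completion, $1/\al'$ lies in the closure of the image, and $u_{\al'}=\frac{1}{\al'}\bigl(s_{\al'}-r(s_{\al'}\acham,\acham)\bigr)$ does as well --- the same identity you already invoke in the faithfulness computation. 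This is a short fix, but without it the fullness claim is incomplete.
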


  \begin{proof}
  Consider the representation $\mathscr{F}$ of  $\widehat{\mathscr{B}}_{\upsilon'}$.  In order to confirm the result, we must show that the images above under $\gamma_{\mathsf{B}}$ satisfy the relations of $\sfB$ and define a faithful representation on $\mathscr{F}$. 
  
  The formula \ref{gamma4} identifies each summand of $\mathscr{F}$ with the fraction field of the completion of $\widehat{S}_0^{(0)}$ at the origin of $S_0$, and thus $\mathscr{F}(\eta_p,\upsilon')\cong \bigoplus \widehat{S}_0^{(0)}\cdot [t^\la]$.  Consider the images of the right-hand sides of (\ref{gamma1}--\ref{gamma4}) under transport of structure.  We wish to show that this agrees with the representation $\mathsf{K}$ of $\mathsf{B}$ defined in Lemma \ref{lem:frac-rep}.  
  
For the morphism $\gamma_{\mathsf{B}}(\mu)$, this is automatic.  For $\gamma_{\mathsf{B}}(w)$, this is an immediate consequence of the definition.  
  
Now, consider $r(\eta,\eta')$.  In its polynomial
representation, this element acts by
${\mathsf{\Phi}(\acham,\acham')}$, which is the product of $\varphi_i^+$ raised to the number of
integers $n$ satisfying ${\varphi}^{\operatorname{mid}}_i(\acham)< n<
{\varphi}^{\operatorname{mid}}_i(\acham')$.  This is sent under
$\gamma_{\mathsf{B}}$ to the shift $\varphi_i^+-\langle
\varphi^+,\upsilon'\rangle$, leaving the overall structure of the
product the same.  

Now note that by \eqref{eq:p-mid}, we have that
\[{\varphi}^{\operatorname{mid}}_i(\acham_p+\upsilon') =p \varphi^{\operatorname{mid}}_i(\acham)+\langle
\varphi^+,\upsilon'\rangle\qquad \langle \varphi^+,\acham_p+\upsilon'\rangle =p \langle \varphi^+,\acham\rangle+\langle
\varphi^+,\upsilon'\rangle.\]
Thus, we have that
\begin{equation}
 {\varphi_i^{\operatorname{mid}}}(\acham_p+\upsilon')>n\qquad \varphi_i^{\operatorname{mid}}(\acham_p'+\upsilon')<n \qquad
  \langle\varphi^+_i,\upsilon'\rangle\equiv n\pmod p\label{eq:hyperplanes1}
\end{equation}

if and only if we have that
$n_{1/p}=\frac{1}{p}(n-\langle\varphi^+_i,\upsilon'\rangle)\in \Z$ and 
\begin{equation}
 {\varphi_i^{\operatorname{mid}}}(\acham)>n_{1/p}\qquad
\varphi_i^{\operatorname{mid}}(\acham')<n_{1/p}\label{eq:hyperplanes2}
\end{equation}

Thus, we have an
equality  \[{\mathsf{\Phi}(\acham,\acham')}=\frac{{\Phi}(\acham_p-\upsilon',\acham'_p-\upsilon')}{\hat{\Phi}_0(\acham_p-\upsilon',\acham_p-\upsilon',\upsilon'')}\]
since the factors of the RHS are given by \eqref{eq:hyperplanes1} and
of the LHS by \eqref{eq:hyperplanes2}.
Note the difference between $\mathsf{\Phi}$ and $\Phi$ in the equation above, denoting the use of the $p$th root conventions on the left-hand side.  Thus, in $\bigoplus \widehat{S}_0^{(0)}$, we see that $\gamma_{\mathsf{B}}(r(\eta,\eta'))=\wall(\eta_p,\eta_p')$ acts by multiplication by $\mathsf{\Phi}(\acham,\acham')$, as expected.  Finally, $\gamma_{\mathsf{B}}(u_\al)$ must have the desired image because it can be written as $\frac{1}{\al}(s_{\al}-r(s_\al\acham,\acham))$ when it is well-defined and $\al$ is invertible in $\operatorname{Frac}(\widehat{S}_0^{(0)}).$  This shows that we have recovered $\mathsf{F}$.

Since $\mathsf{F}$ is a faithful representation, this shows that $\gamma_{\mathsf{B}}$ is well-defined and faithful.

Any object $(\acham, w\cdot \upsilon')$ is in the essential image of
the functor, since it was isomorphic to $(w^{-1}\cdot \acham,
\upsilon')$, and the map $(\cdot)_p$ is a bijection. 

Finally, we need to show that the image of the functor is dense.
Note that:   \begin{align*}
   {r(\acham,\acham')}  &=\hat{\Phi}_0(\acham,\acham',\upsilon')\wall(\acham,\acham')=
                            \hat{\Phi}_0(\acham,\acham',\upsilon')\gamma_{\mathsf{B}}(r((\acham-\upsilon')_{1/p},(\acham'-\upsilon')_{1/p}))\\
   w&=  \gamma_{\mathsf{B}} (w_{1/p})\\
   u_{\al^{(p)}}&=\gamma_{\mathsf{B}}(u_\al) \\
   \mu &=  \gamma_{\mathsf{B}}(\mu+\langle\mu,\upsilon'\rangle). 
  \end{align*}
We are only left the task of showing that $u_{\al}$ is
in the closure of the image of $\gamma_{\mathsf{B}}$ for $s_{\al}\notin \widehat{W}_{\upsilon'}$.  In this case, $\al$
thought of as an element of $S_1$ will act invertibly, so $1/\al$ lies in the closure of the image.   Thus, we can use the
formula $\frac{1}{\al}(s_{\al}-r(s_\al\acham,\acham))$ as before. This shows the density.
  \end{proof}

\begin{remark}\label{rem:coefficients}
  We should emphasize that the functor $\gamma_{\mathsf{B}}$ is only well-defined over a field of characteristic $p$, but the category $\sfB$  makes sense with coefficients in  an arbitrary commutative ring (in
  particular, $\Z$).  We'll write $\sfB(\K)$ when we wish to emphasize the choice of base field.  
\end{remark}
    
 \notation{${\mathsf{A}}$}{The subcategory  whose objects are $\upsilon'+\ft_{1,\Z}$ whose morphisms match those of $\sfB$ after reindexing (Definition \ref{def:sfA}).}
 \begin{definition}\label{def:sfA}
  We let $\sfA(\K)$ be the category whose objects are 
 the elements of $\upsilon'+\ft_{1,\Z}$, with morphisms 
 $\Hom_{ \mathsf{A}(\K)}(\xi,\xi')\cong
 \Hom_{\mathsf{B}(\K)}((\zero-\xi)_{1/p}, (\zero-\xi')_{1/p})$, and
 $\sfAhat (\K)$ its completion with respect to the grading. 
\end{definition}
 Note that we have broken a bit from our convention of using sans-serifizing (which would suggest that $\mathsf{A}$ should be the ring $\EuScript{A}_0$); the reason for this will be clearer below.

Since objects in $\mathsf{A}(\K)$ that differ by the level $p$ action of $\What$ are
isomorphic, we could also consider only the elements  of
$-\upsilon'+\ft_{1,\Z}$ in a fundamental region for the action of this group.  However, it's more convenient to describe the elements $\wall$ in the unrolled picture.

Note that in $ {\widehat{\mathscr{B}}_{\upsilon'}}$,  the translation by
$\upsilon'-\upsilon $ induces an isomorphism
$(\zero ,\upsilon)\cong (\zero-\upsilon+\upsilon', \upsilon')$.  
Conjugating the functor $\gamma_{\mathsf{B}}$ by this isomorphism induces a functor $\gamma\colon
\widehat{\mathsf{A}} \to\widehat{\mathscr{A}}_{\upsilon'} $ sending $\gamma(\upsilon)=(\zero,\upsilon)$.  Proposition \ref{prop:B-equiv} immediately implies that:
\begin{theorem}\label{thm:pStein-equiv}
  The categories $\mathscr{\widehat{A}}_{\upsilon'}$ and
  $\widehat{\mathsf{A}}$ are equivalent via the functors
 \[\tikz[->,thick]{
\matrix[row sep=12mm,column sep=35mm,ampersand replacement=\&]{
\node (d) {$\widehat{\mathsf{A}}$}; \& \node (e)
{$\sfBhat$}; \\
\node (a) {$\widehat{\mathscr{A}}_{\upsilon'}$}; \& \node (b)
{$\widehat{\mathscr{B}}_{\upsilon'}$}; \\
};
\draw (a) -- (b) ; 
\draw (d) -- (a) node[left,midway]{$\gamma$} ; 
\draw (e) -- (b) node[right,midway]{$\gamma_{\mathsf{B}}$}; 
\draw (d) -- (e) node[above,midway]{$\upsilon\mapsto (\zero-\upsilon)_{1/p}$}; 
}\]
\end{theorem}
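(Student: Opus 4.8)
The plan is to deduce the statement directly from Proposition~\ref{prop:B-equiv} by restricting the equivalence $\gamma_{\mathsf{B}}$ to appropriate full subcategories and tracking objects. By Definition~\ref{def:sfA}, the rule $\upsilon\mapsto(\zero-\upsilon)_{1/p}$ is a fully faithful functor $\mathsf{A}_p(\K)\hookrightarrow\mathsf{B}(\K)$ onto a full subcategory, since the morphism spaces of $\mathsf{A}_p$ are \emph{defined} to be the corresponding morphism spaces of $\mathsf{B}$; completing with respect to the grading identifies $\widehat{\mathsf{A}}_p$ with a full subcategory of $\widehat{\mathsf{B}}$. So the content of the theorem is that $\gamma_{\mathsf{B}}$, restricted along this inclusion and post-composed with suitable isomorphisms, carries $\widehat{\mathsf{A}}_p$ equivalently onto the full subcategory $\widehat{\mathscr{A}}_{\upsilon'}\subset\widehat{\mathscr{B}}_{\upsilon'}$.

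First I would trace objects. Since $\gamma_{\mathsf{B}}$ sends $\eta\mapsto(\eta_p+\upsilon',\upsilon')$ and $((\cdot)_{1/p})_p=\id$, the object $(\zero-\upsilon)_{1/p}$ of $\widehat{\mathsf{B}}$ is carried to $(\zero-\upsilon+\upsilon',\upsilon')$. By the observation recorded immediately before the theorem, translation by $\upsilon'-\upsilon$ gives an isomorphism $(\zero,\upsilon)\cong(\zero-\upsilon+\upsilon',\upsilon')$ in $\widehat{\mathscr{B}}_{\upsilon'}$. Conjugating $\gamma_{\mathsf{B}}$ by this family of isomorphisms produces a functor $\gamma$ with $\gamma(\upsilon)=(\zero,\upsilon)$, landing in $\widehat{\mathscr{A}}_{\upsilon'}$ and making the square of the statement commute up to natural isomorphism by construction; this is the functor $\gamma$ named in the statement.

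Next I would verify that $\gamma$ is fully faithful and essentially surjective. Faithfulness and topological fullness are inherited from $\gamma_{\mathsf{B}}$ (Proposition~\ref{prop:B-equiv}): restricting a faithful, topologically full functor to a full subcategory preserves both properties, and post-composing with isomorphisms preserves them as well, so $\gamma$ induces a fully faithful functor after completion. For essential surjectivity, the objects of $\widehat{\mathscr{A}}_{\upsilon'}$ are the $(\zero,\upsilon)$ with $\upsilon\in\upsilon'+\ft_{\Z/p\Z}$ (equivalently $\upsilon\in\upsilon'+\ft_\Z$, with translation by $p\ft_\Z$ acting by isomorphisms), and since reduction modulo $p$ sends $\upsilon'+\ft_{1,\Z}$ onto this set, every such object equals $\gamma(\upsilon)$ for an object $\upsilon$ of $\widehat{\mathsf{A}}_p$. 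Hence $\gamma$ is an equivalence; since $\gamma_{\mathsf{B}}$ is an equivalence $\widehat{\mathsf{B}}\cong\widehat{\mathscr{B}}_{\upsilon'}$, the square consists of fully faithful horizontal inclusions together with equivalences $\gamma,\gamma_{\mathsf{B}}$, commuting up to natural isomorphism.

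The only delicate point — the ``main obstacle,'' insofar as there is one — is the compatibility of the two completions: $\widehat{\mathsf{A}}_p$ is built by completing with respect to the grading, whereas $\widehat{\mathscr{A}}_{\upsilon'}$ sits inside the $\mathfrak{m}_{\upsilon}$-adic completion $\widehat{\mathscr{B}}_{\upsilon'}$. This compatibility, however, is precisely what Proposition~\ref{prop:B-equiv} already establishes at the level of $\widehat{\mathsf{B}}$ and $\widehat{\mathscr{B}}_{\upsilon'}$, and it restricts to the full subcategories without any further argument; everything else is the routine object-bookkeeping above.
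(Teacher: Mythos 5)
Your proposal is correct and follows the paper's own route: the paper likewise obtains $\gamma$ by conjugating $\gamma_{\mathsf{B}}$ with the translation isomorphisms $(\zero,\upsilon)\cong(\zero-\upsilon+\upsilon',\upsilon')$ and then declares the theorem an immediate consequence of Proposition~\ref{prop:B-equiv}. You have merely spelled out the object-tracking and the restriction-to-full-subcategories bookkeeping that the paper leaves implicit.
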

Note that the square commutes when applied to objects since
 \[\tikz[->,thick]{
\matrix[row sep=12mm,column sep=35mm,ampersand replacement=\&]{
\node (d) {$\upsilon$}; \& \node (e)
{$(\zero-\upsilon)_{1/p}$}; \\
\node (a) {$(\zero,\upsilon)$}; \& \node (b)
{$(\zero,\upsilon)\cong (\zero-\upsilon+\upsilon',\upsilon')$}; \\
};
\draw (a) -- (b) ; 
\draw (d) -- (a) ; 
\draw (e) -- (b); 
\draw (d) -- (e); 
}\]

\subsection{Consequences for representation theory}
\label{sec:cons-repr-theory}
Theorem \ref{thm:pStein-equiv} on its own has a quite interesting
consequence for the behavior of the finite-dimensional
representations of $ {\EuScript{A}_\phi}$ for
different primes $p$ with  $\flav/p$  ``held constant.''  For simplicity in this section, we only consider the case where $\K=\mathbb{F}_p$, though the results could be
generalized without much difficulty.  As we discussed in Remark \ref{rem:coefficients}, the
category $\sfB$ has relations which are independent of $p$.
This allows us to compare the representations of $\EuScript{A}_\phi$,
by matching them with the representations of $\mathsf{B}.$

     \notation{${\Lambda}$}
    {The set of vectors such that $\rACp_{\Ba}$
  contains $\xi_{1/p}$ for $\xi\in \ft_{1,\Z}$.}
 \newcommand{\barLambda}{\bar{\Lambda}}
\begin{definition}\label{def:Lambda}
  Let $\Lambda\subset \Z^{d}$ be the vectors such that $\rACp_{\Ba}$
  contains $(\zero-\xi)_{1/p}$ for $\xi\in \phi+\ft_{1,\Z}$.   Let $\barLambda$ be the quotient of this set by the action of $\What$ on $\Lambda$.
\end{definition}
Note that if we wish to compare different primes, it is perhaps more natural to consider the dependence of
this set on
$\psi=\phi_{1/p}$.  The sets $\Lambda,\bar{\Lambda}$ 
  make sense for an arbitrary $\psi \in \R^{d}$.

The set $\bar \Lambda$ is finite; its size is bounded above by the number
of collections of weights $\vp_i$ which form a basis of $\ft^*$.   We
can then divide up choices of $\phi$ according to what the
corresponding set $\bar \Lambda$ is.  Given  $\Ba\in \Z^d$, we let
$\bar{\Ba}$ be its $\widehat{W}$-orbit.
\notation{ $ {\mathsf{B}^{\bar \Lambda}(\K)}$
}{The category with objects given by chambers in $\barLambda$, and morphisms by morphisms between points in the relevant chambers in $\sfB$.}
\begin{definition}\label{def:BLam}
  For a fixed $\barLambda$, we let $ {\mathsf{B}^{\bar \Lambda}(\K)}$
  for any commutative ring $\K$ be the category with object set
  $\bar \Lambda$ such that
  $\Hom_{\mathsf{B}^{\bar
      \Lambda}(\K)}(\bar{\Ba},\bar{\Bb})=\Hom_{\mathsf{B}(\K)}(\eta_\Ba,\eta_\Bb)$,
  for $\eta_\Ba$ an arbitrary element of the chamber $\rACp_{\Ba}$.

  We can also encapsulate this in the graded ring
  \begin{equation}
  A(\K)=\bigoplus_{\bar{\Ba},\bar{\Bb}\in
    \bar\Lambda^{\R}}\Hom_{ {\mathsf{B}^{\bar{\Lambda}^{\R}}(\K)}}(\bar{\Ba},\bar{\Bb}),\label{eq:A-def}
\end{equation}
\notation{$A$}{The noncommutative resolution of $\fM$ constructed by quantization.  Also, the sum of morphisms in $ {\mathsf{B}^{\bar \Lambda}(\Q)}$.}
\end{definition}
Let $e(\bar{\Ba})\in A(\K)$ be the identity morphism on $\bar{\Ba}$.  Since the center $Z(A(\K))$ is of finite
codimension, the algebra $A(\K)$ has finitely many graded simple
modules, all of which are finite-dimensional.  Each such simple for
$\K=\Q$ has a $\Z$-form, which remains irreducible mod $p$ for all but finitely
many $p$.  That is:
\begin{theorem}\label{thm:which-polytope}
For a fixed $\bar\Lambda$ and  $p\gg 0$, there is a bijection
between homogeneous simple $A(\K)$-modules $L$ and simple finite-dimensional $\EuScript{A}_{\phi}$-modules $L(p)$ for all $\flav\in \Z^d$ with $\bar
\Lambda(\flav)=\bar \Lambda$.   Under this bijection,  each weight space of
$L(p)$ for a weight $\upsilon$ with $\upsilon_{1/p}\in \rACp_{\Ba}$ is the same
dimension as the $\Q$-vector space $L(\Ba)=e(\bar \Ba)L$.  
\end{theorem}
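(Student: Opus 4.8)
The statement essentially formalizes the discussion preceding it. The plan is to use the equivalences of Sections~\ref{sec:reps}--\ref{sec:homo} to turn finite-dimensional $\EuScript{A}_{\flav}$-modules into graded $A(\Fp)$-modules, and then to compare $A(\Fp)$ with $A(\Q)$ by a reduction-mod-$p$ argument. So fix $\flav\in\Z^{d}$ with $\bar\Lambda(\flav)=\bar\Lambda$, take $p\gg 0$ ($p$ not a torsion prime for $G$ and $p\nmid\#W$), and put $\upsilon'=\tilde\phi\bmod p$. In characteristic $p$, $\EuScript{A}_{\flav}$ is module-finite over its centre, so all of its simples are finite-dimensional; restricted to $S_{1}=\Fp[\tilde\ft]$ such a module decomposes into generalized weight spaces, and since $\ft_{\Z}\subset\widehat W$ reduces onto $\ft_{\Fp}$, the affine Weyl group acts transitively on the $\ft_{\Fp}$-torsor $\ft_{1,\Fp}$, so the weights automatically lie in the single orbit $\widehat W\cdot\upsilon'$; that is, every finite-dimensional $\EuScript{A}_{\flav}$-module lies in $\EuScript{A}_{\flav}\mmod_{\upsilon'}$. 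By the Drozd--Futorny--Ovsienko-type equivalence of Section~\ref{sec:reps} this category is equivalent to the representations of $\widehat{\mathscr{A}}_{\upsilon'}$ in $\Fp$-vector spaces, and by Theorem~\ref{thm:pStein-equiv} to the representations of $\widehat{\mathsf{A}}_{p}(\Fp)$. Since $\widehat{\mathsf{A}}_{p}$ is the completion with respect to the grading of the $\Z$-graded category $\mathsf{A}_{p}(\Fp)\simeq\mathsf{B}^{\bar\Lambda}(\Fp)=A(\Fp)$ (Definition~\ref{def:BLam}), and over such an algebra every finite-dimensional simple is gradable, the simple $\EuScript{A}_{\flav}$-modules are in bijection, up to grading shift, with the simple graded $A(\Fp)$-modules.

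Next I would compare $A(\Q)$ with $A(\Fp)$. By Remark~\ref{rem:coefficients} the category $\mathsf{B}^{\bar\Lambda}$, and hence $A$, is defined already over $\Z$ with structure constants depending only on the combinatorial datum $\bar\Lambda$ --- not on $p$, and not on the particular $\flav$ realizing $\bar\Lambda$; moreover $Z(A(-))$ has finite codimension, so $A(\Z)$ is module-finite over a finitely generated central subring and $A(\Q)$ has finitely many simple graded modules. Each such simple $L$ admits an $A(\Z)$-stable graded lattice $L_{\Z}$, and $L_{\Z}\otimes_{\Z}\Fp$ remains simple over $A(\Fp)$ outside a finite set of primes --- those dividing one of finitely many nonzero integers: the pivots exhibiting $L_{\Z}$ as cyclic, together with a determinant witnessing that the pairings between distinct simple $A(\Q)$-modules are nondegenerate. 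Conversely, the number of geometrically simple graded modules of $A(k)$ is a constructible function of $\Spec k\to\Spec\Z$, hence equals its generic value for $p\gg 0$, so no $A(\Q)$-simple can split or disappear modulo $p$. Combining these, we obtain a bijection $L\mapsto L(p)$ between the simple graded $A(\Q)$-modules and the simple graded $A(\Fp)$-modules for all $p\gg 0$, with the exceptional set of primes independent of $\flav$ precisely because $A(\Z)$ is; composing with the equivalence of the first paragraph gives the asserted bijection, simultaneously for all $\flav$ with $\bar\Lambda(\flav)=\bar\Lambda$.

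The weight-space statement is then pure bookkeeping through the chain. The $\upsilon$-weight space of a module $N$ in $\EuScript{A}_{\flav}\mmod_{\upsilon'}$ is, by construction of the equivalence of Section~\ref{sec:reps}, the value at the object $(\zero,\upsilon)$ of the corresponding representation of $\widehat{\mathscr{A}}_{\upsilon'}$; since the functor $\gamma$ of Theorem~\ref{thm:pStein-equiv} sends the object $\upsilon$ to $(\zero,\upsilon)$, this equals the value at $\upsilon\in\upsilon'+\ft_{1,\Z}$ of the corresponding $\widehat{\mathsf{A}}_{p}$-representation, and by Definitions~\ref{def:sfA} and~\ref{def:BLam} that object is identified with the object $\bar{\Ba}$ of $A$, where $\Ba$ is the chamber with $\upsilon_{1/p}\in\rACp_{\Ba}$. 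Evaluation at an object is multiplication by an idempotent, hence carries the lattice $L_{\Z}$ to a $\Z$-direct summand; therefore $\dim_{\Fp}L(p)(\bar{\Ba})=\operatorname{rank}_{\Z}L_{\Z}(\bar{\Ba})=\dim_{\Q}L(\bar{\Ba})=\dim_{\Q}L(\Ba)$, as claimed.

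The main obstacle is the second step, specifically the uniformity in $\flav$ and the matching of the \emph{total counts} of simples on the two sides: ``$L_{\Z}\otimes\Fp$ stays simple for $p\gg 0$'' is the routine half, but ensuring that no simple over $A(\Q)$ splits, merges, or fails to lift --- so that the bijection is onto the full set of simples of $\EuScript{A}_{\flav}$ --- requires the constructibility (equivalently, Cartan-matrix) input, and the whole argument is meaningful only because the integral category $\mathsf{B}^{\bar\Lambda}$ depends on $\flav$ solely through $\bar\Lambda$, so that one finite exceptional set of primes works for all such $\flav$ at once.
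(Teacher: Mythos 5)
Your proposal is correct and follows essentially the same route as the paper: reduce via the equivalence of Theorem \ref{thm:pStein-equiv} to graded modules over $\mathsf{B}^{\bar\Lambda}(\mathbb{F}_p)$, then match these with simples of $\mathsf{B}^{\bar\Lambda}(\Q)$ by reducing invariant lattices mod $p$, and track weight spaces through idempotents. You simply make explicit several steps the paper leaves implicit (finite-dimensionality of all simples in characteristic $p$, gradability, and surjectivity of the reduction map $L\mapsto L(p)$), which is a welcome expansion rather than a divergence.
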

\begin{proof}
  As discussed above, for $p\gg 0$, the simple graded representations of $A(\K)$ are given by reductions mod $p$ of an arbitrary invariant lattice of the simples $L$ of $\mathsf{B}^{\bar \Lambda}(\Q)$.  This clearly preserves the dimension of the vector space assigned to an object $\Ba$.  Under the equivalence of Theorem \ref{thm:pStein-equiv}, the $\upsilon$ weight space of a $\EuScript{A}_{\phi}$-module matches the vector space assigned $\Ba$ defined as before in the $\mathsf{B}^{\bar \Lambda}({\mathbb{F}_p})$-module.  
\end{proof}
Thus, the dimension of $L(p)$ only depends on the number of weights of
$\EuScript{A}_{\phi}$  with $\upsilon_{1/p}\in \rACp_{\Ba}$: it is the sum of the
dimensions of the 
spaces $L(\Ba)$ weighted by this count of integral points in a
polytope.  By the usual quasi-polynomiality of Erhart polynomials, we have that:
\begin{corollary} Fix $\barLambda$ and  let $p$ and $\phi$ vary over values where $\bar{\Lambda}(\phi)=\bar{\Lambda}$.  
  For $p\gg 0$, the dimension of $L(p)$ is a quasi-polynomial function of $\psi=\flav_{1/p}$ and
  $p$.  
\end{corollary}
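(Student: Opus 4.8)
The plan is to turn the preceding theorem into an explicit weighted count of lattice points in a polytope and then invoke the quasi-polynomiality of Ehrhart functions. Fix a simple $L$ of $\mathsf{B}^{\bar\Lambda}(\Q)$. For $p\gg 0$, so that the preceding theorem applies, I would first write
\[
\dim L(p)=\sum_{\bar\Ba\in\bar\Lambda}\bigl(\dim_\Q L(\bar\Ba)\bigr)\cdot N_{\bar\Ba}(p,\psi),
\]
where $\dim_\Q L(\bar\Ba)$ is the dimension of the space the fixed module $L$ assigns to $\bar\Ba$ --- a constant, depending only on $\bar\Lambda$ and not on $p$ or $\phi$ --- and $N_{\bar\Ba}(p,\psi)$ is the number of weights $\upsilon$ of the $\EuScript{A}_\phi$-module $L(p)$ whose image $\upsilon_{1/p}$ lies in one of the chambers $\rACp_{\Ba}$ making up the $\widehat W$-orbit $\bar\Ba$. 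This is justified by the preceding theorem, since each such weight contributes exactly $\dim_\Q L(\bar\Ba)$ to the dimension; and the sum is finite because over a field of characteristic $p$ the affine space $\ft_{1,\Fp}$, and hence the weight set of any $\EuScript{A}_\phi$-module, is finite.

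Next I would identify each $N_{\bar\Ba}$ with an Ehrhart count. Lifting weights to characteristic $0$, the map $\upsilon\mapsto\upsilon_{1/p}$ is an affine-linear rescaling that carries the finite weight set bijectively onto the points of the lattice coset $\psi+\tfrac1p\ft_\Z$ lying in a fixed fundamental domain for $\ft_\Z$; moreover the lattice $\psi+\tfrac1p\ft_\Z$ is $\widehat W$-invariant, since $\phi$, and hence $\psi$, may be chosen $W$-invariant and $\ft_\Z\subseteq\widehat W$ acts by translations. Consequently, as $\bar\Ba$ is a single $\widehat W$-orbit of chambers, $N_{\bar\Ba}$ is a fixed non-negative integer combination of the counts $\#\bigl((\psi+\tfrac1p\ft_\Z)\cap\rACp_{\Ba}\bigr)$ over the finitely many chambers $\rACp_{\Ba}$ in this orbit that meet the chosen fundamental domain --- for generic $\psi$, just a single such count scaled by a fixed positive integer. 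Rescaling by $p$ converts $\#\bigl((\psi+\tfrac1p\ft_\Z)\cap\rACp_{\Ba}\bigr)$ into $\#\bigl(\ft_\Z\cap(p\,\rACp_{\Ba}-\phi)\bigr)$; and since $\rACp_{\Ba}$ is, inside a fixed real vector space, the translate of a fixed rational polytope by a vector affine-linear in $\psi$, while $p\psi=\phi\in\Z^d$, we may rewrite $p\,\rACp_{\Ba}-\phi=p\,P_{\Ba}+\ell_{\Ba}(\phi)$ for a fixed rational polytope $P_{\Ba}$ and a fixed linear map $\ell_{\Ba}$. By the usual quasi-polynomiality of Ehrhart functions of rational polytopes --- in the version allowing simultaneous dilation by $p$ and translation by a lattice vector --- each such count is a quasi-polynomial in $(p,\phi)$, hence in $(p,\psi)$; summing with the fixed coefficients $\dim_\Q L(\bar\Ba)$ over the finitely many contributing $\bar\Ba$ yields the corollary. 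The hypothesis $\bar\Lambda(\phi)=\bar\Lambda$ is exactly what keeps the set of contributing chambers locally constant as $(p,\phi)$ vary, so that this finite sum makes uniform sense.

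The step I expect to be the main obstacle is the combinatorial bookkeeping of the second paragraph: matching the intrinsic quantity ``number of weights $\upsilon$ with $\upsilon_{1/p}$ in the $\widehat W$-orbit of $\rACp_{\Ba}$'' with an honest count of lattice points in a single polytope. The awkwardness is that $\rACp_{\Ba}$ is a union of alcoves, not a fundamental domain for $\widehat W$, so one must keep careful track of the orbit structure and the finite stabilizers $\widehat W_{\Ba}$, exclude the (generically absent) degeneracies where a weight or a chamber wall sits in a special position, and check that the resulting sum over chambers is genuinely finite and uniform as $(p,\phi)$ range over the locus on which $\bar\Lambda$ is held fixed. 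None of this is deep: once the combinatorics is pinned down, the reduction in the first paragraph is immediate from the preceding theorem, and the final step is a black-box appeal to Ehrhart theory.
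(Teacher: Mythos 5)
Your proposal is correct and follows essentially the same route as the paper: the paper's argument is precisely the observation that $\dim L(p)$ is the sum of the dimensions $\dim_\Q L(\Ba)$ weighted by the count of integral weights with $\upsilon_{1/p}\in \rACp_{\Ba}$, followed by an appeal to the quasi-polynomiality of Ehrhart counts. Your version simply spells out the lattice-point bookkeeping that the paper leaves implicit.
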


 \section{Relation to geometry}
\label{sec:geometry}
Now, we turn to relating this approach to the study of coherent
sheaves on resolved Coulomb branches.   Recall that $\Coulomb$ is an affine algebraic variety, and in this section, we will study a BFN resolution of this variety $\tM$, given by the GIT quotient of $\MQ$ with respect to a character $\chi\colon K\to \C^*$.

Throughout this section, we will
take the convention that $\widehat{\mathscr{A}}_*$
or $\EuScript{A}_*$ with $*\in \{h,0,1\}$ denotes the category $ {\mathscr{A}_\phi}$ or algebra
$ {\EuScript{A}_\phi}$ with $\phi$ left implicit, and $h$ left as a formal variable, or specialized to
be $0$ or $1$ (depending on the subscript).  

\subsection{Frobenius constant quantization}

Recall that a quantization $R_h$ of a $\K$-algebra $R_0$ is called {\bf Frobenius constant} if there is a multiplicative map $\sigma\colon
R_0\to R_h$ congruent to the Frobenius map
modulo $h^{p-1}$.  

In the case of the quantum Coulomb branch, the Frobenius constancy of the quantization was recently proven by Lonergan.
\begin{theorem}[\mbox{\cite[Thm. 1.1]{lonerganSteenrodOperators2021}}]\label{thm:lonergan}
  There is a ring homomorphism $\sigma\colon  {\EuScript{A}_0^{\operatorname{sph}}}\to   {\EuScript{A}_h^{\operatorname{sph}}}$
  making $\EuScript{A}_h^{\operatorname{sph}}$ into a FCQ for $\EuScript{A}_0^{\operatorname{sph}}$.
\end{theorem}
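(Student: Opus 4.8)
The plan is to build $\sigma$ from the Steenrod ``total power operation'' attached to the BFN convolution diagram, with the loop-rotation $\C^*$ (the circle whose equivariant parameter is $h$) playing the role of the quantization direction. Recall that $\EuScript{A}_h^{\operatorname{sph}}=H_*^{BM,\wtG}(\mathcal{R};\Fp)$, where $\mathcal{R}=\EuScript{Y}_{V[[t]]}\times_{V((t))}\EuScript{Y}_{V[[t]]}$ is the BFN convolution space and $\wtG\supset\C^*$, and that $\EuScript{A}_0^{\operatorname{sph}}=\EuScript{A}_h^{\operatorname{sph}}/(h)\cong\Fp[\fM]$. Assuming $p$ odd, the first step is to set up, in the ind-scheme setting of the affine Grassmannian, an extended-power construction: form $\mathcal{R}^{\times p}$ with its cyclic $\Z/p$-action, pass to $D_p\mathcal{R}=E\Z/p\times_{\Z/p}\mathcal{R}^{\times p}$, and use the total power operation to produce a natural map $\mathbb{P}\colon H_*^{BM,\wtG}(\mathcal{R})\to H_*^{BM}\bigl(D_p\mathcal{R}\bigr)$ (suitably equivariant and completed) which, restricted along the $\Z/p$-fixed diagonal $\Delta\colon \mathcal{R}\hookrightarrow \mathcal{R}^{\times p}$, acquires coefficients in $H^*(B\Z/p;\Fp)=\Fp[e]\otimes\Lambda[\beta]$ with $\deg e=2$, $\deg\beta=1$.

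The crux is the geometric identification that turns the Steenrod parameter $e$ into the loop parameter $h$: one shows that $\mathcal{R}^{\times p}$, with its cyclic $\Z/p$-symmetry, is identified with (a version of) the BFN convolution space for $(G,V)$ in which the formal loop coordinate $t$ is replaced by a $p$-th root $t^{1/p}$, so that the Steenrod $\Z/p$ becomes the $p$-torsion subgroup of a loop-rotation circle. This is precisely the geometry already underlying the $p$-th-root conventions of Definition \ref{def:pth-root} and the Frobenius $\frob_V$. Under this identification $e$ is carried, up to a nonzero scalar, to $h$, the odd generator $\beta$ does not contribute to $\sigma$ for parity reasons, and one obtains a well-defined map $\sigma\colon \EuScript{A}_0^{\operatorname{sph}}\to \EuScript{A}_h^{\operatorname{sph}}$.

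That $\sigma$ is a \emph{ring} homomorphism, and not merely additive, follows because the BFN product on $\EuScript{A}^{\operatorname{sph}}$ is induced by an explicit correspondence whose $p$-fold external product is $\Z/p$-equivariant and realizes convolution on $\mathcal{R}^{\times p}$; hence the total power operation intertwines $\star$ with $\star$, and this survives $\Delta^*$ and the substitution $e\mapsto h$, $\beta\mapsto 0$. The congruence $\sigma(r)\equiv r^{p}\pmod{h^{p-1}}$ is then the standard ``leading term is Frobenius'' property of power operations: for $r$ of cohomological degree $2n$ one has $\Delta^*\mathbb{P}(r)=\sum_{i=0}^{n}P^{i}(r)\,e^{(n-i)(p-1)}$ modulo $\beta$-terms, with $P^{n}(r)=r^{p}$, so that modulo $e^{p-1}$---equivalently modulo $h^{p-1}$---only the Frobenius term $r^{p}$ remains.

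The main obstacle is technical but real. First, one must make the extended-power/Steenrod machinery rigorous for the ind-pro-finite-type objects that appear (the BFN space is an increasing union of finite-dimensional pieces inside an infinite-dimensional affine Grassmannian, with equivariance along the pro-algebraic group $G[[t]]\rtimes\C^*$), so that Borel--Moore homology, the power operation, and its compatibility with convolution all behave correctly under the relevant limits and colimits. Second---and more delicately---one must establish the identification of $(\mathcal{R}^{\times p},\Z/p)$ with the $p$-th-root loop geometry precisely enough to prove that the Steenrod class $e$ is genuinely matched with the loop parameter $h$ rather than some unrelated degree-$2$ class, compatibly with convolution and with the passage between the Iwahori and spherical ($e_{\operatorname{sph}}$) descriptions. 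Verifying that $\sigma$ is independent of the auxiliary choices (the model of $E\Z/p$, the cyclic ordering of the factors, and so on) is where most of the work will lie.
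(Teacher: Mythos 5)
This statement is not proved in the paper at all: it is quoted verbatim from Lonergan, and the paper's only ``proof'' is the citation \cite[Thm.\ 1.1]{Lon}, followed by a description of the resulting map (the Artin--Schreier formula $\vp\mapsto \vp^p-h^{p-1}\vp$, $r_\nu\mapsto r_{p\nu}$ in the abelian case, and reduction to the abelian case by localization to the $T$-fixed locus for non-abelian $G$). Your sketch does correctly identify the strategy of Lonergan's actual argument: the map $\sigma$ is built from the mod-$p$ total power operation on the Borel--Moore homology of the BFN convolution space, the Steenrod $\Z/p$ is identified with the $p$-torsion $\mu_p$ of the loop-rotation circle via the substitution $t\mapsto t^p$ (the same geometry underlying the $p$-th-root conventions of Definition \ref{def:pth-root}), the degree-$2$ Steenrod class is matched with $h$, and the congruence $\sigma(r)\equiv r^p\pmod{h^{p-1}}$ is the leading-term property of the power operation. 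So as a description of the route, this is faithful to the source the paper relies on.

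However, as a proof it is a plan rather than an argument. The two steps you flag as ``the main obstacle'' are in fact the entire mathematical content of Lonergan's paper: (a) making the extended-power construction and its compatibility with convolution rigorous for the ind-pro objects $\VB_{V[[t]]}\times_{V((t))}\VB_{V[[t]]}$ with $\wtG$-equivariance, and (b) proving that the restriction of $\Delta^*\mathbb{P}$ genuinely lands in $\EuScript{A}_h^{\operatorname{sph}}$ with the Steenrod class carried to $h$ (not merely to some degree-$2$ class), compatibly with the convolution product. Neither is established here; the multiplicativity claim in particular is asserted via a one-sentence appeal to $\Z/p$-equivariance of the $p$-fold external product, whereas the convolution product lives on a twisted product $\mathcal{R}\,\tilde\times\cdots\tilde\times\,\mathcal{R}$ rather than $\mathcal{R}^{\times p}$, and reconciling the two is where the work is. A concrete sanity check you should add, and which would tie your construction to how the theorem is actually used in this paper: verify that your $\sigma$ reproduces the explicit formulas recorded after the theorem statement, namely $\vp\mapsto \AS(\vp)=\vp^p-h^{p-1}\vp$ on $S_0$ and $r_\nu\mapsto r_{p\nu}$ for $G$ abelian, together with the identity $\Phi(\acham+p\gamma,\acham)=\prod_i\AS(\vp_i^+)^{\max(\vp_i(\gamma),0)}$; without that, the later sections (the splitting $\kappa$, the Azumaya property of $\psalg$, and the homogeneous coordinate ring construction) cannot be run from your version of $\sigma$.
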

Since Lonergan's construction is quite technical, it's worth reviewing the actual map that results.    If $G$ is abelian, then we can write this morphism very explicitly:  in this case, we consider $\EuScript{A}_h^{\operatorname{sph}}$ as $\End_{\mathscr{B}}(\second)$, and this space is spanned over $S_h$ by the elements $r_\nu=y_{\nu}r(-\nu,0)$ and \cite[\S 3.15(3)]{lonerganSteenrodOperators2021} shows that
  it is induced by 
\begin{align}
    \vp &\mapsto \AS(\vp)=\vp^p-h^{p-1}\vp\\
     r_\nu&\mapsto r_{p\nu}
\end{align} 
We can rewrite the action of the polynomial
$\Phi(\acham+p\gamma,\acham)$ for $\gamma\in \ft_{\Z}$ using this map:
this is a product of consecutive factors $\vp_i^+-kh$ for $k\in \Fp$,
and must range over a number of these factors divisible by $p$.
Furthermore, the number of such factors is $\vp_i(\gamma)p$ if
$\vp_i(\gamma)\geq 0$ and $0$ otherwise.  That
is,
\[\Phi(\acham+p\gamma,\acham)=\prod_{i=1}^d
\AS(\vp^+_i)^{\operatorname{max}(\vp_i(\gamma),0)}\]
Having noted this, it is a
straightforward calculation that this is a ring homomorphism.

If $G$ is non-abelian, then this homomorphism is induced by the
inclusion of $\EuScript{A}_0^{\operatorname{sph}}$ and $\EuScript{A}_h^{\operatorname{sph}}$ into the
localization of the Coulomb branch algebras for the maximal torus $T$ by inverting $\al$ for all affine
roots $\al$, since Steenrod operations commute with pushforward from the $T$-fixed locus, as discussed in \cite[\S 3.15(4)]{lonerganSteenrodOperators2021}.

A natural property to consider for varieties in characteristic $p$ is whether they are Frobenius split.  For the abelian case, it's easy to construct a splitting.  Let $\kappa_0\colon S_0\to S_0$ be any homogeneous Frobenius splitting.
\begin{proposition}\label{prop:abelian-splitting}
  The map
  \[\kappa(f\cdot r_\la)=
    \begin{cases}
      \kappa_0(f) r_{\la/p} & \la/p\in \ft_\Z\\
      0 & \la/p\notin \ft_\Z
    \end{cases}\]
  is a Frobenius splitting for the ring $\K[\fM]$ when $G$ is abelian.
\end{proposition}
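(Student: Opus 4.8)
The plan is to reduce everything to two elementary facts about how the structure constants of $\K[\fM]=\EuScript{A}_0$ interact with the Frobenius, together with the defining properties of $\kappa_0$. I would begin by recalling the explicit shape of $\K[\fM]$ when $G$ is a torus (this is in \cite{BFN,WebSD}, but follows directly from Theorem~\ref{thm:BFN-pres}): there are no roots, hence no $\Bpsi$-generators, and $\K[\fM]=\End_{\mathscr{B}}(\second)$ is a \emph{free} $S_0$-module on the basis $\{r_\la:=y_\la\,r(-\la,0):\la\in\ft_\Z\}$; since $h=0$, relations \eqref{eq:dot-commute}--\eqref{eq:weyl1} make $S_0$ central, and combining \eqref{eq:conjugate2}, \eqref{eq:coweight2} and \eqref{eq:wall-cross1} gives the multiplication rule $r_\la r_\mu=c_{\la,\mu}\,r_{\la+\mu}$ with $c_{\la,\mu}:=\Phi(-\la-\mu,-\mu,0)\in S_0$. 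In particular every element of $\K[\fM]$ has a unique expansion $\sum_\la f_\la r_\la$ with $f_\la\in S_0$, so $\kappa$ is a well-defined additive map (applied componentwise in this decomposition) with $\kappa(1)=\kappa_0(1)r_0=1$.

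The two facts I would then establish are: (i) $r_\la^{\,p}=r_{p\la}$, and (ii) $c_{p\la,p\mu}=(c_{\la,\mu})^p$. For (i), iterating the product rule gives $r_\la^{\,p}=\bigl(\prod_{k=1}^{p-1}c_{k\la,\la}\bigr)r_{p\la}$, and each $c_{k\la,\la}=\Phi(-(k+1)\la,-\la,0)$ equals $1$ because the broken path $-(k+1)\la\to-\la\to 0$ has both segments positive multiples of $\la$, hence is monotone in every function $\varphi_i^{\operatorname{mid}}$ and crosses no matter hyperplane twice, so the product defining $\Phi(\cdot,\cdot,\cdot)$ is empty. It follows that the $p$th-power endomorphism $F$ of the commutative characteristic-$p$ algebra $\K[\fM]$ is ``diagonal'' in this basis: using centrality of $S_0$, $F\!\bigl(\sum_\la f_\la r_\la\bigr)=\sum_\la f_\la^{\,p}\,r_{p\la}$. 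For (ii), I would apply $F$ (a ring homomorphism) to $r_\la r_\mu=c_{\la,\mu}r_{\la+\mu}$ and compare coefficients of the basis vector $r_{p(\la+\mu)}$; alternatively one may derive (ii) directly from the formula $\Phi(\acham+p\gamma,\acham)=\prod_i\AS(\varphi_i^+)^{\max(\varphi_i(\gamma),0)}$ of the preceding discussion, which at $h=0$ exhibits each two-argument $\Phi$ (and hence $c_{\la,\mu}$, via $\Phi(A,B,C)=\Phi(A,B)\Phi(B,C)\Phi(A,C)^{-1}$) as a perfect $p$th power.

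Granting (i) and (ii), the only remaining point is $\kappa(a^p b)=a\,\kappa(b)$, which by additivity I would check for $a=fr_\la$, $b=gr_\mu$ with $f,g\in S_0$. By (i) and centrality of $S_0$, $a^pb=f^pg\,c_{p\la,\mu}\,r_{p\la+\mu}$; if $\mu\notin p\ft_\Z$ then $p\la+\mu\notin p\ft_\Z$ and both sides vanish, while if $\mu=p\mu'$ then (ii) and the $p^{-1}$-linearity of $\kappa_0$ give $\kappa(a^pb)=\kappa_0\!\bigl(f^p(c_{\la,\mu'})^pg\bigr)r_{\la+\mu'}=f\,c_{\la,\mu'}\,\kappa_0(g)\,r_{\la+\mu'}=fr_\la\cdot\bigl(\kappa_0(g)r_{\mu'}\bigr)=a\,\kappa(b)$. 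Together with $\kappa(1)=1$ this is precisely the Frobenius-splitting property, and $\kappa$ inherits homogeneity from $\kappa_0$. Everything outside of (i) and (ii) is bookkeeping with the central subalgebra $S_0$; the one place where care is genuinely required is fact (ii), which hinges on the unexceptionality of $\second$ (hence of all of its coweight translates): that is exactly what guarantees that dilating the relevant intervals by $p$ multiplies their count of interior integer points by exactly $p$, with no off-by-one error. I expect that ``no off-by-one'' point to be the main obstacle to writing the argument down rigorously.
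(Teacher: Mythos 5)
Your proof is correct and follows essentially the same route as the paper: reduce to monomials $f r_\la$, use $r_\la^p=r_{p\la}$ and the product rule $r_{p\la}\cdot gr_\mu=g\,\Phi(-p\la-\mu,-\mu,0)\,r_{p\la+\mu}$, split on whether $\mu$ is $p$-divisible, and invoke the $p$th-power property of the structure constants together with $p^{-1}$-linearity of $\kappa_0$. The only difference is that you supply justifications (for $r_\la^p=r_{p\la}$ via monotone paths, and for $c_{p\la,p\mu}=c_{\la,\mu}^p$ via the Frobenius endomorphism) that the paper leaves as asserted calculations.
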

\begin{proof}
  This map is obviously a homomorphism of abelian groups sending 1 to 1, so we need only show that $\kappa(a^pb)=a\kappa(b)$ in the case where $a$ and $b$ are both of the form $a=f\cdot r_\la$ and $b=g\cdot r_\mu$.  This is easy to see, since $r_\la^p=r_{p\la}$ and 
  \[\kappa(f^p r_{p\la}\cdot g r_\mu )=\kappa(f^pg \Phi(-p\la-\mu,-\mu,0)\cdot r_{p\la+\mu})\]
  If $\mu$ is not $p$-divisible, then this expression is 0, as is $fr_\la\kappa(gr_\mu)=0$, so the result holds.  On the other hand, if $\mu/p\in \ft_\Z$, then
  \[\kappa(f^pg \Phi(-p\la-\mu,-\mu,0)\cdot r_{p\la+\mu})=f\kappa_0(g) \Phi(-\la-\frac{\mu}{p},\frac{\mu}{p},0)r_{\la+\frac{\mu}{p}}=fr_{\la}\kappa(g r_\mu )\] as desired.  
\end{proof}

Now, assume that $G$ is non-abelian and that the map $\kappa_0$ is equivariant for the group $ \weylW$; as usual this is possible because the average of the $W$-conjugates of a Frobenius splitting is again a splitting.  Recall from \cite[Def. \ref{SD-def:r-tilde}]{websterKoszulDuality2019} that we have an element $\tilde{r}_\pi$ for any path $\pi$; let us write $\tilde{r}(\acham,\acham')$ for the straight line path from $\acham'$ to $\acham$.

By \cite[Prop. \ref{SD-prop:dressed-basis}]{websterKoszulDuality2019}, the algebra  $ {\EuScript{A}_0^{\operatorname{sph}}}$ has a basis given by the dressed monopole operators: the elements \[\mathbbm{m}_{\la}(f)=y_{\la}\tilde{{r}}(-\la, -\la\epsilon) f \tilde{r}(-\la \epsilon,0)\] for $\epsilon>0$ a very small real number, $\la$ running over dominant coweights of $G$ and $f$ over a basis of $S_0^{W_\la}$;
we only need dominant coweights because \[\mathbbm{m}_{\la}(f) =y_{w\la}\tilde{r}(-w\la,-w\la \epsilon) f^w \tilde{r}(-w\la \epsilon,0)\] for any $w\in  \weylW$.

\begin{proposition}\label{prop:nonabelian-splitting}
  There is a Frobenius splitting $\kappa\colon \K[\fM]\to \K[\fM]$ such that
  \begin{equation}
    \kappa(\mathbbm{m}_{\la}(f))=
    \begin{cases}
      \mathbbm{m}_{\la/p}(\kappa_0(f)) & \la/p\in \ft_\Z\\
      0 & \la/p\notin \ft_\Z
    \end{cases}\label{eq:nonabelian-splitting}
  \end{equation}
\end{proposition}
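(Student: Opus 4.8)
The plan is to reduce the non-abelian case to the abelian case by exploiting the embedding of $\EuScript{A}_0^{\operatorname{sph}}$ into the Coulomb branch algebra for the maximal torus $\gls{T}$, localized at the affine roots, in exact parallel with the construction of the Frobenius-constant map $\sigma$ in the non-abelian case discussed just above. Write $\fM^{\circ}_T$ for the spectrum of the localization of the torus Coulomb branch $\K[\fM_T]$ at all affine roots $\al$; then $\K[\fM]$ embeds in $\K[\fM^{\circ}_T]$, and the latter ring is visibly a localization of a ring of the form handled by Proposition \ref{prop:abelian-splitting}. First I would extend the abelian splitting $\kappa_0^{\mathrm{ab}}$ of Proposition \ref{prop:abelian-splitting} to this localization: a Frobenius splitting extends uniquely to any localization, since for $a$ a denominator we must have $\kappa(x/a^p) = \kappa(x)/a$, and one checks this is well defined and still multiplicative in the required sense. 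Call the resulting splitting $\tilde\kappa$ on $\K[\fM^{\circ}_T]$; by construction $\tilde\kappa(f\cdot r_\la) = \kappa_0(f) r_{\la/p}$ when $\la$ is $p$-divisible and $0$ otherwise, now with $f$ allowed to be a $W$-invariant-at-the-end Laurent-type expression in the localized ring. Because $\kappa_0$ was chosen $W$-equivariant and the $W$-action permutes the affine roots, $\tilde\kappa$ is $W$-equivariant as well.

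The key step is then to show that $\tilde\kappa$ restricts to a map $\K[\fM]\to\K[\fM]$, i.e.\ that it preserves the image of $\EuScript{A}_0^{\operatorname{sph}}$, and that on the dressed monopole basis it has the stated effect \eqref{eq:nonabelian-splitting}. For this I would compute $\tilde\kappa(\mathbbm{m}_\la(f))$ directly using the description $\mathbbm{m}_\la(f) = y_\la \tilde{\mathbbm{r}}(-\la,-\la\epsilon) f \tilde{\mathbbm{r}}(-\la\epsilon,0)$. Pushing this element into the localized torus algebra, it becomes (up to the invertible-at-$\epsilon$ factors, which are units in the localization) an expression of the form $\sum_{w} (\text{unit}_w)\, y_{w\la} (w\cdot f) r_{\text{something}}$ summed over the $W$-orbit of $\la$, where the $r$-exponent is $w\la$ plus bounded correction; the point is that the only contributions surviving $\tilde\kappa$ are those whose monopole exponent is exactly $p$-divisible, and since $w\la$ ranges over the $W$-orbit of $\la$, the exponent is $p$-divisible for all $w$ simultaneously iff $\la$ is $p$-divisible. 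When $\la/p\in\ft_\Z$, applying $\tilde\kappa$ divides every monopole exponent by $p$ and applies $\kappa_0$ to the polynomial part; reassembling via the same orbit-sum identity $\mathbbm{m}_{\la/p}(g) = \sum_w (\ldots) y_{w\la/p}(w\cdot g) r_{\ldots}$ (valid by \cite[Prop. 3.14]{WebSD}) identifies the result with $\mathbbm{m}_{\la/p}(\kappa_0(f))$, which lies in $\EuScript{A}_0^{\operatorname{sph}}\subset\K[\fM]$. When $\la$ is not $p$-divisible, every term vanishes, giving $0$. So $\tilde\kappa$ preserves $\K[\fM]$ and \eqref{eq:nonabelian-splitting} holds; call the restriction $\kappa$.

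It remains to confirm $\kappa$ is a genuine Frobenius splitting of $\K[\fM]$: it sends $1\mapsto 1$ (take $\la=0$, $f=1$), it is additive, and the Frobenius-linearity $\kappa(a^p b) = a\,\kappa(b)$ holds because it already holds for $\tilde\kappa$ on the larger ring $\K[\fM^{\circ}_T]$ of which $\K[\fM]$ is a subring. Since the dressed monopole operators $\mathbbm{m}_\la(f)$ form a $\K$-basis of $\EuScript{A}_0^{\operatorname{sph}} = \K[\fM]$ by \cite[Prop. 3.14]{WebSD}, the formula \eqref{eq:nonabelian-splitting} together with $\K$-linearity determines $\kappa$ uniquely and shows it is a well-defined splitting.

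I expect the main obstacle to be the bookkeeping in the second step: verifying that, after pushing $\mathbbm{m}_\la(f)$ into the localized torus algebra, the surviving terms under $\tilde\kappa$ reassemble cleanly into a single dressed monopole operator $\mathbbm{m}_{\la/p}(\kappa_0(f))$ rather than into a messier combination. This requires care with the small-$\epsilon$ path factors $\tilde{\mathbbm{r}}(-\la,-\la\epsilon)$ and $\tilde{\mathbbm{r}}(-\la\epsilon,0)$ — one must check their contributions are units in the localization, hence transparent to the splitting, and that the $W_\la$-invariance of $f$ is exactly what is needed for $\kappa_0(f)$ to again be $W_{\la/p}$-invariant (note $W_\la = W_{\la/p}$, so this is automatic once $\kappa_0$ is $W$-equivariant). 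The rest is formal.
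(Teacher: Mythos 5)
Your proposal is correct and follows essentially the same route as the paper: embed $\K[\fM]$ into the $W$-invariants of the localized abelianized Coulomb branch, extend the abelian splitting of Proposition \ref{prop:abelian-splitting} there, and verify \eqref{eq:nonabelian-splitting} on the dressed monopole basis via the orbit-sum decomposition $\mathbbm{m}_{\la}(f)=\sum_w h_w (w\cdot f)$ together with $(\mathbbm{m}_{\la}(1))^p=\mathbbm{m}_{p\la}(1)$. The paper's verification is the same computation you sketch in your second step, just phrased more compactly.
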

\begin{proof}
  Consider the usual inclusion $\K[\fM]\to \K[\fM_{\operatorname{ab}}^0]^W$ where $\fM_{\operatorname{ab}}^0$ is the open subset of $\fM_{\operatorname{ab}}$ where the root functions are non-vanishing.    The former is Frobenius split by Proposition \ref{prop:abelian-splitting}, and the restriction of the splitting map to $\K[\fM]$ acts by \eqref{eq:nonabelian-splitting}.  In particular, it preserves the subring $\K[\fM]$ and thus gives a Frobenius splitting.

  In order to do this calculation, it is useful to note that $(\mathbbm{m}_{\la}(1))^p=\mathbbm{m}_{p\la}(1)$, so this shows the result when $f=1$.  There are elements $h_w\in  \K[\fM_{\operatorname{ab}}^0]^W$ such that
  \[\mathbbm{m}_{\la}(f)=\sum_w h_w (w\cdot f)\qquad \mathbbm{m}_{p\la}(f)=\sum_w h_w^p(w\cdot f) .\]  Thus, we have that \[\kappa(\mathbbm{m}_{p\la}(f))=\sum_w h_w \kappa_0(w\cdot f)=\mathbbm{m}_{\la/p}(\kappa_0(f)).\qedhere \]
\end{proof}
If $G$ has non-trivial $\pi_1$, then this splitting is obviously equivariant for the induced action of the Pontryagin dual of $\pi_1$, and thus descends to the GIT quotient.  Since any (partial) BFN resolution is a GIT quotient of this form, we thus also have that:
\begin{corollary}\label{cor:BFN-split}
  Any partial BFN resolution is Frobenius split.  
\end{corollary}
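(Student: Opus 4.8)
The plan is to make precise the remark preceding the statement.  First I would recall from Section~\ref{sec:background} that a partial BFN resolution $\tilde\fM$ is produced by choosing a cocharacter of $T_F$, viewing it as a character $\chi$ of $K=T_F^\vee$, and taking the corresponding GIT quotient of the Coulomb branch $\fM_{\To}$; concretely $\tilde\fM=\operatorname{Proj}\bigoplus_{n\geq 0}\K[\fM_{\To}]_{n\chi}$, where $\K[\fM_{\To}]=\bigoplus_{w}\K[\fM_{\To}]_w$ is the weight decomposition for the Hamiltonian action of $K$ (a torus whose character lattice $X^*(K)$ is the coweight lattice of $T_F$, i.e.\ the Pontryagin dual in the sense of the remark).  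When $T_F$ is trivial there is nothing to prove beyond $\tilde\fM=\fM$, which is Proposition~\ref{prop:abelian-splitting} or~\ref{prop:nonabelian-splitting} directly, so I would assume $T_F\neq 1$.

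Next I would apply Proposition~\ref{prop:abelian-splitting} or~\ref{prop:nonabelian-splitting} to the reductive group $\To$ acting on $V$ (its Weyl group is again $W$), fixing a $W$-equivariant homogeneous Frobenius splitting $\kappa_0$ of $S_0$, to obtain a Frobenius splitting $\kappa$ of $\K[\fM_{\To}]$ with $\kappa(\mathbbm{m}_{\la}(f))=\mathbbm{m}_{\la/p}(\kappa_0(f))$ when $\la/p\in\ft_\Z$ and $\kappa(\mathbbm{m}_{\la}(f))=0$ otherwise, on the dressed monopole basis.  The key point I would then verify is that $\kappa$ is $K$-equivariant in the graded sense $\kappa(\K[\fM_{\To}]_w)\subseteq\K[\fM_{\To}]_{w/p}$ (read as $0$ when $w/p\notin X^*(K)$): the monopole operator $\mathbbm{m}_{\la}(f)$ lies in the $K$-weight space indexed by the image $\bar\la$ of $\la$ in the coweight lattice of $T_F$ (the topological symmetry acts on $\mathbbm{m}_\la$ through the class of $\la$), while $S_0$ is $K$-invariant and $\kappa_0$ preserves $K$-weights; hence $\kappa$ carries the weight-$\bar\la$ part into the weight-$(\bar\la/p)$ part and annihilates it unless $\bar\la$ is $p$-divisible.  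This is exactly the compatibility that is visibly built into the formulas of Propositions~\ref{prop:abelian-splitting}--\ref{prop:nonabelian-splitting}, which is why the remark calls it obvious.

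Finally I would conclude: $\kappa$ restricts to the subring $R=\bigoplus_{n\geq 0}\K[\fM_{\To}]_{n\chi}$, on which it sends the degree-$n$ graded piece into the degree-$(n/p)$ piece (killing it for $p\nmid n$), so it is an $\mathbb{N}$-graded Frobenius splitting of the homogeneous coordinate ring of $\tilde\fM$; by the standard descent of a graded Frobenius splitting to $\operatorname{Proj}$, the variety $\tilde\fM=\operatorname{Proj}R$ is Frobenius split.  Since every partial BFN resolution arises this way, the corollary follows.  I do not expect a genuine obstacle: the argument is just the two propositions together with GIT descent, and the only thing needing a line of checking is the compatibility of the monopole-operator formula for $\kappa$ with the $K$-weight grading --- that the topological symmetry scales the weight of $\mathbbm{m}_\la$ linearly in $\la$, so that the rescaling $\la\mapsto\la/p$ is precisely the behaviour demanded of a Frobenius splitting.
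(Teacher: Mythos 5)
Your strategy is the one the paper intends: apply Proposition \ref{prop:nonabelian-splitting} to the Coulomb branch $\fM_{\To}$ of the larger group $\To$, observe that the splitting $\kappa$ interacts correctly with the $K$-weight grading (it sends weight $pw$ into weight $w$ and kills the rest, because the $K$-weight of $\mathbbm{m}_{\la}(f)$ is the image of $\la$ in $X_*(T_F)$), and descend the resulting graded splitting to the $\operatorname{Proj}$. That part of your write-up is fine. The gap is in your identification of $\tfM$: a partial BFN resolution is \emph{not} $\operatorname{Proj}\bigoplus_{n\geq 0}\K[\fM_{\To}]_{n\chi}$. It is the GIT quotient of the zero level $\mu^{-1}(0)$ of the $K$-moment map, so its homogeneous coordinate ring is $\bigoplus_{n\geq 0}\EuScript{Q}_0^{\chi^n}$ with $\EuScript{Q}_0=\K[\fM_{\To}]/\mu^*(\gk)\cdot\K[\fM_{\To}]$, as in \eqref{eq:proj-coord}. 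The variety you have split is a strictly larger one: already for $G=\C^*\subset\To=(\C^*)^2$ acting on $\C^2$, your $\operatorname{Proj}$ is a threefold while $\tfM=T^*\P^1$.

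To close the gap you must show that $\kappa$ \emph{compatibly} splits the subvariety $\mu^{-1}(0)$, i.e.\ preserves the ideal generated by $\mu^*(\gk)$; only then does it induce a splitting of $\EuScript{Q}_0$, to which your graded-descent step applies. This is strictly stronger than the $K$-equivariance you verify --- a Frobenius splitting need not preserve a given invariant ideal. It does hold here, but requires an extra choice and a short computation: pick $\kappa_0$ so that it compatibly splits the linear subspace of $\ft_{\To}$ cut out by $\ft_F^*$ (possible while keeping $\kappa_0$ homogeneous and $W$-equivariant, since $W$ acts trivially on $\ft_F^*$ and one can average a coordinate splitting adapted to a $W$-stable complement), and then check on the monopole basis that
\[\kappa\bigl(\mu^*(x)\,\mathbbm{m}_{\la}(f)\bigr)=\mathbbm{m}_{\la/p}\bigl(\kappa_0(\mu^*(x)f)\bigr)\in \mu^*(\gk)\cdot\K[\fM_{\To}],\]
using that $\mu^*(x)$ is a $W$-invariant flavor function so that it passes inside $\mathbbm{m}_{\la}(-)$. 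With that addition your argument coincides with the paper's, which compresses all of this into the single sentence preceding the corollary.
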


There are two natural ways to view $\EuScript{A}_h^{\operatorname{sph}}$ as a sheaf of algebras on $\fM=\Spec \EuScript{A}_0^{\operatorname{sph}}$:
\begin{enumerate}
\item The first is the usual microlocalization $\salg$ of
  $\EuScript{A}_h^{\operatorname{sph}}$. The sections $\salg(U_f)$ on the
  open set $U_f$ where $f$ is non-vanishing are given by
  $\EuScript{A}_h^{\operatorname{sph}}$ with every element congruent
  to $f$ mod $h$ inverted.  This construction is discussed, for example, in \cite[\S 4.1]{BLPWquant}.  This is a quantization in the usual sense
  of \cite{BKpos}, and thus {\it not} a coherent sheaf.

  \notation{${\psalg_\phi}$}
      {The coherent sheaf of generically Azumaya algebras on $\tM$ or
   its pushforward to $\Coulomb$,  such that
   $\Gamma(\tM; \psalg_\phi)= {\EuScript{A}_1 ^{\operatorname{sph}}
   }$ with the quantization parameter $\phi$; see Definition \ref{def:psalg}.}
\item On the other hand, we can use $\sigma$ to view  $\EuScript{A}_h^{\operatorname{sph}}$ as a finite $\EuScript{A}_0^{\operatorname{sph}}[h]$-algebra, by the finiteness of the Frobenius map.  We'll typically consider the specialization at
$h=1$, which realizes $\EuScript{A}_1^{\operatorname{sph}}$ as a finitely generated $\EuScript{A}_0^{\operatorname{sph}}$-module.  Let $\psalg$ be the corresponding coherent sheaf on $\Coulomb=\Spec \EuScript{A}_0^{\operatorname{sph}}$.  This is essentially the 
pushforward of the usual microlocalization by the
Frobenius map, specialized at $h=1$.
\end{enumerate}

  \notation{${\Isalg_\phi}$}
      {The coherent sheaf of generically Azumaya algebras on $\tM$ or
   its pushforward to $\Coulomb$,  such that
   $\Gamma(\tM; \Isalg_\phi)= {\EuScript{A}_1 
   }$ with the quantization parameter $\phi$; see Definition \ref{def:psalg}.}
The sheaf of algebras $\psalg$ is an Azumaya
algebra on the smooth locus of $\fM$  of degree $p^{\operatorname{rank}(G)}$ by \cite[Lemma 3.2]{BKpos}.  We can also localize the algebra $\EuScript{A}_1$ using the map $\sigma$, and obtain an algebra $\Isalg$ which on the smooth locus is Azumaya of degree $p^{\operatorname{rank}(G)}\cdot \#W$; the spherical idempotent in $\EuScript{A}_1$ induces a Morita equivalence between the Azumaya algebras $\psalg$ and $\Isalg$.

Note that up to this point we have only obtained coherent sheaves on the affine variety $\fM$, but we will be more interested in considering the resolution $\tM$.  By assumption, this resolution is the Hamiltonian reduction of the  Coulomb branch $\MQ$ of $\To$ by $\groupK=T_F^\vee$.  This Hamiltonian action of $\gK$ is quantized by a noncommutative moment map $U(\gk)\to \EuScript{A}_{1,\To}^{\operatorname{sph}}$.  Let \[\EuScript{Q}_h=\EuScript{A}_{h,\To}^{\operatorname{sph}}/\gk \cdot (\EuScript{A}_{h,\To}^{\operatorname{sph}});\] by \cite[3(vii)(d)]{BFN} and \cite[Lem. \ref{SD-lem:q-mm}]{websterKoszulDuality2019}, we then have that
\begin{equation}
\EuScript{A}_{h}^{\operatorname{sph}}=\End_{\EuScript{A}_{h,\To}^{\operatorname{sph}} }(\EuScript{Q}_h)^K\cong \EuScript{Q}_h^K.\label{eq:qham}
\end{equation}
Thus, we can follow the usual yoga for constructing quantizations of Hamiltonian reductions (see \cite[4.3]{Stadnik} for a discussion of doing this reduction for a torus in characteristic $p$, and \cite[\S 2.5]{KR07} for a more general discussion in characteristic $0$)  to obtain a Frobenius constant quantization of the
resolved Coulomb branch $\tM$.  We'll give an alternate construction of this quantization below using $\Z$-algebras.

\begin{definition}\label{def:psalg}
Pushing forward by the
Frobenius map and specializing $h=1$ as above, we obtain a coherent
sheaf of algebras, also denoted by $\psalg$ which is Azumaya on the smooth locus of
$\tilde{\fM}$.  We can perform the analogous operation with $\EuScript{A}_h ^{\operatorname{sph}}$ replaced by  $\EuScript{A}_h$.  As before, we denote this by $\Isalg$.  
\end{definition}

In particular: \begin{lemma}
  If $\tM$ is smooth, then $\psalg$ is an Azumaya algebra of degree $p^{\operatorname{rank}(G)}$ and $\Isalg$ is Azumaya of degree $p^{\operatorname{rank}(G)}\cdot \#W$.  
\end{lemma}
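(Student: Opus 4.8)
The plan is to obtain the statement by packaging the structures assembled just above. The sheaf $\psalg$ on $\tilde{\fM}$ is the Frobenius pushforward, specialized at $h=1$, of a quantization of $\tilde{\fM}$ produced by Hamiltonian reduction from $\EuScript{A}_{h,\To}^{\operatorname{sph}}$, and \cite[Lemma 3.2]{BKpos} asserts that the Frobenius pushforward of a Frobenius constant quantization of a smooth symplectic variety $X$ of dimension $2n$ is a sheaf of Azumaya algebras of degree $p^n$. So the essential points are (a) that the reduced quantization really is Frobenius constant, and (b) that when $\tilde{\fM}$ is smooth the ``smooth locus'' appearing in the discussion above is all of $\tilde{\fM}$, so that the degree is attained globally. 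Here we use $\dim\tilde{\fM}=2\operatorname{rank}(G)$, which holds because $\tilde{\fM}\to\fM$ is proper and birational and a BFN Coulomb branch of $(G,V)$ has dimension $2\operatorname{rank}(G)$; this also makes $\tilde{\fM}$ irreducible, hence connected, being a GIT quotient of the normal irreducible $\fM_{\To}$. Thus $n=\operatorname{rank}(G)$ in the citation above.

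For (a), I would begin from Lonergan's multiplicative section $\sigma\colon\EuScript{A}_{0,\To}^{\operatorname{sph}}\to\EuScript{A}_{h,\To}^{\operatorname{sph}}$ of \cite[Thm.~1.1]{Lon} and check that it is compatible with the reduction: that $\sigma$ carries the classical moment-map ideal $\gk\cdot\EuScript{A}_{0,\To}^{\operatorname{sph}}$ into the quantum one $\gk\cdot\EuScript{A}_{h,\To}^{\operatorname{sph}}$ modulo $h^{p-1}$, and that it commutes with the $K$-action. Then $\sigma$ descends to a multiplicative section of $\EuScript{Q}_h^K$, which by \eqref{eq:qham} is $\EuScript{A}_h^{\operatorname{sph}}$, still congruent to Frobenius modulo $h^{p-1}$, so $\psalg$ arises as the Frobenius pushforward at $h=1$ of a Frobenius constant quantization of $\tilde{\fM}$. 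This follows the standard reduction-of-FCQ yoga (cf.\ \cite[4.3]{Stadnik} in the torus case and \cite[\S 2.5]{KR07} in characteristic $0$), and it is the one place I would expect to spend real effort, precisely because $\sigma$ is only multiplicative, not additive, so the ideal-compatibility must be argued on the multiplicative monoid using the explicit form of $\sigma$ recorded above. With this in hand, \cite[Lemma 3.2]{BKpos} applied to the smooth $X=\tilde{\fM}$ gives at once that $\psalg$ is Azumaya of degree $p^{\operatorname{rank}(G)}$.

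For $\Isalg$, I would use the spherical idempotent $e_{\operatorname{sph}}\in\EuScript{A}_1$. By \cite[Lemma 3.3]{WebSD} it is a full idempotent with $e_{\operatorname{sph}}\EuScript{A}_1 e_{\operatorname{sph}}=\EuScript{A}_1^{\operatorname{sph}}$; sheafifying on $\tilde{\fM}$ this gives $e_{\operatorname{sph}}\Isalg e_{\operatorname{sph}}=\psalg$ and exhibits $\Isalg e_{\operatorname{sph}}$ as a progenerator for $\psalg$, whence $\Isalg\cong\End_{\psalg}(\Isalg e_{\operatorname{sph}})$ as sheaves of algebras. Over the regular connected base $\tilde{\fM}$, the $\psalg$-module $\Isalg e_{\operatorname{sph}}$ is locally free of some constant rank $m$, and since $\psalg$ is Azumaya of degree $p^{\operatorname{rank}(G)}$ this makes $\End_{\psalg}(\Isalg e_{\operatorname{sph}})$ Azumaya of degree $m\cdot p^{\operatorname{rank}(G)}$; comparing with the already-known computation over the smooth locus forces $m=\#W$. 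Hence $\Isalg$ is Azumaya of degree $p^{\operatorname{rank}(G)}\cdot\#W$ on all of $\tilde{\fM}$. Alternatively, one may simply invoke Morita-invariance of the Azumaya property, noting that $\psalg$ and $\Isalg$ share the centre $\cO_{\tilde{\fM}}$, and read the degree off a generic point.
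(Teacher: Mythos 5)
Your proposal is correct and follows essentially the same route as the paper, which states this lemma with no separate proof — it is presented as an immediate consequence of the preceding discussion (the citation of \cite[Lemma 3.2]{BKpos} for the degree $p^{\operatorname{rank}(G)}$ on the smooth locus, the Hamiltonian-reduction ``yoga'' for transporting the Frobenius constant quantization to $\tilde{\fM}$, and the Morita equivalence via $e_{\operatorname{sph}}$ for $\Isalg$). You have merely filled in the details the paper delegates to \cite{Stadnik,KR07} and to the generic rank computation of Lemma \ref{lem:Q-rank}, and your flagged point about checking that Lonergan's multiplicative section is compatible with the moment-map ideal is exactly the step the paper leaves implicit.
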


\subsection{Homogeneous coordinate rings}
\label{sec:homogeneous}
While this discussion is quite abstract, we can make it much more concrete by thinking about $\tM$ in terms of its homogeneous coordinate ring.

The variety $\tilde{\fM}$ is a GIT quotient of the moment map level $\mu^{-1}(0)\subset \MQ$ with respect to some character $\chi\colon \groupK\to \mathbb{G}_m$. We do not assume for now that this is smooth.  Note that in our notation, we have that 
  \begin{align*}
    \Fp[\MQ]&=\EuScript{A}_{0,\To}^{\operatorname{sph}}\\
    \Fp[\mu^{-1}(0)]&= {\EuScript{Q}_0}=\EuScript{A}_{0,\To}^{\operatorname{sph}}/\Big(\mu^*(\gk)\cdot(\EuScript{A}_{0,\To}^{\operatorname{sph}})\Big)\\
    \Fp[\fM]&=\EuScript{Q}_0^K= (\EuScript{A}_{0,\To}^{\operatorname{sph}})^K/\Big(\mu^*(\gk)\cdot(\EuScript{A}_{0,\To}^{\operatorname{sph}})^K\Big)
  \end{align*}
  where $\gk$ is thought of as the space of linear functions on $\gk^*$, and $\mu^*$ is pullback by the moment map.    
  By definition, we have that the section space of powers of the canonical ample bundle on the GIT quotient is given by the semi-invariants for $\chi^n$:
  \begin{equation*}
    \Gamma(\tilde{\fM};\cO(n))\cong \EuScript{Q}_0^{\chi^n} =\{q\in  {\EuScript{Q}_0}\mid a^*( q)=\chi^n(k)q \} 
  \end{equation*}
  for $a\colon K\times \mu^{-1}(0)\to \mu^{-1}(0) $ the action map. Since we are working in characteristic $p$, we need to phrase semi-invariance in terms of pullback of functions; it is necessary but not sufficient to check that $k\cdot q=\chi^n(k)q$ for points of the group $K$.  Of course, we have, by definition, that
  \begin{equation}
T\cong \bigoplus_{m\geq 0}\Gamma(\tilde{\fM};\cO(m))\cong \bigoplus_{m\geq 0}\EuScript{Q}_0^{\chi^m}\qquad \tilde{\fM}=\operatorname{Proj}(T).\label{eq:proj-coord}
\end{equation}

  Let us describe the quantum version of this structure.  It is tempting to simply change $h=0$ in \eqref{eq:proj-coord} to $h=1$; unfortunately, this doesn't result in an algebra
  or a module over the projective coordinate ring.  Instead, $\EuScript{Q}_1^{\chi^m}= {{}_{\phi+m\nu}T^{\:\operatorname{sph}}_\phi}$ is the twisting bimodule associated to the derivative $\nu=d\chi\in \mathfrak{k}_\Z^*\cong \ft_\Z$.   With a bit more care, we could modify this structure to a $\Z$-algebra as discussed in \cite[\S 5.5]{BLPWquant}.

However, being in characteristic $p$ and having a Frobenius map gives us a second option.  The quantum Frobenius map $\sigma$ sends $\chi$-semi-invariants to $\chi^p$-semi-invariants, and thus induces a graded $T$-module structure on the graded algebra \[\EuScript{T}^{\operatorname{sph}}:=\bigoplus_{m\geq 0}\EuScript{Q}_1^{\chi^{pm}}=\bigoplus_{m\geq 0} {{}_{\phi+pm\nu}T^{\:\operatorname{sph}}_\phi}.\]
It's easy to see that the associated graded of this noncommutative algebra is \[\bigoplus_{m\geq 0}\Gamma(\tilde{\fM};\cO(pm)),\] with $T$ acting by the twist of the obvious action by the Frobenius. In particular, $\EuScript{T}^{\operatorname{sph}}$ is finitely generated over $T$ by the finiteness of the Frobenius map.
This allows us to give our more ``hands-on'' definition of $\psalg$.
\begin{definition}
 Let $\psalg$ be the coherent sheaf of algebras on $\tM$ induced by $\EuScript{T}^{\operatorname{sph}}$.  That is, $\psalg= {\EuScript{Q}_1^{\chi^{pN}}}\otimes_{\EuScript{A}_0^{\operatorname{sph}}}\mathcal{O}(-N)$ for $N\gg 0$.  
\end{definition}
This sheaf stabilizes for $N$ sufficiently large because of the finite generation of $\EuScript{T}^{\operatorname{sph}}$; thus multiplication is induced by the graded multiplication on $\EuScript{T}^{\operatorname{sph}}$ and on $T$.   It follows immediately from standard results on projective coordinate rings that:
\begin{corollary}
  The functor $\mathcal{F}\mapsto \bigoplus_{m\geq 0}\Gamma(\tilde{\fM},\mathcal{F}(m))$ induces an equivalence between the category of coherent $\psalg$-modules and the category of graded finitely generated $\EuScript{T}^{\operatorname{sph}}$-modules modulo those of bounded degree.
\end{corollary}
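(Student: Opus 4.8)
The plan is to obtain this as an instance of Serre's equivalence between coherent sheaves on a projective scheme and finitely generated graded modules over a homogeneous coordinate ring modulo those of bounded degree, transported to the sheaf of algebras $\psalg$ and its modules. Since $\tilde{\fM}=\operatorname{Proj}(T)$ with $T=\bigoplus_{m\geq 0}\Gamma(\tilde{\fM};\cO(m))$ a finitely generated graded $\EuScript{A}_0^{\operatorname{sph}}$-algebra and $\cO(1)$ relatively ample, Serre's theorem gives an equivalence between $\Coh(\tilde{\fM})$ and the Serre quotient of finitely generated graded $T$-modules by bounded-degree modules, with quasi-inverse $\mathcal{F}\mapsto\Gamma_*\mathcal{F}:=\bigoplus_{m\geq 0}\Gamma(\tilde{\fM};\mathcal{F}(m))$; if $T$ is not generated in degree one one first passes to a Veronese subring, which changes neither $\operatorname{Proj}$ nor the resulting quotient category. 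By construction $\psalg$ is the $\cO_{\tilde{\fM}}$-module associated to the graded $T$-module $\EuScript{T}^{\operatorname{sph}}$, since $\psalg=\EuScript{Q}_1^{\chi^{pN}}\otimes_{\EuScript{A}_0^{\operatorname{sph}}}\cO(-N)=(\EuScript{T}^{\operatorname{sph}})_N\otimes_{\EuScript{A}_0^{\operatorname{sph}}}\cO(-N)$ for $N\gg 0$, and $\EuScript{T}^{\operatorname{sph}}$ is finitely generated over $T$ by finiteness of the Frobenius map; hence $\psalg$ is coherent and $\Gamma_*\psalg$ coincides with $\EuScript{T}^{\operatorname{sph}}$ in all sufficiently large degrees.

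Next I would upgrade the equivalence from $\cO_{\tilde{\fM}}$-modules to $\psalg$-modules. The functor $\Gamma_*$ carries the multiplication of $\psalg$ to a graded multiplication on $\Gamma_*\psalg$, which by the stabilization just noted is exactly the $\Z$-algebra multiplication on $\EuScript{T}^{\operatorname{sph}}$; likewise it sends a coherent $\psalg$-module $\mathcal{F}$ to a graded $\EuScript{T}^{\operatorname{sph}}$-module $\Gamma_*\mathcal{F}$ compatible with the $T$-action, and sheafification sends a finitely generated graded $\EuScript{T}^{\operatorname{sph}}$-module back to a coherent $\psalg$-module. Because $\EuScript{T}^{\operatorname{sph}}$ is module-finite over the commutative Noetherian ring $T$, being finitely generated over $T$ and over $\EuScript{T}^{\operatorname{sph}}$ are equivalent, and coherence over $\cO_{\tilde{\fM}}$ and over $\psalg$ agree, so the two constructions restrict to the subcategories in the statement. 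I would then observe that the unit and counit of the (sheafification, $\Gamma_*$) adjunction become isomorphisms modulo bounded-degree modules exactly as in the commutative case, since those statements only involve the underlying $\cO_{\tilde{\fM}}$-module structure and are unaffected by the extra $\psalg$-action.

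The only delicate points — and the place I would expect to spend effort if one wanted a self-contained argument rather than a citation — are the grading bookkeeping when $\EuScript{T}^{\operatorname{sph}}$ is not generated in degree one (handled by the Veronese reduction above, so that degree $m$ of $\EuScript{T}^{\operatorname{sph}}$ is matched with the $\cO(m)$-twist) and the mild noncommutativity of $\EuScript{T}^{\operatorname{sph}}$. The latter is not a serious obstacle: $\EuScript{T}^{\operatorname{sph}}$ is a Noetherian graded ring finite over the commutative graded subring $T$, so the entire Serre formalism applies verbatim; this is precisely the situation covered by the standard noncommutative-$\operatorname{Proj}$ machinery, which is why the corollary follows immediately from known results on projective coordinate rings.
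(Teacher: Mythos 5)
Your argument is correct and is exactly the route the paper intends: the paper simply asserts that the corollary "follows immediately from standard results on projective coordinate rings," and your proposal fleshes out precisely that — Serre's equivalence for $\operatorname{Proj}(T)$, upgraded to $\psalg$-modules via the module-finiteness of $\EuScript{T}^{\operatorname{sph}}$ over the commutative Noetherian ring $T$ (coming from finiteness of Frobenius), with the Veronese reduction handling the degree bookkeeping. Nothing is missing.
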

As with the other structures we have considered, we can remove the superscripts of $\operatorname{sph}$.  This can be done from first principles, reconstructing all the objects defined above, but we ultimately know that the result will be Morita equivalent to the spherical version, so we can define it more quickly.  Consider the tensor product $\EuScript{T}^{\operatorname{sph}}\otimes_{\EuScript{A}_1^{\operatorname{sph}}}e_{\operatorname{sph}}\EuScript{A}_1$, which is just a free module of rank $\# \weylW$, and let $\EuScript{T}$ be the endomorphism algebra of this module.  We let $\Isalg$ be the corresponding algebra of coherent sheaves.

\subsection{Infinitesimal splittings}

Assume now that $\tM$ is smooth and a resolution of $\fM$, that is, a BFN resolution.  Recall that we have a map $\tilde{\fM}\to \ft/ \weylW$ induced by the inclusion of $S_0^W$ into
$\EuScript{A}_0^{\operatorname{sph}}$.
\begin{definition}
  We let $\widehat{\tM}$ be the formal neighborhood of the fiber over the origin in $\ft/W$, that is the formal scheme obtained by completing at the schematic fiber.\notation{$\widehat{\tM}$}{The formal neighborhood of the fiber over the origin in $\ft/W$.}

  Let $ {\hat{\psalg}_\phi}$ be the corresponding pullback of $  {\psalg_\phi}$, let $ {\hat{\Isalg}_\phi}$
be the corresponding pullback of $  {\Isalg_\phi}$ and similarly, $\hat{\EuScript{A}}_\phi$ the corresponding completion of $ \EuScript{A}_{\phi}$.
\end{definition}

The algebra $\hat{\Isalg}_\phi$ can be written as the inverse limit
\[ {\hat{\Isalg}_\phi}=\varprojlim {\Isalg}_\phi/{\Isalg}_\phi\mathfrak{m}^N\] for
$\mathfrak{m}\subset S_0^W$ the maximal ideal corresponding to the
origin.  Of course, $\hat{\Isalg}_\phi$ contains the larger commutative
subalgebra $\hat{S}_1=\varprojlim {S}_1/{S}_1\mathfrak{m}^N$ so we can consider how this profinite-dimensional algebra acts on
${\Isalg}_\phi/{\Isalg}_\phi\mathfrak{m}^N$.

As is well-known, an element $a\in \K$ satisfies $a^p-a=0$ if and only if $a\in \mathbb{F}_p$.
This extends to show that in
$S_1$, the ideal $\mathfrak{m}S_1$ has radical given by the intersection of the maximal ideals $\mathfrak{m}_{\mu}$ defined by the points in $\mu \in \ft_{1,\Fp}$.  Thus, $\hat{S}_1$ breaks up as the sum of the completions at these individual maximal ideals.  For a given $\mu\in \ft_{1,\Fp}$ let $e_{\mu}$ be the idempotent that acts by 1 in the formal neighborhood of $\mu$ and vanishes everywhere else.  Thus, $e_{\mu}\hat{\Isalg}_\phi =\varprojlim \Isalg_\phi/  \Isalg_\phi \mathfrak{m}_{\mu}^N$.  Standard calculations show:
\begin{equation}\label{eq:summand-hom}
  \Hom_{\hat{\Isalg}_\phi}(e_{\mu}\hat{\Isalg}_\phi, e_{\mu'}\hat{\Isalg}_\phi)\cong \Gamma(\tilde{\fM}, e_{\mu'}\hat{\Isalg}_\phi e_{\mu}).  
\end{equation}
\notation{$e_{\mu}$}{The idempotent in $\hat{S}_1$ that acts by 1 in the formal neighborhood of $\mu$ and vanishes everywhere else, thought of as a section of $\hat{\Isalg}_\phi$.}

Of course, the reader should recognize this analysis as almost precisely the analysis of the functors of taking weight spaces discussed in Section \ref{sec:reps} and in particular that of the category $\widehat{\mathscr{A}}$ defined in that section.  We wish to consider the subcategory $\widehat{\mathscr{A}}_{\mathbb{F}_p}$ of objects of the form $(\zero, \mu)$ with $\mu \in \ft_{1,\Fp}$; for simplicity, we will just denote this object by $\mu$.  In the notation introduced in that section, this subcategory would be $\widehat{\mathscr{A}}_0$, but we think that this is too likely to generate confusion with our convention of using this to denote objects with $h=0$.
\begin{lemma}\label{lem:A-H}
There is a fully faithful functor from $\widehat{\mathscr{A}}_{\mathbb{F}_p}$ to the category of right $  {\hat{\Isalg}_\phi}$ modules sending  $\mu\mapsto e_{\mu}\hat{\Isalg}_\phi $.  
\end{lemma}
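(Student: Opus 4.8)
The plan is to identify $\widehat{\mathscr{A}}_{\mathbb{F}_p}$ with a subcategory of right modules over $\hat{\Isalg}_\phi$ by using the fact that $\hat{\Isalg}_\phi$ is obtained from $\EuScript{A}_1 = \Hom_{\mathscr{B}}(\zero,\zero)$ by completing along the maximal ideal $\mathfrak{m}\subset S_0^W$, and that this completion decomposes according to the idempotents $e_\mu$ indexed by $\mu\in\ft_{1,\Fp}$, which are precisely the objects of $\widehat{\mathscr{A}}_{\mathbb{F}_p}$. First I would recall from Section \ref{sec:reps} that the objects of $\widehat{\mathscr{A}}$ of the form $(\zero,\upsilon)$ have endomorphism and morphism spaces given by the $\mathfrak{m}_\upsilon$-adic completions of $\Hom_{\mathscr{B}}(\zero,\zero)$, i.e. $\Hom_{\widehat{\mathscr{A}}}((\zero,\mu),(\zero,\mu')) = \varprojlim \Hom_{\mathscr{B}}(\zero,\zero)/(\mathfrak{m}_{\mu'}^N\Hom + \Hom\,\mathfrak{m}_\mu^N)$. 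Since we are specializing $h=1$ (as stipulated in that section), $\Hom_{\mathscr{B}}(\zero,\zero) = \EuScript{A}_1$, so these morphism spaces are built from $\EuScript{A}_1$ by a two-sided completion at the maximal ideals of $S_1$.

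The key step is to match these with $\Hom_{\hat{\Isalg}_\phi}(e_\mu\hat{\Isalg}_\phi, e_{\mu'}\hat{\Isalg}_\phi)$. By \eqref{eq:summand-hom} this Hom-space is $\Gamma(\tilde{\fM}, e_{\mu'}\hat{\Isalg}_\phi e_\mu)$, and by definition $e_{\mu'}\hat{\Isalg}_\phi e_\mu = \varprojlim_N e_{\mu'}\Isalg_\phi e_\mu /(e_{\mu'}\Isalg_\phi e_\mu)\mathfrak{m}_{\mu}^N$, which in turn (using that $\Gamma$ of $\Isalg_\phi$ on $\tilde{\fM}$ recovers $\EuScript{A}_1$, and that completing along $\mathfrak m\subset S_0^W$ then decomposing into the $\mathfrak m_\mu$ pieces as in the paragraph preceding the lemma) is exactly the completion $\varprojlim \EuScript{A}_1/(\mathfrak{m}_{\mu'}^N\EuScript{A}_1 + \EuScript{A}_1\mathfrak{m}_\mu^N)$. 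Thus the functor $\mu\mapsto e_\mu\hat{\Isalg}_\phi$ is defined on objects, and on morphisms it is the tautological map coming from right multiplication; I would then check it is fully faithful by observing that both sides are literally the same inverse limit of quotients of $\EuScript{A}_1$. One should be slightly careful that the two-sided completion appearing in $\widehat{\mathscr{A}}$ agrees with the one-sided module picture $e_{\mu'}\hat{\Isalg}_\phi e_\mu$: this is where one uses that $\hat{S}_1$ already breaks as a product over the $\mu\in\ft_{1,\Fp}$ (the radical computation $a^p - a = 0 \iff a\in\Fp$ in the paragraph before the lemma), so that $\Isalg_\phi\mathfrak{m}^N$ and the sum $\mathfrak{m}_{\mu'}^{N}\Isalg_\phi + \Isalg_\phi\mathfrak{m}_\mu^N$ cut out the same completion once one projects to the $(\mu',\mu)$-component via $e_{\mu'}(\cdot)e_\mu$.

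The main obstacle I expect is bookkeeping the passage between the ``module-theoretic'' completion $e_{\mu'}\hat{\Isalg}_\phi e_\mu$ (a completion only on the right, then cut by idempotents) and the ``symmetric'' completion used to define $\widehat{\mathscr{A}}$ (a genuine two-sided $\mathfrak m_\mu$-$\mathfrak m_{\mu'}$-adic completion), together with keeping straight that $\Gamma(\tilde{\fM},-)$ of the coherent sheaf $\Isalg_\phi$ returns $\EuScript{A}_1$ rather than its spherical or torus variant. Both points are essentially formal once one notes that $e_\mu$ is central modulo each $\mathfrak{m}^N$ in the completion (since the $e_\mu$ come from the commutative subalgebra $\hat S_1$), so left and right multiplication by $e_\mu$ agree with projecting onto the generalized $\mu$-weight space for the $\hat S_1$-action, and this is exactly the weight-space functor of Section \ref{sec:reps}. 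With these identifications in hand, full faithfulness is immediate, and essential-image considerations are not needed since the lemma only claims full faithfulness.
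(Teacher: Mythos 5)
Your strategy is the same as the paper's: identify $\Hom_{\widehat{\mathscr{A}}}((\zero,\mu),(\zero,\mu'))=\varprojlim \EuScript{A}_1/(\mathfrak{m}_{\mu}^N\EuScript{A}_1+\EuScript{A}_1\mathfrak{m}_{\mu'}^N)$ on one side and $\Hom_{\hat{\Isalg}_\phi}(e_{\mu}\hat{\Isalg}_\phi,e_{\mu'}\hat{\Isalg}_\phi)\cong\Gamma(\tilde{\fM},e_{\mu'}\hat{\Isalg}_\phi e_{\mu})$ via \eqref{eq:summand-hom} on the other, and then match them. The gap is in the step where you declare the two sides to be ``literally the same inverse limit of quotients of $\EuScript{A}_1$.'' One side is an inverse limit of quotients of the ring $\EuScript{A}_1=\Gamma(\tilde{\fM},\Isalg_\phi)$; the other is the global sections of an inverse limit of quotient \emph{sheaves}. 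Since $\Gamma(\tilde{\fM},-)$ is only left exact, the natural map
\[
\EuScript{A}_1/(\mathfrak{m}_{\mu}^N\EuScript{A}_1+\EuScript{A}_1\mathfrak{m}_{\mu'}^N)\longrightarrow \Gamma\bigl(\tilde{\fM},\,\Isalg_\phi/(\mathfrak{m}_{\mu}^N\Isalg_\phi+\Isalg_\phi\mathfrak{m}_{\mu'}^N)\bigr)
\]
need not be an isomorphism at any finite level $N$: the quotient sheaf can acquire sections not coming from $\EuScript{A}_1$. This is not the bookkeeping issue you flag about one-sided versus two-sided completions or the centrality of the idempotents coming from $\hat{S}_1$ (that part of your argument is fine); it is a genuine cohomological input. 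The paper resolves it by invoking the theorem on formal functions for $\tilde{\fM}$ proper over $\Spec S_0^W$, which shows that after passing to the inverse limit over $N$ the map above does become an isomorphism, i.e.\ completion of global sections agrees with global sections of the completion along the central fiber. Without this (or an equivalent vanishing or base-change argument), fullness of your functor is exactly the surjectivity you have not justified; only faithfulness would follow formally from left exactness.
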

\begin{proof}
  Note that the isomorphism $\EuScript{A}_1\cong \Gamma(\tilde{\fM},{\Isalg}_\phi)$ induces a map
  \[ \EuScript{A}_1/(\mathfrak{m}_{\mu}^N
 \EuScript{A}_1+\EuScript{A}_1\mathfrak{m}_{\mu'}^N) \to \Gamma(\tilde{\fM}, {\Isalg}_\phi/(\mathfrak{m}_{\mu}^N
 {\Isalg}_\phi+{\Isalg}_\phi\mathfrak{m}_{\mu'}^N)  )\]
It's not clear if this map is an isomorphism since sections are not right exact as a functor, but the theorem on formal functions \cite[\href{https://stacks.math.columbia.edu/tag/02OC}{Theorem 02OC}]{stacks-project} shows that after completion, we obtain an isomorphism
  \[ \varprojlim
 \EuScript{A}_1/(\mathfrak{m}_{\mu}^N
 \EuScript{A}_1+\EuScript{A}_1\mathfrak{m}_{\mu'}^N)\to \Gamma(\tilde{\fM}, e_{\mu'}\hat{\Isalg}_\phi e_{\mu})\]
  By \eqref{eq:summand-hom}, this shows that we have the desired fully-faithful functor.
\end{proof}

In particular, this means that in the case of $\mu=\second$, this weight space has an additional action of the nilHecke algebra of $W$, so $e_{\second}$ is the sum of $\#W$ isomorphic idempotents which are primitive in this subalgebra. We let $e_{0,\second}$ be such an idempotent; since we assume  $p$ does not divide the order of $\# W$, we can assume that this is the symmetrizing idempotent for the $W$-action on the weight space.   
\notation{$e_{0,\second}$}{A primitive ideal in the nilHecke algebra, considered as an element of $\Hom_{\widehat{\mathscr{A}}_{\mathbb{F}_p}}(\second,\second)$.}

\begin{lemma}\label{lem:0-split}
  For each $\mu$, the algebra $e_\mu {\hat{\Isalg}_\phi} e_\mu$ is Azumaya of degree $\# \weylW$ over $\fM$ and split by the natural action on the vector bundle $ {\mathcal{\hat Q}_\mu}:=e_\mu\hat{\Isalg}_\phi e_{0,\second}$.
\end{lemma}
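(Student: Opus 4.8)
The plan is to reduce the statement to the fact that $\Isalg_\phi$ is honestly Azumaya on the \emph{smooth} resolution, and then to extract the corner $e_\mu\hat{\Isalg}_\phi e_\mu$ by pure idempotent bookkeeping. First, since $\tilde{\fM}$ is smooth, the lemma above says $\Isalg_\phi$ is Azumaya of degree $p^{\operatorname{rank}(G)}\cdot\#W$ on $\tilde{\fM}$. The ideal $\mathfrak{m}\subset S_0^W$ cutting out the origin of $\ft/W$ is central in $\EuScript{A}_1$, so $\EuScript{A}_1\mathfrak{m}^N=\mathfrak{m}^N\EuScript{A}_1$, and by the theorem on formal functions $\hat{\Isalg}_\phi=\varprojlim\EuScript{A}_1/\EuScript{A}_1\mathfrak{m}^N$ is the algebra of sections of $\Isalg_\phi$ over the formal neighbourhood $\hat{\fM}$ of the fibre over $0$; hence $\hat{\Isalg}_\phi$ is Azumaya of the same degree over $\hat{\fM}$, with center the completion $\varprojlim\cO/\mathfrak{m}^N$, whose sections form the completion $\widehat{\K[\fM]}$.

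Next I would pin down the ranks of the idempotents involved. The $e_\mu$ ($\mu\in\ft_{1,\Fp}$) come from $\hat{S}_1=\prod_\mu\widehat{(S_1)}_\mu$ over the $p^{\operatorname{rank}(G)}$ points of the Artin--Schreier fibre over the origin; they are orthogonal and sum to $1$. Conjugation by the invertible classes $y_w$ ($w\in\widehat{W}$) acts on $\hat{S}_1$ through the level-$1$ action of $\widehat{W}$ on $\ft_{1,\To,\R}$, so $y_w e_\mu y_w^{-1}=e_{w\cdot\mu}$; since translations already act transitively on the $\ft_\Fp$-torsor $\ft_{1,\Fp}$, all the $e_\mu$ are mutually conjugate, hence have the same rank as idempotent sections of $\hat{\Isalg}_\phi$. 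Being $p^{\operatorname{rank}(G)}$ orthogonal idempotents summing to $1$ in an Azumaya algebra of degree $p^{\operatorname{rank}(G)}\cdot\#W$, each $e_\mu$ has constant rank $\#W$. In particular $e_\second$ has rank $\#W$; as recalled just before the lemma, $e_\second=\sum_{w\in W}e_{w,\second}$ with the $e_{w,\second}$ mutually conjugate inside the nilHecke subalgebra, so each of them, and in particular $e_{0,\second}$, has rank $1$.

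Now a rank-$1$ idempotent splits an Azumaya algebra. Locally on a cover splitting $\hat{\Isalg}_\phi$ as a matrix algebra, $e_{0,\second}\hat{\Isalg}_\phi e_{0,\second}$ is the structure sheaf, and the canonical map $z\mapsto z\,e_{0,\second}$ identifies this sheaf of commutative algebras with the center $\cO_{\hat{\fM}}$; thus $\widehat{\mathcal V}:=\hat{\Isalg}_\phi e_{0,\second}$ is locally free of rank $p^{\operatorname{rank}(G)}\cdot\#W$, the natural map $\hat{\Isalg}_\phi\to\End_{\cO_{\hat{\fM}}}(\widehat{\mathcal V})$ is an isomorphism, and $e_{0,\second}$ exhibits a Morita equivalence between $\hat{\Isalg}_\phi$ and $\cO_{\hat{\fM}}$. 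Applying the rank-$\#W$ idempotent $e_\mu$ on the left, $\hat{\cQ}_\mu=e_\mu\hat{\Isalg}_\phi e_{0,\second}=e_\mu\widehat{\mathcal V}$ is a direct summand of $\widehat{\mathcal V}$, hence a vector bundle of rank $\#W$ on $\hat{\fM}$, and
\[
 e_\mu\hat{\Isalg}_\phi e_\mu \;=\; e_\mu\,\End_{\cO_{\hat{\fM}}}(\widehat{\mathcal V})\,e_\mu \;=\; \End_{\cO_{\hat{\fM}}}(\hat{\cQ}_\mu),
\]
which is precisely a split Azumaya algebra of degree $\#W$ over $\hat{\fM}$, with center $\cO_{\hat{\fM}}$ (sections $\widehat{\K[\fM]}$), split by the natural action on $\hat{\cQ}_\mu$. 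Equivalently: $e_\mu$ and $e_{0,\second}$ are full idempotents, so $e_\mu\hat{\Isalg}_\phi e_\mu$, $\hat{\Isalg}_\phi$ and $e_{0,\second}\hat{\Isalg}_\phi e_{0,\second}=\cO_{\hat{\fM}}$ are all Morita equivalent, with $\hat{\cQ}_\mu$ the progenerator realizing the first equivalence.

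The one genuinely delicate point is the first step: identifying the completed convolution algebra $\hat{\Isalg}_\phi$ with the sections of an Azumaya algebra over a formal neighbourhood on which $\Isalg_\phi$ is literally Azumaya (i.e. inside the smooth $\tilde{\fM}$), and keeping straight which ring serves as the base; once that identification is in hand, everything else is formal manipulation of idempotents in Azumaya algebras. As a sanity check one can instead run the whole argument through Lemma~\ref{lem:A-H}, computing $e_\mu\hat{\Isalg}_\phi e_\mu$ as $\End_{\widehat{\mathscr{A}}_{\Fp}}(\mu)$ and transporting the structure along the equivalences of Section~\ref{sec:homo}.
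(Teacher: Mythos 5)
There is a genuine gap at the pivotal step of your argument: the claim that the idempotents $e_\mu$ are mutually conjugate in $\hat{\Isalg}_\phi$ because ``conjugation by $y_w$ sends $e_\mu$ to $e_{w\cdot\mu}$.'' The classes $y_w$ for $w\in\widehat{W}$ are not elements of $\EuScript{A}=\Hom_{\mathscr{B}}(\zero,\zero)$: they are morphisms $\zero\to w\cdot\zero$ between \emph{distinct} objects of the extended category (the invertible morphisms out of $\zero$ built from $y_\la$ land in $\End(w\cdot\zero)$, and the elements of $\EuScript{A}$ one actually gets from them are the dressed monopoles $\mathbbm{m}_\la(f)=y_\la\tilde{\mathbbm{r}}(\cdots)$, which involve the non-invertible $r$'s). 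So there is no inner automorphism of $\hat{\Isalg}_\phi$ implementing the translation $\mu\mapsto\mu+\la$. Indeed the conclusion you draw is too strong: if all the $e_\mu$ were conjugate in $\hat{\Isalg}_\phi$, the right modules $e_\mu\hat{\Isalg}_\phi$, and hence all the summands $\mathcal{\hat Q}_\mu$ of the splitting bundle, would be pairwise isomorphic, and the tilting generator constructed later would be a sum of copies of a single bundle --- which is false in general and would defeat the purpose of the construction. The objects $(\zero,\mu)$ and $(\zero,\mu+\la)$ are related in $\widehat{\mathscr{B}}$ only through $r$-morphisms whose invertibility depends on which matter hyperplanes intervene.

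The equal-rank statement you want is true, but the correct route to it is different: $\mathcal{\hat Q}_\mu=e_\mu\hat{\Isalg}_\phi e_{0,\second}$ is a direct summand of an Azumaya algebra on the smooth $\hat{\fM}$, hence a vector bundle of constant rank, and its generic rank is computed by Lemma~\ref{lem:Q-rank} (the $h=0$ statement that $\Hom_{\mathscr{B}}(\zero,\eta)$ is generically free of rank $\#W$ over $\K[\fM]$). This bounds the degree of each corner $e_\mu\hat{\Isalg}_\phi e_\mu$ by $\#W$, and then your counting over the $p^{\operatorname{rank}(G)}$ idempotents $e_\mu$ against the total degree $\#W\cdot p^{\operatorname{rank}(G)}$ forces each degree to equal $\#W$ exactly; at that point the splitting by $\mathcal{\hat Q}_\mu$ and the identification $e_\mu\hat{\Isalg}_\phi e_\mu\cong\End_{\cO_{\hat{\fM}}}(\mathcal{\hat Q}_\mu)$ follow just as in your last paragraph. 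The remainder of your write-up (corners of Azumaya algebras are Azumaya, the nilHecke decomposition of $e_\second$, the Morita/idempotent bookkeeping) is fine.
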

\notation{${\mathcal{\hat Q}_\mu}$}{The vector bundle $e_\mu\hat{\Isalg}_\phi e_{0,\second}$ on $\widehat{\tM}$.}
Note that \cite[Prop. 1.24]{BKpos} implies that these algebras must be split, but it is more satisfying to have a concrete splitting bundle. 
\begin{proof}
 Note first that for any idempotent $e$ in an Azumaya algebra $A$, the centralizer $eAe$ is again Azumaya.  Thus, these algebras must all be Azumaya.

 If $\tilde{\fM}$ is smooth, then $\mathcal{\hat Q}_\mu$ is a vector bundle since it is a summand of an Azumaya algebra.  By Lemma \ref{lem:Q-rank}, it is thus of rank $\#W$.
  
  Since these algebras are Azumaya, this shows that their degree is no more than $\# W$, and if this bound is achieved, then they split. Since the idempotents $p^{\operatorname{rank}(G)}$ of the form $e_\mu$ sum to the identity, and the total degree is $\# W\cdot p^{\operatorname{rank}(G)}$, this is only possible if the degree of each algebra is $\#W$. This shows the desired splitting.
\end{proof}

\begin{corollary}\label{cor:Q-splitting}
  The vector bundle $\hat{\mathcal{Q}}\cong \bigoplus { \hat{\mathcal{Q}}_\mu}$ is a splitting bundle for the Azumaya algebra $ {\hat{\Isalg}_\phi}$.

There is a fully faithful functor from  $\widehat{\mathscr{A}}_{\mathbb{F}_p}$ to the category of $\Coh^{\ell \!f}(\hat{\fM})$ of locally free coherent sheaves on $\hat{\fM}$ sending  $\mu\mapsto \hat{\mathcal{Q}}_\mu$.  
\end{corollary}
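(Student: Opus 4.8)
The plan is to deduce both assertions from the preceding lemma and Lemma~\ref{lem:A-H} by a Morita-theoretic argument organised around the idempotent $e_{0,\second}$. The starting observation is that, writing $\mathbf 1=\sum_\mu e_\mu$ for the decomposition of the identity of $\hat{\Isalg}_\phi$ coming from the points $\mu\in\ft_{1,\Fp}$, the Peirce decomposition gives an identification of left $\hat{\Isalg}_\phi$-modules
\[\hat{\Isalg}_\phi e_{0,\second}=\bigoplus_{\mu}e_\mu\hat{\Isalg}_\phi e_{0,\second}=\bigoplus_\mu \hat{\mathcal{Q}}_\mu=\hat{\mathcal{Q}},\]
which is moreover a right module over $e_{0,\second}\hat{\Isalg}_\phi e_{0,\second}$. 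So it suffices to show (a) that $e_{0,\second}$ is a \emph{full} idempotent, i.e. $\hat{\Isalg}_\phi e_{0,\second}\hat{\Isalg}_\phi=\hat{\Isalg}_\phi$, and (b) that $e_{0,\second}\hat{\Isalg}_\phi e_{0,\second}\cong\mathcal{O}_{\hat{\fM}}$; given these, standard Morita theory for an idempotent $e$ in a sheaf of algebras $A$ with $AeA=A$ gives that left multiplication is an isomorphism $\hat{\Isalg}_\phi\xrightarrow{\sim}\End_{\mathcal{O}_{\hat{\fM}}}(\hat{\mathcal{Q}})$, which is exactly the statement that $\hat{\mathcal{Q}}$ is a splitting bundle. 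Since each $\hat{\mathcal{Q}}_\mu$ is locally free of rank $\#W$ by the preceding lemma and Lemma~\ref{lem:Q-rank}, the finite sum $\hat{\mathcal{Q}}$ is locally free of rank $p^{\operatorname{rank}(G)}\cdot\#W$, matching the degree of the Azumaya algebra $\hat{\Isalg}_\phi$.

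For (a): any nonzero idempotent of an Azumaya algebra over a base generates a two-sided ideal of full support — after a formal-local trivialization $\hat{\Isalg}_\phi\cong M_N(\mathcal{O})$ the image of $e_{0,\second}$ is a nonzero idempotent, which generates all of $M_N(\mathcal{O})$ as a two-sided ideal — and since two-sided ideals of an Azumaya algebra are extended from its centre, this forces $\hat{\Isalg}_\phi e_{0,\second}\hat{\Isalg}_\phi=\hat{\Isalg}_\phi$. For (b): $e_{0,\second}$ was chosen inside $e_{\second}\hat{\Isalg}_\phi e_{\second}$ as the symmetrizing idempotent of the nilHecke action on the $\second$-weight space, and since $p\nmid\#W$ it is primitive there; by the preceding lemma $e_{\second}\hat{\Isalg}_\phi e_{\second}\cong\End_{\mathcal{O}_{\hat{\fM}}}(\hat{\mathcal{Q}}_{\second})$ with $\hat{\mathcal{Q}}_{\second}$ of rank $\#W$, so $e_{0,\second}$ corresponds to a rank-one subbundle and its corner algebra $e_{0,\second}\hat{\Isalg}_\phi e_{0,\second}=e_{0,\second}(e_{\second}\hat{\Isalg}_\phi e_{\second})e_{0,\second}$ is the endomorphism sheaf of a line bundle, hence $\cong\mathcal{O}_{\hat{\fM}}$.

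For the second assertion, the full idempotent $e_{0,\second}$ yields a Morita equivalence between the category of right $\hat{\Isalg}_\phi$-modules and the category of right $e_{0,\second}\hat{\Isalg}_\phi e_{0,\second}\cong\mathcal{O}_{\hat{\fM}}$-modules, namely $M\mapsto Me_{0,\second}=M\otimes_{\hat{\Isalg}_\phi}\hat{\Isalg}_\phi e_{0,\second}$, which carries finitely generated projective modules to locally free coherent sheaves on $\hat{\fM}$. It sends the object $e_\mu\hat{\Isalg}_\phi$ (a summand of $\hat{\Isalg}_\phi$, hence finitely generated projective) to $e_\mu\hat{\Isalg}_\phi e_{0,\second}=\hat{\mathcal{Q}}_\mu$. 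Composing with the fully faithful functor of Lemma~\ref{lem:A-H}, $\mu\mapsto e_\mu\hat{\Isalg}_\phi$, produces the desired fully faithful functor $\widehat{\mathscr{A}}_{\mathbb{F}_p}\to\Coh^{\ell\!f}(\hat{\fM})$, $\mu\mapsto\hat{\mathcal{Q}}_\mu$.

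The point requiring the most care — the main obstacle in turning this sketch into a proof — is that $\hat{\Isalg}_\phi$ is a pro-object, the inverse limit of the $\Isalg_\phi/\Isalg_\phi\mathfrak{m}^N$, so the Azumaya/Morita formalism, and in particular the fullness in (a) and the corner computation in (b), must be run at each finite level and then passed to the limit (or, alternatively, one invokes directly that $\hat{\Isalg}_\phi$ is Azumaya over the formal scheme $\hat{\fM}$ with $e_{\second}\hat{\Isalg}_\phi e_{\second}$ Azumaya of degree $\#W$, as already recorded in the preceding lemmas, together with the formal-functions identification used in the proof of Lemma~\ref{lem:A-H}). Everything else is bookkeeping with the orthogonal idempotents $e_\mu$.
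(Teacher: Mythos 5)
Your proof is correct and fills in, via the standard Morita argument around the full idempotent $e_{0,\second}$, exactly the reasoning the paper leaves implicit: the corollary is stated there without proof, as an immediate consequence of the preceding lemma and Lemma \ref{lem:A-H}. In particular the identification $e_{0,\second}\hat{\Isalg}_\phi e_{0,\second}\cong\mathcal{O}_{\hat{\fM}}$ that you establish in step (b) is the same one the paper records just after Lemma \ref{lem:Gamma-iso}, so your route and the intended one coincide.
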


Note that the bundle $e_{\operatorname{sph}}\hat{\mathcal{Q}}$ consequently is a splitting bundle for $\hat{\psalg} _\phi$; this summand can also be realized 
as the invariants of a $W$-action on $\hat{\mathcal{Q}}$.   If $W$ acts freely on the orbit of $\mu$, then $\hat{\mathcal{Q}}_\mu$ is a summand of this bundle, but otherwise, we only obtain the invariants of the stabilizer of $\mu$ in $W$ acting on this bundle.  However, since $e_{\operatorname{sph}}$ induces a Morita equivalence, these bundles satisfy $\hat{\mathcal{Q}}\cong (e_{\operatorname{sph}}\hat{\mathcal{Q}})^{\oplus \# W}$.

\begin{remark}\label{rem:tilting-from-twisting}
Note that this bundle only depends on $\phi$'s reduction mod $p$;  of course, tensoring each summand of $\hat{\mathcal{Q}}$ with an arbitrary line bundle gives a splitting bundle for the same Azumaya algebra.  We will ultimately prove that $\hat{\mathcal{Q}}$ is tilting, and this property is only preserved by tensor product of the whole bundle with a line bundle.  We can give natural realizations these tilting bundles by replacing $\hat{\Isalg}_\phi$ with the localization of the twisting bimodule ${}_{\phi+\nu}\mathscr{T}_{\phi}$, and consider the images of $e_\mu$ acting on the left on the completion of this coherent sheaf.  This still carries an action of $\hat{\Isalg}_\phi$ on the right, and we'll show below that this is again a splitting bundle for this Azumaya algebra.  
\end{remark}

\subsection{Lifting to characteristic 0}

Recall from Theorem \ref{thm:pStein-equiv} that we have an equivalence $\widehat{\mathscr{A}}_{\Fp}\cong \sfAhat(\Fp)$. Given $\mu\in \ft_{1,\Fp}$, let $\tilde{\mu}\in \ft_{1,\Z}$ be a lift. Combining this with Corollary \ref{cor:Q-splitting}, we that that:
\begin{lemma}\label{lem:Gamma-iso}
There is a fully-faithful functor $\mathsf{Q}\colon \sfAhat\to  \Coh^{\ell \!f}(\hat{\fM})$ sending $ \tilde{\mu}\mapsto  {\mathcal{\hat Q}_\mu}$.  \end{lemma}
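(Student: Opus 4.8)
The plan is to obtain $\mathsf{Q}$ by transporting the functor of Corollary~\ref{cor:Q-splitting} across the equivalences of Theorem~\ref{thm:pStein-equiv}, and then extending it over all of $\mathsf{B}$ by transport of structure. Corollary~\ref{cor:Q-splitting} supplies a fully faithful functor $F\colon\widehat{\mathscr{A}}_{\Fp}\to\Coh^{\ell\!f}(\hat{\fM})$ with $\mu\mapsto\hat{\cQ}_\mu$. By Theorem~\ref{thm:pStein-equiv}, $\widehat{\mathscr{A}}_{\Fp}\simeq\widehat{\mathsf{A}}_p$, and by Definition~\ref{def:sfA} the category $\widehat{\mathsf{A}}_p$ is the grading-completion of the full subcategory of $\mathsf{B}$ on the objects $(\zero-\xi)_{1/p}$, $\xi\in\upsilon'+\ft_{1,\Z}$ (the object ``$\xi$'' of $\mathsf{A}_p$ being by definition this object of $\mathsf{B}$). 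Composing, $F$ becomes a fully faithful functor on this completed full subcategory of $\widehat{\mathsf{B}}$; restricting to $\mathsf{A}_p\subset\mathsf{B}$ produces $\mathsf{Q}$ there, and since each $\hat{\cQ}_\mu$ is locally free all objects are sent to locally free sheaves.

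First I would run the object-level bookkeeping. Under the functor $\gamma$ of Theorem~\ref{thm:pStein-equiv}, an object $\xi$ of $\mathsf{A}_p$ maps to $(\zero,\xi\bmod p)\in\widehat{\mathscr{A}}_{\Fp}$, hence to $\hat{\cQ}_{\xi\bmod p}$ under $F$; taking $\xi=\tilde\mu$ an integral lift of $\mu\in\ft_{1,\Fp}$ gives $(\zero-\tilde\mu)_{1/p}\mapsto\hat{\cQ}_\mu$. Now $\zero$ and $\zero_{1/p}$ are generic points of the fundamental alcove lying in one chamber $\rACp_{\Ba}$, so $r(-,-)$ is an isomorphism between them --- the relevant product $\Phi$ is empty because $\varphi_i^{\operatorname{mid}}$ takes values in the open interval $(a_i,a_i+1)$ throughout any single chamber --- and translating by $-\tilde\mu_{1/p}$ gives $-\tilde\mu_{1/p}+\zero\cong(\zero-\tilde\mu)_{1/p}$ in $\mathsf{B}$, so $\mathsf{Q}$ indeed sends $-\tilde\mu_{1/p}+\zero$ to $\hat{\cQ}_\mu$. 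Since $F$ is a functor, distinct lifts $\tilde\mu$, or distinct $\mu$ in a fixed isomorphism class, produce canonically isomorphic images, so $\mathsf{Q}$ is well defined on isomorphism classes.

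It then remains to extend $\mathsf{Q}$ from this subcategory to all of $\mathsf{B}$. The facts I would invoke are: (i) any two objects of $\mathsf{B}$ in a common chamber $\rACp_{\Ba}$ are isomorphic via $r(-,-)$; (ii) the classes $y_w$, $w\in\widehat{W}$, give $\eta\cong w\cdot\eta$, so the isomorphism classes of $\mathsf{B}$ are indexed by $\ACs/\widehat{W}=\bar\Lambda^{\R}$; and (iii) for $p\gg0$ the objects $(\zero-\xi)_{1/p}$ above, which form a single coset of the refined lattice $\tfrac1p\ft_{\Z}$, meet every nonempty chamber $\rACp_{\Ba}$, i.e.\ $\Lambda=\Lambda^{\R}$ and hence $\bar\Lambda=\bar\Lambda^{\R}$. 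Granting (iii), every object of $\mathsf{B}$ is isomorphic to some $-\tilde\mu_{1/p}+\zero$; choosing such isomorphisms and transporting $\mathsf{Q}$ along them extends it to all of $\mathsf{B}$ while preserving faithfulness and fullness. I expect (iii) to be the one genuinely non-formal point --- it is where $p\gg0$ is used --- and it reduces to the elementary fact that the unrolled matter arrangement has, modulo $\ft_{\Z}$, only finitely many chambers, each a full-dimensional polyhedron, so a lattice of sufficiently small mesh meets all of them. I would also flag that ``fully faithful'' is to be read after grading-completion, since morphism spaces in $\mathsf{B}$ are not complete while those in $\Coh^{\ell\!f}(\hat{\fM})$ are; what is really constructed is a fully faithful $\widehat{\mathsf{B}}\to\Coh^{\ell\!f}(\hat{\fM})$ that restricts to $\mathsf{Q}$.
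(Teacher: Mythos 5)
Your proposal is correct and follows exactly the route the paper intends: the lemma is stated there as an immediate consequence of combining the equivalence of Theorem \ref{thm:pStein-equiv} with the fully faithful functor of Corollary \ref{cor:Q-splitting}, which is precisely the composition you carry out. The additional bookkeeping you supply (the identification $-\tilde\mu_{1/p}+\zero\cong(\zero-\tilde\mu)_{1/p}$, the extension to chambers via $\Lambda=\Lambda^{\R}$ for $p\gg 0$, and the caveat about grading-completion of Hom spaces) is consistent with what the paper leaves implicit.
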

Note that since $\zero$ is isomorphic to the direct sum of $\#  \weylW$ copies of the object $\second$ in $\mathsf{B}$, we thus have that this functor sends $\second=\second_{1/p}\mapsto \mathcal{O}_{\hat{\fM}}=e_{0,\second} {\hat{\Isalg}_\phi} e_{0,\second}$. This means that:
\begin{lemma}\label{lem:Frob-or-B}
  The functor $\mathsf{Q}$ when combined with quantum Frobenius $\sigma$ or the functor $\gamma\colon \sfAhat\to \widehat{\mathscr{A}}_{\Fp}$ induce two different isomorphisms \[\End_{\widehat{\mathscr{B}}}((\second, \second),(\second, \second))\cong \EuScript{A}^{\operatorname{sph}}_0.\]

The resulting module structures on $\Hom_{\widehat{\mathscr{B}}}( (\second,\second),(\acham,\mu))$ are isomorphic.  
\end{lemma}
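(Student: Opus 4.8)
The plan is to route both isomorphisms through the single functor $\mathsf{Q}$ of Lemma~\ref{lem:Gamma-iso}, so that the only thing left to reconcile is two ways of identifying $\End(\mathsf{Q}(\second))$ with a completion of $\EuScript{A}_0^{\operatorname{sph}}$, and then to show that the resulting discrepancy automorphism does not change the isomorphism class of the relevant coherent sheaf on $\hat{\fM}$. By construction, $\mathsf{Q}$ is the composite of the equivalence $\widehat{\mathsf{B}}(\Fp)\cong\widehat{\mathscr{A}}_{\Fp}$ of Theorem~\ref{thm:pStein-equiv} (whose $\widehat{\mathsf{B}}$-to-$\widehat{\mathscr{B}}$ component is $\gamma_{\mathsf{B}}$ from Proposition~\ref{prop:B-equiv}) with the fully faithful embedding $\widehat{\mathscr{A}}_{\Fp}\hookrightarrow\Coh^{\ell \!f}(\hat{\fM})$ of Corollary~\ref{cor:Q-splitting}, namely $\mu\mapsto\hat{\mathcal{Q}}_\mu=e_\mu\hat{\Isalg}_\phi e_{0,\second}$; in particular $\mathsf{Q}(\second)=e_{0,\second}\hat{\Isalg}_\phi e_{0,\second}=\mathcal{O}_{\hat{\fM}}$. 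Since $\mathsf{Q}$ is a functor, it carries the composition module $\Hom_{\widehat{\mathscr{B}}}((\second,\second),(\acham,\mu))$ over $\End_{\widehat{\mathscr{B}}}((\second,\second),(\second,\second))$ to $\Gamma(\hat{\fM},\mathsf{Q}(\acham,\mu))$ over $\End_{\Coh}(\mathcal{O}_{\hat{\fM}})=e_{0,\second}\hat{\Isalg}_\phi e_{0,\second}$, and this passage is linear over $\widehat{\EuScript{A}}_0^{\operatorname{sph}}$ regardless of how the endomorphism ring is identified with it. Hence it is enough to compare the two $\widehat{\EuScript{A}}_0^{\operatorname{sph}}$-module structures on the \emph{same} abelian group $\Gamma(\hat{\fM},\mathsf{Q}(\acham,\mu))$.

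Next I would make the two identifications $\theta_\sigma,\theta_\gamma\colon e_{0,\second}\hat{\Isalg}_\phi e_{0,\second}\xrightarrow{\sim}\widehat{\EuScript{A}}_0^{\operatorname{sph}}$ explicit. The map $\theta_\sigma$ is the one built into the definition of $\hat{\Isalg}_\phi$ as the Frobenius pushforward of $\EuScript{A}_1$ along Lonergan's quantum Frobenius $\sigma$ (explicitly $\varphi\mapsto\varphi^p-\varphi$ and $r_\nu\mapsto r_{p\nu}$ in the abelian case, and by reduction to the maximal torus in general), while $\theta_\gamma$ is read off from the formulas \eqref{gamma1}--\eqref{gamma4} for $\gamma_{\mathsf{B}}$ together with the tautological identity $\End_{\mathsf{B}}(\second,\second)=\EuScript{A}_0^{\operatorname{sph}}$ coming from $h=0$ in the $p$th-root conventions. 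The composite $\theta:=\theta_\gamma\circ\theta_\sigma^{-1}$ is an automorphism of the completed \emph{commutative} ring $\widehat{\EuScript{A}}_0^{\operatorname{sph}}$ (so there are no nontrivial inner automorphisms to worry about). A direct computation with these formulas should show that $\theta$ is the identity on the subring $(S_0^W)^\wedge$ of functions pulled back from $\ft/W$ — both routes send each polynomial generator $\mu$ to its nilpotent part $\mu-\langle\mu,\upsilon'\rangle$, and these agree on $W$-invariants — and that on the dressed monopole operators $\mathbbm{m}_\la(f)$ of \cite[Prop.~3.14]{WebSD} it is induced by the Hamiltonian $K=T_F^\vee$-translation by the reduction mod $p$ of the $W$-invariant flavor lift $\tilde{\phi}$, equivalently by conjugation by the homology classes $y_w$ realizing the translation part of the $\widehat{W}$-action.

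The main obstacle is the last step: showing $\theta^{*}\mathsf{Q}(\acham,\mu)\cong\mathsf{Q}(\acham,\mu)$ as $\widehat{\EuScript{A}}_0^{\operatorname{sph}}$-modules. Since $\theta$ fixes $(S_0^W)^\wedge$, it induces the identity on the underlying formal scheme $\hat{\fM}$, so this is the assertion that a finite-order automorphism in the ``$K$-directions'' does not move the splitting bundle. I would prove it by lifting $\theta$ to an automorphism $\tilde{\theta}$ of $\hat{\Isalg}_\phi$: on $\EuScript{A}_{1,\To}^{\operatorname{sph}}$, before Hamiltonian reduction, this lift is conjugation by the relevant classes $y_w$ (equivalently, it is realized by the change-of-flavor twisting bimodules ${}_{\phi+\nu}T_\phi$ of \cite[Def.~3.16]{WebSD}), which carries $e_{0,\second}$ to a conjugate primitive idempotent and each $e_\mu$ to $e_{\mu'}$ with $\mu'$ in the $\widehat{W}$-orbit of $\mu$; thus $\tilde{\theta}$ supplies an $\mathcal{O}_{\hat{\fM}}$-linear isomorphism $\mathsf{Q}(\acham,\mu)\cong\theta^{*}\mathsf{Q}(\acham,\mu)$. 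Transporting this back through $\mathsf{Q}$ yields the asserted isomorphism of $\widehat{\EuScript{A}}_0^{\operatorname{sph}}$-module structures on $\Hom_{\widehat{\mathscr{B}}}((\second,\second),(\acham,\mu))$. The genuinely delicate part is the explicit determination of $\theta$ — keeping precise track of how Lonergan's Frobenius and the $p$th-root recoordinatization $\eta\mapsto\eta_p+\upsilon'$ interact with the weight-space decomposition — together with verifying that the lift $\tilde{\theta}$ exists with exactly the stated effect on the idempotents.
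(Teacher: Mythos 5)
Your reduction of the lemma to an automorphism $\theta=\theta_\gamma\circ\theta_\sigma^{-1}$ of $\widehat{\EuScript{A}}_0^{\operatorname{sph}}$, and the observation that the two module structures on the same abelian group differ by pullback along $\theta$, is a legitimate reformulation. But your computation of $\theta$ is wrong at the decisive point, and the error propagates into the final step. The two routes do \emph{not} agree on $(S_0^W)^{\wedge}$: writing $\mu=\langle\mu,\upsilon'\rangle+n$ with $n$ the topologically nilpotent part, the quantum Frobenius sends $\mu\mapsto \AS(\mu)=\mu^p-\mu=n^p-n$ (since $\langle\mu,\upsilon'\rangle\in\mathbb{F}_p$), whereas $\gamma$ sends $\mu\mapsto n$. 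These differ by the unit $n^{p-1}-1$, so $\theta$ acts on the center by (the inverse of) $n\mapsto n^p-n$ and induces a \emph{nontrivial} automorphism of the formal scheme $\hat{\fM}$. Similarly, on monopole operators $\theta$ multiplies $r_\nu$ by the unit $\hat{\Phi}_0(\cdot,\cdot,\upsilon')^{\pm1}$, which is a non-constant function, not a character of the coweight lattice, so $\theta$ is not a Hamiltonian $K$-translation. Consequently your proposed lift $\tilde\theta$ — conjugation by the classes $y_w$, or twisting by ${}_{\phi+\nu}T_\phi$ — cannot realize $\theta$: any such inner/bimodule twist acts trivially (resp.\ by a translation) on the center, while $\theta$ does not. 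The assertion $\theta^*\mathsf{Q}(\acham,\mu)\cong\mathsf{Q}(\acham,\mu)$ is therefore left without support; for a general automorphism of a complete local base that is nontrivial modulo $\mathfrak{m}^2$ there is no automatic reason a vector bundle is preserved under pullback.

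The paper avoids this entirely by never passing through an automorphism of the base: it writes down a single explicit bijection $\ell\colon\Hom_{\widehat{\mathsf{B}}}(\second,\acham_{1/p})\to\Hom_{\widehat{\mathscr{B}}}((\second,\second),(\acham,\second))$ on the spanning set $y_w\mathbbm{r}_\pi$ over the dots, defined on generators by $\la\mapsto\la^p-\la$, $w\mapsto w_p$, $u_\al\mapsto u_{\al^{(p)}}/((\al^{(p)})^{p-1}-1)$, $r(\eta,\eta')\mapsto r(\eta_p,\eta'_p)$. By design $\ell$ restricted to $\End(\second)$ \emph{is} $\sigma$, so $\ell$ identifies the source with the $\sigma$-module structure, while $\gamma$ identifies it with the $\gamma$-module structure; the discrepancies ($\hat\Phi_0$, $\al^{p-1}-1$, and $n$ versus $n^p-n$) are absorbed because they are units in the completion. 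If you want to keep your framework, you would need to replace the claimed lift by exactly such a $\theta$-semilinear bijection of the module with itself, which amounts to reproving the paper's computation.
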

\begin{proof}
  Using the action of $\widehat{W}$, we can assume that $\mu=\second$.
  The module $\Hom_{\widehat{\mathscr{B}}}( (\second,\second),(\acham,\second))$ is spanned as a module over the dots by a basis consisting of the elements $y_w\tilde{r}_{\pi}$for $w\in \widehat{W}$ such that $w\cdot \second=\second$ and  a minimal length path $\second$ to $w\cdot \acham$.  The same is true of $\Hom_{\widehat{\mathsf{B}}}(\second,\acham_{1/p})$.

  We define an isomorphism \[\ell\colon \Hom_{\widehat{\mathsf{B}}}(\second,\acham_{1/p})\to \Hom_{\widehat{\mathscr{B}}}( (\second,\second),(\acham,\second)) \]
by the formulas
\[ \ell(\la)=\la^p-\la \qquad \ell(w)=w_p\qquad\ell(u_{\al})=\frac{u_{\al^{(p)}}}{(\al^{(p)})^{p-1}-1}\]
\[\ell(r(\eta,\eta')) =r(\eta_{p},\eta'_{p}).\]
This defines an isomorphism since the polynomials $\hat \Phi_0(\acham,\acham',\second)$ and $\al^{p-1}-1$ are invertible.  It's important to note that this does not define an equivalence of categories, but only of $\EuScript{A}^{\operatorname{sph}}_0$-modules.
\end{proof}

We wish to extend this result to the coherent sheaves $ {\mathcal{\hat Q}_\mu}$.  In order to do this, it is useful to consider the completed category $ {\widehat{\mathscr{B}}^{\To}}$ attached to the gauge group $\To$.  We have a functor from this category to $\Coh^K(\hat{\fM}_{\To})$, the category of $\groupK$-equivariant coherent sheaves on the corresponding completion of the Coulomb branch $\fM _{\To}$.  This functor is given by considering $\Hom_{\widehat{\mathscr{B}}^{\To}}( (\second,\second),(\acham,\mu))$ as a module over $\EuScript{A}^{\To}_0=\End_{\widehat{\mathscr{B}}^{\To}}((\second, \second),(\second, \second))$, where the isomorphism is via the quantum Frobenius.

This inherits a $K$-action from the category $ {\widehat{\mathscr{B}}^{\To}}$ itself.  If we change $\acham\mapsto \acham+p\nu$ for $\nu \in \ft_{\To,\Z}$, this has the effect of twisting the equivariant structure by the corresponding character of $K$ induced by exponentiating $\gamma$. In particular, as an equivariant sheaf, this depends only on the image of $\nu$ in $\ft_{F,\Z}$, so if $\gamma\in \ft_{\Z}$, the resulting sheaf is $K$-equivariantly isomorphic.

By definition, the module $\mathcal{\hat Q}_\mu$ is the reduction of the coherent sheaf \[\mathcal{\hat R}_\mu=\Hom_{\widehat{\mathscr{B}}^{\To}}( (\second,\second),(\zero,\mu)),\] thought of as a $\EuScript{A}^{\To;\operatorname{sph}}_0$-module via the quantum Frobenius $\sigma$.  

Of course, we can apply the functor of Proposition \ref{prop:B-equiv} with the gauge group $\To$;  this gives us an identification of $\mathcal{\hat R}_\mu$ with $\mathsf{\hat R}_\mu=\Hom_{\widehat{\mathsf{B}}^{\To}}( \second,-\tilde{\mu}_{1/p}+\zero)$.  This is a module over $\EuScript{A}^{\To;\operatorname{sph}}_0\cong \Hom_{\widehat{\mathsf{B}}^{\To}}( \second,\second)$, and the two possible module structures are isomorphic by Lemma \ref{lem:Gamma-iso}.

Note that using this presentation has enormous advantages---we can consider the induced module $\mathsf{ R}_\mu=\Hom_{{\mathsf{B}}^{\To}}( \second,\mu_{1/p})$ in the uncompleted category ${\mathsf{B}}^{\To}$; localizing, this gives a $K\times \mathbb{G}_m$-equivariant module on $\MQ$. Furthermore, whereas all of the geometry discussed earlier in this category required us to consider $\fM$ over a base field of  characteristic $p$, the category $ {\mathsf{B}}^{\To}(\K)$ is well-defined over $\Z$ and thus over any commutative base ring $\K$ .  
\begin{definition}\label{def:Q-def}
Let $ {\mathcal{Q}_\mu^{\K}}$ be the $\mathbb{G}_m$-equivariant coherent sheaf on $\tilde{\fM}$ given by Hamiltonian reduction of $ \mathsf{ R}_\mu(\K)=\Hom_{{\mathscr{B}}^{\To}(\K)}( \second,-\tilde{\mu}_{1/p}+\zero)$.
\end{definition}
\notation{$\mathcal{Q}_\mu^{\K}$}{The $\mathbb{G}_m$-equivariant coherent sheaf on $\tilde{\fM}$ given by Hamiltonian reduction of $ \mathsf{ R}_\mu(\K)=\Hom_{{\mathscr{B}}^{\To}(\K)}( \second,-\tilde{\mu}_{1/p}+\zero)$ (Definition \ref{def:Q-def}). This is a lift to arbitrary base ring of $\widehat{\mathcal{Q}}_\mu$.}

\begin{remark}\label{rem:tilting-from-twisting2}
  We can construct the more general tilting bundles from Remark \ref{rem:tilting-from-twisting} by choosing a lift of the cocharacter $\nu\colon \mathbb{G}_m\to \Aut_{G}(V)$, and consider the Hamiltonian reduction of $\Hom_{{\mathscr{B}}^{\To}(\K)}( \second,-\tilde{\mu}_{1/p}+\zero+\nu)$.  This is simply $ {\mathcal{Q}_\mu^{\K}}\otimes \mathcal{L}_{\nu/p}$ where $\mathcal{L}_{\nu/p}$ is the line bundle that arises from $\Hom_{{\mathscr{B}}^{\To}(\K)}( \second,\nu)$.  If $\nu$ is $p$-divisible, then this is the associated bundle for $\nu/p$, identified with a character of $K$.  If  not, then describing this line bundle is more complicated (though we should emphasize that it is an honest line bundle, not a fractional power).  
\end{remark}

\subsection{Derived localization}

For now, let us specialize back to the case where $\K=\mathbb{F}_p$.  By  Grauert and Riemenschneider for Frobenius split varieties (\cite{mehtaGrauertRiemenschneiderVanishing1992}) and the splitting of Proposition \ref{prop:nonabelian-splitting}, we have that:
\begin{corollary}\label{cor:cohomology-vanishing}
For any prime $p$, we have the higher cohomology vanishing $H^i(\tM;\cO)=0$ for all $i>0$.
\end{corollary}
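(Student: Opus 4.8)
The plan is to invoke the Grauert--Riemenschneider-type vanishing theorem for Frobenius split varieties due to Mehta--Ramanathan and Mehta--van der Kallen (cited here as \cite{MR1156382}). Recall that result: if $f\colon \tilde X\to X$ is a proper birational morphism, $\tilde X$ is smooth, and $\tilde X$ carries a Frobenius splitting, then $R^if_*\cO_{\tilde X}=0$ for $i>0$; moreover if $X$ is affine this gives $H^i(\tilde X;\cO)=0$ for $i>0$. So the entire task reduces to producing the two hypotheses: smoothness of $\tilde\fM$ and a Frobenius splitting.

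First I would recall that by the standing assumption of this subsection, $\tilde\fM$ is a BFN resolution, hence smooth, and the structure map $\pi\colon\tilde\fM\to\fM$ is a proper birational morphism onto the affine variety $\fM=\Spec\EuScript{A}_0^{\operatorname{sph}}$; properness holds because $\tilde\fM$ is a GIT quotient of $\mu^{-1}(0)\subset\fM_{\To}$ with respect to a character $\chi\colon K\to\mathbb{G}_m$, and the map to the affine GIT quotient $\fM$ is projective. Second, I would invoke Corollary \ref{cor:BFN-split}: any partial BFN resolution is Frobenius split, the splitting being inherited from the explicit splitting $\kappa$ of $\K[\fM_{\operatorname{ab}}^0]$ constructed in Proposition \ref{prop:abelian-splitting} and descended through Proposition \ref{prop:nonabelian-splitting} and the Pontryagin-dual invariance noted just before Corollary \ref{cor:BFN-split}. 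With both hypotheses in hand, the cited vanishing theorem applies verbatim: $R^i\pi_*\cO_{\tilde\fM}=0$ for $i>0$, and since $\fM$ is affine, the Leray spectral sequence degenerates to give $H^i(\tilde\fM;\cO_{\tilde\fM})=H^0(\fM;R^i\pi_*\cO_{\tilde\fM})=0$ for $i>0$.

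The only point requiring a word of care is that the Frobenius splitting theorem of \cite{MR1156382} is usually stated for a splitting of $\tilde X$ itself, whereas Corollary \ref{cor:BFN-split} gives the splitting directly on $\tilde\fM$ (it is produced as a restriction of the splitting of the abelianized Coulomb branch along the open locus where root functions are nonvanishing, then shown to preserve $\K[\tilde\fM]$); so there is nothing to reconcile. I do not expect any genuine obstacle here---the corollary is essentially a formal consequence of Corollary \ref{cor:BFN-split} plus a standard theorem---the mild subtlety is simply confirming that the splitting constructed upstairs on $\fM$ and its GIT quotients is a bona fide Frobenius splitting of the smooth variety $\tilde\fM$ in the sense required by the cited vanishing statement, which is exactly what Proposition \ref{prop:nonabelian-splitting} and the remark on descent to GIT quotients provide.
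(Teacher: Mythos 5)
Your proposal is correct and follows exactly the paper's route: the paper likewise deduces the corollary by combining the Mehta--van der Kallen Grauert--Riemenschneider theorem for Frobenius split varieties (\cite{MR1156382}) with the splitting of Proposition \ref{prop:nonabelian-splitting} (equivalently Corollary \ref{cor:BFN-split}) applied to the smooth, proper-over-affine BFN resolution $\tilde{\fM}\to\fM$. Your additional care about the Leray spectral sequence and about where the splitting lives is a fuller write-up of what the paper leaves implicit, but it is the same argument.
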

As discussed in \cite{KalDEQ}, this means that the
derived functor of localization $\LLoc$ is right inverse to the
functor  $\Rsecs$ of derived sections for modules over $ {\psalg_\phi}$.  
Recall that we have chosen $\chi$ such that  $\tilde{\fM}$ is smooth. We can conclude from \cite[Thm. 4.2]{KalDEQ} that:

\begin{lemma}\label{lem:upper-bound}
  There is an integer $N$, such that for any $p$, and any line parallel to $\chi$ in $\ft_{1,\Fp}$, there are at most $N$ values of $\phi$ for which $\LLoc$ and $\Rsecs$ are {\em not} inverse equivalences.  
\end{lemma}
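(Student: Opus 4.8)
The plan is to reduce the statement to a uniform bound coming from the characteristic-free combinatorial category $\mathsf{B}$ and its geometric incarnation via the functors $\mathsf{Q}$ and $\gamma_{\mathsf{B}}$. First I would recall that by Corollary \ref{cor:cohomology-vanishing} we have $H^i(\tilde{\fM};\cO)=0$ for $i>0$ and any $p$, so the criterion of \cite[Thm. 4.2]{KalDEQ} applies: derived localization $\LLoc$ and derived sections $\Rsecs$ are mutually inverse precisely when a certain Ext-vanishing (equivalently, the vanishing of higher cohomology of the splitting bundle $\hat{\mathcal{Q}}$, or of $\psalg_\phi$ viewed as a tilting-type object) holds. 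Kaledin's theorem phrases this as: there is a proper closed subscheme of the space of quantization parameters outside of which localization holds, and the key extra input we need is that this ``bad locus'' is cut out by equations whose degrees are bounded independently of $p$.

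The main step is to show this bad locus, intersected with a line parallel to $\chi$ in $\ft_{1,\Fp}$, consists of at most $N$ points with $N$ independent of $p$. Here is where I would use the work of the previous subsections: by Lemma \ref{lem:Gamma-iso} and Corollary \ref{cor:Q-splitting}, the bundle $\hat{\mathcal{Q}}_\mu$ is identified with $\mathsf{Q}(-\tilde\mu_{1/p}+\zero)$, and by the definition of $\mathcal{Q}_\mu^{\K}$ via Hamiltonian reduction of $\mathsf{R}_\mu(\K)=\Hom_{\mathscr{B}^{\To}(\K)}(\second,-\tilde\mu_{1/p}+\zero)$, this bundle and the relevant Ext-groups between its summands are computed by a construction defined over $\Z$ --- only the specialization $\K=\Fp$ and the scaling $\mu\mapsto \mu_{1/p}$ depend on $p$. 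The condition that $\LLoc,\Rsecs$ fail is the condition that $\hat{\mathcal{Q}}_\phi:=\bigoplus_\mu\hat{\mathcal{Q}}_\mu$ (or its reduction) fails to be a tilting generator, i.e. $\Ext^{>0}(\hat{\mathcal{Q}}_\phi,\hat{\mathcal{Q}}_\phi)\neq 0$. These Ext-groups are graded pieces of the $\Z$-defined category $\mathsf{B}^{\bar\Lambda}(\Z)$, so the set of $\phi$ where they are nonzero, as $\phi$ ranges over a line parallel to $\chi$, is governed by finitely many polynomial (in fact linear, coming from the hyperplane arrangements $\varphi_i^{\operatorname{mid}}(\xi)\in\tfrac1p\Z$ and the root hyperplanes) conditions whose number depends only on the finite data $(\bar\Lambda, d, \#W)$ and not on $p$. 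The number of lattice points on a line where a fixed finite union of affine subspaces is met --- after the $\tfrac1p$-rescaling --- is bounded by the number of those subspaces, giving the uniform $N$.

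The hard part will be making precise the passage from ``$\LLoc,\Rsecs$ fail'' to ``a specific, $p$-independent finite list of linear conditions on $\phi$ is satisfied.'' Kaledin's criterion is stated geometrically (Ext-vanishing on $\tilde{\fM}$ over $\Fp$), whereas our control is algebraic (the category $\mathsf{B}^{\bar\Lambda}$ over $\Z$); bridging these requires that the identification of $\Ext^\bullet_{\tilde{\fM}}(\mathcal{Q}_\phi,\mathcal{Q}_\phi)$ with morphism spaces in $\mathsf{B}^{\bar\Lambda}$ is compatible with base change and with the $\Z$-grading, which is essentially Lemma \ref{lem:A-H}, Corollary \ref{cor:Q-splitting}, and the theorem on formal functions, but one must check the Ext-groups in positive degree (not just $\Hom$) transform correctly under reduction mod $p$ and under the rescaling $\mu\mapsto\mu_{1/p}$. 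Once that compatibility is in hand, the bound is essentially the statement that a fixed finite arrangement of rational hyperplanes, dilated by $p$, meets a line in a number of integral points bounded by the number of hyperplanes --- a purely combinatorial fact. I would also remark that the passage $\phi\mapsto\phi_{1/p}$ sends the failure locus in the $\phi$-line to a fixed subset of the $\psi$-line independent of $p$ (as noted after Definition \ref{def:Lambda}), which is the cleanest way to phrase and prove the uniformity.
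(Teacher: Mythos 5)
You correctly identify the two ingredients the paper actually uses: the cohomology vanishing $H^{>0}(\tilde{\fM};\mathcal{O})=0$ (Corollary \ref{cor:cohomology-vanishing}), which makes $\LLoc$ a one-sided inverse to $\Rsecs$, and Kaledin's Theorem 4.2. But the paper's entire ``proof'' is that citation: Kaledin's theorem already asserts, for a quantization of a smooth symplectic resolution in characteristic $p$ twisted along powers of an ample class $\chi$, that localization fails for at most $N$ values of the twist on each such line, with $N$ independent of $p$ (the uniformity is built into his argument via a degeneration defined over a localization of $\Z$). There is nothing further to prove, and your proposal should stop there.

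The elaboration you add instead contains a genuine gap, and in fact a circularity. You claim that the locus of $\phi$ where $\Ext^{>0}(\hat{\mathcal{Q}}_\phi,\hat{\mathcal{Q}}_\phi)\neq 0$ (equivalently, where localization fails, by Lemma \ref{lem:tiling-localization}) is cut out by a fixed, $p$-independent finite list of linear conditions coming from the arrangement governing $\bar\Lambda$. What the arrangement controls is only that the isomorphism type of $\mathcal{Q}_\phi$ is constant on each region (Lemma \ref{lem:doesnt-depend2}); it does not tell you that localization \emph{holds} on the open regions. That statement is essentially what the whole section is trying to establish (Theorem \ref{thm:asymptotic-derived}), and in its strongest form it is only a conjecture in the paper. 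The logical order is the reverse of what you propose: one first gets the uniform bound $N$ per line from Kaledin, then uses constancy on regions plus Lemma \ref{lem:segment} to place $N+1$ generic parameters on a $\chi$-parallel segment inside a single region, concluding that localization must hold somewhere in that region and hence throughout it. Your combinatorial count of lattice points on a line meeting a fixed arrangement would, at best, bound the number of \emph{walls} crossed, not the number of parameters where localization fails.
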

\begin{remark}
It seems likely that this result also holds when $\tilde{\fM}$ is not smooth, at least for the quantizations we have constructed, but let us leave this point unresolved for the time being.
\end{remark}

\begin{lemma}\label{lem:tiling-localization}
The vector bundle $\mathcal{Q}^{\Fp}=\bigoplus_{\mu} {\mathcal{Q}_\mu^{\Fp}}$ is a tilting generator for $\Coh(\fM)$ if and only if derived localization holds for $ {\hat{\psalg}_\phi}$.  
\end{lemma}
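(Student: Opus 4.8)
The plan is to argue that both sides are equivalent to the same concrete statement about $\hat{\psalg}_\phi$-modules, namely that $\hat{\mathcal{Q}}=\bigoplus_\mu\hat{\mathcal{Q}}_\mu$ (equivalently $e_{\operatorname{sph}}\hat{\mathcal{Q}}$ after the Morita equivalence) is a tilting generator for $\Coh(\hat{\fM})$. First I would recall what ``$\mathcal{Q}^{\Fp}$ is a tilting generator'' and ``derived localization holds'' each unpack to. By Corollary \ref{cor:Q-splitting} the bundle $\hat{\mathcal{Q}}$ is a splitting bundle for $\hat{\Isalg}_\phi$, so it is automatically a generator of $\Coh^{\ell\!f}(\hat{\fM})$; the content is the tilting (Ext-vanishing) condition $\Ext^{>0}(\hat{\mathcal{Q}},\hat{\mathcal{Q}})=0$. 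On the other hand, by Kaledin's theory (\cite{KalDEQ}) together with the cohomology vanishing of Corollary \ref{cor:cohomology-vanishing}, derived localization for $\psalg_\phi$ on the smooth resolution $\tilde{\fM}$ is equivalent to the existence of a tilting generator arising from the splitting bundle of the Azumaya algebra $\psalg_\phi$; after passing to the formal neighborhood $\hat{\fM}$ this becomes a statement purely about $\hat{\psalg}_\phi$.

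The key steps, in order, would be: (1) Use \eqref{eq:proj-coord} and the homogeneous coordinate ring picture to transfer statements about $\Coh(\tilde{\fM})$ and coherent $\psalg_\phi$-modules into statements about graded $\EuScript{T}^{\operatorname{sph}}$-modules, reducing the ``global'' tilting question for $\mathcal{Q}^{\Fp}$ on $\tilde{\fM}$ to the corresponding question over the formal neighborhood $\hat{\fM}$, using that $\tilde{\fM}\to\fM$ is projective and the sheaves involved are coherent (so formal-completion detects $\Ext$-vanishing, via the theorem on formal functions as in the proof of Lemma \ref{lem:A-H}). (2) Identify, via Lemma \ref{lem:Gamma-iso} and the definition of $\mathcal{Q}_\mu^{\K}$ by Hamiltonian reduction, the summands $\mathcal{Q}_\mu^{\Fp}$ with the splitting-bundle summands $\hat{\mathcal{Q}}_\mu=e_\mu\hat{\Isalg}_\phi e_{0,\second}$, so that $\Ext^\bullet_{\tilde{\fM}}(\mathcal{Q}^{\Fp},\mathcal{Q}^{\Fp})$ is computed by $\End$ and higher $\Ext$ of $\hat{\mathcal{Q}}$ over $\hat{\fM}$, i.e. by $\hat{\Isalg}_\phi$ itself in degree $0$ and its (non)vanishing derived self-Homs. (3) Invoke \cite[Thm. 4.2]{KalDEQ}: since $\psalg_\phi$ is a Frobenius-constant quantization of the conic symplectic resolution $\tilde{\fM}$ with $H^{>0}(\tilde{\fM};\cO)=0$, derived localization $\LLoc\dashv\Rsecs$ being an equivalence is equivalent to the splitting bundle of $\psalg_\phi$ (equivalently of its completion $\hat{\psalg}_\phi$) being a tilting generator; then use the Morita equivalence $\hat{\mathcal{Q}}\cong(e_{\operatorname{sph}}\hat{\mathcal{Q}})^{\oplus\#W}$ from Corollary \ref{cor:Q-splitting} to pass between $\hat{\psalg}_\phi$ and $\hat{\Isalg}_\phi$, so that the tilting property of $\hat{\mathcal{Q}}$ is equivalent to derived localization for $\hat{\psalg}_\phi$. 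Chaining (1)--(3) gives the claimed equivalence.

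The main obstacle I expect is step (1): one must be careful that ``tilting generator for $\Coh(\fM)$'' as stated (note the ambient variety is really $\tilde{\fM}$, with $\mathcal{Q}^{\Fp}$ a sheaf on $\tilde{\fM}$) can be tested after completion along the central fiber over $0\in\ft/W$. The generation part is the delicate direction: Ext-vanishing localizes to the formal neighborhood readily, but showing that the completed bundle generates $D^b(\Coh(\hat{\fM}))$ forces $\hat{\mathcal{Q}}$ to generate globally requires knowing that the central fiber of $\tilde{\fM}\to\ft/W$ meets every component and that coherent sheaves on $\tilde{\fM}$ with vanishing restriction to $\hat{\fM}$ are zero, which uses the conic $\C^*$-action contracting $\tilde{\fM}$ to this fiber. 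Everything else—the identification of summands and the citation of Kaledin's criterion—is essentially bookkeeping once the earlier lemmas are in place.
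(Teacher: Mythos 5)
There is a genuine gap: you have inverted which half of the tilting-generator condition is automatic and which half carries the content of the lemma. You assert that, because $\hat{\mathcal{Q}}$ is a splitting bundle for $\hat{\Isalg}_\phi$, ``it is automatically a generator \dots; the content is the tilting (Ext-vanishing) condition.'' This is backwards. The Ext-vanishing is the unconditional part: since $\sHom_{\cO_{\hat{\fM}}}(\hat{\mathcal{Q}},\hat{\mathcal{Q}})\cong\hat{\psalg}$ (up to the Morita equivalence with $\hat{\Isalg}$), one has $\Ext^{i}(\hat{\mathcal{Q}},\hat{\mathcal{Q}})\cong H^i(\hat{\fM};\hat{\psalg})$, which vanishes for $i>0$ regardless of $\phi$. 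Being a splitting bundle only gives an equivalence $\mathcal{F}\mapsto\sHom(\hat{\mathcal{Q}},\mathcal{F})$ between $\Coh(\hat{\fM})$ and coherent $\hat{\psalg}$-modules; it does \emph{not} imply that $\hat{\mathcal{Q}}$ generates $D^b(\Coh(\hat{\fM}))$. Generation is exactly the statement that $\Ext^\bullet(\hat{\mathcal{Q}},\mathcal{F})=H^\bullet(\hat{\fM};\sHom(\hat{\mathcal{Q}},\mathcal{F}))$ is nonzero for every nonzero $\mathcal{F}$, i.e.\ that $\Rsecs$ kills no nonzero $\hat{\psalg}$-module; combined with the fact that $\LLoc$ is already a right inverse to $\Rsecs$ (Corollary \ref{cor:cohomology-vanishing}), this conservativity is precisely derived localization. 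That two-line identification is the whole proof in the paper, and it is the step your plan replaces with a citation of \cite[Thm.\ 4.2]{KalDEQ} asserting ``derived localization iff the splitting bundle is a tilting generator'' --- which is the lemma itself, not something Kaledin's Theorem 4.2 states (that theorem gives the bound on bad parameters used in Lemma \ref{lem:upper-bound}). Followed literally, your step (3) applied to the Ext-vanishing condition would ``prove'' that localization always holds, which is false.

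On your step (1): the paper disposes of the passage from $\tilde{\fM}$ to $\hat{\fM}$ with a one-line appeal to semi-continuity, and the concern you raise about the generation direction is not where the weight of the argument lies. The machinery of homogeneous coordinate rings and the theorem on formal functions is not needed here; what is needed is the correct bookkeeping of which condition ($\Ext$-vanishing versus generation) is equivalent to derived localization on the formal neighborhood.
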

\begin{proof}
First note that  by semi-continuity, it's enough to show this for $\mathcal{\hat Q}^{\Fp}$ on $\hat{\fM}$.  We know that on $\hat{\fM}$, we have an isomorphism $\hat{\psalg}\cong \sHom_{\cO_{\hat{\fM}}}(\mathcal{\hat Q}^{\Fp},\mathcal{\hat Q}^{\Fp})$.  Since the higher cohomology of $\hat{\psalg}$ vanishes, this shows that $\mathcal{\hat Q}^{\Fp}$ is a tilting bundle.  

The $\hat{\psalg}$-modules are precisely the sheaves of the form $\sHom_{\cO_{\hat{\fM}}}(\mathcal{\hat Q}^{\Fp},\mathcal{F})$ for a coherent sheaf $\mathcal{F}$. Since $\mathcal{\hat Q}^{\Fp}$ is a vector bundle, we have that \[H^i(\fM;\sHom_{\cO_{\hat{\fM}}}(\mathcal{\hat Q}^{\Fp},\mathcal{F}))\cong \Ext^i_{\cO_{\hat{\fM}}}(\mathcal{\hat Q}^{\Fp},\mathcal{F}).\] Thus, $\mathcal{Q}^{\Fp}$ is a generator if and only if no module over $\hat{\psalg}$ has all cohomology groups trivial.   
\end{proof}

\begin{corollary} \label{cor:char-p-equiv}
If derived localization holds at $\phi$, then the fully faithful functor  $\mathsf{Q}\colon \sfA(\Fp)\to \Coh(\tilde{\fM})$  induces an equivalence of derived categories $D^b(\mathsf{A}(\Fp)\mmod)\cong D^b(\Coh(\tilde{\fM}))$.
\end{corollary}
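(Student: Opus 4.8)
The plan is to deduce this from the tilting-bundle description established in Lemma \ref{lem:tiling-localization} together with the fully faithfulness in Lemma \ref{lem:Gamma-iso}. First I would invoke Lemma \ref{lem:tiling-localization}: under the hypothesis that derived localization holds at $\phi$, the vector bundle $\mathcal{Q}^{\Fp}=\bigoplus_\mu\mathcal{Q}_\mu^{\Fp}$ is a tilting generator for $\Coh(\tilde\fM)$. By the standard tilting-bundle formalism (cf.\ \cite{KalDEQ}), this means the functor $\RHom_{\cO_{\tilde\fM}}(\mathcal{Q}^{\Fp},-)$ induces an equivalence $D^b(\Coh(\tilde\fM))\cong D^b(\End(\mathcal{Q}^{\Fp})\mmod)$, where $\End(\mathcal{Q}^{\Fp})=\bigoplus_{\mu,\mu'}\Hom_{\cO_{\tilde\fM}}(\mathcal{Q}_\mu^{\Fp},\mathcal{Q}_{\mu'}^{\Fp})$. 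It then remains to identify this endomorphism algebra with $\mathsf{B}(\Fp)$ (equivalently $\mathsf{A}_p(\Fp)$).

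The identification is exactly what Lemma \ref{lem:Gamma-iso} (via $\mathsf{Q}$) provides on objects: $\mathsf{Q}$ sends $-\tilde\mu_{1/p}+\zero\mapsto \mathcal{\hat Q}_\mu$, and is fully faithful, so $\Hom_{\mathsf{B}(\Fp)}(-\tilde\mu_{1/p}+\zero,-\tilde\mu'_{1/p}+\zero)\cong \Hom_{\cO_{\hat\fM}}(\mathcal{\hat Q}_\mu,\mathcal{\hat Q}_{\mu'})$. Since the objects $\{-\tilde\mu_{1/p}+\zero\}$ running over $\mu\in\ft_{1,\Fp}$ generate $\mathsf{B}(\Fp)$ up to the $\widehat W$-action and Morita equivalence (the object set of $\mathsf{B}$ is all of $\mathsf{t}_{1,\R}$, but two objects differing by $\widehat W$ are isomorphic, and everything reduces to the finitely many $\mu\in\ft_{1,\Fp}$), the direct sum $\bigoplus_\mu\mathcal{\hat Q}_\mu$ has endomorphism algebra Morita equivalent to $\mathsf{B}(\Fp)$. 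By semicontinuity (as in the proof of Lemma \ref{lem:tiling-localization}) the same holds over $\tilde\fM$ rather than just $\hat\fM$, so $\End(\mathcal{Q}^{\Fp})$ is Morita equivalent to $\mathsf{B}(\Fp)$, and a Morita equivalence does not change the bounded derived category: $D^b(\End(\mathcal{Q}^{\Fp})\mmod)\cong D^b(\mathsf{B}(\Fp)\mmod)$. Composing the two equivalences gives $D^b(\mathsf{B}(\Fp)\mmod)\cong D^b(\Coh(\tilde\fM))$, and the composite is the derived functor induced by $\mathsf{Q}$.

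The main obstacle I anticipate is bookkeeping rather than conceptual: one must be careful that the $\Hom$-spaces computed in $\mathsf{B}(\Fp)$ (which are graded, and involve the completion $\widehat{\mathsf{B}}$) match the $\Ext$-groups on the resolution in the correct degrees, and that passing from the completed picture on $\hat\fM$ back to $\tilde\fM$ (via the semicontinuity argument already used) genuinely upgrades fully faithfulness of $\mathsf{Q}$ to an algebra isomorphism onto $\End(\mathcal{Q}^{\Fp})$, not merely onto its completion. Concretely, one needs that $\mathcal{Q}^{\Fp}$ being a tilting \emph{generator} (not just a tilting bundle) forces $\RHom(\mathcal{Q}^{\Fp},-)$ to be an equivalence, which is precisely the content extracted from Lemma \ref{lem:tiling-localization}; and one needs the vanishing $\Ext^{>0}(\mathcal{Q}^{\Fp},\mathcal{Q}^{\Fp})=0$, which follows from the higher cohomology vanishing for $\hat\psalg_\phi$ noted there. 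Once these are in place the argument is a formal concatenation of equivalences.
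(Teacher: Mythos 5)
Your argument is correct and rests on the same two ingredients as the paper's own proof: Lemma \ref{lem:tiling-localization} (derived localization makes $\mathcal{Q}^{\Fp}$ a tilting generator) and Lemma \ref{lem:Gamma-iso} (full faithfulness of $\mathsf{Q}$). The paper simply phrases this as ``fully faithful plus essentially surjective, since $\mathcal{Q}$ generates,'' whereas you route through $D^b(\End(\mathcal{Q}^{\Fp})\mmod)$ and a Morita identification — the same argument in different packaging.
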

\begin{proof}
  If derived localization holds at $\phi$, then the induced derived functor is essentially surjective, since $\mathcal{Q}$ is a generator of the derived category.  Thus, this derived functor is an equivalence. 
\end{proof}

Let $\Lambda,\barLambda$ be as defined in Definition \ref{def:Lambda}.  As noted before, the set $\bar{\Lambda}$ is finite.
\begin{definition}
 We call a choice of $\psi=\phi_{1/p}$ {\bf generic} if the number of elements of $ \barLambda$ is maximal amongst all choices of $\psi\in \ft_{1,F,\R}$.  
\end{definition}
Note that for a given $p$, there may be no generic choices of $\psi$ in $\ft_{1,F,\frac{1}{p}\Z}$, but since real numbers can be arbitrarily well approximated by fractions with prime denominators, there are generic $\psi$ with $\phi\in \ft_{1,F,\Z}$ for all sufficiently large $p$.   In fact, we can divide $\ft_{1,F,\R}$ up into regions $R_{\bar{\Lambda}'}$ according to  what the set $\bar \Lambda'$ attached to $\psi$ is.  Having a maximal number of such non-empty chambers is a open dense property (it is the complement of the integral translates of finitely many hyperplanes).  Simple geometry shows that:
\begin{lemma}\label{lem:segment}
For a fixed $\barLambda$ with $R_{\bar{\Lambda}}$ open and non-empty and a fixed integer $N$, there is a constant $M$ such that if $p>M$ then there is a choice $\phi\in \ft_{1,\Z}$ such that $\phi,\phi+\chi,\phi+2\chi,\dots, \phi+N\chi$ are generic and \[R_{\bar{\Lambda}}\supset \left\{(\phi+k\chi)_{1/p}\,\big| \,k\in \R, 0\leq k\leq N\right\}.\]
\end{lemma}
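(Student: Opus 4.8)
The plan is to run a simple scaling argument: since the map $\xi\mapsto\xi_{1/p}$ shrinks $\ft_{1,\To,\R}$ by a factor of order $1/p$, for $p$ large any prescribed short segment inside a fixed open set can be realized starting from an integral flavor.

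First I would reduce to producing a single $\phi\in\ft_{1,\Z}$ with the displayed containment; the genericity of $\phi,\phi+\chi,\dots,\phi+N\chi$ is then automatic. Indeed, as noted just above, the locus of $\psi$ where $|\bar\Lambda(\psi)|$ fails to be maximal is contained in the integral translates of finitely many hyperplanes, so it has empty interior. Since $R_{\bar\Lambda}$ is open and non-empty, it must consist entirely of generic $\psi$; hence whenever an integral point $(\phi+k\chi)_{1/p}$ lies in $R_{\bar\Lambda}$ the flavor $\phi+k\chi$ is generic. So it is enough to secure the containment.

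Next, fix a point $\psi_0$ in the interior of $R_{\bar\Lambda}$ and a radius $r>0$ (with respect to some fixed Euclidean norm on $\ft_{1,F,\R}$) so that the ball $B(\psi_0,r)$ is contained in $R_{\bar\Lambda}$; these depend only on $\bar\Lambda$. The map $\xi\mapsto\xi_{1/p}$ is the fixed affine contraction given on linear parts by $\tfrac1p\frob_{\tilde{\fg}}^{-1}$, so $(\phi+k\chi)_{1/p}=\phi_{1/p}+k\,\chi_{1/p}$ with $|\chi_{1/p}|=c_\chi/p$, where $c_\chi$ depends only on the fixed character $\chi$; moreover the image of $\ft_{1,\Z}$ under $(\cdot)_{1/p}$, projected to $\ft_{1,F,\R}$, is $\tfrac1p$ times a fixed full-rank lattice, whose covering radius is therefore $C_0/p$ for a constant $C_0$ independent of $p$. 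Pick $\phi\in\ft_{1,\Z}$ with $|\phi_{1/p}-\psi_0|\le C_0/p$. Then for every real $k$ with $0\le k\le N$,
\[ |(\phi+k\chi)_{1/p}-\psi_0|\ \le\ |\phi_{1/p}-\psi_0|+k\,|\chi_{1/p}|\ \le\ \frac{C_0+Nc_\chi}{p}. \]
Taking $M:=(C_0+Nc_\chi)/r$, which depends only on $\bar\Lambda$ (through $r$) and on $N$, we conclude that for $p>M$ the whole segment $\{(\phi+k\chi)_{1/p}\mid 0\le k\le N\}$ lies in $B(\psi_0,r)\subset R_{\bar\Lambda}$; combined with the previous paragraph this gives the lemma.

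The closest thing to a subtlety — and it is entirely routine — is the bookkeeping of lattices and ambient spaces: one identifies $\chi$ (equivalently $\nu=d\chi\in\ft_{\Z,F}$) with a shift of the flavor that keeps it in $\ft_{1,\Z}$, and checks that $(\cdot)_{1/p}$ together with the relevant lattices descends compatibly to $\ft_{1,F,\R}$. Since $\ft_{1,\Z}\to\ft_{1,F,\Z}$ has finite index and $(\cdot)_{1/p}$ is the fixed contraction above, this affects none of the order-of-magnitude estimates, which carry the whole content of the proof.
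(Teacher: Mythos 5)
Your argument is correct, and it is precisely the ``simple geometry'' that the paper leaves implicit (the lemma is stated with no proof beyond that phrase): the non-generic locus lies in a measure-zero union of hyperplane translates, so an open non-empty $R_{\bar\Lambda}$ is automatically generic, and the $1/p$-contraction plus the $O(1/p)$ covering radius of the image lattice forces the whole length-$O(N/p)$ segment into a fixed ball inside $R_{\bar\Lambda}$ once $p$ exceeds a constant depending only on $\bar\Lambda$, $N$ and $\chi$. Nothing is missing.
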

Recall that as we mentioned earlier that there is a constant $N$ such that for a fixed $\phi$, localization can only fail at $N$ values of the form $\phi+k\chi$ for $k\in \Z/p\Z$.  Fix $\bar{\Lambda}$ with $R_{\bar{\Lambda}}$ open and non-empty and let $M$ be the associated constant in Lemma \ref{lem:segment}.
\begin{theorem}\label{thm:asymptotic-derived}
  If $\phi$ is a generic parameter with $\barLambda$ as fixed above, and $p>M$, then derived localization holds for $\phi$, and so the associated $ {\mathcal{Q}^{\Fp}}$ is a tilting generator.  
\end{theorem}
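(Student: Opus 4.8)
The plan is to reduce, via Lemma~\ref{lem:tiling-localization}, to the statement that derived localization holds for $\hat{\psalg}_\phi$, to produce one parameter in the chamber $R_{\bar\Lambda}$ where this is true by a counting argument against Lemma~\ref{lem:upper-bound}, and then to propagate this to every generic $\phi$ with the same $\bar\Lambda$ using that the endomorphism algebra $A(\Fp)$ is $\phi$-independent. First I would assemble what is already in hand: by Corollary~\ref{cor:Q-splitting}, Lemma~\ref{lem:Gamma-iso} and the uncompleted construction of the $\mathcal{Q}^{\Fp}_\mu$, the sheaf $\mathcal{Q}^{\Fp}=\bigoplus_\mu\mathcal{Q}^{\Fp}_\mu$ is a vector bundle on $\tilde{\fM}$ with $\sHom_{\cO}(\hat{\mathcal{Q}}^{\Fp},\hat{\mathcal{Q}}^{\Fp})\cong\hat{\psalg}_\phi$; since the higher cohomology of $\hat{\psalg}_\phi$ vanishes (this is where Corollary~\ref{cor:cohomology-vanishing} enters, exactly as in the proof of Lemma~\ref{lem:tiling-localization}), $\mathcal{Q}^{\Fp}$ has no higher self-extensions, and by Lemma~\ref{lem:Gamma-iso}, Theorem~\ref{thm:pStein-equiv} and Definition~\ref{def:BLam} its endomorphism algebra is $A(\Fp)$, which depends only on $\bar\Lambda$ and not on the individual generic $\phi$. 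So Lemma~\ref{lem:tiling-localization} says precisely: $\mathcal{Q}^{\Fp}$ is a tilting generator at $\phi$ iff derived localization holds at $\phi$.

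Next I would run the pigeonhole step. Apply Lemma~\ref{lem:segment} to $\bar\Lambda$ with the integer $N$ of Lemma~\ref{lem:upper-bound}: since $p>M$, there is $\phi_0\in\ft_{1,\Z}$ with $\phi_0,\phi_0+\chi,\dots,\phi_0+N\chi$ all generic for $\bar\Lambda$ and with the real segment $\{(\phi_0+k\chi)_{1/p}\mid 0\le k\le N\}$ contained in $R_{\bar\Lambda}$. By Lemma~\ref{lem:upper-bound}, $\LLoc$ and $\Rsecs$ fail to be inverse at most $N$ of the (pairwise distinct, for $p$ large) parameters on the line through $\phi_0$ in direction $\chi$ in $\ft_{1,\Fp}$, so derived localization holds at $\phi_1:=\phi_0+k_0\chi$ for some $0\le k_0\le N$. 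By the first paragraph $\mathcal{Q}^{\Fp}$ at $\phi_1$ is then a tilting generator, and the Corollary just above the theorem --- whose functor $\mathsf{Q}$ sends $-\tilde{\mu}_{1/p}+\zero\mapsto\mathcal{Q}^{\Fp}_\mu$, with the finiteness recorded in $\bar\Lambda$ --- yields an equivalence $D^b(A(\Fp)\mmod)\cong D^b(\Coh(\tilde{\fM}))$. Since $\tilde{\fM}$ is smooth, so that $D^b(\Coh(\tilde{\fM}))=\mathrm{Perf}(\tilde{\fM})$, the thick subcategory generated by $\mathcal{Q}^{\Fp}$ is everything and is transported to $\mathrm{Perf}(A(\Fp))$; hence $\mathrm{Perf}(A(\Fp))=D^b(A(\Fp)\mmod)$, i.e.\ $A(\Fp)$ has finite global dimension, and $D^b(A(\Fp)\mmod)$ is generated by the free module $\bigoplus_\mu e_\mu A(\Fp)$.

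Finally, for an arbitrary generic $\phi$ with $\bar\Lambda(\phi_{1/p})=\bar\Lambda$: by the first paragraph $\mathcal{Q}^{\Fp}$ at $\phi$ is again a partial tilting bundle with endomorphism algebra $A(\Fp)$, now known to be of finite global dimension, so $\RHom(\mathcal{Q}^{\Fp},-)$ identifies the thick subcategory $\langle\mathcal{Q}^{\Fp}\rangle\subseteq D^b(\Coh(\tilde{\fM}))$ with $D^b(A(\Fp)\mmod)$ and carries $\mathcal{Q}^{\Fp}$ to the regular module; since the regular module generates, transporting through the $\phi_1$-equivalence of the previous paragraph forces $\langle\mathcal{Q}^{\Fp}\rangle=D^b(\Coh(\tilde{\fM}))$, i.e.\ $\mathcal{Q}^{\Fp}$ is a generator, which by Lemma~\ref{lem:tiling-localization} is equivalent to derived localization at $\phi$. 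I expect the main obstacle to be making this last step precise: one must check that under the $\phi_1$-equivalence the summand $\mathcal{Q}^{\Fp}_\mu$ of the \emph{new} parameter corresponds to the indecomposable projective $e_\mu A(\Fp)$ --- equivalently, that the partial tilting bundles attached to two generic parameters of a single chamber have the same indecomposable summands together with the same Morita data --- which is the geometric shadow of the fact that the combinatorial presentation of $\mathsf{B}^{\bar\Lambda}$, hence of $A$, is unchanged as $\phi$ varies within $R_{\bar\Lambda}$; granting that identification the remainder is formal. (If one is content with the existence of a single good generic $\phi$ per chamber, the argument stops at the end of the second paragraph.)
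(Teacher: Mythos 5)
Your proposal is correct and rests on the same two pillars as the paper's proof: Lemma \ref{lem:segment} produces a segment $\phi,\phi+\chi,\dots,\phi+N\chi$ of generic parameters all lying over $R_{\bar\Lambda}$, and Lemma \ref{lem:upper-bound} forbids derived localization from failing at all $N+1$ of them. The difference is purely in how "derived localization depends only on $\bar\Lambda$ among generic parameters" is established. The paper does this first and directly: tensor product with the twisting bimodule ${}_{\phi}T_{\phi'}$ matches the chambers $\AC_{\Ba}$ over $\phi$ with those over $\phi'$, so the relevant categories are naturally equivalent, and then failure at one point of the segment would force failure at all $N+1$, a contradiction. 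You instead first extract one good $\phi_1$ by pigeonhole and then propagate via the abstract algebra $A(\Fp)$ and a thick-subcategory argument. The step you flag as the obstacle is real: abstract equivalence of $\langle\mathcal{Q}^{\Fp}_\phi\rangle$ with $D^b(\Coh(\tilde{\fM}))$ does not by itself give equality of subcategories, and you genuinely need that the bundles attached to two generic parameters in one chamber have the same indecomposable summands. That is exactly the content of Lemma \ref{lem:doesnt-depend2} (the "equiconstituted" observation), whose proof is immediate from the construction of $\mathcal{Q}_\mu^{\K}$ in the $\phi$-independent category $\mathscr{B}^{\To}$; granting it, your propagation closes, since two bundles with the same summands generate the same thick subcategory. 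So your argument works, but the paper's route through the twisting bimodules is shorter and avoids the finite-global-dimension detour entirely; your route has the mild advantage of isolating exactly which $\phi$-independence statement is being used.
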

\begin{proof}
  First note that it is enough to replace $\phi$ by any other generic parameter with the same set $\bar{\Lambda}$. In this case,  tensor product with the bimodule ${}_{\phi}T_{\phi'}$  sends any object in $\AC_{\Ba}$ in the preimage of $\phi$ to one in $\AC_{\Ba}$ in the preimage of $\phi'$ (see \eqref{eq:aff-cham} for the definition of $\AC_{\Ba}$).  Thus the categories $\sfA(\Fp)$ are naturally equivalent via tensor product with bimodule ${}_{\phi}T_{\phi'}$ connecting them. 
  
  Thus, we can assume that $\phi$ is as in Lemma \ref{lem:segment}.  If derived localization fails at $\phi$, then it also fails at $\phi+\chi,\phi+2\chi,\dots, \phi+N\chi$.  This is impossible by our upper bound on the number of points where it fails from Lemma \ref{lem:upper-bound}.
\end{proof}

This is certainly too crude to give a sharp characterization of when derived localization holds.  We expect that we will instead find that:
\begin{conjecture}
If $\phi$ is a generic parameter, then derived localization holds for $\phi$.  Equivalently, if $\phi$ and $\phi'$ are generic, then derived tensor product with ${}_{\phi}T_{\phi'}$ is an equivalence between $D^b(\EuScript{A}_\phi\mmod)$ and $D^b(\EuScript{A}_{\phi'}\mmod)$.
\end{conjecture}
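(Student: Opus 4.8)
The plan is to isolate a single characteristic-free homological statement and reduce the conjecture to it. First, derived localization is constant on each open region $R_{\bar{\Lambda}}$ of parameters: if $\phi,\phi'$ both lie in $R_{\bar{\Lambda}}$ then, exactly as in the first paragraph of the proof of Theorem~\ref{thm:asymptotic-derived}, derived tensor product with the twisting bimodule ${}_{\phi}T_{\phi'}$ crosses no walls, identifies $\mathsf{A}_p(\Fp)$ for the two parameters, and carries $\mathcal{Q}^{\Fp}$ to $\mathcal{Q}^{\Fp}$; hence $\LLoc$ and $\Rsecs$ are mutually inverse for one if and only if for the other. So it suffices to treat a single generic $\phi$ per chamber, and by Lemma~\ref{lem:tiling-localization} this amounts to showing that $\mathcal{Q}^{\Fp}$ is a tilting generator of $\Coh(\tilde{\fM})$ for every prime $p\nmid \#W$ (and not a torsion prime for $G$).

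Since $\mathcal{Q}^{\Fp}$ is already a vector bundle with $\psalg_\phi\cong\sHom_{\cO_{\tilde{\fM}}}(\mathcal{Q}^{\Fp},\mathcal{Q}^{\Fp})$ and $H^{>0}(\tilde{\fM};\psalg_\phi)=0$ by Corollary~\ref{cor:cohomology-vanishing}, it is a tilting \emph{bundle}, so $\RHom(\mathcal{Q}^{\Fp},-)\colon D^b(\Coh\tilde{\fM})\to D^b(\mathsf{A}_p(\Fp)\mmod)$ is fully faithful and its essential image is a thick subcategory containing the regular module $\mathsf{A}_p(\Fp)$. Hence if $\mathsf{A}_p(\Fp)$ has finite global dimension the regular module thickly generates $D^b(\mathsf{A}_p(\Fp)\mmod)$, the functor is an equivalence, and $\mathcal{Q}^{\Fp}$ is a tilting generator --- this is the argument of \cite{KalDEQ} (compare \cite[\S 5]{BLPWquant}). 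Thus the conjecture becomes equivalent to the statement that $A(\Fp)=\End(\mathcal{Q}^{\Fp})\cong\mathsf{A}_p(\Fp)$ has finite global dimension, necessarily $\dim\tilde{\fM}$, for all $p\nmid \#W$; equivalently, that $\mathsf{A}_p$ is a non-commutative crepant resolution of $\Fp[\fM]$ uniformly in $p$.

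The remaining, genuinely arithmetic step is to control the homological algebra of the $\Z$-algebra $\mathsf{A}_p$ (equivalently $\mathsf{B}^{\bar{\Lambda}}$ of Definition~\ref{def:BLam}), whose structure constants are integers by Remark~\ref{rem:coefficients}. We already know $\mathsf{A}_p(\Q)$ has global dimension $\dim\tilde{\fM}$ (from the $p\gg 0$ case of Theorem~\ref{thm:asymptotic-derived} together with semicontinuity of global dimension in a flat family, or from the characteristic-zero construction of $\mathcal{Q}^{\Q}_\mu$), hence so does $\mathsf{A}_p(\Fp)$ for all but finitely many $p$; what must be added is that no minimal resolution of an $\mathsf{A}_p$-module degenerates modulo $p$ once $p\nmid\#W$. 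I would attempt this by writing down explicit finite projective resolutions, of length exactly $\dim\tilde{\fM}$, of the (finitely many, finite-dimensional) simple $\mathsf{A}_p$-modules, with all differentials expressed through the combinatorial generators $\wall$, $\psi$, $u_{\al}$ and the polynomials $\mu$ of Theorem~\ref{thm:BFN-pres}; such a resolution is a complex of free $\Z[\tfrac{1}{\#W}]$-modules, and the point is that the only primes that can enter its elementary divisors are the divisors of $\#W$, the one place where the Weyl-group and nilHecke combinatorics introduce denominators. In the quiver gauge theory case this reduces to showing that the cylindrical KLR algebra of the companion paper \cite{WebcohII} has global dimension $\dim\tilde{\fM}$ in all relevant characteristics, which one expects to extract from the characteristic-free projective resolutions available for KLR algebras; in the abelian (hypertoric) case it should follow from the explicit description of \cite{McBW} together with the Frobenius splitting of Proposition~\ref{prop:abelian-splitting}.

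The main obstacle is exactly this last point: ruling out ``bad primes'' for the global dimension of $\mathsf{A}_p$ beyond those dividing $\#W$ --- equivalently, ruling out vertical components, over small primes, of the locus where derived localization fails. Theorem~\ref{thm:asymptotic-derived} and Lemma~\ref{lem:upper-bound} only bound the \emph{number} of bad parameters along a line and give no control for small $p$, where the $\tfrac1p$-scaled chambers $\rACp_{\Ba}$ are too coarse for the segment argument of Lemma~\ref{lem:segment} to run. A complete solution would amount to establishing the Schober structure on the nose --- so that crossing a wall and crossing back is the identity functor, witnessed $\Z[\tfrac{1}{\#W}]$-linearly --- and I have no general construction of the required uniform resolution; I would expect the quiver and hypertoric cases, via KLR homological algebra and \cite{McBW} respectively, to be the first where the conjecture can be confirmed.
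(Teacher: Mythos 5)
The statement you are trying to prove is stated in the paper as a \emph{conjecture}: the paper offers no proof of it, and only establishes the weaker Theorem~\ref{thm:asymptotic-derived} (derived localization for $p$ larger than a bound $M$ depending on $\bar\Lambda$) and its characteristic-zero shadow, Theorem~\ref{th:Q-equiv}. So there is no argument in the paper against which to measure yours, and your proposal, as you yourself say, is not a proof but a reduction with the decisive step missing.

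That said, the reduction you give is sound and consistent with the paper's machinery. The constancy of derived localization on the regions $R_{\bar\Lambda}$ follows from Lemma~\ref{lem:Lam-same} together with Lemmas~\ref{lem:tiling-localization} and~\ref{lem:doesnt-depend2}, exactly as in the first paragraph of the proof of Theorem~\ref{thm:asymptotic-derived}; the equivalence ``$\mathcal{Q}^{\Fp}$ is a tilting generator $\Leftrightarrow$ $\End(\mathcal{Q}^{\Fp})$ has finite global dimension'' is the standard Hille--Van den Bergh/Kaledin argument and is legitimate here because $\mathcal{Q}^{\Fp}$ is already known to be a tilting bundle (Corollary~\ref{cor:cohomology-vanishing} and the Azumaya splitting) and $\mathsf{A}_p(\Fp)$ is module-finite over its Noetherian center. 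The genuine gap is precisely the one you name: showing that the global dimension of the $\Z[\frac{1}{\#W}]$-algebra $\mathsf{A}_p$ does not jump at any prime beyond the excluded ones. Semicontinuity from the generic fibre only excludes an ineffective finite set of primes per chamber, which reproves Theorem~\ref{thm:asymptotic-derived} rather than the conjecture, and the ``explicit characteristic-free resolutions of the simples'' you propose are not constructed anywhere in this paper (nor, for the general gauge theory, anywhere else the author cites). Your instinct that the quiver case should be attackable via cylindrical KLR homological algebra, and the hypertoric case via \cite{McBW}, is reasonable, but until such resolutions are actually produced the statement remains exactly where the paper leaves it: a conjecture.
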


These results have consequences for the case where $\K$ is an arbitrary commutative ring.
Note that by construction $\mathsf{A}(\K)$, and thus $\mathcal{Q}^{\K}$, depends on a choice of
$\phi$ and ultimately a prime $p$, but for fixed $\K$, this dependence is very
weak.
\begin{lemma}\label{lem:doesnt-depend2}
  The vector bundle
   $ {\mathcal{Q}_\mu^{\K}}$ only
  depends on which element of $\barLambda$ corresponds to the
  chamber $\rACp_{\Ba}$ containing $\mu$. Consequently, the vector bundles that appear this way for a fixed $\flav$ only depends on the set $\barLambda$.
\end{lemma}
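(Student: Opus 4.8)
The plan is to trace through the definition of $\mathcal{Q}_\mu^{\K}$ and show that each ingredient in its construction only sees the combinatorial data of which chamber $\rACp_{\Ba}$ the lift $-\tilde\mu_{1/p}+\zero$ lands in. Recall that $\mathcal{Q}_\mu^{\K}$ is the Hamiltonian reduction of the $\mathscr{B}^{\To}(\K)$-module $\mathsf{R}_\mu(\K)=\Hom_{\mathscr{B}^{\To}(\K)}(\second, -\tilde\mu_{1/p}+\zero)$. The key observation is that, in the extended BFN category with $p$th root conventions, the object $-\tilde\mu_{1/p}+\zero$ enters only through its Hom-spaces $\Hom_{\mathscr{B}^{\To}}(\acham, -\tilde\mu_{1/p}+\zero)$ and $\Hom_{\mathscr{B}^{\To}}(-\tilde\mu_{1/p}+\zero,\acham)$, and by Definition \ref{def:extended-BFN} these depend only on the subspaces $U_{\acham'}\subset V((t))$ and $\Iwahori_{\acham'}$, which in turn depend only on the affine chamber $\AC_{\Ba}$ (equivalently $\rACp_{\Ba}$ in $p$th root conventions) and the alcove of the unrolled root arrangement. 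So the first step is: if $\mu$ and $\mu'$ give lifts lying in the same chamber $\rACp_{\Ba}$, then the objects $-\tilde\mu_{1/p}+\zero$ and $-\tilde\mu'_{1/p}+\zero$ are canonically identified as objects of $\mathscr{B}^{\To}(\K)$ (the morphism spaces to and from any other object agree on the nose), hence $\mathsf{R}_\mu(\K)\cong \mathsf{R}_{\mu'}(\K)$ as $\End_{\mathscr{B}^{\To}}(\second,\second)=\EuScript{A}^{\To;\operatorname{sph}}_0$-modules, compatibly with the residual torus action used in the reduction. Since Hamiltonian reduction along $\gk$ with stability parameter $\chi$ is a functor of the module, this gives $\mathcal{Q}_\mu^{\K}\cong \mathcal{Q}_{\mu'}^{\K}$ as $\mathbb{G}_m$-equivariant coherent sheaves on $\tilde\fM$.

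The second step is to worry about the choice of lift $\tilde\mu\in\ft_{1,\Z}$ of $\mu$, and about the $\widehat W$-action implicit in passing from an element of $\gls{barLambda}$ (a $\widehat W$-orbit) to an actual chamber. For the lift: two lifts differ by an element of $\ft_\Z$, and—exactly as in the discussion preceding the definition of $\mathcal{Q}_\mu^\K$, where it is observed that changing $\acham\mapsto \acham+p\nu$ twists the equivariant structure by a character of $K$ that is trivial when $\nu\in\ft_\Z$—this does not change the chamber $\rACp_{\Ba}$ (in $p$th root conventions the shift is by $\nu\in\ft_\Z$ after the $1/p$ rescaling, which moves $-\tilde\mu_{1/p}$ by an integer vector and the constraint is defined up to the $\widehat W$-translation lattice). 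For the $\widehat W$-action: the level-$1$ action of $\widehat W$ on $\mathsf{t}_{1,\To,\R}$ carries $\rACp_{\Ba}$ to $\rACp_{w\cdot\Ba}$, and the elements $y_w$ of Theorem \ref{thm:BFN-pres} furnish an isomorphism in $\mathscr{B}^{\To}$ between $\acham$ and $w\cdot\acham$ intertwining all structure; hence $\mathsf{R}_\mu(\K)$ for $\mu$ in $\rACp_{\Ba}$ and $\mathsf{R}_{\mu'}(\K)$ for $\mu'$ in $\rACp_{w\cdot\Ba}$ are isomorphic modules (the isomorphism is transport of structure by $y_w$, which is defined over $\Z$ hence over $\K$), and therefore their Hamiltonian reductions agree. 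Combining, $\mathcal{Q}_\mu^\K$ depends only on the image $\bar{\Ba}\in\gls{barLambda}$ of the chamber containing $\mu$. The final sentence of the lemma is then immediate: the collection of bundles arising from a fixed $\gls{flav}$ is indexed, via $\mu\mapsto\bar{\Ba}$, by the elements of $\gls{barLambda}$, and $\gls{barLambda}$ itself is determined by $\gls{flav}$ by Definition \ref{def:Lambda}.

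I expect the main obstacle to be the bookkeeping in step two: making sure the $\widehat W$-equivariance and the lift-independence interact correctly with the reduction functor and the $\mathbb{G}_m$- (and residual $K$-) equivariant structures, since the identifications of objects in $\mathscr{B}^{\To}$ are only "on the nose" for the underlying group $G$-structure but twist the $K$-equivariant structure by characters. One must check that these twisting characters are trivial precisely when $\Ba$ stays in a fixed $\widehat W$-orbit and the lift is changed by $\ft_\Z$ (not merely $\ft_{\To,\Z}$), which is exactly the content of the parenthetical remark in the paragraph defining $\mathcal{Q}_\mu^\K$; everything else is a transport-of-structure argument using the generators and relations of Theorem \ref{thm:BFN-pres}, all of which are manifestly defined over $\Z$.
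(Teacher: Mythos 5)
Your proposal is correct and follows essentially the same route as the paper, whose entire proof is the one-line observation that two weights $\mu_1,\mu_2$ lying in the same chamber $\rACp_{\Ba}$ yield isomorphic objects of $\mathscr{B}^{\To}$ and hence isomorphic bundles after reduction. The extra bookkeeping you supply --- independence of the lift $\tilde\mu$ up to $\ft_\Z$ and transport of structure by $y_w$ along the $\widehat{W}$-orbit --- is exactly what the paper handles in the paragraphs preceding the definition of $\mathcal{Q}_\mu^{\K}$ rather than in the proof itself, so your write-up is a faithful (and somewhat more self-contained) expansion of the intended argument.
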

\begin{proof} If
  $\mu_1$ and $\mu_2$ both lie in $\rACp_{\Ba}$ then we obtain an
  isomorphism $\mathcal{Q}_{\mu_1}^{\K}\cong \mathcal{Q}_{\mu_2}^{\K}$.
\end{proof}
As we change $\flav$ and $p$ while keeping $\bar \Lambda$ fixed, the
number of integral points in each chamber $\rACp_{\Ba}$  will increase and decrease, so the vector
bundle $\mathcal{Q}^{\K}$ will change, but only by changing the
number of times different summands appear; that is, the vector bundles $\mathcal{Q}^{\K}$ for different $\phi$ are {\bf equiconstituted}.  Which summands appear at
least once will only change when we change $\bar \Lambda$.

     \notation{${\Lambda^{\R}}$}
    {The set of vectors such that $\rACp_{\Ba}$
 is non-empty.}
 \newcommand{\LambdaR}{\Lambda^{\R}}
We obtain the cleanest statement if we pass to $\Q$, which as we mentioned before is essentially the case of $p$ is infinitely large.  In this case, it is convenient to fix a parameter $\psi\in \ft_{1,F,\R}$, defining a real flavor, and consider the set $\LambdaR$ of vectors with $\rACp_{\Ba}$ non-empty and $\bar{\Lambda}^{\R}$ its quotient by $\What$; as before, we call $\psi$ generic if the set $\bar{\Lambda}^{\R}$ has maximal size.  We let $ {\cQ^{\Q}_\phi}$ be the sum of the vector bundles under $ {\cQ^\Q_\mu}$ for representatives $\mu$ of each different chamber in $\bar{\Lambda}^{\R}$.  This is analogous to the construction of the category $ {\mathsf{B}^{\bar{\Lambda}^{\R}}(\Q)}$ discussed in Definition \ref{def:BLam}.   
\notation{$\cQ^{\Q}_\phi$}{The sum of the vector bundles under $ {\cQ^\Q_\mu}$ for representatives $\mu$ of each different chamber in $\bar{\Lambda}^{\R}$. For generic $\phi$, this is a tilting generator. }
\begin{theorem}\label{th:Q-equiv}
  If $\psi$ is a generic parameter, the vector bundle $ {\mathcal{Q}^{\mathbb{Q}}_\phi}$ on $ {\tilde{\fM}_{\mathbb{Q}}}$ is 
  a tilting generator with a summand isomorphic to the structure sheaf $\mathcal{O}_{\tilde{\fM}}$ and 
  induces an equivalence $D^b( {\mathsf{B}^{\bar{\Lambda}^{\R}}(\Q)})\cong D^b(\Coh(\tilde{\fM}_{\mathbb{Q}})).$
\end{theorem}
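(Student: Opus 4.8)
The plan is to deduce the statement from the characteristic $p$ results of the previous subsection by spreading out over $R=\Z[1/N]$. Fix the generic real parameter $\psi$ and its set of chamber-orbits $\bar{\Lambda}^{\R}$. First I would invoke Lemma \ref{lem:segment} and Theorem \ref{thm:asymptotic-derived}: there is a constant $M$ so that for every prime $p>M$ one can choose $\phi=\phi(p)\in\ft_{1,\Z}$ with $\bar{\Lambda}(\phi)=\bar{\Lambda}^{\R}$ at which derived localization holds, so that the multiplicity-one bundle $\mathcal{Q}^{\Fp}_\phi:=\bigoplus_{\mu\in\bar{\Lambda}^{\R}}\mathcal{Q}^{\Fp}_\mu$ (which has the same indecomposable summands as the equiconstituted $\mathcal{Q}^{\Fp}$ of Lemma \ref{lem:tiling-localization}, hence is again a tilting generator of $\Coh(\tilde{\fM}_{\Fp})$) has endomorphism algebra $\mathsf{B}^{\bar{\Lambda}^{\R}}(\Fp)$, via the fully faithful functor of Lemma \ref{lem:Gamma-iso} and the corollary to Theorem \ref{thm:asymptotic-derived}. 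By Lemma \ref{lem:doesnt-depend2} and the remarks on equiconstitution, each $\mathcal{Q}^{\Fp}_\mu$ is the reduction modulo $p$ of a flat $R$-form $\mathcal{Q}^R_\mu$, where $N$ is divisible by $\#W$ and by the finitely many primes at which $\tilde{\fM}$, the Borel--Moore homology groups defining $\mathsf{B}$, or the Hamiltonian reduction defining the $\mathcal{Q}_\mu$ fail to be $R$-flat; and $\mathcal{Q}^R_\mu\otimes_R\Q=\mathcal{Q}^{\Q}_\mu$. Throughout, $\tilde{\fM}$, the category $\mathsf{B}^{\bar{\Lambda}^{\R}}$, the bundles $\mathcal{Q}_\mu$ and the functor of Lemma \ref{lem:Gamma-iso} are all base-changed from $R$-forms compatibly with $-\otimes_R-$; the technical input that makes this work is cohomology-and-base-change for the projective morphism $\tilde{\fM}_R\to\fM_R$, used exactly as in Lemma \ref{lem:A-H}.

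Next I would establish the tilting property and the endomorphism algebra over $\Q$. Put $\mathcal{Q}^R=\bigoplus_{\mu\in\bar{\Lambda}^{\R}}\mathcal{Q}^R_\mu$ and $\mathcal{E}=\sHom_{\cO_{\tilde{\fM}_R}}(\mathcal{Q}^R,\mathcal{Q}^R)$, an $R$-flat vector bundle, which is $\mathbb{G}_m$-equivariant since each $\mathcal{Q}^R_\mu$ is. Because $\tilde{\fM}_R$ is conic and projective over the affine $\fM_R$, each $\mathbb{G}_m$-weight space of $R\Gamma(\tilde{\fM}_R;\mathcal{E})$ is a bounded complex of finitely generated $R$-modules whose derived reduction modulo $p$ computes the corresponding weight space of $R\Gamma(\tilde{\fM}_{\Fp};\mathcal{E}_{\Fp})$ (cohomology-and-base-change, as in Lemma \ref{lem:A-H}). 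For $p>M$ the latter is concentrated in degree $0$ and equals $\mathsf{B}^{\bar{\Lambda}^{\R}}(\Fp)$; a weight-space-by-weight-space universal coefficient argument then shows that for $i>0$ the module $H^i(\tilde{\fM}_R;\mathcal{E})$ vanishes modulo $p$ for infinitely many $p$, hence is $\Z$-torsion, hence vanishes after $\otimes_R\Q$, so $\Ext^{>0}_{\tilde{\fM}_\Q}(\mathcal{Q}^{\Q}_\phi,\mathcal{Q}^{\Q}_\phi)=0$. The same comparison of $R$-ranks of weight spaces (equal for $p$ large, from $\mathsf{B}^{\bar{\Lambda}^{\R}}(\Fp)\cong\End_{\tilde{\fM}_{\Fp}}(\mathcal{Q}^{\Fp}_\phi)$) together with the vanishing of the cokernel modulo almost all $p$ shows the natural map $\mathsf{B}^{\bar{\Lambda}^{\R}}(R)\to\End_{\tilde{\fM}_R}(\mathcal{Q}^R)$ is an isomorphism after $\otimes_R\Q$. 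Thus $\mathcal{Q}^{\Q}_\phi$ is a tilting bundle on $\tilde{\fM}_\Q$ with endomorphism algebra $\mathsf{B}^{\bar{\Lambda}^{\R}}(\Q)$ (up to the usual $\op$), and the functor of Definition \ref{def:BLam} gives a fully faithful embedding of $\mathsf{B}^{\bar{\Lambda}^{\R}}(\Q)\mmod$ into $D^b(\Coh(\tilde{\fM}_\Q))$.

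The remaining point, which I expect to be the hardest, is that $\mathcal{Q}^{\Q}_\phi$ \emph{generates} $D^b(\Coh(\tilde{\fM}_\Q))$, since generation is not an obviously constructible condition. Here I would argue via an explicit cone. Since $\cO(1)$ is relatively ample over $\tilde{\fM}_R\to\fM_R$, the sheaves $\mathcal{F}_R=\cO_{\tilde{\fM}_R}(n)$, $n\in\Z$, form an $R$-flat generating class of $D^b(\Coh(\tilde{\fM}_R))$. For each, form the cone $E_R$ of the counit $\LLoc_R(R\Gamma_R(\mathcal{F}_R))\to\mathcal{F}_R$, where $R\Gamma_R(-)=\RHom_{\tilde{\fM}_R}(\mathcal{Q}^R,-)$ and $\LLoc_R(-)=-\Lotimes_{\mathsf{B}^{\bar{\Lambda}^{\R}}(R)}\mathcal{Q}^R$. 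All three functors commute with $-\Lotimes_R\Fp$ (for $\RHom$ and $R\Gamma$ by cohomology-and-base-change as above; for $\LLoc$ because $\mathcal{Q}^R$ is a flat bundle and $\mathsf{B}^{\bar{\Lambda}^{\R}}(R)$ is $R$-flat, with $\mathsf{B}^{\bar{\Lambda}^{\R}}(R)\Lotimes_R\Fp=\mathsf{B}^{\bar{\Lambda}^{\R}}(\Fp)$ by the previous paragraph), so $E_R\Lotimes_R\Fp$ is the corresponding cone over $\Fp$, which vanishes for every $p>M$ because there $\mathcal{Q}^{\Fp}_\phi$ is a tilting \emph{generator}. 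Hence each coherent cohomology sheaf $\mathcal{H}^i(E_R)$ satisfies $\mathcal{H}^i(E_R)\otimes_R\Fp=0$ for all $p>M$; this forces its support to miss every fibre $\tilde{\fM}_{\Fp}$ with $p>M$, so $\mathcal{H}^i(E_R)\otimes_R\Q=0$ and $E_\Q=0$. Therefore each $\cO_{\tilde{\fM}_\Q}(n)\cong\LLoc(R\Gamma(\cO_{\tilde{\fM}_\Q}(n)))$ lies in the thick subcategory generated by $\mathcal{Q}^{\Q}_\phi$, and since these $\cO_{\tilde{\fM}_\Q}(n)$ generate, so does $\mathcal{Q}^{\Q}_\phi$.

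Finally, having both that $\mathcal{Q}^{\Q}_\phi$ is a tilting generator on the smooth variety $\tilde{\fM}_\Q$ and that its endomorphism algebra is $\mathsf{B}^{\bar{\Lambda}^{\R}}(\Q)$, the standard tilting formalism (Hille--Van den Bergh) gives that $\RHom_{\tilde{\fM}_\Q}(\mathcal{Q}^{\Q}_\phi,-)$ is the asserted equivalence $D^b(\Coh(\tilde{\fM}_\Q))\xrightarrow{\ \sim\ }D^b(\mathsf{B}^{\bar{\Lambda}^{\R}}(\Q)\mmod)$. The main obstacle is exactly the generation step: one cannot directly transport ``$\mathcal{Q}$ generates'' from characteristic $p$, and the argument above circumvents this only by exhibiting the possible failure as the cohomology of an explicit complex $E_R$ controllable by reduction modulo infinitely many primes. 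A secondary nuisance, suppressed above, is the bookkeeping showing that all the objects in play really do admit compatible flat $R$-forms and that the cited base-change isomorphisms hold; as indicated, each of these reduces to cohomology-and-base-change for $\tilde{\fM}_R\to\fM_R$, already invoked in Lemma \ref{lem:A-H}.
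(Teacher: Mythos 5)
Your proposal is correct and follows essentially the same route as the paper: reduce to showing that $\mathcal{Q}^{\Fp}_\phi$ is a tilting generator for some (equivalently, all sufficiently large) $p$ via Theorem \ref{thm:asymptotic-derived} and Lemmas \ref{lem:tiling-localization} and \ref{lem:doesnt-depend2}, and then transfer the conclusion to $\Q$. The only difference is that the paper dispatches the transfer step in one sentence by citing that being a vector bundle and a tilting generator is an open property on $\Spec \Z$, whereas you prove exactly that openness by hand (torsion/weight-space argument for $\Ext$-vanishing and the endomorphism algebra, and the cone $E_R$ for generation), which is a legitimate and more self-contained expansion of the same idea.
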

\begin{proof}
  By construction, $\mathsf{R}_\tau(\K)\cong \mathcal{O}_{\tilde{\fM}}^{\oplus \#W}$ since $\zero$ is isomorphic to $\#W$ copies of $\tau$, so this shows the structure sheaf is a summand. 
   Being a vector bundle and a tilting generator after base change to a point of $\Spec\,
   \Z$ is an open property, so if the set of primes where this holds
   is non-empty, it must be so over $\Q$ as well.  Thus, we need only
   show that $\mathcal{Q}^{\Fp}$ is a tilting generator for some
   prime $p$.  By Lemma \ref{lem:doesnt-depend2}, this fact only
   depends on the corresponding $\bar \Lambda$.  By Theorem
   \ref{thm:asymptotic-derived}, for $p\gg 0$, there is a $\phi$ which
   gives $\bar \Lambda$ as the set of chambers with integral points
   such that derived localization holds at $\phi$.  Thus, by Lemma
   \ref{lem:tiling-localization}, the associated sheaf
   $\mathcal{Q}^{\mathbb{F}_p}_\phi$ is a tilting generator, which
   establishes the result.
 \end{proof}
 Of course, the tilting bundles constructed from $\mathcal{Q}^{\mathbb{Q}}_\phi$ by tensoring with line bundles have the corresponding line bundle as a summand.  
 
\subsection{Noncommutative crepant resolutions}
\label{sec:noncomm-crep}

Recall the notion of a {\bf noncommutative crepant resolution} of the affine variety $\Coulomb$, originally defined in \cite{vandenberghNoncommutativeCrepant2004}: This is an algebra $A=\End(M),$ for some reflexive coherent sheaf $M$ on $\fM$, such that $A$ is a Cohen-Macaulay as a coherent sheaf and the global dimension of $A$ is equal to $\dim \fM.$  A {\bf D-equivalence} between a commutative resolution $\tM$ and a noncommutative resolution $A$ is an equivalence of dg-categories $D^b(\Coh(\tilde{\fM}))\cong D^b(A\mmod)$.  

The following is a corollary of \cite[Lem. 3.2.9 \& Prop. 3.2.10]{vandenberghThreedimensionalFlops2004}:
\begin{lemma}\label{lem:tilt-nccr}
Suppose $\mathcal{T}$ is a tilting generator on a resolution $\tM$ such that the structure sheaf $\mathcal{O}_{\tfM}$ is a summand of $\mathcal{T}$, and let $M=\Gamma(\tfM;\mathcal{T}).$ Then $A=\End_{\Coh(\tfM)}(\mathcal{T})\cong \End_R(M)$ is a noncommutative crepant resolution of singularities, canonically D-equivalent to $Y$.  
\end{lemma}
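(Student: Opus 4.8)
The plan is to quote \cite[Lem. 3.2.9 \& Prop. 3.2.10]{van2004three} almost verbatim, after checking that the hypotheses of those statements are met in our setting, and to spell out the two ingredients that make $\End_R(M)$ a non-commutative crepant resolution: Cohen--Macaulayness and finite (equal to $\dim \fM$) global dimension. First I would record the standard consequence of $\mathcal{T}$ being a tilting generator on $\tfM$: the functor $\RHom_{\tfM}(\mathcal{T},-)$ is an equivalence $D^b(\Coh(\tfM))\xrightarrow{\sim} D^b(A\mmod)$ with $A=\End_{\Coh(\tfM)}(\mathcal{T})$, and since $\tfM\to \fM$ is a projective resolution with $\fM$ affine and (by Corollary \ref{cor:cohomology-vanishing}) rational singularities, one has $A\cong \End_R(M)$ for $R=\K[\fM]$ and $M=\Gamma(\tfM;\mathcal{T})$, with $M$ a reflexive $R$-module because $\mathcal{T}$ is locally free and $\tfM\to\fM$ is an isomorphism in codimension one.

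Next I would verify the two defining properties of an NCCR. For the global dimension: since $\tfM$ is smooth, $D^b(\Coh(\tfM))$ has finite homological dimension, hence so does $D^b(A\mmod)$; combined with the fact that $A$ is module-finite over the normal domain $R$ of Krull dimension $\dim\fM$, a standard argument (Auslander--Buchsbaum together with the equivalence, or directly \cite[Prop. 3.2.10]{van2004three}) gives $\operatorname{gl.dim} A = \dim \fM$. For the Cohen--Macaulay property of $A$ as an $R$-module: here the hypothesis that $\mathcal{O}_{\tfM}$ is a summand of $\mathcal{T}$ is used, so that $R$ itself is a summand of $M$; then $A=\End_R(M)$ contains $\operatorname{Hom}_R(M,R)$ and $\operatorname{Hom}_R(R,M)=M$ as summands, and the higher-cohomology vanishing $H^{>0}(\tfM;\sHom(\mathcal{T},\mathcal{T}))=0$ (which follows from Corollary \ref{cor:cohomology-vanishing} applied to the locally free sheaf $\sHom(\mathcal{T},\mathcal{T})$, or rather to its summands, using that $\tfM$ is a symplectic resolution so $\mathcal{O}_{\tfM}$ and its duals behave well) identifies $A$ with $\Gamma(\tfM;\sHom(\mathcal{T},\mathcal{T}))$ with no derived corrections. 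Then the argument of \cite[Lem. 3.2.9]{van2004three} shows $A$ is a maximal Cohen--Macaulay $R$-module: $\operatorname{depth}_R A \geq \operatorname{depth}_{R_\mathfrak{p}} A_\mathfrak{p}$ is controlled by pushing forward from the smooth resolution.

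Finally, the D-equivalence is just the equivalence $\RHom_{\tfM}(\mathcal{T},-)$ recorded in the first step, now phrased as $D^b(\Coh(\tfM))\cong D^b(A\mmod)$ with $A\cong\End_R(M)$; and its canonicity (independence of the presentation) follows because $\mathcal{T}$ is intrinsic to $\tfM$ up to the usual Morita ambiguity. The main obstacle I expect is not any single hard step but making sure the cohomology-vanishing input is applied to the right sheaf: Corollary \ref{cor:cohomology-vanishing} is stated for $\mathcal{O}_{\tfM}$, and to conclude $H^{>0}(\tfM;\sHom(\mathcal{T},\mathcal{T}))=0$ one must know this for all summands of $\sHom(\mathcal{T},\mathcal{T})$ — which is exactly the definition of $\mathcal{T}$ being a \emph{tilting} bundle (not merely that $\mathcal{O}_{\tfM}$ has no higher cohomology), so I would be careful to invoke the tilting hypothesis there rather than Corollary \ref{cor:cohomology-vanishing} alone. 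Everything else is bookkeeping around \cite{van2004three}.
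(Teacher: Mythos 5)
Your proposal is correct and follows exactly the route the paper intends: the paper offers no proof of this lemma beyond the citation of \cite[Lem. 3.2.9 \& Prop. 3.2.10]{van2004three}, and your write-up is a faithful unpacking of why those results apply (tilting equivalence, identification $A\cong\End_R(M)$ via reflexivity, finite global dimension from smoothness of $\tfM$, Cohen--Macaulayness using the $\cO_{\tfM}$-summand hypothesis). Your closing caution --- that the vanishing $H^{>0}(\tfM;\sHom(\mathcal{T},\mathcal{T}))=0$ is part of the tilting hypothesis rather than a consequence of Corollary \ref{cor:cohomology-vanishing} alone --- is exactly the right place to be careful.
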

We have already defined the ring $A=A(\Q)$ that appears in our case in \eqref{eq:A-def}.
 By the equivalence of Theorem \ref{th:Q-equiv}, the definition of $A$ implies we have an isomorphism \[A=\End_{\Coh(\tfM)}( \mathcal{Q}^{\mathbb{Q}}_\phi)\] and an idempotent $e_0$ projecting to the structure sheaf as a summand.     Then, applying Lemma \ref{lem:tilt-nccr}, we can see that:
\begin{corollary}\label{cor:A-nccr}
  The ring $A$ is a noncommutative crepant resolution of the Coulomb branch $\fM$, induced by the pushforward $M$ of $\mathcal{Q}^{\mathbb{Q}}_\phi$ to $\fM$.  
\end{corollary}
As mentioned earlier, we can give very explicit computations of the algebras in question when $\fM$ is a quiver gauge theory, which we will discuss in much greater detail in \cite{WebcohII}.  This is also true in the hypertoric case, as discussed in \cite[Prop. 3.35]{mcbreenHomologicalMirror2024} and \cite[\S 4.1]{gammageHomologicalMirror2023}.

\subsection{Presentations}
\label{sec:presentations}

For the sanity of the reader, let us try to give a more explicit description of the resulting algebra $A$ which gives our noncommutative resolution of singularities.  For our gauge group $G$, consider the fundamental alcove $\nabla$ in the Cartan of $\fg$ modulo the action of the group $\widehat{W}_0$ of length 0 elements in the extended affine Weyl group (which are, by definition, the elements sending the fundamental alcove to itself). That is, $\nabla$ is the subset of the positive Weyl chamber in $\ft$ that is not separated from the origin by the zeros of any affine root.   For fixed flavor $\flav$, every point in this space gives an object in the extended category $\sfB$, but there are only finitely many isomorphism types, given by the set $\barLambda$.  

First of all, we divide this fundamental alcove by considering the hyperplanes defined by $\varphi_i(\acham)\equiv -\phi_i/p\pmod \Z$ for $\phi_i$ the weight of the flavor $\phi$ on the weight space $V_i$; these are the unrolled matter hyperplanes.  Unrolled root hyperplanes only appear on the boundaries of the alcove and only ones corresponding to simple roots of the affinization of $G$ are relevant.  Also, note that the objects corresponding to the walls of the fundamental alcove are summands of the nearby generic objects, so up to isomorphism or inclusion of summands, we can take the algebra $A$ to be the endomorphisms of a sum of representatives of the chambers cut out by the unrolled hyperplanes.  By \cite[Cor. \ref{SD-lem:Coulomb-basis}]{websterKoszulDuality2019}, we have a basis of these endomorphisms which we can visualize as straight (or small perturbation of straight) paths in $\ft$, folded using reflections to fit in the fundamental alcove.  Of course, having chosen representatives of each chamber, we can factor this path to pass through the representative of each chamber it passes through, and thus factor it into shorter segments that either:
\begin{enumerate}
	\item join chambers which are adjacent across an unrolled matter hyperplane
	\item ``bounce'' off a root hyperplane within a chamber bounding it.  
\end{enumerate}  
We can thus, we have that
\begin{proposition}\label{prop:presentation}
The algebra $A$ is a quotient of the path algebra of the quiver where:
\begin{enumerate}
	\item  nodes are given by $\barLambda$, the chambers in this arrangement,
	\item  we add as endomorphisms to each node the semi-direct product of $S_h$ with the stabilizer of the corresponding chamber in $\widehat{W}_0$
	\item we add an opposing pair of edges for every pair of chambers adjacent across a matter hyperplane 
	\item we add a self-loop for each adjacency of a chamber to a root hyperplane.
\end{enumerate}
\end{proposition}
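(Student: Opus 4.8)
\emph{Proof plan.} The plan is to deduce the presentation directly from the explicit spanning set for morphisms in $\mathsf{B}$ furnished by \cite[Cor. 3.13]{WebSD}, by showing that every spanning element factors, through the chosen representatives of the chambers in $\bar{\Lambda}$, into a product of the three elementary types described in the statement. Since the Proposition only claims that $A$ is a \emph{quotient} of a path algebra, it suffices to exhibit the quiver, equip it with the indicated vertex algebras and arrows, and check that the tautological map from its path algebra to $A$ is surjective; no relations need to be identified.

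First I would fix, once and for all, a generic representative cocharacter $\eta_{\Ba}\in \rACp_{\Ba}$ for each class $\bar{\Ba}\in\bar\Lambda$, chosen to avoid all unrolled root hyperplanes. By \cite[Cor. 3.13]{WebSD}, $\Hom_{\mathsf{B}}(\eta_{\Ba},\eta_{\Bb})$ is spanned over $S_h$ by elements $y_w\,\mathbbm{r}_\pi$, where $w\in\widehat W$ and $\pi$ is a (small perturbation of a) straight segment. Using the isomorphisms $\eta\cong w\cdot\eta$ provided by the $y_w$ and relation \eqref{eq:conjugate2}, I can fold $\pi$ back into the fundamental alcove $\nabla$ by successive reflections in the walls it would exit through; in this process $w$ becomes a product of the alcove-wall reflections used, so $y_w$ is itself a product of the elementary ``fold'' morphisms $y_{s_{\alpha_i}}$. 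Then I subdivide the folded path at every point where it meets a hyperplane of the unrolled arrangement, inserting the representative $\eta_{\Ba}$ of whichever chamber the path currently occupies. This writes $y_w\,\mathbbm{r}_\pi$ (times its polynomial coefficient) as a composite of segments, each of which, after reading off its meaning from the presentation of Theorem \ref{thm:BFN-pres}, is one of:
\begin{enumerate}[label=(\roman*)]
\item a segment staying inside a single chamber; absorbing the polynomial coefficient and any element of $\operatorname{Stab}_{\widehat W_0}(\eta_{\Ba})$ using \eqref{eq:dot-commute}--\eqref{eq:weyl2}, this is an endomorphism in the subalgebra $S_h\rtimes\operatorname{Stab}_{\widehat W_0}(\eta_{\Ba})$ attached to that node;
\item a segment crossing a single unrolled matter hyperplane between two adjacent chambers, which is a multiple of an $r$-type morphism $r(\eta_{\Ba},\eta_{\Bb})$; since this exists in both directions, we get the opposing pair of edges;
\item a segment reaching a wall of $\nabla$ that is an unrolled root hyperplane $\{\alpha=n\}$, crossing it and being folded back: this is the morphism $u_\alpha$ followed by the reflection $y_{s_\alpha}$, which goes from the bordering chamber to itself, i.e.\ a self-loop.
\end{enumerate}
Here I use crucially that the interior of $\nabla$ contains no unrolled root hyperplanes, so type (iii) can occur only on $\partial\nabla$, and only the simple affine root walls are relevant. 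Having written every spanning morphism as a word in the generators (i)--(iii), the natural map from the path algebra of the described quiver onto $A$ is surjective, which is the assertion.

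The main obstacle I expect is the bookkeeping in steps (ii)--(iii): one must verify carefully that folding a generic straight segment into $\nabla$ really produces only the listed pieces --- that the reflections needed are always in walls of $\nabla$ so that they are available as $y_{s_{\alpha_i}}$, that $y_{s_\alpha}u_\alpha$ genuinely returns to the same chamber (using that $s_\alpha$ fixes the wall and swaps its two sides), and that the polynomial and stabilizer decorations can always be commuted into a single endomorphism factor using relations \eqref{eq:dot-commute}--\eqref{eq:weyl2} together with \eqref{eq:psipoly} and \eqref{eq:triple}. None of this is deep, but it is the part that requires the most care, and it is also where one sees why one must remember the stabilizer $\operatorname{Stab}_{\widehat W_0}$ and not just $S_h$ at each node.
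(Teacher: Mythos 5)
Your proposal is correct and follows essentially the same route as the paper: the text surrounding the Proposition argues exactly as you do, taking the basis of folded straight-line paths from \cite[Cor. 3.13]{WebSD}, factoring each through the chosen chamber representatives, and identifying the resulting segments as matter-hyperplane crossings, root-hyperplane ``bounces,'' and endomorphisms absorbed into $S_h$ and the stabilizer in $\widehat{W}_0$. Your added care about why surjectivity alone suffices and about the bookkeeping of the folding is a reasonable elaboration of the same argument.
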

The relations that we need arise from (\ref{eq:dot-commute}--\ref{eq:triple}).   One simply takes the pictures \cite[(\ref{SD-eq:graphical1}--\ref{SD-eq:graphical3})]{websterKoszulDuality2019}, replaces chambers with nodes in the quiver and hyperplanes with arrows, and interprets in the path as one in the quiver. These are a bit tedious to write out in full generality, so we leave this as an exercise to the reader.   
\begin{example}
	One valuable example to consider is when $\C^*$ acts on $\C^n$ with weight $1$.  In this case, the fundamental alcove is all of $\ft_\R$ and the extended affine Weyl group the coweight lattice, so the quotient is the maximal compact of the torus $T\subset G$.  The flavor $\phi$ has $n$ components $(\phi_1,\dots, \phi_n)$, and the unrolled matter hyperplane arrangement is given by removing the points $x=-\phi_i/p$ from the circle.  Thus, we have $n$ chambers arranged in a circle.  For simplicity, we draw each pair of arrows from a matter hyperplane as a double-headed arrow, so the structure we see is:
	\[\tikz[very thick,xscale=2]{\node[draw,circle ,outer sep=2pt] (A) at (0,-.5) {$\,$}; \node [draw,circle ,outer sep=2pt] (B) at (1,0) {$\,$}; \node [draw,circle,outer sep=2pt] (C) at (2, -.5) {$\,$}; \node[outer sep=2pt,inner sep=0pt] (D) at (-.7,-1.8) {$\iddots$}; \node[outer sep=2pt,inner sep=0pt] (E) at (2.7, -1.8) {$\ddots $};  \draw[<->] (A) to[out=45,in=180](B); \draw[<->] (B) to[out=0,in=135](C);  \draw [->]  (D) to[in=-135,out=60] (A); \draw [->](E) to[out=120,in=-45] (C);}\] 
	In fact, $A$ is the preprojective algebra of this quiver, which is well-known to give the desired noncommutative resolution. 
\end{example}
\begin{example}
In our usual running example, with $G=GL(2)$, the fundamental alcove is the region $\{(x,y)\in \mathbb{R}^2 \mid 0\leq x-y \leq 1$, and the length 0 elements of the affine Weyl group act by the integer powers of the glide reflection $(x,y)\mapsto (y+1, x)$.  The quotient is thus a M\"obius band, which we can identify with the configuration space of pairs of points on a circle.  

We take matter representation $V=\C^2\oplus \C^2$ and thus obtain a chamber structure in Figure \ref{fig:pthroot}.  That is, we have a geometry like
\[\tikz[very thick,scale=3]{\draw[dir] (0,0) -- (0,1); \draw[dir] (0,1) -- (1,1);  \draw (0,.4) -- (.4,.4); \draw (.4,.4) -- (.4,1); \draw[dashed] (0,0) -- (1,1);\node at (.12,.3){$A$}; \node at (.2,.7){$B$}; \node at (.6,.8){$C$};}\]	where the solid lines are matter hyperplanes, dashed lines are root hyperplanes, and the lines with arrows indicate gluing to obtain a M\"obius strip with dashed boundary.  Thus, we have between $A$ and $B$ two adjacencies and thus two {\it pairs} of arrows, and similarly with $B$ and $C$, with $A$ and $C$ both having self-loops corresponding to the adjacent root hyperplane. 
\[\tikz[very thick,xscale=2]{\node[draw,circle ,outer sep=2pt] (A) at (0,0) {$A$}; \node [draw,circle ,outer sep=2pt] (B) at (1,0) {$B$}; \node [draw,circle,outer sep=2pt] (C) at (2,0) {$C$}; \draw[->] (A.160) to[out=125,in=90] (-.5,0) to[out=-90,in=-125] (A.-160); \draw[->] (C.20) to[out=55,in=90] (2.5,0) to[out=-90,in=-55] (C.-20); \draw[<->] (A) to[out=45,in=135](B); \draw[<->] (A) to[out=-45,in=-135](B); \draw[<->] (B) to[out=45,in=135](C); \draw[<->] (B) to[out=-45,in=-135](C);}\] 
\end{example}

\section{Schobers and wall-crossing}
\label{sec:schob-wall-cross}

Our final section will concern the theory of {\bf twisting functors} (also called {\bf wall-crossing functors}), and in particular, their connection to the theory of Schobers.  These functors are discussed for general symplectic singularities in \cite[\S 2.5.1]{losevModularCategories2021}.  Schobers constructed from categories of coherent sheaves and variation of GIT have already appeared in work of Donovan \cite{donovanPerverseSchobers2019} and Halpern-Leistner and Shipman \cite{HL-S}.  These works have mostly focused on a single wall-crossing, rather than a more complicated hyperplane arrangement, but the simplicity of Coulomb branches compared to other symplectic singularities gives us a tighter control over the structures appearing.  

We will first give some preliminary results on Morita contexts.  These are, of course, standard objects of study in noncommutative geometry and algebra, but their connections to spherical functors and thus to Schobers seem to have mostly escaped notice.  Then, we turn to the construction of a Schober and thus a $\pi_1$-action from the algebraic and geometric objects considered earlier in the paper.  We'll note here that essentially identical arguments will construct Schobers in many similar contexts where actions of fundamental groups have been constructed, in particular for the twisting and shuffling functors in characteristic 0 considered in \cite{BLPWquant,BLPWgco}.

We'll also note that it seems quite likely that this argument proceeds essentially identically for all symplectic resolutions of singularities.  However, both for reasons of notational convenience, and avoiding certain technical difficulties (in particular, proving the analogue of Lemma \ref{lem:just-hyperplanes}), we will restrict ourselves to the case of Coulomb branches. 

 \subsection{Morita contexts and spherical functors}
\label{sec:morita-cont-spher}

Recall that a {\bf Morita} context (called ``pre-equivalence data'' in \cite{BassK}) is a category with 2 objects $\{+,-\}$.  The endomorphism algebras of the two objects give two rings $R_+$ and $R_-$, and the Hom spaces give  $R_\pm$-$R_\mp$ bimodules ${}_{\pm}R_\mp$.  Let $I_\pm={}_\pm R_\mp\cdot {}_\mp R_\pm$ be the two-sided ideal of morphisms factoring through $\mp$, and $Q_\pm=R_{\pm}/I_{\pm}$.  For simplicity, we assume that $R_+$ and $R_-$ have finite global dimension. Modules over this category are the equivalent to modules over the ``matrix'' ring \[R=
\begin{bmatrix}
  R_+ & {}_+R_-\\
   {}_-R_+ & R_-
 \end{bmatrix}\]
Let $e_+,e_-$ be the identities on the 2-objects. For any context, we have quotient functors $q_{\pm}\colon R\mmod \to R_{\pm}\mmod$ with $q_{\pm}(M)=e_{\pm}M=e_{\pm}R\otimes_RM=\Hom_{R}(Re_{\pm},M)$.  This functor has left and right adjoints \[{}^*q_{\pm}(N)=Re_{\pm}\otimes_{R_{\pm}}N\qquad q_{\pm}^*(N)=\Hom_{R_{\pm}}(e_{\pm}R,N).\]  Of course, both of these functors are fully faithful.  The images of their derived functors thus give two copies of $\mathcal{E}_{\pm}:=D^b(R_{\pm}\mmod)$ in $\mathcal{E}_0 =D^b(R\mmod)$ which are the left and right perpendiculars of $\mathcal{F}_{\pm}$, the subcategory of the derived category of $D^b(R\mmod)$ which become acyclic after applying $e_{\pm}$.  This can be identified with modules over the dg-algebra $F_{\pm}=\Ext_R^\bullet(Q_{\mp},Q_{\mp})$.  The inclusion $\xi_{\pm}$ of this subcategory can then be identified with $Q_{\mp}\Lotimes_{F_{\pm}}-$.  Thus, left and right adjoints of this functor are given by \[{}^*\xi_{\pm}(M)=Q_{\mp}\Lotimes_{R}-\qquad \xi_{\pm}^*(M)=\RHom_{R}(Q_{\mp}, M).\]

The inclusions $j_{\pm}=q_{\pm}^*$ and $\xi_{\pm}$ thus fit in the setup of \cite[\S 3.C]{kapranovPerverseSchobers2015}.  Consider the composition $S=q_{\pm}\circ \xi_{\mp}$.  This has left and right adjoints \[L={}^*\xi_{\mp}\circ {}^*q_{\pm} =Q_{\pm}\Lotimes_{R_{\pm}}- \qquad R=\xi_{\mp} ^*\circ q_{\pm}^*= \RHom_{R_{\pm}}(Q_{\pm}, -).  \]

Consider the functors \[{}^*j_{\pm}\circ j_{\mp}=q_{\pm}\circ q_{\mp}^*={}_{\pm}R_{\mp}\Lotimes_{R_{\mp}}-\colon  \mathcal{E}_{\mp}\to \mathcal{E}_{\pm} \]
\begin{lemma}\label{lem:equiv-sphere}
  If ${}^*j_{\pm}\circ j_{\mp}$ and ${}^*j_{\mp}\circ j_{\pm}$ are equivalences of derived categories, then the data above define a spherical pair in the sense of Kapranov and Schechtman \cite[\S 3.C]{kapranovPerverseSchobers2015}, and the functor $S$ is spherical.
\end{lemma}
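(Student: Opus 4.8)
\emph{Proof strategy.} The plan is to extract from the Morita context the two recollements on which Kapranov--Schechtman build the notion of a spherical pair, and then to recognize the hypothesis as precisely the ``gluing'' condition that upgrades this data to such a pair. First I would record the standard recollement bookkeeping. For each sign, the idempotent $e_\pm\in R$ produces a recollement of $\mathcal{E}_0=D^b(R\mmod)$ with closed part $\mathcal{F}_\pm=\operatorname{im}(\delta_\pm)\simeq D(F_\pm\dgmod)$ and open part $\mathcal{E}_\pm\simeq D^b(R_\pm\mmod)$, whose six functors are exactly $q_\pm$, ${}^*q_\pm$, $q_\pm^*=j_\pm$, $\delta_\pm$, ${}^*\delta_\pm$, $\delta_\pm^*$ as in the excerpt. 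Full faithfulness of $j_\pm$, ${}^*q_\pm$ and $\delta_\pm$, the two semiorthogonal decompositions of $\mathcal{E}_0$, the identifications $\delta_\pm\simeq Q_\mp\Lotimes_{F_\pm}-$, and the two adjunction triangles $\delta_\pm\delta_\pm^*\to\operatorname{id}\to q_\pm^*q_\pm\to[1]$ and ${}^*q_\pm q_\pm\to\operatorname{id}\to\delta_\pm{}^*\delta_\pm\to[1]$ are all classical (Cline--Parshall--Scott recollement), with the finite global dimension of $R_\pm$ keeping everything inside the bounded derived categories and compatible with the evident dg-enhancements. This is the ``setup of \cite[\S 3.C]{KSschobers}'' referred to above.

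Next I would unwind the criterion of \cite[\S 3.C]{KSschobers} for this configuration to be a spherical pair. Concretely this amounts to requiring that the functors comparing one recollement with the other --- the composites of an inclusion of one recollement with the localization/projection functors of the other --- be equivalences. Using the displayed identity ${}^*j_\pm\circ j_\mp=q_\pm\circ q_\mp^*={}_\pm R_\mp\Lotimes_{R_\mp}-$, and rewriting the remaining comparison functors by means of the two adjunction triangles, one sees that, up to the fixed equivalences $\mathcal{E}_\pm\simeq\operatorname{im}(j_\pm)$, every such comparison functor is either ${}^*j_+\circ j_-$ or ${}^*j_-\circ j_+$. Hence the hypothesis that these two are equivalences is exactly the spherical-pair condition, so the data $\{j_\pm,\delta_\pm\}$ (equivalently, the pair of recollements) defines a spherical pair. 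The spherical functor attached to it in loc.\ cit.\ is the composite of $\delta_\mp$ with the projection onto $\operatorname{im}(j_\pm)$, which under $\operatorname{im}(j_\pm)\simeq\mathcal{E}_\pm$ is precisely $S=q_\pm\circ\delta_\mp$, with left and right adjoints $L,R$ as listed; so $S$ is spherical.

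As a sanity check, and as an alternative avoiding a verbatim appeal to the definition in \cite{KSschobers}, I would also verify sphericality of $S$ directly through the Anno--Logvinenko criterion. Applying the recollement triangle $\delta_\mp\delta_\mp^*\to\operatorname{id}\to q_\mp^*q_\mp\to[1]$ to $q_\pm^*(E)$ for $E\in\mathcal{E}_\pm$ and then applying $q_\pm$, and using that the counit $q_\pm q_\pm^*\to\operatorname{id}_{\mathcal{E}_\pm}$ is an isomorphism, one obtains a triangle $SR(E)\to E\to (q_\pm q_\mp^*)(q_\mp q_\pm^*)(E)\to[1]$ in which the first arrow is the counit of $S\dashv R$; thus the twist $T_S:=\operatorname{cone}(SR\to\operatorname{id}_{\mathcal{E}_\pm})$ is identified with $({}^*j_\pm\circ j_\mp)\circ({}^*j_\mp\circ j_\pm)$, a composite of two equivalences. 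A parallel computation of the cotwist $F_S:=\operatorname{cone}(\operatorname{id}\to RS)[-1]$ (equivalently, of the twist attached to the mirror functor $q_\mp\circ\delta_\pm$) supplies the second of the two conditions that together force $S$ to be spherical, once one identifies the relevant composite $\delta_\pm^*\circ\delta_\mp$ with tensoring by a bimodule that the hypothesis renders invertible.

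The main obstacle I expect is the matching carried out in the second and third paragraphs: not the formal recollement yoga, which is routine, but the translation between the hypothesis --- a statement about the \emph{quotient} categories $\mathcal{E}_\pm$ --- and the family of comparison functors on which \cite[\S 3.C]{KSschobers} imposes conditions, together with checking that those adjunctions required by their axioms which are not visible from a single recollement genuinely hold here. This is the step where the equivalence hypothesis does real work rather than mere bookkeeping; in the direct route it reappears as the need to see that $\delta_\pm^*\circ\delta_\mp$ (equivalently, tensoring by $\Ext^\bullet_R(Q_\mp,Q_\pm)$) is an equivalence. A secondary, routine point is to confirm that all the functors in play are the derived/dg functors between the standard enhancements, so that the results of \cite{KSschobers} apply literally.
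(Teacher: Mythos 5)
There is a genuine gap. The Kapranov--Schechtman notion of a spherical pair requires not only that the comparison functors between the two copies of the ``open'' categories $\mathcal{E}_\pm$ be equivalences --- that is exactly the hypothesis of the lemma --- but also that the comparison functors $\delta_{\mp}^*\circ\delta_{\pm}$ between the two ``closed'' subcategories $\mathcal{F}_\pm$ be equivalences. Your second paragraph asserts that, after rewriting via the adjunction triangles, ``every such comparison functor is either ${}^*j_+\circ j_-$ or ${}^*j_-\circ j_+$''; this is false. The functors $\delta_{\mp}^*\circ\delta_{\pm}$ (equivalently, tensoring over $F_\pm$ with $\Ext_R^\bullet(Q_\mp,Q_\pm)$) do not reduce to the $j$'s by formal recollement yoga, and their invertibility is an independent statement that must be \emph{deduced} from the hypothesis. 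Your third and fourth paragraphs in fact concede this point --- you name ``the need to see that $\delta_\pm^*\circ\delta_\mp$ \dots is an equivalence'' as the main obstacle --- but you never resolve it, so the proposal does not establish the lemma.

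The paper closes precisely this gap. Since ${}_{\mp}R_{\pm}\Lotimes_{R_{\pm}}-$ is an equivalence, its inverse is its adjoint $\RHom_{R_{\mp}}({}_{\mp}R_{\pm},-)$; setting $N'={}^{**}j_{\mp}({}_{\mp}R_{\pm}\Lotimes_{R_{\pm}}N)$, the natural map $j_{\pm}(N)\to N'$ becomes a quasi-isomorphism after applying $e_+$ and after applying $e_-$, so $j_{\pm}$ and ${}^{**}j_{\mp}$ have the same essential image. Consequently $\mathcal{F}_{\pm}$ and $\mathcal{F}_{\mp}$ are the left and right orthogonals of one and the same admissible subcategory of $D^b(R\mmod)$, and $\delta_{\mp}^*\circ\delta_{\pm}$ is the mutation functor between the two complements of a semiorthogonal decomposition, hence an equivalence by \cite[Thm. 3.11]{HLS}. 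Some argument of this shape (or a direct proof that $\Ext_R^\bullet(Q_\mp,Q_\pm)$ is an invertible dg-bimodule) is what your write-up is missing; once it is supplied, the rest of your recollement bookkeeping and the identification of $S$ with $q_\pm\circ\delta_\mp$ go through as you describe.
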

\begin{proof}
  In addition to our hypotheses, we need to prove that $\xi_{\mp}^*\circ \xi_{\pm}$ are equivalences of derived categories.  If ${}_{\mp}R_{\pm}\Lotimes_{R_{\pm}}-$ is an equivalence, then its inverse is its adjoint $\Hom_{R_{\mp}}({}_{\mp}R_{\pm},-)$.   Thus, $N'={}^{**}j_{\mp}( {}_{\mp}R_{\pm}\Lotimes_{R_{\pm}}N)$ is an $R$-module such that ${}^*j_{\pm}(N')\cong N$.  This shows we have a natural map $j_{\pm}(N)\to N'$, which is a quasi-isomorphism after applying $e_{\pm}$ (by the observation we just made) and a quasi-isomorphism after applying $e_{\mp}$, by the isomorphism of ${}^*j_{\mp}{}^{**}j_{\mp}$ to the identity.

  Thus, $j_{\pm}$ and ${}^{**}j_{\mp}$ have the same image.  Obviously, $\mathcal{F}_{\pm}$ is the left orthogonal to this image, and $\mathcal{F}_{\mp}$ its right orthogonal.  Thus, $\xi_{\mp}^*\circ \xi_{\pm}$ is the mutation with respect to these dual semi-orthogonal decompositions.  Note that this is a special case of \cite[Thm. 3.11]{HL-S}, with the ambient dg-category being the derived category of $R$-modules, the category $\mathcal{A}$ being the image of $j_{\pm}$ and  ${}^{**}j_{\mp}$, and $\mathcal{A}'$ the image of $j_{\mp}$ and  ${}^{**}j_{\pm}$.
\end{proof}

\subsection{Wall-crossing functors}
\label{sec:wall-cross-funct}

For different choices of flavor $\flav$, we obtain different quantizations of
the structure sheaf of $\fM$.  Quantized line bundles give canonical
equivalences of categories between the categories of modules over
these sheaves, as in \cite{BLPWquant}.  Note that the isomorphism type
of the underlying sheaf only depends on $\phi$ considered modulo $p$,
but for different elements of the same coset, there is still a
non-trivial autoequivalence, induced by tensoring with the
quantizations of $p$th power line bundles.  Similarly, for each
element of the Weyl group $W_F$, there's an isomorphism between the
section algebras of $\EuScript{A}_{\phi}$ and $ \EuScript{A}_{w\cdot
  \phi}$; together, these give us such a morphism for every $w\in
\widehat{W}_F$, affine Weyl group of $F$.  We thus can consider the twisting bimodule
$ {{}_{w\phi'}T_{\phi}}$ discussed earlier, turned into a
$\EuScript{A}_{\phi'}\operatorname{-}\EuScript{A}_{\phi}$-bimodule
using the isomorphism above to twist the left action.

\begin{definition}\label{def:twisting}
Given flavors $\flav$ and $\phi'$, and $w\in \widehat{W}_F$, we define the {\bf twisting} or {\bf wall-crossing functor} $ {\Phi_w^{\phi',\phi}}\colon D^b(\EuScript{A}_{\phi}\mmod) \to D^b(\EuScript{A}_{\phi'}\mmod)$ to be the derived tensor product with $ {{}_{w\phi'}T_{\phi}}$.
\end{definition}
\notation{${\Phi_w^{\phi',\phi}}$}{The twisting or wall-crossing functor (Definition \ref{def:twisting}).}
One can think of this functor as measuring the different sets $\Lambda,\Lambda'$ attached to the parameters $\phi',\phi$.  In particular:
\begin{lemma}\label{lem:Lam-same}
  If $\phi, \phi'$ are generic and $\Lambda=\Lambda'$, then ${}_{\phi'}T_{\phi}$ induces a Morita equivalence and $\Phi_1^{\phi',\phi}$ is an exact functor.
\end{lemma}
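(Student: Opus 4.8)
The plan is to reduce the statement to a purely combinatorial identity in the $p$th-root category and then invoke the equivalences already established. First I would recall that, after completing $\EuScript{A}_\phi$ along the central fibre over the origin of $\ft/W$ and decomposing modules into weight spaces, Theorem \ref{thm:pStein-equiv} identifies the resulting category of $\EuScript{A}_\phi$-modules with modules over $\widehat{\mathsf{A}}_p$ for the flavor $\phi$; by Lemma \ref{lem:doesnt-depend2} and Definition \ref{def:BLam} this is in turn Morita equivalent to $\mathsf{B}^{\bar\Lambda(\phi)}$, and likewise for $\phi'$. Since $\phi$ and $\phi'$ are generic, the equality $\Lambda(\phi)=\Lambda(\phi')$ of the (finite, $p$-dependent) sets of chambers containing lattice points forces an equality of the entire real chamber data, so $\mathsf{B}^{\bar\Lambda(\phi)}$ and $\mathsf{B}^{\bar\Lambda(\phi')}$ are the \emph{same} category. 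Thus $\EuScript{A}_\phi$ and $\EuScript{A}_{\phi'}$ are abstractly Morita equivalent, and all that remains is to check that the bimodule ${}_{\phi'}T_\phi$ realizes this equivalence; once that is known, $\Phi_1^{\phi',\phi}=-\Lotimes_{\EuScript{A}_\phi}{}_{\phi'}T_\phi$ agrees with the ordinary (underived) tensor product and is therefore exact.

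Next I would identify ${}_{\phi'}T_\phi$ under these equivalences. Recall from \eqref{eq:aXnua} and \cite[Def.~3.16]{WebSD} that ${}_{\phi+\nu}\mathscr{T}_\phi$ is cut from the flavor-torus category $\mathscr{B}^{\To}$ as the weight-$\nu$ part of its morphism spaces, so that composition in $\mathscr{B}^{\To}$ assembles the slices $\mathscr{B}_{\phi+k\nu}$ and their twisting bimodules into one $\ft_{F,\Z}$-graded category. Applying the $p$th-root equivalence of Proposition \ref{prop:B-equiv} for the group $\To$, a change of flavor by $\nu\in\ft_{F,\Z}$ becomes a translation of the distinguished $\tfrac1p$-coset in $\mathsf{t}_{1,\R}$ by $\nu/p$, hence a relabelling of the assignment $\Ba\mapsto\rACp_{\Ba}$ of chamber indices to chambers. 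The hypothesis $\Lambda(\phi)=\Lambda(\phi')$ says exactly that this relabelling is a bijection of index sets, so that matching $\eta_{\Ba}$ for $\phi$ with $\eta_{\Ba}$ for $\phi'$ gives a functor between the two copies of $\mathsf{B}^{\bar\Lambda}$ which on morphisms is the transport of the spaces $\Hom_{\mathsf{B}}(\eta_{\Ba},\eta_{\Bb})$. I would then argue that under this dictionary ${}_{\phi'}T_\phi$ is the diagonal bimodule, so that tensoring with it is the identity functor, in particular a Morita equivalence, and so $\EuScript{A}_\phi\mmod\simeq\EuScript{A}_{\phi'}\mmod$.

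The genuinely non-formal step, and the one I expect to be the main obstacle, is verifying the previous sentence: that the natural composition maps ${}_{\phi}T_{\phi'}\otimes_{\EuScript{A}_{\phi'}}{}_{\phi'}T_\phi\to\EuScript{A}_\phi$ and its mirror are isomorphisms. Both are maps of modules over a Coulomb-branch algebra; by the generic computation of Lemma \ref{lem:Q-rank} (and the fact that $\EuScript{A}$ is generically Azumaya) both sides are generically free of rank $\#W$ and the maps are isomorphisms over the smooth locus of $\fM$, so only surjectivity is at stake. By the composition relation \eqref{eq:wall-cross1}, a round trip $\phi\to\phi'\to\phi$ contributes the ``twice-crossed'' polynomial $\Phi$, whose linear factors are precisely the unrolled matter hyperplanes separating $\rACp_{\Ba}$ for $\phi$ from its image for $\phi'$; after passing to the generators $\wall(\eta,\eta')=\hat\Phi_0^{-1}r(\eta,\eta')$ this round trip becomes a unit multiple of the identity exactly when every such factor is among those that stay invertible modulo $p$. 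The content of the proof is to check that $\Lambda(\phi)=\Lambda(\phi')$, together with genericity, is equivalent to this last condition, i.e.\ that the equality of these chamber sets kills precisely the potential obstruction to the round trip being an isomorphism. All of this is elementary bookkeeping with the unrolled arrangement; the rest of the argument is formal from the equivalences already in place.
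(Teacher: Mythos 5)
Your overall skeleton agrees with the paper's up to a point: both arguments set up the Morita context $({}_{\phi'}T_{\phi},\;{}_{\phi}T_{\phi'})$ and reduce the lemma to surjectivity of the two trace maps ${}_{\phi'}T_{\phi}\otimes_{\EuScript{A}_{\phi}}{}_{\phi}T_{\phi'}\to\EuScript{A}_{\phi'}$ and its mirror (exactness of $\Phi_1^{\phi',\phi}$ is then formal). The divergence, and the genuine gap, is in how surjectivity is established. You propose that the round trip $\phi\to\phi'\to\phi$ ``becomes a unit multiple of the identity'' once the twice-crossed factors are checked to be invertible. But by \eqref{eq:wall-cross1} that round trip acts by the polynomial $\Phi(\eta,\eta',\eta)$, a product of nonconstant linear forms $\varphi_i^+-n$ lying in $S_1\subset\EuScript{A}_{\phi}$. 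Such a factor is never a unit in $\EuScript{A}_{\phi}$ itself; it becomes invertible only in the completion $\widehat{S}^{(\upsilon)}_1$ at those weights $\upsilon$ with $\langle\varphi_i^+,\upsilon\rangle\not\equiv n\pmod p$ (this is exactly the role of $\hat\Phi_0$ and $\wall$, which live in the completed category $\widehat{\mathscr{B}}_{\upsilon'}$). So your bookkeeping with the unrolled arrangement shows that the bimodule acts invertibly weight-by-weight --- equivalently, that ${}_{\phi}T_{\phi'}\otimes-$ is an equivalence on finite-dimensional or completed modules --- but it does not show that the trace map surjects onto the uncompleted algebra. A related soft spot: the claim that $\Lambda=\Lambda'$ makes $\mathsf{B}^{\bar\Lambda(\phi)}$ and $\mathsf{B}^{\bar\Lambda(\phi')}$ literally ``the same category,'' with ${}_{\phi'}T_{\phi}$ the diagonal bimodule, is close to circular, since the identification of these categories for different flavors is itself implemented by the twisting bimodule.

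The paper closes exactly this local-to-global gap with a short structural argument that your proposal is missing. If a trace map were not surjective, its image would be a proper two-sided ideal $I$ (the trace of the context); since $\EuScript{A}_{\phi'}$ is finitely generated over its center, the quotient $\EuScript{A}_{\phi'}/I$ has a finite-dimensional simple module $L$, which is annihilated by the context. The weight-space analysis of Theorem \ref{thm:pStein-equiv} --- i.e.\ precisely your completed computation --- then forces every chamber in the support of $L$ to lie in $\Lambda'$ but not in $\Lambda$, contradicting $\Lambda=\Lambda'$. You need to supply this bridge (or a substitute, e.g.\ checking surjectivity after completion at every relevant maximal ideal of the center and invoking Nakayama); as written, ``elementary bookkeeping'' does not carry you from invertibility in each $\widehat{S}^{(\upsilon)}_1$ to surjectivity of the trace maps.
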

\begin{proof}
  Of course, we have natural maps ${}_{\phi'}T_{\phi}\otimes {}_{\phi}T_{\phi'}\to \EuScript{A}_{\phi'}$ and similarly with $\phi,\phi'$ reversed.  This gives a Morita context, as discussed above, and by \cite[II.3.4]{BassK}, we will obtain the desired Morita equivalence if we prove both of these maps are surjective.

  If this map is not surjective, then its image is a proper 2-sided ideal (sometimes called the trace of the Morita context).  Since $  \EuScript{A}_{\phi'}$ is finitely generated over its center, the quotient by this ideal has the same property, so it has at least one finite dimensional simple module $L$, which thus satisfies $ {}_{\phi}T_{\phi'}\otimes_{\EuScript{A}_{\phi'}}L=0$.  Thus, any chamber that appears in the support of $L$ must lie in $\Lambda'$ but not $\Lambda$, which is impossible since these sets coincide.  In fact, it's clear from Theorem \ref{thm:pStein-equiv} that $ {}_{\phi}T_{\phi'}\otimes_{\EuScript{A}_{\phi'}}-$ induces an equivalence on the category of finite dimensional representations.  Thus, we must have that ${}_{\phi'}T_{\phi}\otimes {}_{\phi}T_{\phi'}\to \EuScript{A}_{\phi'}$ is surjective, and similarly with $\phi, \phi'$ reversed.
\end{proof}

Recall that $\tM$ depends on a choice of $\chi\in \ft_{F,\Z}$.  This dependence is rather crude, though.  By the usual theory of variation of variation of GIT \cite{DHGIT}, the space $\ft_{F,\R}$ is cut into a finite number of convex cones, such that $\tilde{\fM}$ is smooth when $\chi$ lies in the interior of one of these cones, called ``chambers'' in \cite{DHGIT}. An element $\chi'$ will give an ample line bundle on $\tilde{\fM}$ if it is in the same chamber as $\chi$ (since their stable loci coincide), or a semi-ample bundle if it lies in the boundary of the cone (since the semi-stable locus becomes strictly larger by the Hilbert-Mumford criterion).   Since by Corollary \ref{cor:BFN-split}, the variety $\fM$ is Frobenius split, \cite[Thm. 1.4.8]{BrionKumar} shows that the corresponding line bundle induced by $\chi'$ has vanishing cohomology for all $\chi'$ in the closure of the chamber containing $\chi$.
\begin{lemma}\label{lem:localize-twist}
If $\chi'=w\cdot \phi'-\phi$ lies in the closure of the chamber containing $\chi$, then we have a natural isomorphism \[\Phi_w^{\phi',\phi}(M)\cong\Rsecs({}_{w\phi'}\mathcal{L}_{\phi}\otimes \LLoc(M))\] where the action on the RHS is twisted by the isomorphism $\EuScript{A}_{\phi'}\cong \EuScript{A}_{w\cdot \phi'}$.  
\end{lemma}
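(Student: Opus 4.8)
The plan is to reduce the statement to a comparison between the algebraic construction of the twisting bimodule $\glslink{Twist}{{}_{w\phi'}T_\phi}$ inside the extended category $\gls{scrB}^{\To}$ and the geometric operation of tensoring a quantized line bundle on $\glslink{tM}{\tilde{\fM}}$ and taking derived sections. First I would recall from \eqref{eq:proj-coord} and the surrounding discussion that $\EuScript{A}_1^{\operatorname{sph}}$, realized via the quantum Frobenius $\sigma$ as a sheaf of algebras $\glslink{psalg}{\psalg_\phi}$ on $\tilde{\fM}$, has derived sections recovering $\EuScript{A}_\phi$ on the nose; and that the twisting bimodules $\EuScript{Q}_1^{\chi^{pm}}={}_{\phi+pm\nu}T_\phi^{\operatorname{sph}}$ are exactly the graded pieces of $\EuScript{T}^{\operatorname{sph}}$, which geometrically is the pushforward by Frobenius of $\bigoplus_m \cO(pm)$. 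So the ``$p$-th power'' line bundles $\cO(pm)$ are visibly the $\sigma$-images of the twisting bimodules, which is the source of the isomorphism we want.

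The key steps, in order: (1) Use the hypothesis that $\chi'=w\cdot\phi'-\phi$ lies in the closure of the GIT chamber containing $\chi$. By variation of GIT \cite{DHGIT}, $\chi'$ then gives a semi-ample (nef) line bundle ${}_{w\phi'}\mathcal{L}_\phi$ on $\tilde{\fM}$, so the semistable locus for $\chi'$ contains that for $\chi$ and the quantized line bundle makes sense as a sheaf of $\glslink{psalg}{\psalg_{w\phi'}}$-$\glslink{psalg}{\psalg_\phi}$ bimodules. (2) Invoke Frobenius splitting: by Corollary \ref{cor:BFN-split} the variety $\fM$ is Frobenius split, hence by \cite[Thm. 1.4.8]{BrionKumar} the line bundle attached to $\chi'$ and all bundles in the closure of its chamber have vanishing higher cohomology on $\tilde{\fM}$; combined with Corollary \ref{cor:cohomology-vanishing} this gives $H^{>0}(\tilde{\fM}; {}_{w\phi'}\mathcal{L}_\phi\otimes \cO(n))=0$ for $n\gg0$, so that $\Rsecs$ of the twisted localization is concentrated in degree $0$ and the projective-coordinate-ring formalism of \cite[\S 5.2]{BLPWquant} applies. (3) Identify $\Rsecs({}_{w\phi'}\mathcal{L}_\phi\otimes \LLoc(M))$ with $\glslink{Twist}{{}_{w\phi'}T_\phi}\Lotimes_{\EuScript{A}_\phi}M$: on the level of graded modules over $\bigoplus_m \Gamma(\tilde{\fM};\cO(m))$, tensoring with $\mathcal{L}_\phi$ and pushing forward shifts the grading in a way matching the $\Z$-algebra structure of the twisting bimodules as in \cite[\S 5.5]{BLPWquant}, and the quantum Frobenius $\sigma$ intertwines multiplication by $\chi$-semi-invariants with $\chi^p$-semi-invariants (as used to define $\psalg$), so the identification of the twist on the two sides is exactly the content of $\sigma$ sending $\chi$-semi-invariants to $\chi^p$-semi-invariants; the twist of the left $\EuScript{A}_{\phi'}$-action by the isomorphism $\EuScript{A}_{\phi'}\cong\EuScript{A}_{w\cdot\phi'}$ is built into the definition of $\glslink{Wall}{\Phi_w^{\phi',\phi}}$ and of ${}_{w\phi'}\mathcal{L}_\phi$ in parallel. (4) Since $\LLoc$ is right inverse to $\Rsecs$ (by Corollary \ref{cor:cohomology-vanishing} and \cite{KalDEQ}, as recalled before Lemma \ref{lem:upper-bound}), the composite is the derived tensor product with the global sections of the bimodule sheaf, which is $\glslink{Twist}{{}_{w\phi'}T_\phi}$ by construction, giving the asserted natural isomorphism.

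The main obstacle I expect is step (2)--(3): making precise the claim that the quantized line bundle ${}_{w\phi'}\mathcal{L}_\phi$ really is the microlocalization of the twisting bimodule ${}_{w\phi'}T_\phi$ pushed forward by Frobenius, and that its higher cohomology vanishes. The cohomology vanishing for \emph{ample} $\chi'$ is standard once we know $\fM$ is Frobenius split, but the statement is asserted for $\chi'$ merely in the \emph{closure} of the chamber, i.e.\ semi-ample, where one must be a little careful that $\tilde{\fM}$ may change or that the map contracts; here one uses that $\Rsecs$ only sees the image of the semi-ample contraction and that Frobenius-split-ness passes to it, so \cite[Thm. 1.4.8]{BrionKumar} still applies. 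A secondary subtlety is bookkeeping the left-module twist by $\EuScript{A}_{\phi'}\cong\EuScript{A}_{w\cdot\phi'}$ consistently on both sides; but this is purely formal once the untwisted ($w=1$) case is established, since both $\glslink{Wall}{\Phi_w^{\phi',\phi}}$ and the geometric side are defined by composing the $w=1$ construction with the same algebra isomorphism.
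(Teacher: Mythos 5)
Your proposal is correct and follows essentially the same route as the paper: the paper's proof reduces to checking the isomorphism on the free module $\EuScript{A}_\phi$ itself, at which point the only content is the vanishing $H^{i>0}(\tilde{\fM};{}_{w\phi'}\mathcal{L}_\phi)=0$, deduced exactly as in your step (2) from the Frobenius splitting of Corollary \ref{cor:BFN-split} and \cite[Thm. 1.4.8]{BrionKumar} applied to the (semi-)ample bundle attached to $\chi'$ in the closure of the chamber. Your steps (3)--(4) are just a more explicit unwinding of the paper's one-line reduction ``it's enough to check this on $\EuScript{A}_\phi$,'' and your worry about the contraction in the semi-ample case is unnecessary since Brion--Kumar applies directly to the nef bundle on the fixed $\tilde{\fM}$.
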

\begin{proof}
It is enough to check this on the algebra $\EuScript{A}_{\phi}$ itself. Thus, we need to show that $H^i(\fM;{}_{w\phi'}\mathcal{L}_{\phi})=0$ for $i>0$.  This is clear since this is a quantization of the line bundle induced by $\chi'$, which has trivial cohomology as discussed above.
\end{proof}

\begin{corollary}
If derived localization holds at $\phi'$ and $\phi$, then the functor $ {\Phi_w^{\phi',\phi}}$ is an equivalence of categories. 
\end{corollary}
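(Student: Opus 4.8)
The plan is to trade the functor $\Phi_w^{\phi',\phi}$ for tensoring with a line bundle on $\tilde{\fM}$, which is visibly an autoequivalence of $D^b(\Coh(\tilde{\fM}))$, and then transport this across the equivalences supplied by derived localization. Set $\chi'=w\phi'-\phi\in\ft_{F,\R}$. First I would treat the case in which $\chi'$ lies in the closure $\overline{\cC}$ of the GIT chamber containing the fixed polarization $\chi$. There Lemma \ref{lem:localize-twist} identifies $\Phi_w^{\phi',\phi}$ with the composite $\Rsecs\circ({}_{w\phi'}\mathcal{L}_\phi\otimes-)\circ\LLoc$, where $\LLoc$ is taken relative to $\psalg_\phi$ and $\Rsecs$ relative to $\psalg_{w\phi'}\cong\psalg_{\phi'}$ (the left action twisted by the isomorphism $\EuScript{A}_{\phi'}\cong\EuScript{A}_{w\phi'}$). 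Derived localization at $\phi$ makes $\LLoc\colon D^b(\EuScript{A}_\phi\mmod)\to D^b(\Coh(\tilde{\fM}))$ an equivalence with quasi-inverse $\Rsecs$; derived localization at $\phi'$ makes the outer $\Rsecs$ an equivalence; and the middle functor, being tensoring with an invertible sheaf, is an equivalence. So $\Phi_w^{\phi',\phi}$ is a composite of three equivalences. The mirror case $-\chi'\in\overline{\cC}$ is the same argument run for the opposite twisting functor $\Phi_{w^{-1}}^{\phi,\phi'}$, together with the observation that $\Phi_w^{\phi',\phi}$ and $\Phi_{w^{-1}}^{\phi,\phi'}$ are mutually quasi-inverse (the underlying line bundles being $\mathcal{O}(\chi')$ and $\mathcal{O}(-\chi')$), so an equivalence in one direction forces one in the other.

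For a general $\chi'$ I would factor. Since $\overline{\cC}$ is a full-dimensional convex cone, the intersection $(\phi+\overline{\cC})\cap(\phi'+\overline{\cC})$ is nonempty and full-dimensional, so I may choose a \emph{generic} flavor $\phi''$ with $\phi''-\phi\in\overline{\cC}$ and $\phi''-\phi'\in\overline{\cC}$, together with a compatible factorization $w=w'w''$ of the Weyl element. Using the cocycle/composition law for the twisting bimodules one gets $\Phi_w^{\phi',\phi}\cong\Phi_{w'}^{\phi',\phi''}\circ\Phi_{w''}^{\phi'',\phi}$. The second factor has underlying shift $\phi''-\phi\in\overline{\cC}$, so it is covered by the first paragraph; the first factor has underlying shift $\phi''-\phi'$ which is the negative of an element of $\overline{\cC}$, so it is covered by the mirror case. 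Both invocations require derived localization at the intermediate flavor $\phi''$, which is available for generic $\phi''$ by Theorem \ref{thm:asymptotic-derived} in the asymptotic regime in which these statements live (or, if one prefers, one simply restricts attention to generic parameters, cf. the conjecture above). Hence $\Phi_w^{\phi',\phi}$ is a composite of equivalences, as desired.

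The main obstacle is the factorization step, and more precisely the cocycle identity ${}_{w'\phi'}T_{\phi''}\Lotimes_{\EuScript{A}_{\phi''}}{}_{w''\phi''}T_\phi\cong{}_{w\phi'}T_\phi$ at the level of derived categories. Proving it amounts to a projection-formula computation: both sides are $\Rsecs$ applied to a tensor product of quantized line bundles, and one needs the derived tensor product of those quantized bundles to agree with the quantization of the tensor product of the underlying line bundles. This rests on the higher-cohomology vanishing of the relevant line bundles, which is exactly what the Frobenius splitting of BFN resolutions provides (Corollary \ref{cor:BFN-split} together with \cite[Thm.\ 1.4.8]{BrionKumar}); so the argument goes through, but carrying out the bookkeeping—in particular tracking the $\widehat{W}_F$-labels through the composition and checking the cocycle carefully—is where the genuine work lies. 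Everything else, in particular the reduction to tensoring with a line bundle, is formal once Lemma \ref{lem:localize-twist} is in hand.
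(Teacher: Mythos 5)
Your first paragraph is exactly the paper's (implicit) argument: the corollary is read under the standing hypothesis of Lemma \ref{lem:localize-twist}, namely that $\chi'=w\cdot\phi'-\phi$ lies in the closure of the GIT chamber of $\chi$, and then $\Phi_w^{\phi',\phi}=\Rsecs\circ({}_{w\phi'}\mathcal{L}_\phi\otimes-)\circ\LLoc$ is a composite of three equivalences (the middle one because a quantized line bundle is invertible, the outer two by derived localization at $\phi$ and $\phi'$). That is the whole proof, and you have it.

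The additional two paragraphs aim at a stronger statement (arbitrary $\chi'$) than the paper is claiming here, and they contain real soft spots. First, in the ``mirror case'' you assert that $\Phi_w^{\phi',\phi}$ and $\Phi_{w^{-1}}^{\phi,\phi'}$ are mutually quasi-inverse; knowing that one of them is an equivalence does not force the other to be, unless you identify ${}_{w\phi'}T_\phi$ with the adjoint bimodule $\RHom_{\EuScript{A}_{\phi}}({}_{\phi}T_{w\phi'},\EuScript{A}_\phi)$, which is not automatic and is not proved in the paper until the Schober section (and there only under separation hypotheses and for $p\gg0$). Second, the cocycle identity ${}_{w'\phi'}T_{\phi''}\Lotimes{}_{w''\phi''}T_\phi\cong{}_{w\phi'}T_\phi$ that your factorization rests on is precisely Lemma \ref{lem:just-hyperplanes}, which appears \emph{after} this corollary, requires that no hyperplane separate both endpoints from the intermediate parameter, and is itself proved using Lemma \ref{lem:Lam-same} and this very corollary; invoking it here risks circularity. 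None of this is needed: for the statement as the paper intends it, stop after your first case.
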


Corresponding to a flavor $\phi$, we have a set $\LambdaR$ as defined to be the vectors in $\Z^d$ such that $\rACp_{\Ba}\neq 0$; this agrees with $\Lambda$ for $p$ sufficiently large.  The set $\Lambda^{\mathbb{R}}$ is locally constant, and only changes when $\psi=\phi_{1/p}$ lies on a hyperplane in $\ft_{1,F,\R}$ defined by a circuit in the unrolled matter hyperplanes.  We can thus cut the set $\ft_{1,F,\Z}$ into chambers according to what the set $\Lambda^\R$ is; these are chambers induced by the hyperplane arrangement defined by the circuits of the unrolled matter hyperplanes.  We will use repeatedly that by choosing $p$ sufficiently large, we make sure that any non-empty chamber in $\ft_{1,F,\R}$ contains a point of the form $\phi_{1/p}$ and in fact, any point in $\ft_{1,F,\R}$ can be approximated arbitrarily well by points satisfying this property.  
Combining Lemmata \ref{lem:Lam-same} and \ref{lem:localize-twist}, we see an important compatibility for the twisting functors:
\begin{lemma}\label{lem:just-hyperplanes}
  For $p$ sufficiently large, if no hyperplane $H_\al$ separates both $\phi$ and $\phi''$ from $\phi'$, then ${}_{\phi''}T_{\phi'}\Lotimes_{\EuScript{A}_{\phi'}} {}_{\phi'}T_{\phi}\cong {}_{\phi''}T_{\phi}$.
\end{lemma}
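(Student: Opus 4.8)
The plan is to exhibit the isomorphism as the tautological composition map and then verify it is an isomorphism, combining Lemmata \ref{lem:Lam-same} and \ref{lem:localize-twist}. There is always a natural map
\[m\colon {}_{\phi''}T_{\phi'}\Lotimes_{\EuScript{A}_{\phi'}}{}_{\phi'}T_{\phi}\longrightarrow {}_{\phi''}T_{\phi}\]
of bimodules over $\EuScript{A}_{\phi''}$ and $\EuScript{A}_{\phi}$, coming from multiplication of twisting classes, i.e.\ restricting the convolution product in $\mathscr{B}^{\To}$ to the weight $\phi'-\phi$ and weight $\phi''-\phi'$ components in the sense of \eqref{eq:aXnua}; the content of the lemma is that, under the separation hypothesis and for $p\gg 0$, this $m$ is a quasi-isomorphism. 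Since the statement only asks for $p$ large, I would begin by enlarging $p$ so that, by Theorem \ref{thm:asymptotic-derived}, derived localization holds at every flavor that will occur, so that $\LLoc\circ\Rsecs\cong\id$ on the categories in play and line bundles may be pushed and pulled freely.

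The heart of the argument is the model case in which $\chi'=\phi'-\phi$ and $\chi''=\phi''-\phi'$ both lie in the closure of the GIT chamber of the $\chi$ used to build $\tilde{\fM}$ (so that $\chi'''=\chi'+\chi''=\phi''-\phi$ does too, a closed cone being closed under addition). There Lemma \ref{lem:localize-twist} identifies each of the three twisting functors with $\Rsecs$ of tensoring by a quantized line bundle ${}_{\bullet}\mathcal{L}_{\bullet}$, and these line bundles add in the $\chi$-parameter, so
\[{}_{\phi''}T_{\phi'}\Lotimes_{\EuScript{A}_{\phi'}}{}_{\phi'}T_{\phi}\Lotimes M\;\cong\;\Rsecs\big({}_{\phi''}\mathcal{L}_{\phi'}\otimes\LLoc\Rsecs({}_{\phi'}\mathcal{L}_{\phi}\otimes\LLoc M)\big)\;\cong\;\Rsecs\big({}_{\phi''}\mathcal{L}_{\phi}\otimes\LLoc M\big)\;\cong\;{}_{\phi''}T_{\phi}\Lotimes M,\]
functorially in $M$, where the middle isomorphism uses $\LLoc\circ\Rsecs\cong\id$ and the additivity ${}_{\phi''}\mathcal{L}_{\phi'}\otimes{}_{\phi'}\mathcal{L}_{\phi}\cong{}_{\phi''}\mathcal{L}_{\phi}$; taking $M=\EuScript{A}_{\phi}$ and reading off the maps identifies this with $m$.

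Next I would reduce the general hypothesis to this model case. By Lemma \ref{lem:Lam-same} each of the three $T$-bimodules depends, up to Morita equivalence compatible with $m$, only on the $\Lambda$-chamber of its two flavors, so one is free to replace $\phi,\phi',\phi''$ by any flavors in the same chambers of the circuit-hyperplane arrangement. The hypothesis — no $H_\al$ separates both $\phi$ and $\phi''$ from $\phi'$ — is precisely the statement that the chambers of $\phi,\phi',\phi''$ occur in that order along a single segment of $\ft_{1,F,\R}$ (they are ``in convex position''); since the GIT walls for $\tilde{\fM}$ sit among the $H_\al$, one can then slide the three flavors along such a segment, into general position, so that $\phi'-\phi$ and $\phi''-\phi'$ land in the closure of one GIT chamber, reducing to the previous paragraph.

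The main obstacle is exactly this last reduction: one must make rigorous, uniformly for $p\gg 0$, that the finite circuit-hyperplane arrangement in $\ft_{1,F,\R}$ really governs which of the infinitely many unrolled matter hyperplanes are relevant modulo $p$, that the separation hypothesis yields the ``betweenness'' of the $\Lambda$-chambers, and that collinearity, convex position, and landing in a GIT chamber can be arranged simultaneously — and it is this sort of control, peculiar to Coulomb branches, that is flagged in the section introduction as not obviously available in general. Should the geometric route prove awkward, there is a parallel purely algebraic one: transport $m$ through the $p$-independent category $\mathsf{B}^{\To}$ via Theorem \ref{thm:pStein-equiv}, where in the faithful fraction-field representation $\mathscr{F}$ of Proposition \ref{prop:hat-rep} the two sides of $m$ differ exactly by the factor $\hat{\Phi}_0(\eta_\phi,\eta_{\phi'},\eta_{\phi''})$ of \eqref{eq:wall-cross1} recording the matter hyperplanes a path $\eta_\phi\to\eta_{\phi'}\to\eta_{\phi''}$ crosses twice; the hypothesis forces every such hyperplane to be irrelevant mod $p$, so that factor is a unit, $m$ is an isomorphism outright, and the same computation (after inverting these unit factors the relevant bimodules become free over the localisation) shows the higher $\Tor$-groups vanish.
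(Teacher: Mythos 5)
Your two key ingredients are the right ones and are exactly the ones the paper uses: Lemma \ref{lem:Lam-same} to move flavors freely within a chamber, and Lemma \ref{lem:localize-twist} to identify the composition of two twisting bimodules with ${}_{\phi''}T_{\phi}$ in the model case where both difference vectors lie in the closure of a single GIT chamber (so that the quantized line bundles compose and $\LLoc\circ\Rsecs\cong\id$ applies). But the reduction to that model case is the actual content of the lemma, and you leave it as an acknowledged obstacle; moreover, as sketched it does not work. The separation hypothesis says only that $S(\phi,\phi')\cap S(\phi',\phi'')=\emptyset$, i.e.\ that the chamber of $\phi'$ lies in the ``convex hull'' of the chambers of $\phi$ and $\phi''$ in the arrangement-theoretic sense; it does \emph{not} say that representatives of the three chambers can be chosen collinear in order, and without collinearity there is no reason $\phi'-\phi$ and $\phi''-\phi'$ can be made to lie in a common GIT chamber in one step — generically they point in genuinely different directions, which is precisely the situation where twisting bimodules fail to compose.

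The paper avoids this by never asking for more than one nearly-straight step at a time: it inducts on the number $m$ of hyperplanes separating $\phi$ from $\phi''$. The base case $m=1$ forces $\phi'$ into the chamber of one endpoint (every hyperplane other than the single separating one has $\phi$ and $\phi''$ on the same side, hence $\phi'$ there too), so Lemma \ref{lem:Lam-same} finishes it. For the inductive step one takes $\phi_1$ in the first chamber met by the segment from $\phi$ to $\phi'$; since $\phi_1$ lies essentially on that segment, $\phi'-\phi_1$ and $\phi_1-\phi$ are close to parallel, and for $p$ large one can perturb within chambers so both lie in one GIT chamber — this is where the model case is invoked, and it is available at every step precisely because each step crosses only one wall. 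The induction then splices these one-wall compositions together. If you want to salvage your one-shot version, you would need to prove the collinear realizability of the betweenness condition, which is a nontrivial combinatorial statement about hyperplane arrangements that the paper's induction deliberately sidesteps. Your alternative algebraic route through $\mathsf{B}^{\To}$ and the factors $\hat{\Phi}_0$ is plausible in spirit but is likewise only a sketch: the relation \eqref{eq:wall-cross1} controls the underived tensor product in the polynomial representation, and you would still owe an argument for the vanishing of higher $\Tor$.
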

\begin{proof}
We induct on the number $m$ of hyperplanes separating $\phi$ and $\phi''$.  If $m=1$, then this is trivial by Lemma  \ref{lem:Lam-same}, since $\phi'$ must be in the same chamber as one of the endpoints.  Let $\phi_1$ be a point in the first chamber that the line segment joining $\phi $ to $\phi'$ passes through.  Given that $p$ is sufficiently large, we can assume that there is a point in this chamber such that $\phi'-\phi_1$ and $\phi_1-\phi$ lie in the same GIT chamber, so we have ${}_{\phi'}T_{\phi_1}\Lotimes_{\EuScript{A}_{\phi_1}} {}_{\phi_1}T_{\phi}\cong {}_{\phi'}T_{\phi}$.  By induction, ${}_{\phi}T_{\phi'}\Lotimes_{\EuScript{A}_{\phi'}} {}_{\phi'}T_{\phi_1}\cong {}_{\phi}T_{\phi_1}$.  Thus, it suffices to prove that ${}_{\phi''}T_{\phi_1}\Lotimes_{\EuScript{A}_{\phi_1}} {}_{\phi_1}T_{\phi}\cong {}_{\phi''}T_{\phi}$.  By replacing $\phi$ by another point in its chamber (again, we use that $p$ is sufficiently large), we can assume that the straight line from $\phi$ to $\phi''$ passes through the chamber of $\phi_{1}$. This completes the proof.
\end{proof}

As usual, we'll want to think of this action in a way such that $p$ becomes large and then can be forgotten.  Thus, we will want to take as our basic parameter $\psi=\phi_{1/p}\in \ft_{1,F,\R}$ which we can continuously vary.  Note that the bad locus in $\ft_{1,F,\R}$ where the set $\Lambda^{\mathbb{R}}$ changes is closed under the action of the affine Weyl group $\widehat{W}_F$.  We let $\mathring{\ft}_{1,F}$ denote the complement of the complexifications of these hyperplanes in $\ft_{1,F}=\ft_{1,F,\C}$, and $\mathring{T}_{1,F}$ the image of this locus under the isomorphism $T_{1,F}\cong \ft_{1,F}/\ft_{F,\Z}$.
\notation{$\mathring{T}_{1,F}$}{The complement in $T_{1,F}$ of the subtori that correspond to changes of the set $\barLambda$.}

Consider the fundamental group $\pi=\pi_1(\mathring{T}_{1,F}/W_F,\psi)=\pi_1(\mathring{\ft}_{1,F}/\widehat{W}_F,\psi)$. For each fixed $p$, we can consider the subgroupoid $\pi^{(p)}$ of the fundamental group with objects $\psi=\phi_{1/p}$ given by generic $\phi\in \ft_{1,F,\Z}$ (that is, the values of $\phi$ where derived equivalence holds).  

It is a fact that seems to well-known to experts, though the author has not found any particularly satisfactory reference (this is stated as a conjecture in \cite[\S 3.2.8]{OkGRT}), that:
\begin{proposition}\label{prop:pi-action}
For $p$ sufficiently large,  the functors $\Phi_w^{\phi',\phi''}$  define an action of the groupoid $\pi^{(p)}$ that induces an action of $\pi$ on $D^b(\EuScript{A}_{\phi}\mmod)$.
\end{proposition}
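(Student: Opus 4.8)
\subsection*{Proof plan for Proposition~\ref{prop:pi-action}}

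The plan is to build the action as a functor out of the fundamental groupoid of $\mathring{\ft}_{1,F}/\widehat W_F$, using the decomposition of $\ft_{1,F,\R}$ into chambers cut out by the circuit hyperplanes of the unrolled matter arrangement. First I would record the structural reductions: for $p\gg 0$ every chamber contains a point $\phi_{1/p}$ with generic $\phi\in\ft_{1,F,\Z}$ (density of $\tfrac1p\Z$), and since $\mathring{\ft}_{1,F}/\widehat W_F$ is connected the full subgroupoid $\pi^{(p)}$ on such objects is equivalent to $\pi$; thus it suffices to produce a $\pi^{(p)}$-action, and then restrict to $\operatorname{Aut}$ of a chosen object to get the $\pi$-action on $D^b(\EuScript{A}_\phi\mmod)$ (one may equally work on the geometric side $D^b(\Coh(\tilde{\fM}))$ via Theorem~\ref{th:Q-equiv}). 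Next I would fix a presentation of $\pi^{(p)}$ adapted to the arrangement: generators are (i) for each ordered pair of chambers adjacent across a single circuit hyperplane, a wall-crossing morphism, realized on chosen generic representatives $\phi\in C$, $\phi'\in C'$ by $\Phi_1^{\phi',\phi}=-\Lotimes_{\EuScript{A}_\phi}{}_{\phi'}T_\phi$, and (ii) for each $w\in\widehat W_F$ a ``deck'' morphism from $\phi_{1/p}$ to $(w\phi)_{1/p}$, realized by the canonical isomorphism $\EuScript{A}_\phi\cong\EuScript{A}_{w\phi}$; a path through a chain of chambers is sent to the composite of the corresponding twisting functors, and the relations to be checked are the group relations of $\widehat W_F$, $\widehat W_F$-equivariance of wall-crossings, and the local monodromy relations at codimension-two flats (together with the orbifold relations $s_{\alpha_F}^2=1$ coming from reflection hyperplanes of $\widehat W_F$ not in the arrangement).

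The needed inputs are all in the excerpt. That every such functor is an \emph{equivalence} for $p\gg 0$ is the corollary to Lemma~\ref{lem:localize-twist} combined with Theorem~\ref{thm:asymptotic-derived}: at generic $\phi$ derived localization holds, so $\Phi_w^{\phi',\phi}$ is an equivalence whenever $\phi,\phi'$ are generic. Independence of the representative chosen inside a chamber is Lemma~\ref{lem:Lam-same} (same $\Lambda$, hence ${}_{\phi_1}T_{\phi_2}$ a Morita equivalence), and these identifications are mutually compatible because Lemma~\ref{lem:just-hyperplanes}, applied to a triple inside one chamber (no hyperplane separates any of the three), gives ${}_{\phi''}T_{\phi'}\Lotimes_{\EuScript{A}_{\phi'}}{}_{\phi'}T_\phi\cong{}_{\phi''}T_\phi$. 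The crucial homotopy invariance comes again from Lemma~\ref{lem:just-hyperplanes}: if generic points $\phi=\psi_0,\psi_1,\dots,\psi_k=\phi''$ lie successively in the chambers met by the segment $[\phi,\phi'']$, then for every circuit hyperplane $H$ the point $\psi_j$ is on the $\phi$-side or the $\phi''$-side of $H$, so the lemma applies inductively and the composite of the wall-crossings along this monotone path equals $-\Lotimes{}_{\phi''}T_\phi$, independent of the path chosen; taking $\phi''=\phi$ shows that crossing a wall and immediately crossing back is canonically the identity. Since homotopies of paths in a hyperplane-complement are generated by such backtrack cancellations and by codimension-two moves, and a codimension-two move exchanges two monotone segments on opposite sides of a flat — each equal to $-\Lotimes{}_{\phi''}T_\phi$ — the assignment descends to $\pi_1(\mathring{\ft}_{1,F})$. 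The $\widehat W_F$-part is then formal: the isomorphisms $\EuScript{A}_\phi\cong\EuScript{A}_{w\phi}$ compose by the group law and intertwine the twisting bimodules, $w\cdot{}_{\phi''}T_\phi\cong{}_{w\phi''}T_{w\phi}$, yielding a functor out of the quotient groupoid $\pi$.

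The main obstacle is making the homotopy step fully rigorous for this particular arrangement rather than heuristic. Concretely I expect three points to require care: (a) invoking the Salvetti/Deligne description of $\pi_1$ of a complexified real arrangement complement — that it is generated by monotone paths with relations concentrated at codimension-two flats — and checking its applicability here, including the orbifold contributions of reflection hyperplanes of $\widehat W_F$ outside the arrangement (the relation $s_{\alpha_F}^2=1$ holding because these $\EuScript{A}_\phi\cong\EuScript{A}_{w\phi}$ identifications are strict, and the relevant short segment being a Morita equivalence by Lemma~\ref{lem:Lam-same}); (b) at a non-generic codimension-two flat where several circuit hyperplanes meet, one must apply the monotone-path independence of the previous paragraph simultaneously to all the chambers around the flat, which should be written out; and (c) since everything is local and there are finitely many combinatorial types of local configuration, the several thresholds ``$p\gg 0$'' (from Lemma~\ref{lem:just-hyperplanes}, Theorem~\ref{thm:asymptotic-derived}, and the density argument) must be consolidated into a single uniform bound. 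Once (a)--(c) are settled, the construction above produces the $\pi^{(p)}$-action and hence the $\pi$-action on $D^b(\EuScript{A}_\phi\mmod)$.
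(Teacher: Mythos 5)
Your machinery is the right one and largely coincides with the paper's: chambers cut out by circuit hyperplanes, independence of the representative inside a chamber via Lemma \ref{lem:Lam-same}, composition along monotone paths via Lemma \ref{lem:just-hyperplanes}, equivalences from derived localization via Theorem \ref{thm:asymptotic-derived} and the corollary to Lemma \ref{lem:localize-twist}, and finitely many $\widehat{W}_F$-orbits of local configurations to consolidate the thresholds on $p$. But one step is both unjustified and false: the claim that ``taking $\phi''=\phi$ shows that crossing a wall and immediately crossing back is canonically the identity.'' Lemma \ref{lem:just-hyperplanes} requires that no hyperplane separate the middle point from \emph{both} endpoints; for the triple $(\phi,\phi',\phi)$ with $\phi'$ across a wall, that wall separates $\phi'$ from both copies of $\phi$, so the lemma does not apply. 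And the conclusion fails: ${}_{\phi}T_{\phi'}\Lotimes_{\EuScript{A}_{\phi'}}{}_{\phi'}T_{\phi}$ is the monodromy around the wall --- the nontrivial autoequivalence packaged by the spherical pair of Lemma \ref{lem:equiv-sphere}; indeed the proof of Lemma \ref{lem:Lam-same} shows the multiplication map to $\EuScript{A}_{\phi}$ fails to be surjective precisely when $\Lambda\neq\Lambda'$. If backtracks cancelled, every meridian would act trivially and there would be nothing to prove. The underlying issue is that a real path does not determine an element of $\pi_1$ of the \emph{complexified} complement until you specify, at each wall, whether it passes above or below. The repair is standard but must be made explicit: in the Salvetti/Deligne presentation the generators are the positive half-turns, to which you assign ${}_{\phi'}T_{\phi}\Lotimes-$; the negative half-turns must then be assigned the inverse equivalence $\RHom_{\EuScript{A}_{\phi'}}({}_{\phi'}T_{\phi},-)$, \emph{not} ${}_{\phi}T_{\phi'}\Lotimes-$. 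Backtrack cancellation becomes tautological, and the only remaining relations are the equalities of positive minimal galleries around codimension-two flats, which your Lemma \ref{lem:just-hyperplanes} argument does handle correctly.

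For comparison, the paper does not argue through a presentation of $\pi$ at all: it constructs the full perverse Schober (Theorem \ref{thm:p-Schober}), attaching to every face $C$ --- including the lower-dimensional ones --- the matrix algebra $\EuScript{A}_C$ built from all generic parameters near $C$, verifies the Schober axioms with exactly the lemmas you invoke, and then extracts the $\pi$-action from the formalism of \cite{KSschobers,KShyperplane}, where the positive/negative bookkeeping and the nontriviality of ${}_{\phi}T_{\phi'}\Lotimes{}_{\phi'}T_{\phi}$ are absorbed into the spherical-pair structure. Your lighter route, avoiding the lower-dimensional faces, can be made to work once the backtrack step is replaced as above; as written, that step would force the action to be trivial.
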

This should not be a special fact about Coulomb branches, but is expected to be a general fact about symplectic resolutions.  A version of it is proven in \cite{BezRiche} for the case of the Springer resolution and in the case of a Higgs branch by Halpern-Leistener and Sam in \cite{HLSdeq}.

\subsection{Schobers}
\label{sec:schobers}

We'll give a proof of Proposition \ref{prop:pi-action} below, and in fact, show that this action is part of a more complicated structure: a {\it perverse Schober}, a notion proposed by Kapranov and Schechtman \cite{kapranovPerverseSchobers2015}.  Perverse schobers are not, in fact, a structure which has been defined in full generality, but for the complement of a subtorus arrangement, they can be defined using the presentation of the perverse sheaves on a complex vector space stratified by a complexified hyperplane arrangement given by the same authors in \cite{KShyperplane}.
\notation{$\gamma_{CC'},\delta_{C'C},\phi_{CC''}$}{The generalization/specialization/transition functors of a Schober structure.}
\begin{definition}
  Let $Z$ be a finite-dimensional $\R$-affine space, and let $\{H_\gamma\}$ for $\gamma$ running over a (possible infinite) index set be a locally finite hyperplane arrangement.  Let $\nabla$ be the poset of faces of this arrangement.  A {\bf perverse Schober} on the space $Z\otimes_{\R}\C$ stratified by the intersections of the hyperplanes $\{H_\gamma\} $ is an assignment of a dg-category $\mathcal{E}_C$ for each $C\in \nabla$, and   to every pair of faces where $C'\leq C$, an assignment of 
  {\bf generalization functors} $\gamma_{CC'}:\mathcal{E}_{C'}\to \mathcal{E}_{C}$ and their left adjoints, the  {\bf specialization functors} $\delta_{C'C}\colon \mathcal{E}_{C}\to \mathcal{E}_{C'}$.  These combine to give {\bf transition functors} $\phi_{CC''}=\gamma_{CC'}\delta_{C'C''}$ whenever $\bar{C}\cap \bar{C''}\neq \emptyset$, and $C'$ is the unique open face in this intersection.
  \begin{enumerate}
  \item We have isomorphisms of functors $\gamma_{CC'}\gamma_{C'C''}\cong \gamma_{CC''}$  for a triple $C''\leq C'\leq C$ with the usual associativity for a quadruple.  
  \item If $C'\leq C$, the unit of the adjoint pair $(\delta_{CC'},\gamma_{C'C})$ is an isomorphism of  $\gamma_{C'C}\delta_{CC'}$ to the identity.  This gives a canonical isomorphism between $\phi_{CC''}$ and $\gamma_{CC'}\delta_{C'C''}$ for $C'$ any face in the intersection $\bar{C}\cap \bar{C}''$.  
  \item If $(C, C', C'')$ is colinear, then we have isomorphisms  $\phi_{CC'}\phi_{C'C''}\cong\phi_{CC''}$ again with associativity for a colinear quadruple $(C_1,C_2,C_3,C_4)$.  This means we can define the functor $\phi_{CC''}$ for any pair of faces $(C,C')$ by taking a generic line segment between these faces, and composing the functors $\phi_{CC_1}\phi_{C_1C_2}\cdots \phi_{C_nC'}$ for $C_1,\dots, C_n$ the full list of faces this line passes through.  
  \item If $C$ and $C'$ have the same dimension, span the same subspace, and are adjacent across a face with codimension 1 in $C$ and $C'$, then $\phi_{CC'}$ is an equivalence. 
  \end{enumerate}
\end{definition}
\begin{remark}
  For reasons of convenience here, we have departed a little from the framework of Kapranov and Schechtman.  It would be more consistent with their definition of a Schober on a disk \cite{kapranovPerverseSchobers2015}, to assume that the equivalence $\phi_{CC'}$ will be the twist equivalence of a spherical functor, while it is more convenient for us to present it as the cotwist, as Lemma \ref{lem:equiv-sphere} shows, and the definition of a spherical functor is not totally symmetric. This seems to be a general feature of equivalences arising from Morita contexts.  
\end{remark}

A Schober on a complex torus $T$ that is smooth on the faces of a subtorus arrangement is just a Schober on the preimage in the universal cover $\ft$, together with an action of $\pi_1(T)$
compatible with all the data above.  

We'll be interested in the case where $Z=\ft_{1,F,\R}$ and $H_\al$ the hyperplanes defined by the circuits in unrolled matter hyperplanes. Thus, the faces are the sets on which $\Lambda$ is constant.  
This collection of hyperplanes is invariant under the action of $\widehat{W}_F$.   We can therefore define a Schober on the quotient $\mathring{T}_{1,F}/W_F\cong \mathring{\ft}_{1,F}/\widehat{W}_F$ by defining a $\widehat{W}_F$-equivariant Schober on $\mathring{\ft}_{1,F}$, which we will do below.

This might concern some readers, since there are infinitely many hyperplanes in this arrangement, and thus infinitely many Schober relations to check.  However, under the action of the affine Weyl group $\widehat{W}_F$, there are only finitely many orbits of faces, hyperplanes, etc., and thus finitely many Schober relations to check, once we have proven the obvious commutations with elements of the affine Weyl group.  In particular, in the following section, we will give a proof where checking each Schober relation might require enlarging the prime $p$.  Since we will only need to this once for each orbit of $\widehat{W}_F$, we can safely enlarge $p$ as much as necessary at each step of the proof, and still have a finite $p$ at the end.  

\subsection{The Schober of quantized modules}
\label{sec:schob-quant-modul}

There are two natural ways to define a Schober based on a Coulomb branch.  Let us first describe the quantum route, based on the representation theory of the algebras $\efA$ and the wall-crossing functors of Section \ref{sec:wall-cross-funct}. Accordingly, this Schober is only defined over a positive base field.  Now, choose a disjoint collection of open subsets $U_C\subset Z$ for each face $C$, contained in the star of this face, and having non-trivial intersection with each face in this star.  Let $\mathsf{u}_C$ be the set of points $\phi\in \ft_{1,F;\Z}$ such that derived localization holds at $\phi$ and we have that  $\phi_{1/p}\in U_C$.   If $\mathsf{u}_{C}=\{\phi_1,\dots, \phi_k\}$, then we let $\EuScript{A}_{C}$ be the matrix algebra where the entry $(i,j)$ is an element of ${}_{\phi_i}T_{\phi_j}$, that is,  
\newseq
\begin{equation*}\label{def:AC}
\subeqn
    \EuScript{A}_{C}=\begin{bmatrix}     \EuScript{A}_{\phi_1} & {}_{\phi_1}T_{\phi_2}& \cdots & {}_{\phi_1}T_{\phi_k}\\          {}_{\phi_2}T_{\phi_1}&\EuScript{A}_{\phi_2} & \cdots & {}_{\phi_2}T_{\phi_k}\\      \vdots & \vdots &\ddots & \vdots\\       {}_{\phi_k}T_{\phi_1} & {}_{\phi_k}T_{\phi_2} & \cdots &\EuScript{A}_{\phi_k}    \end{bmatrix}  
\end{equation*}
\notation{$\EuScript{A}_{C},T_{C,C'}$}{The matrix algebras with entries in quantizations in characteristic $p$ and bimodules defined by \Cref{def:AC,def:TCC}.}
with obvious multiplication.  Any pair $C$ and $C'$ has a similarly defined bimodule where $\mathsf{u}_{C'}=\{\psi_1,\dots, \psi_h\}$ given by
\begin{equation*}\label{def:TCC}
\subeqn
  T_{C,C'}=\begin{bmatrix}
    {}_{\phi_1}T_{\psi_k} & {}_{\phi_1}T_{\psi_2}& \cdots & {}_{\phi_1}T_{\psi_k}\\    
     {}_{\phi_2}T_{\psi_1}& {}_{\phi_2}T_{\psi_2} & \cdots & {}_{\phi_2}T_{\psi_k}\\
     \vdots & \vdots &\ddots & \vdots\\
      {}_{\phi_h}T_{\psi_1} & {}_{\phi_h}T_{\psi_2} & \cdots & {}_{\phi_h}T_{\psi_k}
   \end{bmatrix}
 \end{equation*}
 Of course, we can define this bimodule $T_{\mathsf{u},\mathsf{v}}$ for any pair $\mathsf{u},\mathsf{v}\subset \ft_{1,F,\Z}$; if $\mathsf{u}$ or $\mathsf{v}$ is a singleton, then we omit brackets and just write the single element.   It's easy to check using Lemma \ref{lem:Lam-same} that:
 \begin{lemma}
   If we replace $U_C, U_{C'}$ by open sets $U'_C, U_{C'}'$ satisfying the same conditions, then the resulting algebras $\EuScript{A}_{C}$ and $\EuScript{A}_{C}'$ are Morita equivalent via the bimodules $ T_{\mathsf{u}_C,\mathsf{u}'_C}$ and $ T_{\mathsf{u}'_C,\mathsf{u}_C}$, with this Morita equivalence preserving the bimodules $ T_{C,{C'}}'\cong  T_{\mathsf{u}'_{C'},\mathsf{u}_{C'}}\Lotimes_{\EuScript{A}_{{C'}}}  T_{C,{C'}}\Lotimes_{\EuScript{A}_{C}}T_{\mathsf{u}_A,\mathsf{u}'_C}$
 \end{lemma}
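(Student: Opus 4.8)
The plan is to reduce everything to the associativity of the twisting bimodules $T_{\mathsf{u},\mathsf{v}}$ and the key non-crossing/Morita-equivalence statement of Lemma~\ref{lem:Lam-same}. First I would unwind the block-matrix definitions: by construction $\EuScript{A}_C = \bigoplus_{\phi_i,\phi_j\in \mathsf{u}_C} {}_{\phi_i}T_{\phi_j}$ and similarly $T_{\mathsf{u}_C,\mathsf{u}'_C} = \bigoplus_{\phi_i\in\mathsf{u}_C,\,\phi'_j\in\mathsf{u}'_C} {}_{\phi_i}T_{\phi'_j}$, with multiplication/composition induced by the natural maps ${}_{\phi}T_{\phi'}\Lotimes {}_{\phi'}T_{\phi''}\to {}_{\phi}T_{\phi''}$; so all of the claims below follow formally from facts about these pairings of single-parameter bimodules. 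The role of the common enclosing face is that every parameter in $\mathsf{u}_C$ or $\mathsf{u}'_C$ lies in the star of $C$, hence (choosing $p$ large, as permitted) all pairs of such parameters lie within a region where Lemma~\ref{lem:just-hyperplanes} applies: no hyperplane separating $\phi\in\mathsf{u}_C$ from $\phi'\in\mathsf{u}'_C$ separates either of them from a point in $\bar C$ (the open face defining the star), so the chain isomorphisms ${}_{\phi''}T_{\phi'}\Lotimes {}_{\phi'}T_{\phi}\cong {}_{\phi''}T_{\phi}$ hold across the relevant triples.

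The Morita statement then breaks into three checks. (1) \emph{$T_{\mathsf{u}_C,\mathsf{u}'_C}$ is an $\EuScript{A}_C$--$\EuScript{A}'_C$-bimodule and the natural multiplication maps $T_{\mathsf{u}_C,\mathsf{u}'_C}\Lotimes_{\EuScript{A}'_C} T_{\mathsf{u}'_C,\mathsf{u}_C}\to \EuScript{A}_C$ and the reverse one are isomorphisms.} By the block structure this reduces entrywise to showing ${}_{\phi_i}T_{\phi'_k}\Lotimes_{\EuScript{A}_{\phi'_k}}{}_{\phi'_k}T_{\phi_j}\cong {}_{\phi_i}T_{\phi_j}$ for $\phi_i,\phi_j\in\mathsf{u}_C$, $\phi'_k\in\mathsf{u}'_C$ — a single instance of Lemma~\ref{lem:just-hyperplanes} once $p$ is large enough, since $\phi'_k$ lies in the star of $C$ and hence no hyperplane separates it from both $\phi_i$ and $\phi_j$ (both in the star of $C$). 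Summing the entrywise isomorphisms over the index $k\in\mathsf{u}'_C$ and using Lemma~\ref{lem:Lam-same} to see that each ${}_{\phi'_k}T_{\phi'_k}\cong\EuScript{A}_{\phi'_k}$ and the cross terms ${}_{\phi'_k}T_{\phi'_\ell}$ contribute the off-diagonal blocks, one recovers exactly $\EuScript{A}_C$; the matrix multiplication of the block algebras matches the composition of bimodules by the associativity clause (1) in the definition of a Schober (equivalently, by naturality of the pairings). (2) \emph{Exactness/equivalence}: Lemma~\ref{lem:Lam-same} gives that each single ${}_{\phi_i}T_{\phi'_k}$ with $\Lambda(\phi_i)=\Lambda(\phi'_k)$ induces a Morita equivalence; since all parameters in $\mathsf{u}_C\cup\mathsf{u}'_C$ lie in the same face $C$ they have a common $\Lambda$, so $T_{\mathsf{u}_C,\mathsf{u}'_C}$ is a Morita equivalence bimodule (a direct sum of invertible bimodules assembled into a matrix, which is again invertible). (3) \emph{Compatibility with the transition bimodules}: one must produce the stated isomorphism $T'_{C,C'}\cong T_{\mathsf{u}'_{C'},\mathsf{u}_{C'}}\Lotimes_{\EuScript{A}_{C'}} T_{C,C'}\Lotimes_{\EuScript{A}_C} T_{\mathsf{u}_C,\mathsf{u}'_C}$. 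Again this is entrywise: the $(i,j)$ block of the right side is $\bigoplus_{k,\ell} {}_{\phi'_i}T_{\phi'_k}\Lotimes {}_{\phi'_k}T_{\psi_\ell}\Lotimes {}_{\psi_\ell}T_{\psi_j}$ (primes denoting the new open sets), and iterating Lemma~\ref{lem:just-hyperplanes} twice — once to contract ${}_{\phi'_i}T_{\phi'_k}\Lotimes {}_{\phi'_k}T_{\psi_\ell}$ and once more to contract the result with ${}_{\psi_\ell}T_{\psi_j}$ — collapses each summand to ${}_{\phi'_i}T_{\psi_j}$, which is exactly the $(i,j)$ block of $T'_{C,C'}$; the bookkeeping of which index survives is handled by the idempotents $e_{\phi}$ and the associativity constraints on the $\Lotimes$.

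The main obstacle is entirely in justifying the repeated invocation of Lemma~\ref{lem:just-hyperplanes}: its hypothesis is that \emph{no} hyperplane $H_\alpha$ of the circuit arrangement separates the relevant parameters, and one must verify that choosing the open sets $U_C$ inside the star of $C$, together with taking $p$ sufficiently large so that all the $\phi_i$ can be taken arbitrarily close to points of $\bar C$, really does guarantee this for every triple of parameters that shows up in the three computations above. The subtlety is that the star of $C$ may contain faces of higher dimension whose closures are separated by hyperplanes through $C$, so one should be careful that $U_C$ only meets faces in the star and that the reference point for ``no separating hyperplane'' is taken in $C$ itself; with that care the argument is a finite chain of applications of Lemmata~\ref{lem:Lam-same} and~\ref{lem:just-hyperplanes}, and — as already noted in the discussion preceding the statement — each such application may require a new enlargement of $p$, but since only finitely many $\widehat{W}_F$-orbits of faces occur, the final $p$ remains finite. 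Everything else is the bookkeeping of block-matrix multiplication matching composition of bimodules, which I would state but not grind through.
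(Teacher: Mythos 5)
There is a genuine error in your argument, concentrated in steps (1) and (2). You assert that all parameters in $\mathsf{u}_C\cup\mathsf{u}'_C$ ``lie in the same face $C$'' and hence share a common $\Lambda$, and you use this to collapse every block ${}_{\phi_i}T_{\phi'_k}\Lotimes_{\EuScript{A}_{\phi'_k}}{}_{\phi'_k}T_{\phi_j}$ to ${}_{\phi_i}T_{\phi_j}$ for \emph{every} $k$. But $U_C$ is only required to lie in the \emph{star} of $C$ and to meet \emph{every} face of that star; for a non-open face $C$ the set $\mathsf{u}_C$ therefore deliberately contains parameters from several different chambers with different sets $\Lambda$ (this is the whole point of the construction of $\EuScript{A}_C$ for lower-dimensional strata). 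If $\phi_i,\phi_j$ lie in one face of the star and $\phi'_k$ in another, a hyperplane through $C$ separates $\phi'_k$ from both, so the hypothesis of Lemma \ref{lem:just-hyperplanes} fails, and the conclusion fails too: ``crossing a wall and coming back'' is precisely the cotwist and is not the identity bimodule. The same over-collapse infects your entrywise treatment of (3).

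The repair is to argue with less: for a Morita context one only needs the trace maps $T_{\mathsf{u}_C,\mathsf{u}'_C}\otimes_{\EuScript{A}'_C}T_{\mathsf{u}'_C,\mathsf{u}_C}\to\EuScript{A}_C$ (and its mirror) to be surjective, as in the proof of Lemma \ref{lem:Lam-same} via \cite[II.3.4]{BassK}. For each diagonal idempotent $e_{\phi_i}$ choose a single $\phi'_k\in\mathsf{u}'_C$ lying in the \emph{same} face of the star as $\phi_i$ (possible because $U'_C$ meets every face of the star); Lemma \ref{lem:Lam-same} then gives surjectivity onto $e_{\phi_i}\EuScript{A}_Ce_{\phi_i}$, and these images generate $\EuScript{A}_C$ as a two-sided ideal. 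Equivalently, both $\EuScript{A}_C$ and $\EuScript{A}'_C$ are Morita equivalent to the algebra built on $\mathsf{u}_C\cup\mathsf{u}'_C$, which is the mechanism the paper actually has in mind (compare the first paragraph of the proof of Theorem \ref{thm:p-Schober}); the compatibility in (3) is then the transport of $T_{C,C'}$ along these equivalences rather than a block-by-block contraction. Your framing, bookkeeping of the matrix algebras, and the remark about enlarging $p$ finitely often are all fine; it is only the uniform invocation of Lemma \ref{lem:just-hyperplanes} across distinct faces of the star that must be withdrawn.
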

 Thus the category $\mathcal{E}_C^{(p)}\cong D^-(\EuScript{A}_{C}\mmod)$ is independent of the choice of $U_C$, and only depends on $C$.

 \begin{theorem}\label{thm:p-Schober}
   The assignment $\mathcal{E}_C^{(p)}\cong D^-(\EuScript{A}_{C}\mmod)$ for all $C\in \nabla$ and $\phi_{CC'}=T_{C,{C'}}\Lotimes_{\EuScript{A}_{C'}}-$ defines a Schober on $\ft_{1,F,\R}$ which is equivariant for the action $\widehat{W}_F$.  
 \end{theorem}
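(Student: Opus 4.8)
The strategy is to verify the four axioms of a perverse Schober on a complexified hyperplane arrangement directly from the Morita-context formalism of Section \ref{sec:morita-cont-spher} and the wall-crossing compatibilities already established. The central observation is that each transition functor $\phi_{CC'}$ is, by construction, tensor product with a twisting bimodule $T_{C,C'}$, so all four axioms reduce to identities among the bimodules ${}_{\phi}T_{\phi'}$ that we have already proven (up to Morita equivalence, which we may freely apply since the categories $\mathcal{E}^{(p)}_C$ do not depend on the chosen open sets $U_C$).

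\textbf{Step 1: generalization functors and the first two axioms.} For a face $C' \leq C$, I would define the generalization functor $\gamma_{CC'}$ as the derived tensor product with the appropriate component of $T_{C,C'}$, realizing it as the functor ${}^*j$ composed with $j$ in the Morita context attached to the pair $(C',C)$; its left adjoint is then the specialization functor $\delta_{C'C}$. Axiom (1), the compatibility $\gamma_{CC'}\gamma_{C'C''} \cong \gamma_{CC''}$, follows from Lemma \ref{lem:just-hyperplanes}: since $C''\leq C'\leq C$ means no hyperplane separates the relevant parameters on the ``short'' side, the bimodule composition ${}_{\phi''}T_{\phi'}\Lotimes {}_{\phi'}T_{\phi} \cong {}_{\phi''}T_{\phi}$ holds, and associativity for a quadruple is the usual associativity of $\Lotimes$. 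Axiom (2), that $\gamma_{C'C}\delta_{CC'}$ is canonically the identity, is exactly the statement that $j$ is fully faithful, which is general Morita-context nonsense as recorded in Section \ref{sec:morita-cont-spher}; the induced canonical isomorphism $\phi_{CC''}\cong \gamma_{CC'}\delta_{C'C''}$ for any $C'$ in $\bar C\cap\bar{C}''$ is then formal.

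\textbf{Step 2: colinearity (Axiom 3) and adjacency (Axiom 4).} For Axiom (3), when $(C,C',C'')$ are colinear I would again invoke Lemma \ref{lem:just-hyperplanes}: colinearity guarantees that no hyperplane separates both $\phi$ and $\phi''$ from $\phi'$, so ${}_{\phi''}T_{\phi'}\Lotimes {}_{\phi'}T_{\phi}\cong {}_{\phi''}T_{\phi}$; this gives $\phi_{CC'}\phi_{C'C''}\cong\phi_{CC''}$, and associativity for a colinear quadruple follows by the same argument applied to consecutive triples. The well-definedness of $\phi_{CC''}$ for an arbitrary pair via a generic line segment is then automatic. For Axiom (4), adjacency of $C$ and $C'$ across a codimension-1 face means $\Lambda = \Lambda'$ for the two parameters (they lie in the same chamber of the circuit arrangement, or differ only across a single wall), so Lemma \ref{lem:Lam-same} gives that ${}_{\phi'}T_{\phi}$ is a Morita equivalence and hence $\phi_{CC'}$ is an equivalence of derived categories --- this is precisely where Lemma \ref{lem:equiv-sphere} enters to identify the structure as a genuine spherical pair.

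\textbf{Step 3: $\widehat{W}_F$-equivariance.} Finally, the action of the Weyl group element $w$ on bimodules, via the isomorphism $\EuScript{A}_{\phi}\cong \EuScript{A}_{w\cdot\phi}$, intertwines $T_{C,C'}$ with $T_{wC,wC'}$ by construction of the wall-crossing functor $\Phi_w^{\phi',\phi}$; checking that these isomorphisms are compatible with all the generalization and specialization functors is a routine diagram chase, and since the arrangement of circuit hyperplanes is $\widehat{W}_F$-invariant, only finitely many orbits of faces need to be checked. As the excerpt notes, we may enlarge $p$ finitely many times (once per orbit) to ensure Lemmata \ref{lem:Lam-same} and \ref{lem:just-hyperplanes} apply.

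\textbf{Main obstacle.} I expect the hard part to be not the axiom-checking itself --- which is largely bookkeeping once the bimodule identities are in hand --- but rather ensuring that the \emph{canonical} isomorphisms (the coherence/associativity data, not just the existence of isomorphisms) are consistent across all the reductions modulo $p$ and all the Morita equivalences used to make $\mathcal{E}^{(p)}_C$ independent of $U_C$. In particular, verifying that the isomorphism in Axiom (2) is the \emph{unit} of the adjunction (rather than merely some isomorphism), and that this choice propagates correctly through the compositions in Axiom (3), is the delicate point; this is where one must be careful that Lemma \ref{lem:equiv-sphere}'s identification of the cotwist with $\phi_{CC'}$ is applied uniformly.
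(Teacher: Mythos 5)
Your overall architecture (reduce each Schober axiom to bimodule identities among the twisting bimodules, using the independence of $\mathcal{E}^{(p)}_C$ from the choice of $U_C$) matches the paper's, and your treatments of axioms (1) and (2) are essentially the paper's argument, which reduces them to idempotent manipulations in the algebra built from the union $\mathsf{u}_C\cup\mathsf{u}_{C'}\cup\mathsf{u}_{C''}$. However, your Step 2 contains a genuine error in the key case, axiom (4). You assert that adjacency of $C$ and $C'$ across a codimension-$1$ face forces $\Lambda=\Lambda'$, so that Lemma \ref{lem:Lam-same} yields a Morita equivalence. This is false: the faces of the arrangement are by definition the loci where $\Lambda^{\R}$ is constant, so two \emph{distinct} faces adjacent across a codimension-$1$ wall are separated by at least one hyperplane of the circuit arrangement and have \emph{different} sets $\Lambda$. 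The transition functor across such a wall is precisely the nontrivial wall-crossing equivalence --- a derived equivalence that is typically not exact and not a Morita equivalence --- and Lemma \ref{lem:Lam-same} does not apply. The paper's proof of (4) instead picks, for each $\phi\in\mathsf{u}_C$, the chamber of $\mathsf{u}_C$ separated from $\phi$ by exactly the hyperplanes $\EuScript{H}_0$ containing both faces, rewrites ${}_{\phi''}T_\phi$ as $\RHom_{\EuScript{A}_{\phi'}}({}_{\phi'}T_{\phi''},{}_{\phi'}T_\phi)$ using Lemma \ref{lem:just-hyperplanes} together with the fact that the inverse of a derived equivalence is its adjoint, and deduces that $\RHom_{\EuScript{A}_{C'}}({}_{C'}T_C,\,{}_{C'}T_C\Lotimes_{\EuScript{A}_C}{}_{C}T_\phi)\cong{}_{C}T_\phi$ on the generating projectives. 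Some such argument (or an appeal to derived localization at both parameters via Lemma \ref{lem:localize-twist}) is needed; your proposal as written does not establish axiom (4).

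A secondary, smaller gap: in axiom (3) you justify the composition law by saying colinearity of $(C,C',C'')$ means no hyperplane separates both $\phi$ and $\phi''$ from $\phi'$. That is true for the faces themselves, but the actual parameters live in $\mathsf{u}_C,\mathsf{u}_{C'},\mathsf{u}_{C''}$, i.e.\ at generic integral points in the stars of these faces, and the hyperplanes in $\EuScript{H}_0$ containing all three faces can separate these representative points arbitrarily. The paper handles this by carefully choosing $\psi$ and $\chi$ on the same side of every hyperplane of $\EuScript{H}_0$ as $\phi$ before applying Lemma \ref{lem:just-hyperplanes}; without that choice the hypothesis of the lemma can fail. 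Your identification of the delicate point (coherence of the canonical isomorphisms) is reasonable, but the substantive missing content is the equivalence in axiom (4) and the point-selection in axiom (3).
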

 \begin{proof}   
   First, we note that if $C'\leq C$, then the star of $C$ lies in the star of $C'$, so for any element of $\mathsf{u}_C$, there is an element of $\mathsf{u}_{C'}$ Morita equivalent by the twisting bimodule.  Thus, $\EuScript{A}_{C'}$ is Morita equivalent to the algebra obtained by taking the union of the sets $\mathsf{u}_C\cup \mathsf{u}_{C'}$.  Now, let us check the conditions of a Schober in turn:
   \begin{enumerate}[wide]
   \item As discussed, if $C'' \leq C'\leq C$, then $\EuScript{A}_{C''}$ is Morita equivalent to the set obtained from the union $
     \mathsf{u}_C\cup \mathsf{u}_{C'}\cup \mathsf{u}_{C''}$.  Thus, we need only prove the corresponding transitivity for any decomposition of $1$ in a ring as the sum of 3 orthogonal idempotents $e+e'+e''$, in which case it is clear.
 \item Using the union $\mathsf{u}_C\cup \mathsf{u}_{C'}$ again, this is just the fact that for any idempotent, we have $e(Ae\otimes_{eA}M)=M$, giving the required isomorphism of  $\gamma_{C'C}\delta_{CC'}$ to the identity.
\item By assumption, if $(C,C',C'')$ are colinear, then we can assume that the line joining them is generic in  the span of these faces.   Let $\EuScript{H}_0$ be the set (possibly empty) of hyperplanes that contain all three faces, $\EuScript{H}_1$ the set of hyperplanes separating $C$ and ${C'}$, and $\EuScript{H}_2$ the set separating ${C'}$ and $C$.

  Choose a point in $\phi\in\mathsf{u}_C$.  We have a functor ${}_{C}T_{\phi}\Lotimes-\colon D^b(\EuScript{A}_\phi\mmod) \to \mathcal{E}_C^{(p)}$ given by the tensor products with ${}_{\phi'}T_\phi$ for all $\phi'\in\mathsf{u}_C$.  Now consider the derived tensor product with ${}_{{C'}}T_C$.  Since the image of $\EuScript{A}_\phi$ is projective, the composition is the functor of tensor product ${}_{\psi'}T_\phi$ for all $\psi'\in \mathsf{u}_{C'}$, that is, tensor product with ${}_{C'}T_{\phi}$.
  For any point $\psi'\in\mathsf{u}_{C'}$, we can find a point in the same chamber such that the straight line to  $\phi$ passes through any hyperplanes in $\EuScript{H}_0$ that separating $\psi$ and $\phi$ before crossing any hyperplanes in $\EuScript{H}_1$.  We can choose $\psi\in \mathsf{u}_{C'}$ on the same side as $\phi$  of all hyperplanes in $\EuScript{H}_0$, so ${}_{\psi'}T_{\psi}\Lotimes_{\EuScript{A}_\psi}{}_{\psi}T_\phi\cong {}_{\psi'}T_{\phi}$ by Lemma \ref{lem:just-hyperplanes}.  That is, we have
  \[{}_{{C'}}T_C\Lotimes_{\EuScript{A}_C}{}_{C}T_{\phi}\cong {}_{{C'}}T_\psi\Lotimes_{\EuScript{A}_\psi}{}_{\psi}T_\phi.\]  Applying this result a second time with $\chi$ an element of $\mathsf{u}_C$ on the same side of all hyperplanes in  $\EuScript{H}_0$ as $\phi$ and $\psi$, we have
  \[{}_{C}T_{C'}\Lotimes_{\EuScript{A}_{C'}} {}_{{C'}}T_{C''}\Lotimes_{\EuScript{A}_{C''}}{}_{C''}T_{\phi}\cong {}_{C}T_\phi\cong {}_{C}T_{C''}\Lotimes_{\EuScript{A}_{C''}}{}_{C''}T_{\phi}.\]  Since the projective modules ${}_{C''}T_{\phi}$ for all $\phi$ are generators for $\EuScript{A}_{C''}\mmod$, this establishes that $\phi_{CC'}\phi_{C'C''}=\phi_{CC''}$.  Furthermore, since these isomorphisms are induced by the natural tensor product maps, they are appropriately associative.  
\item Now, assume that $C$ and ${C'}$ are both $d$-dimensional, and differ across a face of codimension 1.  As before, let $\EuScript{H}_0$ be the hyperplanes that contain both of these faces.  Note that for each $\phi\in \mathsf{u}_C$, there is a unique chamber that intersects $\mathsf{u}_C$ separated from $\phi$ by all hyperplanes in $\EuScript{H}_0$ and no others.  Let $\phi'$ lie in this face.  Then, we have that ${}_{\phi''}T_\phi$ can also be written as $\RHom_{\EuScript{A}_{\phi'}}({}_{\phi'}T_{\phi''}, {}_{\phi'}T_\phi)$ for all $\phi''\in \mathsf{u}_C$, using Lemma \ref{lem:just-hyperplanes} to show that $ {}_{\phi'}T_\phi\cong  {}_{\phi'}T_{\phi''}\Lotimes_{\EuScript{A}_{\phi''}} {}_{\phi''}T_{\phi'}$ and the fact that the inverse of a derived equivalence is its adjoint.

  Now let $\psi,\psi'\in \mathsf{u}_{C'}$ be elements not separated from $\phi,\phi'$ respectively by any hyperplane in $\EuScript{H}_0$.  Applying the argument above and Lemma \ref{lem:just-hyperplanes}  again, we see that
  \[{}_{{C'}}T_C\Lotimes_{\EuScript{A}_C}{}_{C}T_{\phi}= \RHom_{\EuScript{A}_{\psi'}}({}_{\psi'}T_{{C'}},  {}_{\psi'}T_\phi).\]  The adjoint version of Lemma  \ref{lem:just-hyperplanes} then implies that
  \[  \RHom_{\EuScript{A}_{C'}}({}_{{C'}}T_C , {}_{{C'}}T_C\Lotimes_{\EuScript{A}_C}{}_{C}T_{\phi})=\RHom_{\EuScript{A}_{\psi'}}({}_{\psi'}T_{C},  {}_{\psi'}T_\phi)={}_{C}T_{\phi}.\]  Again, since the projectives ${}_{C}T_{\phi}$ generate, the functors $\phi_{{C'}C}$ are thus an equivalence of derived categories.  \qedhere
 \end{enumerate} 
 \end{proof}

Note that Losev shows that when $C$ and $C''$ are top-dimensional faces and $(C,{C'},C'')$ are colinear with ${C'}\subset \bar{C}\cap \bar{C}''$ , then $\phi_{CC''}$ is not just any equivalence of categories, but a partial Ringel duality functor in an appropriate sense (or rather, the degrading of one) and a perverse equivalence \cite[Thm. 9.10]{losevModularCategories2021}.  It would be interesting to consider whether this is true in the case where $C$ and $C''$ are lower-dimensional faces with the same span.    

\subsection{The coherent Schober}
\label{sec:coherent-schober}

Of course, it is a bit inelegant to consider this Schober in the case where $\K=\Fp$ for some large $p$; it would be preferable to send $p\to \infty $ and replace the algebra $\efA$  quantizing $\Fp[\fM]$ with the noncommutative resolution $A$.    

In order to do this, we must feed every object that appeared in the quantum Schober through the woodchipper of Theorem \ref{thm:pStein-equiv}, which allowed us to construct $A$ in the first place.  
Applying this result to the bimodule ${}_{\phi'}T_\phi$, we send the wall-crossing functor to tensor product with a bimodule $ {}_{\phi'_{1/p}}\mathsf{\hat{T}}_{\phi_{1/p}}$ over the categories $ {\hat{\mathsf B}_{\phi'_{1/p}}}$ and $\hat{\mathsf B}_{\phi_{1/p}}$.  Applying Theorem \ref{thm:pStein-equiv} again, but now to the gauge group $\To$, we can describe the resulting bimodule as the completion of $ {}_{\phi'_{1/p}}\mathsf{T}_{\phi_{1/p}}$, the bimodule given by the Hom spaces in the quotient $\bar{\mathsf B} = {{\mathsf B}^Q} /( {\ft_F})$ of the category for $\To$ with  the Lie algebra $ {\ft_F}$ set to 0 in the morphism spaces (which is well-defined since $h=0$ in the $p$th root conventions).

We can easily extend the presentation of Theorem \ref{thm:BFN-pres} to $\bar{\mathsf B}$; essentially the only change needed is that we expand the set of objects to include all of $\ft_{\To}$.  In particular, as in Section \ref{sec:cons-repr-theory}, we can replace the object set with just the elements of $\bar{\Lambda}_Q$ and form a category $\mathsf B^{\Lambda_Q} $ by choosing a representative $\eta_{\Ba}$ for each chamber.  Note that the set $\bar{\Lambda}_Q$ contains the sets $\bar{\Lambda}$ and $\bar{\Lambda}'$ corresponding to the flavors $\phi$ and $\phi'$, and the corresponding full subcategories are exactly $\mathsf B^{\bar \Lambda}$ and $\mathsf B^{\bar \Lambda'}$ as defined in Section \ref{sec:cons-repr-theory}.  Composing the equivalence of Theorem \ref{thm:pStein-equiv} with the equivalences
\begin{equation}
{\mathsf B}_{\phi_{1/p}}\cong \mathsf B^{\Lambda}\qquad  {\mathsf B}_{\phi'_{1/p}}\cong \mathsf B^{\Lambda'}\qquad \bar{\mathsf B} \cong \mathsf B^{\Lambda_Q},\label{eq:B-Lam}
\end{equation}
we have that:
\begin{lemma}
  The bimodule ${}_{\phi'}T_\phi$ matches with the completion of the bimodule ${}_{\Lambda'}\mathsf{T}_\Lambda$ that sends $\Ba\in \Lambda,\Bb\in \Lambda'$ to
  \[(\Bb,\Ba) \mapsto \Hom_{\bar{\mathsf B}}(\eta_{\Ba},\eta_{\Bb}).\]
\end{lemma}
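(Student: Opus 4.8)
The plan is to push the convolution description of the twisting bimodule through the ``woodchipper'' of Theorem~\ref{thm:pStein-equiv} that produced $A$, and then read off the combinatorial answer. Recall from \cite[Def.~3.16]{WebSD} that ${}_{\phi'}T_{\phi}={}_{\phi+\nu}\mathscr{T}_{\phi}(\zero,\zero)$ with $\nu=\phi'-\phi\in\ft_{F,\Z}$, and that ${}_{\phi+\nu}\mathscr{T}_{\phi}(\acham,\acham')$ is the appropriate quotient of $\mathscr{T}(\nu)(\acham,\acham')$, the weight-$\nu$ summand of $\Hom_{\mathscr{B}^{\To}}(\acham',\acham)$, where ``weight $\nu$'' records the component of the affine Grassmannian of $\To$ in which the relevant space lies --- equivalently, since $\nu\in\ft_{F,\Z}$, its $\ft_F$-weight. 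Thus, read as an $\EuScript{A}_{\phi'}$-$\EuScript{A}_{\phi}$-bimodule, ${}_{\phi'}T_{\phi}$ is nothing more than a single graded piece of a morphism space in (the completion of) the extended BFN category $\mathscr{B}^{\To}$.

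First I would apply Proposition~\ref{prop:B-equiv} to the gauge group $\To$, identifying $\widehat{\mathscr{B}}^{\To}_{\upsilon'}$ with $\widehat{\mathsf B}^{\To}$ via $\gamma_{\mathsf B}$. As an equivalence it carries morphism spaces to morphism spaces, and I would check that it also respects the grading that carves out the twisting bimodule: by \eqref{gamma2} one has $\gamma_{\mathsf B}(w)=w_p$ together with $(w\acham)_p=w_p\acham_p$, so the affine-Grassmannian component is preserved, while the remaining generators $r,u_{\al},\mu$ all sit in weight $0$. Under this dictionary the object $\zero$ with flavor $\phi$ is matched with a chamber representative $\eta_{\Ba_0}\in\rACp_{\Ba_0}$ carrying the $p$th-root flavor $\phi_{1/p}$, and $\zero$ with flavor $\phi'$ with $\eta_{\Bb_0}\in\rACp_{\Bb_0}$ carrying flavor $\phi'_{1/p}$; the weight-$\nu$ summand of the corresponding morphism space is then identified with the completion of $\Hom_{\bar{\mathsf B}}(\eta_{\Ba_0},\eta_{\Bb_0})$. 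Passing to $\bar{\mathsf B}=\mathsf B^{\To}/(\ft_F)$ here is both forced and harmless: forced, because $\eta_{\Ba_0}$ and $\eta_{\Bb_0}$ carry distinct flavors and only become objects of a single common category once the $\ft_F$-directions are set to zero; harmless, because $h=0$ in the $p$th-root conventions, so by \eqref{eq:weyl1} the elements of $\ft_F$ are central, and this quotient is precisely the flavor specialisation through which ${}_{\phi+\nu}\mathscr{T}_{\phi}$ is extracted from $\mathscr{T}(\nu)$ in \cite{WebSD}.

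Next I would upgrade this from a single entry to the whole bimodule. By Theorem~\ref{thm:pStein-equiv} and the equivalences \eqref{eq:B-Lam}, the module categories $\EuScript{A}_{\phi}\mmod$ and $\EuScript{A}_{\phi'}\mmod$ are identified with $\mathsf B^{\bar\Lambda}\mmod$ and $\mathsf B^{\bar\Lambda'}\mmod$, the object $\eta_{\Ba}$ playing the role of the projective indexed by $\Ba$; consequently the $\EuScript{A}_{\phi'}$-$\EuScript{A}_{\phi}$-bimodule ${}_{\phi'}T_{\phi}$ corresponds to a $\mathsf B^{\bar\Lambda'}$-$\mathsf B^{\bar\Lambda}$-bimodule whose $(\Bb,\Ba)$-entry, obtained by running the same weight-$\nu$ and $\gamma_{\mathsf B}$ argument with $\zero$ replaced by a representative of each chamber, is the completion of $\Hom_{\bar{\mathsf B}}(\eta_{\Ba},\eta_{\Bb})$; here the morphisms of $\bar{\mathsf B}$ between chambers are exactly those of Theorem~\ref{thm:BFN-pres} extended to $\bar{\mathsf B}$, as in Definition~\ref{def:BLam}. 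Finally, since $\mathsf B^{\To}$ --- and hence the uncompleted ${}_{\Lambda'}\mathsf T_{\Lambda}$ --- is defined over $\Z$, while every equivalence used is between categories completed with respect to the grading (or to the maximal ideals of $S_1$), tracking these completions shows that the bimodule produced by the wall-crossing functor is precisely the completion of ${}_{\Lambda'}\mathsf T_{\Lambda}$.

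The step I expect to be the main obstacle is the one in the second paragraph: checking carefully that $\gamma_{\mathsf B}$ for $\To$ genuinely intertwines the affine-Grassmannian ($\ft_F$-weight) grading that defines the twisting bimodule with the passage from flavor $\phi_{1/p}$ to flavor $\phi'_{1/p}$ in the $p$th-root picture, and that ``setting $\ft_F=0$'' is both the right operation and loses no information --- that is, that it really coincides with the flavor specialisation defining ${}_{\phi+\nu}\mathscr{T}_{\phi}$ out of $\mathscr{T}(\nu)$. Everything else amounts to bookkeeping with idempotents and completions.
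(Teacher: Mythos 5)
Your argument is correct and is essentially the route the paper takes: the lemma is presented there as a direct consequence of feeding ${}_{\phi'}T_\phi$ through Theorem \ref{thm:pStein-equiv} applied to the gauge group $\To$, recognizing the result as (the completion of) Hom spaces in $\bar{\mathsf B}=\mathsf B^{\To}/(\ft_F)$, and reindexing by chambers via \eqref{eq:B-Lam}. Your write-up simply makes explicit the bookkeeping (the $\ft_F$-weight grading, the flavor shift $\phi\mapsto\phi_{1/p}$, and the completions) that the paper leaves implicit.
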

This latter bimodule is independent of $p$, and thus can be defined over any field, in particular over $\Q$.  
 
This has a very simple consequence for the structure of the category of modules over the algebra $\EuScript{A}_{C}$.  Consider the set $\bar{\Lambda}_C=\cup_{\phi\in \mathsf{u}_C}\bar{\Lambda}^\R_\phi$; this set is independent of $p$, and describes all the chambers that appear in the preimage of the star of $C$ under the map $\ft_{\To}\to \ft_{F}$.  As with all objects here, we therefore obtain the result that:
\begin{proposition}
  The category $\EuScript{A}_{C}\mmod_{\upsilon'}$ for $\K$ a field of large positive characteristic $p$ is equivalent to the category of modules over the completion of $\mathsf {B}^{
   \bar{\Lambda}_C}(\K)$, the subcategory of the completion $\bar{\mathsf B}(\K) $ with object set $\eta_{\Ba}$ for $\Ba\in \Lambda_C$.  
\end{proposition}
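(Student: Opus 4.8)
The plan is to unfold $\EuScript{A}_{C}\mmod_{\upsilon'}$ into the module category of a small $\K$-linear category and then push each piece through the same chain of equivalences \eqref{eq:B-Lam} that produced $\gls{A}$. By the definition of $\EuScript{A}_{C}$ as a matrix algebra with corners $\EuScript{A}_{\phi_i}$ ($\phi_i\in\mathsf{u}_C$) and off-diagonal entries ${}_{\phi_i}T_{\phi_j}$, a module over $\EuScript{A}_{C}$ is the same datum as a module over the $\K$-linear category $\mathscr{A}_{C}$ whose objects are $\phi_1,\dots,\phi_k$ and whose morphism spaces are $\Hom(\phi_j,\phi_i)={}_{\phi_i}T_{\phi_j}$ (with $\EuScript{A}_{\phi_i}$ on the diagonal), composition being the matrix multiplication of the earlier lemma. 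Restricting to modules whose underlying polynomial subalgebra acts with weights in $\widehat{W}\cdot\upsilon'$ is the condition defining the subscript $\upsilon'$.

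\textbf{Step 1: weight spaces for the matrix category.} First I would extend the weight-space analysis of Section \ref{sec:reps} from a single $\EuScript{A}_\phi$ to $\mathscr{A}_{C}$. Each ${}_{\phi_i}T_{\phi_j}$ is finitely generated on both sides over the relevant $\EuScript{A}$, so the functor $M\mapsto\bigl(\Wei_{\upsilon,\phi_i}(M)\bigr)$ is a quotient functor compatible with the bimodule actions, and applying the Drozd--Futorny--Ovsienko criterion \cite[Th. 17]{FOD} exactly as in the lemma of Section \ref{sec:reps} identifies $\EuScript{A}_{C}\mmod_{\upsilon'}$ with the category of modules over the completed category $\widehat{\mathscr{A}}_{C}$ assembled from the $\widehat{\mathscr{A}}_{\upsilon'}$ at each flavor and the completions ${}_{\phi_i}\widehat{\mathscr{T}}_{\phi_j}$ of the twisting bimodules as in \eqref{eq:aXnua}. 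The point to verify here is that $\mathfrak{m}_\upsilon$-adic completion is computed corner-by-corner, so that completing commutes with assembling the matrix category.

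\textbf{Step 2: transport through the $\To$-picture.} Next I would apply Theorem \ref{thm:pStein-equiv} with gauge group $\gls{To}$ to each corner, and the lemma identifying ${}_{\phi'}T_{\phi}$ with the completion of ${}_{\Lambda'}\mathsf{T}_{\Lambda}$ to each off-diagonal bimodule, replacing $\widehat{\mathscr{A}}_{\upsilon'}$ by the completion of $\mathsf{B}^{\bar\Lambda^{\R}_{\phi_i}}(\K)$ and ${}_{\phi_i}\widehat{\mathscr{T}}_{\phi_j}$ by the completion of the bimodule ${}_{\Lambda_j}\mathsf{T}_{\Lambda_i}$ inside $\bar{\mathsf B}(\K)$, compatibly with composition. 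This exhibits $\widehat{\mathscr{A}}_{C}$ as the completion of the category obtained by gluing the full subcategories $\mathsf{B}^{\bar\Lambda^{\R}_{\phi_i}}$ of $\bar{\mathsf B}(\K)$ along the $\mathsf{T}$-bimodules, i.e. the full subcategory of $\bar{\mathsf B}(\K)$ on the objects $\eta_{\Ba}$ with $\Ba$ ranging over $\bigcup_i\bar\Lambda^{\R}_{\phi_i}$. Since $\bar\Lambda_{C}=\bigcup_{\phi\in\mathsf{u}_C}\bar\Lambda^{\R}_\phi$ by definition, and two objects of $\bar{\mathsf B}(\K)$ lying in the same chamber $\rACp_{\Ba}$ are canonically isomorphic, deleting redundant isomorphic objects reduces this to $\mathsf{B}^{\bar\Lambda_{C}}(\K)$; as a module category over a small category is invariant under equivalence, taking completions on both sides gives the claim.

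\textbf{Main obstacle.} The bookkeeping in Step 1 is the delicate part: one must check that the weight-functor equivalence of Section \ref{sec:reps} is sufficiently functorial to be applied simultaneously to all corners and all off-diagonal bimodules of $\EuScript{A}_{C}$ --- in particular that it carries tensor product over $\EuScript{A}_{\phi_j}$ of twisting bimodules to composition in $\widehat{\mathscr{A}}_{C}$ (here these bimodules are projective on the relevant side, so there is no derived subtlety), and that completion of the matrix algebra is the matrix algebra of completions. Everything else is a citation of Theorem \ref{thm:pStein-equiv} and the bimodule lemma, or the elementary fact that a presheaf category is unchanged by pruning isomorphic objects; taking $p\gg0$ guarantees that for the fixed face $C$ all the genericity, localization and finiteness hypotheses invoked above hold at each $\phi\in\mathsf{u}_C$.
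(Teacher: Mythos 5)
Your proposal is correct and follows essentially the same route as the paper: the paper states this proposition as an immediate consequence of feeding each corner of the matrix algebra $\EuScript{A}_{C}$ through Theorem \ref{thm:pStein-equiv} (via the equivalences \eqref{eq:B-Lam}) and each off-diagonal twisting bimodule through the preceding lemma identifying ${}_{\phi'}T_\phi$ with the completion of ${}_{\Lambda'}\mathsf{T}_\Lambda$, then recognizing the result as the full subcategory of $\bar{\mathsf B}(\K)$ on $\bar\Lambda_C=\cup_{\phi\in\mathsf{u}_C}\bar\Lambda^{\R}_\phi$. Your Steps 1 and 2 simply make explicit the bookkeeping (weight functors applied corner-by-corner, completion commuting with the matrix assembly, pruning isomorphic objects) that the paper leaves implicit.
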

As we exploited earlier, the latter category is well-defined over any base ring, in particular over $\Q$.  By analogy with the noncommutative resolution $A$, we let:
\begin{equation}\label{def:AC2}
A_C=\bigoplus_{\bar{\Ba},\bar{\Bb}\in \bar\Lambda_C}\Hom_{ {\mathsf{B}^{\bar{\Lambda}_C}(\Q)}}(\bar{\Ba},\bar{\Bb}).    
\end{equation}

This ring has a presentation directly analogous to that of $A$ given in \cref{sec:presentations}.  We need only adjust Proposition \ref{prop:presentation} by changing the vertex set to be $\Lambda_{C}$.  
\notation{$A_C,\mathcal{E}^{\Q}_C$}{The sum of morphism spaces in $\mathsf{B}^{\bar{\Lambda}_C}(\Q)$ defined in \eqref{def:AC2} and its derived category $\mathcal{E}^{\Q}_C=D^-(A_C\mmod)$.}
\notation{$\phi_{CC'}^{\Q}$}{The transition derived equivalences of the Schober of \Cref{th:coherent-Schober}.}
\begin{definition}
  Let $\mathcal{E}^{\Q}_C=D^-(A_C\mmod)$, and $\phi_{CC'}^{\Q}$ be derived tensor product with the bimodule ${}_{\Lambda_{C}}\mathsf{T}_{\Lambda_{C'}}$.  
\end{definition}
Note that if $C$ is maximal dimensional, then $A_C$ is a noncommutative crepant resolution of $\fM$ by Corollary \ref{cor:A-nccr},  so $D^-(A_C\mmod)\cong D^-(\Coh(\fM))$.  Unfortunately we know no such convenient geometric interpretation of the other categories that appear for smaller strata.  
\begin{theorem}\label{th:coherent-Schober}
  The assignment $\mathcal{E}^{\Q}_C$ and $\phi_{CC'}^{\Q}$ above defines a $\widehat{W}_F$-equivariant Schober.
\end{theorem}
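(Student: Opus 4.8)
The plan is to obtain this Schober as the ``$p\to\infty$'' limit of the characteristic-$p$ Schober of Theorem~\ref{thm:p-Schober}. First I would pin down the data. For a face relation $C'\le C$ the star of $C$ is contained in the star of $C'$, so $\bar\Lambda_{C}\subseteq\bar\Lambda_{C'}$ and $A_{C}=e_{\bar\Lambda_{C}}A_{C'}e_{\bar\Lambda_{C}}$ is a corner of $A_{C'}$; I take the generalization functor $\gamma^{\Q}_{CC'}\colon\mathcal{E}^{\Q}_{C'}\to\mathcal{E}^{\Q}_{C}$ to be the associated quotient functor $M\mapsto e_{\bar\Lambda_{C}}M$ and the specialization functor $\delta^{\Q}_{C'C}=A_{C'}e_{\bar\Lambda_{C}}\Lotimes_{A_{C}}-$ to be its left adjoint, exactly in the Morita--context format of Section~\ref{sec:morita-cont-spher}; the transition functors are then the prescribed $\phi^{\Q}_{CC'}={}_{\Lambda_{C}}\mathsf{T}_{\Lambda_{C'}}\Lotimes_{A_{C'}}-$. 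The key structural input, already established in the paragraphs preceding the statement, is that under the equivalence of Theorem~\ref{thm:pStein-equiv} applied to both $G$ and $\gls{To}$ (and passed to completions exactly as in Lemma~\ref{lem:A-H}), together with the identifications \eqref{eq:B-Lam}, the characteristic-$p$ Schober data $\bigl(\mathcal{E}^{(p)}_{C}=D^-(\EuScript{A}_{C}\mmod),\ \phi_{CC'}={}_{C}T_{C'}\Lotimes-\bigr)$ of Theorem~\ref{thm:p-Schober} is carried, for all $p\gg 0$, to the base change to $\Fp$ of $\bigl(\mathcal{E}^{\Q}_{C},\phi^{\Q}_{CC'}\bigr)$: the algebras $A_{C}$, the bimodules ${}_{\Lambda_{C}}\mathsf{T}_{\Lambda_{C'}}$, the corner idempotents $e_{\bar\Lambda_{C}}$ and all the structure maps are Hom-spaces in $\bar{\mathsf B}$, hence are defined over $\Z$ by Remark~\ref{rem:coefficients}, with $\Fp$-reduction recovering the char-$p$ picture.

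Granting this dictionary, each of the four Schober relations becomes the assertion that a specific natural transformation between derived categories of modules over the $\Z$-forms is an isomorphism: the composition isomorphism $\gamma^{\Q}_{CC'}\gamma^{\Q}_{C'C''}\cong\gamma^{\Q}_{CC''}$ for a triple of faces; the triangle identity $\gamma^{\Q}_{CC'}\delta^{\Q}_{C'C}\xrightarrow{\sim}\id_{\mathcal{E}^{\Q}_{C}}$; the composability ${}_{\Lambda_{C}}\mathsf{T}_{\Lambda_{C'}}\Lotimes_{A_{C'}}{}_{\Lambda_{C'}}\mathsf{T}_{\Lambda_{C''}}\to{}_{\Lambda_{C}}\mathsf{T}_{\Lambda_{C''}}$ for a colinear generic triple; and, for $C,C'$ of equal span adjacent across a codimension-one face, the unit/counit of the adjunction between $\Lotimes$ by ${}_{\Lambda_{C}}\mathsf{T}_{\Lambda_{C'}}$ and $\RHom({}_{\Lambda_{C}}\mathsf{T}_{\Lambda_{C'}},-)$. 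Equivalently, the cone of the corresponding map of complexes of $\Z$-bimodules is acyclic. By Theorem~\ref{thm:p-Schober} each such cone is acyclic after $\Lotimes_{\Z}\Fp$ for all sufficiently large $p$ (there being only finitely many $\widehat{W}_{F}$-orbits of faces, hyperplanes and colinear triples, one may enlarge $p$ finitely often and still keep an infinite set of admissible primes). A standard spreading-out argument over $\Spec\Z$ then finishes: after inverting a suitable integer, generic flatness makes the formation of these cones commute with base change, so their cohomology modules are finitely generated over a finitely generated central $\Z$-algebra, vanish upon reduction at infinitely many closed points, hence by Chevalley's theorem are supported over finitely many primes, hence are $\Z$-torsion, hence are killed by $\otimes_{\Z}\Q$. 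Thus every relation holds over $\Q$.

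The $\widehat{W}_{F}$-equivariance is handled in the same way: $\widehat{W}_{F}$ acts on the set of chambers $\bar\Lambda$ (equivalently on the objects of $\bar{\mathsf B}$) by relabelling, compatibly with the bimodules ${}_{\Lambda_{C}}\mathsf{T}_{\Lambda_{C'}}$ and the corner idempotents, and this action, and all the coherence isomorphisms expressing equivariance, are again natural maps of the $\Z$-forms whose $\Fp$-reductions realise the equivariant structure of Theorem~\ref{thm:p-Schober} for large $p$, so the transfer principle of the previous paragraph promotes them to $\Q$. Finally, on the open faces $C$ one has $\bar\Lambda_{C}=\bar\Lambda^{\R}$, so $A_{C}\simeq\gls{A}$ is the noncommutative crepant resolution of Corollary~\ref{cor:A-nccr} and $\mathcal{E}^{\Q}_{C}\simeq D^-(\Coh(\tilde{\fM}_{\Q}))$ by Theorem~\ref{th:Q-equiv}, so the Schober does glue the geometry of the resolution to the more mysterious categories attached to the smaller strata.

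I expect the main obstacle to be bookkeeping rather than new mathematics. One must verify carefully that the chain of equivalences of Theorem~\ref{thm:pStein-equiv} for $\gls{To}$, together with the passage to completions, really does intertwine the wall-crossing functor ${}_{\phi'}T_{\phi}\Lotimes-$ with $\Lotimes$ by the $p$-independent bimodule ${}_{\Lambda_{C}}\mathsf{T}_{\Lambda_{C'}}$ of the preceding Lemma (the completions at the various maximal ideals of $S_{1}$ and the theorem on formal functions entering exactly as in Lemma~\ref{lem:A-H}); and one must make the finiteness/flatness input to the reduction-mod-$p$ comparison precise, i.e.\ establish the required generic-flatness statement for the derived tensor products of the graded $\Z$-forms over $\Spec\Z$. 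Once this is in place the four Schober relations and the equivariance follow formally from Theorems~\ref{thm:p-Schober}, \ref{thm:pStein-equiv} and~\ref{th:Q-equiv}, with no geometry to recheck.
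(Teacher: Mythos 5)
Your proposal is correct and follows essentially the same route as the paper: the paper's proof likewise observes that all the required isomorphisms are induced by composition maps on $\Z$-forms and therefore can be checked after reduction modulo infinitely many primes, where they follow from Theorem \ref{thm:p-Schober} via the comparison functor. You have simply spelled out in more detail the corner-idempotent description of the generalization/specialization functors and the spreading-out argument that the paper leaves implicit.
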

\begin{proof}
  The required isomorphisms are all induced by composition of maps, so in order to show that the Schober relations hold, it is enough to check that we have the Schober relations mod infinitely many primes $p$.  This is clear from comparison with the Schober $\mathcal{E}^{(p)}$ of Theorem \ref{thm:p-Schober} via the functor of Lemma \ref{lem:Gamma-iso}.  
\end{proof}
Note the similarity of this action with that defined using the ``magic windows'' approach of \cite{HLSdeq}.  It would be quite interesting to understand how these approaches compare when the same symplectic singularity can be written as both a Higgs and Coulomb branch.

For a fixed basepoint, we can choose a D-equivalence between the nccr $A_C$ for a maximal dimensional face and the commutative resolution $\tfM$.  This shows that:
\begin{corollary}\label{cor:pi1-action}
  The functors $\phi_{CC'}$ define an action of $\pi$ on $D^b(\Coh(\tM_\Q))$.   
\end{corollary}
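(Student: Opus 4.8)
The plan is to extract the $\pi$-action directly from the $\widehat{W}_F$-equivariant Schober constructed in the preceding theorem, and then transport it along the D-equivalence between the noncommutative resolution and $\tilde{\fM}_{\Q}$. Since the hard combinatorics — verifying axioms (1)--(4) of a Schober, in particular colinear transitivity and the invertibility of the transition functors across faces spanning the same subspace — has already been done there, this corollary is essentially formal; the only thing to be careful about is the descent from the universal cover and the passage between $D^-$ and $D^b$.

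First I would recall that, by the definition of a Schober on a complex torus, a $\widehat{W}_F$-equivariant Schober on $\mathring{\ft}_{1,F}$ descends to a Schober on the quotient torus $\mathring{T}_{1,F}/W_F\cong\mathring{\ft}_{1,F}/\widehat{W}_F$, and the latter carries a canonical action of its fundamental group $\pi$. Concretely: fix a base face $C_0$ containing $\psi=\phi_{1/p}$ and a loop $\gamma\in\pi=\pi_1(\mathring{T}_{1,F}/W_F,\psi)$; lift $\gamma$ to a path in $\mathring{\ft}_{1,F}$ from $C_0$ to $w\cdot C_0$ for a unique $w\in\widehat{W}_F$, subdivide it into colinear segments, compose the transition functors $\phi^{\Q}_{CC'}$ along the faces crossed, and compose with the equivariance isomorphism identifying $\mathcal{E}^{\Q}_{w\cdot C_0}$ with $\mathcal{E}^{\Q}_{C_0}$. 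This produces an autoequivalence $\Phi_\gamma$ of $\mathcal{E}^{\Q}_{C_0}$, and the Schober axioms established in the preceding theorem force $\Phi_\gamma$ to depend only on the homotopy class of $\gamma$ and to satisfy $\Phi_{\gamma_1}\circ\Phi_{\gamma_2}\cong\Phi_{\gamma_1\gamma_2}$; thus $\gamma\mapsto\Phi_\gamma$ is an action of $\pi$ on $\mathcal{E}^{\Q}_{C_0}$.

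Next I would take $C_0$ to be a maximal-dimensional face, so that $A_{C_0}$ is the noncommutative crepant resolution $A$ of Corollary~\ref{cor:A-nccr} and $\mathcal{E}^{\Q}_{C_0}=D^-(A\mmod)$. Fixing once and for all a D-equivalence $D^b(A\mmod)\cong D^b(\Coh(\tilde{\fM}_{\Q}))$ as furnished by Theorem~\ref{th:Q-equiv} (equivalently Lemma~\ref{lem:tilt-nccr} applied to the tilting generator $\mathcal{Q}^{\Q}_\phi$), one transports the $\pi$-action to $D^b(\Coh(\tilde{\fM}_{\Q}))$. The one point requiring verification is that the transition functors preserve the bounded subcategory: each $\phi^{\Q}_{CC'}$ is derived tensor product with the bimodule ${}_{\Lambda_{C}}\mathsf{T}_{\Lambda_{C'}}$, which has finite projective dimension over $A_{C'}$ since the algebras $A_C$ have finite global dimension (being endomorphism algebras of tilting generators on the smooth variety $\tilde{\fM}$), so these functors send $D^b$ to $D^b$ and the action restricts accordingly.

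The main obstacle is therefore conceptual rather than computational: one must make precise that a $\widehat{W}_F$-equivariant Schober on the universal cover genuinely yields a group action and not merely a lax or weak one, which comes down to the compatible invertibility of the equivalences in parts (2) and (4) of the Schober definition along closed loops. But this is exactly the content of the preceding theorem, so it is available to us. The remaining bookkeeping — independence of the construction on the choice of base face up to conjugation in $\pi$, and the compatibility of the various D-equivalences used — is routine and I would only sketch it.
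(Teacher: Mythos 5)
Your proposal is correct and follows essentially the same route as the paper, which simply fixes a basepoint in a maximal-dimensional face, reads off the $\pi$-action from the $\widehat{W}_F$-equivariant Schober, and transports it through the D-equivalence between the nccr $A_C$ and $\tilde{\fM}_{\Q}$. Your additional remarks on boundedness and descent from the universal cover are sensible elaborations of points the paper leaves implicit.
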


A long-standing conjecture of Bezrukavnikov and Okounkov connects these actions to enumerative geometry, as discussed in \cite[\S 3.2]{OkGRT}:
\begin{conjecture}\label{conj:BO}
The action of $\pi$ on $\Coh(\tM _\Q)$ categorifies the monodromy of the quantum connection.
\end{conjecture}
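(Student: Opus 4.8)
We do not prove Conjecture~\ref{conj:BO}; what follows is a strategy rather than a proof, since it is a case of the general Bezrukavnikov--Okounkov philosophy and is open even in fairly simple examples. The plan is to reduce the statement to a monodromy computation along each wall of the subtorus arrangement carrying the Schober of Theorem~\ref{thm:p-Schober}, to verify it on $K$-theory, and then to upgrade to the categorified assertion by a rigidity argument. The first step is to make the quantum connection explicit. For a BFN resolution $\tilde{\fM}$ of a quiver gauge theory, the symplectic dual of $\tilde{\fM}_\Q$ is (a resolution of) a Nakajima quiver variety for the rank-level dual data of \cite{NTbow}, whose quantum connection is a trigonometric dynamical connection in the Maulik--Okounkov formalism \cite{OkGRT}, built from the stable envelope $R$-matrix of the associated quantum loop algebra. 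Transporting this connection across symplectic duality, it should live on $\mathring{\ft}_{1,F}$ with regular singularities precisely along the hyperplanes $H_\al$ cut out by circuits of the unrolled matter arrangement --- the very arrangement on which our Schober is defined. Proving this matching of singular loci, and of the residues at the generic points of the $H_\al$, reduces to a rank-one computation attached to each circuit.

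The second step is to identify the local monodromies. An elementary wall-crossing corresponds to a single circuit, hence to an $\mathfrak{sl}_2$-type reflection functor $\phi_{CC'}$ assembled from the Morita context of Lemma~\ref{lem:equiv-sphere}; on $K$-theory it acts by the spherical endomorphism determined by that context. On the enumerative side, the monodromy of the quantum connection around $H_\al$ is the scattering operator of the rank-one subtheory, a $q$-hypergeometric connection whose monodromy is again a reflection. One would match these two operators by evaluating both on the $K$-classes of the structure sheaves of the spaces $X_{\sgns,w}$ out of which the tilting generator $\mathcal{Q}^{\Q}_\phi$ is built --- a locus where both sides are explicitly computable --- and then use the braid and Schober relations to propagate this local agreement to a global isomorphism of $\pi$-actions on $K$-theory.

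The third, and essential, step is categorification, and this is where I expect the real obstacle to lie. One wants a rigidity statement: the categorical monodromy of the quantum connection is the unique $\pi$-action on $D^b(\Coh(\tilde{\fM}_\Q))$ that arises from spherical functors along the walls $H_\al$ and has the prescribed class in $K$-theory. No such general rigidity theorem is known --- this is essentially the content of Conjecture~\ref{conj:BO} itself, and it is open even for Hilbert schemes of points on surfaces other than $\C^2$. A realistic proof would therefore proceed case by case: first the abelian/hypertoric situation, where the quantum connection is a Gelfand--Kapranov--Zelevinsky system and our Schober is the one analyzed in \cite{McBW,GMW}; then finite type $A$, i.e.\ Slodowy slices and $T^*(SL_n/P)$, where the monodromy can be compared with the known computations for the Springer resolution and for (singular or parabolic) category $\cO$; and only afterwards the general quiver case, bootstrapped from these via rank-level duality.
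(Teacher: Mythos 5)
This statement is a conjecture, and the paper does not prove it: it is recorded as Conjecture~\ref{conj:BO} with the remark that a positive resolution has been announced by Bezrukavnikov and Okounkov but that no proof has appeared. There is therefore no argument in the paper against which to measure yours, and your own text is explicit that it is a strategy rather than a proof, so it cannot be graded as one. What you outline — matching the singular locus of the quantum connection with the subtorus arrangement carrying the Schober, identifying local monodromies wall by wall via rank-one reductions, verifying on $K$-theory, and then invoking a (currently unavailable) rigidity statement to categorify — is a reasonable rendering of the Bezrukavnikov--Okounkov philosophy and is consistent with the paper's framing, which constructs exactly the $\pi$-action and Schober that the conjecture asks to compare with quantum monodromy. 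The one point worth flagging is that you locate the quantum connection on the symplectic dual (Higgs) side and propose to transport it across duality; the conjecture as stated concerns the quantum connection of the resolution $\tilde{\fM}_\Q$ itself, so that transport step is an additional hypothesis (a form of the expected identification of the K\"ahler parameters of $\tilde{\fM}$ with the equivariant parameters of its dual) that would itself need proof. As you correctly identify, the decisive missing ingredient in any such approach is the categorical rigidity step, which is essentially equivalent to the conjecture and is not supplied here or in the paper.
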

A positive resolution to this conjecture has been announced for hyperk\"ahler reductions by Bezrukavnikov and Okounkov, but as of the current moment, the proof has not appeared.  
Of course, it would be quite interesting to understand whether the Schober discussed above contains deeper information about the quantum D-module.

 \bigskip
\IndexOfNotation

{\renewcommand{\markboth}[2]{}\printbibliography}
\end{document}